
\documentclass[12pt,letterpaper]{amsart}
\usepackage[pass]{geometry}
\usepackage{amssymb,amsmath,amsthm,amsgen}
\usepackage{comment}
\usepackage{}

\newcommand{\old}[1]{}
\addtolength{\hoffset}{-1.5cm}
\addtolength{\textwidth}{3cm}
\addtolength{\voffset}{-2cm}
\addtolength{\textheight}{2.5cm}

\theoremstyle{plain}
\newtheorem{thm}{Theorem}[section]
\newtheorem{lem}[thm]{Lemma}
\newtheorem{conj}{Conjecture}
\newtheorem{cor}[thm]{Corollary}

\newtheorem{prop}[thm]{Proposition}
\theoremstyle{definition}

\newtheorem{remark}[thm]{Remark}

\newtheorem{qn}[thm]{Question}

\numberwithin{equation}{section}
\numberwithin{equation}{thm}

\def\la{{{\lambda}}}

 at 10truept


\bibliographystyle{amsplain}

\title{The conjugacy action of $S_n$ and  modules induced from centralisers}

\author{Sheila Sundaram}
\address{Pierrepont School, One Sylvan Road North, Westport, CT 06880}
\email{shsund@comcast.net}
\date{29 October 2015; revised 26 September 2017}

\dedicatory{For  Benjamin and Priyanka}

\thanks{   }

\subjclass[2010]{20C05, 20C15, 20C30, 05E18, 06A07}

\begin{document}

\begin{abstract}  We establish, for the character table of the symmetric group,  the positivity of  the row sums indexed by irreducible characters, when restricted to various subsets of the conjugacy classes.  A notable example is that  of partitions with all parts odd.  More generally, we study representations related to the conjugacy action of the symmetric group.  These arise as sums of  submodules induced from centraliser subgroups, and  their Frobenius characteristics have elegant descriptions, often as a multiplicity-free sum of power-sum symmetric functions.  We describe a general framework in which such representations, and consequently such linear combinations of power-sums, can be analysed. The conjugacy action for the symmetric group,  and more generally for a large class of groups, is known  to contain every irreducible. We   find other representations of dimension $n!$ with this property, including a twisted analogue of the conjugacy action. 


\noindent \emph{Keywords:} conjugacy action, character tables, symmetric power, exterior power, plethysm, Ramanujan sum.
\end{abstract}
\maketitle

\section{Introduction}

This paper uses plethystic generating functions to provide a general framework for analysing  representations of the symmetric group which arise as sums of modules induced from centralisers. In particular we are led to the study of the conjugation action of $S_n$ on itself, and related permutation and sign-twisted permutation representations.  Further inspiration was provided  by working through  the invaluable  collection of problems and references in Richard Stanley's {\it Enumerative Combinatorics Vol. 2}, and especially  \cite[Exercise 7.71]{St4EC2}.  It follows from work of Solomon \cite{So}  that the Frobenius characteristic (\cite{M}, \cite{St4EC2}) of the $S_n$-action on itself by conjugation is a positive integer combination of the power-sum symmetric functions, each one occurring exactly once.  More recently it was shown, first in \cite{F}, and then in \cite{Sch}, that (for $n\neq 2$) this representation contains every irreducible, or equivalently, the characteristic is  a {\it positive} integer combination of every possible  Schur function.  For the symmetric group $S_n,$ this translates into the statement that in its character table, the row-sum indexed by any irreducible character is positive.  

The conjugacy action has been shown to contain every irreducible for a large class of groups, including the finite simple groups \cite{HSTZ}. This result is equivalent to a non-vanishing condition on the kernel of the adjoint action (as in Passman's problem \cite{HSTZ}).  Interestingly, the symmetric group $S_n$ seems to fall into a separate category from that studied in \cite{HSTZ}.   The conjugacy action decomposes into orbits indexed by centralisers; for $S_n$ these are symmetric powers of representations induced by the trivial character of a cyclic subroup, and hence our motivation for studying the symmetric and exterior powers of such representations.

 These investigations result in the discovery of several other $S_n$-modules with the same dimension as the regular representation, sharing properties of the conjugation action.  Many  seem to contain every irreducible.  In addition, the conjugacy action of the alternating subgroup is shown to play a prominent role in these modules.  Our methods  also provide a framework for the study of  many   nonnegative linear combinations of power-sums.

We write $\lambda\vdash n$ to mean that $\lambda$ is an integer partition $\lambda_1\geq \lambda_2\geq \ldots$ of $n.$ Write $\ell(\lambda)$ for the length, or number of parts, of $\lambda.$ Write $Par_n$ for the set of all partitions of $n,$ $DO_n$ for the subset of $Par_n$ consisting of partitions with all parts distinct and odd.  Fix $n$ and let $T_n$ denote any subset of the partitions of $n.$ 
Consider the linear combination of power-sum symmetric functions $P_{T_n}=\sum_{\lambda\in T_n} p_\lambda.$  We will say a symmetric function is  Schur-positive if it is a {\it nonnegative integer} combination of Schur functions.

Observe that $P_{T_n}$ is the Frobenius characteristic of a possibly virtual  $S_n$-module of nonzero dimension if and only if $T_n$ includes the partition $(1^n)$, in which case the dimension is $n!$.  Also note that the case  $T_n=Par_n$ corresponds to the result mentioned above about the conjugation action of $S_n$.  Sample  results of this paper follow. For the sake of completeness we include Solomon's result.

\begin{thm}
 For the following choices of $T_n, $ the sums $P_{T_n}$ are Schur-positive (and hence they are the Frobenius characteristic of some $S_n$-module).  

\begin{enumerate}
\item[(0)]  (Corollary 4.3)  $T_n=Par_n.$ 
\item (Theorem 4.6)  
$T_n=\{\lambda\in Par_n: \lambda \text{ has all parts odd}\}.$  
 \item (Theorem 4.11) $T_n=\{\lambda\in Par_n:  n-\ell(\lambda) \text{ even}\}.$ 
\item (Theorem 4.15) $T_n=\{\lambda\in Par_n: \lambda\notin DO_n, n-\ell(\lambda) \text{ even}\}, n\geq 2.$
\item (Theorem 4.11) $T_n=\{\lambda\in Par_n: \lambda\notin DO_n\},  n\geq 2.$ 
\item (Theorem 4.23) $T_n=\{\lambda\in Par_n: \lambda_i=1 \text{ or }k\},$ for any fixed $k\geq 1.$ 
\item (Theorem 5.6) $T_n=\{\lambda\in Par_n: \lambda_i \text{ divides }k\},$ for any fixed $k\geq 1.$  
\item (Theorem 5.6) For a fixed odd prime $p,$ 
 $T_n$  is the subset of partitions of $n$ with parts in the set $\{1,2,p,2p\}$ such that an even part occurs at most once.
\item (Theorem 5.6) 
$T_n$ consists of partitions $\lambda$ of $n$ such that odd parts are factors of $k,$   even parts occur at most once and $\lambda_i \text{  even }\Longrightarrow  (\lambda_i/2) |k \text{ but } \lambda_i \not| k,$  for any fixed $k\geq 1.$  
\end{enumerate}
\end{thm}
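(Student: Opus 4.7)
The plan is to realize each $P_{T_n}$ as the Frobenius characteristic of an explicit $S_n$-module. The anchor is Solomon's identity (part (0)): the $S_n$-conjugation character takes value $|Z_{S_n}(g)|=z_{\lambda(g)}$ at each $g$, so averaging over conjugacy classes gives $\text{ch}(\text{Conj}_n)=\sum_{\lambda\vdash n}p_\lambda$. The decomposition $\text{Conj}_n=\bigoplus_{\lambda} \text{Ind}_{Z_\lambda}^{S_n}(\mathbf{1})$ with $Z_\lambda=\prod_i(C_i\wr S_{m_i})$ for $\lambda=(1^{m_1}2^{m_2}\cdots)$, combined with the wreath-product formula $\text{ch}(\text{Ind}_{H\wr S_m}^{S_{nm}}\mathbf{1})=h_m[\text{ch}(\text{Ind}_H^{S_n}\mathbf{1})]$, yields
\[
\text{ch}(\text{Conj}_n)=\sum_{\lambda\vdash n}\prod_i h_{m_i}[l_i],\qquad l_i=\tfrac{1}{i}\sum_{d\mid i}\phi(d)\,p_d^{i/d}.
\]
This plethystic template, together with its sign-twisted variant obtained by replacing $h_m$ with $e_m$ (equivalently, by applying the involution $\omega$), is the engine driving the remaining parts.

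For parts (1)-(4), the key invariant is the sign $\text{sgn}(g_\lambda)=(-1)^{n-\ell(\lambda)}=\omega(p_\lambda)/p_\lambda$. Hence $\text{ch}(\text{Conj}_n\otimes\text{sgn})=\omega(\text{ch}(\text{Conj}_n))=\sum_\lambda(-1)^{n-\ell(\lambda)}p_\lambda$, and averaging with $\text{ch}(\text{Conj}_n)$ gives $2P_{T_n}$ in case (2) as a genuine module characteristic, proving (2) since doubling preserves Schur-positivity. For (4), I would examine the conjugation action of $S_n$ on the subset $A_n\subset S_n$: its character at $g$ is $|Z_{S_n}(g)\cap A_n|$, equal to $z_\lambda$ when $Z_\lambda\subseteq A_n$ and $z_\lambda/2$ otherwise. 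A quick wreath-product analysis (an $i$-cycle is even iff $i$ is odd; transposing two $i$-blocks has sign $(-1)^i$) shows $Z_\lambda\subseteq A_n$ iff all parts of $\lambda$ are odd and distinct, i.e., $\lambda\in DO_n$. This yields $2\,\text{ch}(S_n\text{ on }A_n)=P_{DO_n}+P_{Par_n}$, and hence
\[
P_{T_n}=P_{Par_n}-P_{DO_n}=2\,\text{ch}(S_n\text{ on }S_n\!\setminus\! A_n\text{ by conj})
\]
in case (4), a doubled genuine characteristic. Parts (1) and (3) proceed along similar lines: I would construct $S_n$-modules by conjugation on suitable subsets of $S_n$ (e.g., the set of permutations with only odd-length cycles) or by inducing specific virtual $A_n$-characters up to $S_n$, and combine their characteristics with those already obtained by integer linear combinations; the overall doubling that appears in these averaging arguments preserves Schur-positivity with integer coefficients.

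The arithmetic parts (5)-(8) are the most intricate. Naively restricting $\text{Conj}_n$ to conjugacy classes whose cycle lengths lie in a fixed divisibility set $D$ produces an $S_n$-submodule with characteristic $\sum_{\lambda\text{ with parts in }D}\prod_i h_{m_i}[l_i]$, but this need not equal $\sum_{\lambda\text{ with parts in }D}p_\lambda$ term-by-term, since only the full Solomon sum satisfies the telescoping. The main obstacle is therefore to establish the restricted identity
\[
\sum_{\lambda\text{ with parts in }D}\prod_i h_{m_i}[l_i]=P_{T_n}
\]
at the level of symmetric functions, which I would attack via Möbius inversion on the divisor lattice of $k$ and the cycle-index polynomials of the cyclic subgroups $C_i$. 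The ``at most one even part'' condition in (7)-(8) signals that $h_{m_i}$ is to be replaced by $e_{m_i}$ on even-indexed factors, so that the wreath factor $S_{m_i}$ carries its sign character and contributes only $1+l_i\cdot t^i$ (i.e., $m_i\leq 1$) up to sign twists via $\omega$. Once the appropriate restricted identity is established in each case, Schur-positivity follows immediately from the submodule interpretation, with an overall doubling in the sign-twisted cases that is harmless.
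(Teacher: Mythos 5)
Your treatment of parts (0), (2) and (4) is sound and essentially the paper's: the computation $2\,\mathrm{ch}\bigl(\psi(S_n,\bar A_n)\bigr)=\sum_{\lambda\notin DO_n}p_\lambda$ via $|Z(g)\cap A_n|$ is exactly Lemma 6.3 combined with Theorem 4.11, and the integrality of $S_n$-character values is indeed what legitimises the halving in the averaging arguments (the paper makes this explicit). But there are two genuine gaps. First, for part (1) the construction you propose --- conjugation on the set of permutations all of whose cycles have odd length --- produces $\sum_{\lambda\in Par_n^{odd}}H_\lambda[F]$ with $H_\lambda[F]=\prod_i h_{m_i}[f_i]$ (your $l_i$), which is \emph{not} $\sum_{\lambda\in Par_n^{odd}}p_\lambda$: already for $n=2$ the former is $h_2[f_1]=h_2$ while the latter is $p_1^2=h_2+e_2$. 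The correct realisation is the \emph{sign-twisted} conjugacy action summed over \emph{all} classes, $\sum_{\lambda\vdash n}E_\lambda[F]$ with $E_\lambda[F]=\prod_i e_{m_i}[f_i]$; the identity $\sum_{\lambda\vdash n}E_\lambda[F]=\sum_{\lambda\in Par_n^{odd}}p_\lambda$ rests on the generating function $E[F(t)]=\prod_m(1-t^mp_m)^{f_m(-1)}$ together with the evaluation $f_m(-1)=-1$ for $m$ odd and $0$ for $m$ even (Lemma 4.1 and Theorem 4.2(2)). No restriction of the untwisted action to a subset of conjugacy classes yields this symmetric function, so part (1) is not proved by your outline.

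Second, parts (5)--(8) are not reachable by your ``restricted identity plus M\"obius inversion'' plan. For (6)--(8) the paper does not restrict the conjugation action to classes with cycle lengths in a divisibility set; it replaces the trivial character of $C_n$ by the linear character $(e^{2\pi i/n})^k$ (the Foulkes characters), whose associated arithmetic function is the Ramanujan sum $c_d(k)$ by H\"older's formula. The evaluations $f_n(1)$ (equal to $1$ exactly when $n$ divides $k$) and $f_n(-1)$ then force the product formulas $\prod_{n\mid k}(1-t^np_n)^{-1}$, etc., and in every case the Schur-positive object is the sum over \emph{all} partitions $\mu$ of the genuine modules $H_\mu[F]$ or $E_\mu[F]$ for this new $F$. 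Your proposed identity $\sum_{\lambda\text{ with parts in }D}\prod_ih_{m_i}[f_i]=P_{T_n}$ is false for the conjugacy-action $f_i$, as the same $n=2$ example shows, and M\"obius inversion on divisors of $k$ does not repair it. Part (5) for composite $k$ is a separate algebraic argument (Theorem 4.23): sum the geometric series $\sum_r p_k^rp_1^{n-kr}$, substitute $p_1^k=\frac{1}{2}(\beta_k+\alpha_k)$ and $p_k=\frac{1}{2}(\beta_k-\alpha_k)$ where $\alpha_k=p_1^k-p_k$ and $\beta_k=p_1^k+p_k$ are Schur-positive (from the hook expansion of $p_k$), and observe that the resulting binomial expansion is a nonnegative rational, hence by integrality a nonnegative integral, combination of Schur functions. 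None of this machinery appears in your sketch, so parts (1) and (5)--(8) remain unproved.
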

For  the character table of $S_n$ with rows indexed by the  irreducible characters, each of the above translates into a nonnegativity statement about a 
suitably {\it restricted} row-sum, omitting certain columns (i.e., conjugacy classes).  In some cases (e.g., (1)-(4) above) we are able to establish the stronger statement that these row sums are strictly positive, with an exception in case (4) for the row indexed by the sign character, which does not appear.  
We note also that by considering induced representations, the statement holds when each of the subsets described above is modified by restricting to partitions with at least $m$ parts equal to 1, for any fixed $m\geq 0.$

It is tempting to think that including the partition $(1^n)$ in the subset $T_n$ will guarantee Schur-positivity, by virtue of the dominating presence of the regular representation, but the following examples show that this is not the case.   Suppose $T_n$ consists of $(1^n)$ 
and all partitions $\lambda$ of $n$ such that $n-\ell(\lambda)$ is odd.  It is easily seen that  the sign representation occurs in $P_{T_n}$ with negative multiplicity $1-|\{\lambda\in Par_n: n-\ell(\lambda) \text{ odd}\}|$ when $n\geq 4.$  For instance, we have 
$P_{T_4}=p_1^4+p_2 p_1^2+p_4, P_{T_5}=p_1^5+p_2p_1^3+p_3p_2 +p_4p_1,$ and 
$P_{T_6}=p_1^6+p_2p_1^4+p_2^3+p_3p_2p_1
+p_4p_1^2+p_6,$ and in these the sign representation occurs with multiplicity $-1, -2, -4$ respectively.

      Other examples are  $T_6=\{(1^6), (2,1^4), (3^2),(4,2),(4,1^2)\}$ (the multiplicity of the Schur function indexed by $(2,1^4)$ is $-1$) and 
$T_6=\{(1^6), (2^3), (3,1^3), (4,1^2), (4,2)\}:$ in the latter the multiplicity of the Schur function indexed by $(3^2)$ is $-1$. 

The context in which these representations arise is interesting in its own right.  We define a {\it twisted} conjugacy action of  $S_n$  on itself, as the following  sum of induced modules from centralisers.  Recall that the centraliser of a permutation with $m_i$ cycles of length $i$ is a direct product of wreath product groups $\times_i S_{m_i}[C_i],$ where $C_i$ is a cyclic subgroup of order $i.$  First  define a representation of the wreath product $S_k[C_m]$ so that $S_m$ acts by conjugation on the $m$-cycles, and $S_k$ acts on the $k$ disjoint $m$-cycles according to the sign representation.  Then take outer tensor products, inducing up to $S_n$ from a centraliser of the form $\times_i S_{m_i}[C_i].$  We call the sum of all such modules, ranging over all possible conjugacy classes, the twisted conjugacy representation of $S_n.$ 
\begin{thm} The twisted conjugacy representation is  self-conjugate, has  dimension $n!,$ and contains every irreducible. Its Frobenius charactestic is given by $P_{T_n}$ where $T_n=\{\lambda \in Par_n: \lambda  \text{ has all parts odd}\}.$ 
\end{thm}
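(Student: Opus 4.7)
I will first establish the Frobenius characteristic formula $\mathrm{ch}(\widetilde{\mathrm{Conj}}_n) = P_{T_n}$ with $T_n=\{\lambda\vdash n : \lambda\text{ has all parts odd}\}$, then derive the other three assertions from it. By construction, $\widetilde{\mathrm{Conj}}_n=\bigoplus_{\mu\vdash n}\mathrm{Ind}_{Z_\mu}^{S_n}\chi_\mu$ with $Z_\mu=\prod_i C_i\wr S_{m_i}$ and $\chi_\mu$ the character that is trivial on the base $C_i^{m_i}$ and equal to the sign of $S_{m_i}$ on the top, in each factor. I will apply Frobenius's formula
\[
\mathrm{ch}\bigl(\mathrm{Ind}_H^{S_n}\chi\bigr) = \frac{1}{|H|}\sum_{h\in H}\chi(h)\,p_{\mathrm{cyc}(h)},
\]
and use transitivity of induction through the Young subgroup $\prod_i S_{im_i}$ to factor the computation cleanly across cycle lengths $i$. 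It then suffices to evaluate, as a generating function in $m$, the single-cycle-length contribution $F_i(m):=\mathrm{ch}(\mathrm{Ind}_{C_i\wr S_m}^{S_{im}}(\mathbf{1}\otimes\mathrm{sgn}_{S_m}))$.

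\textbf{The cycle index and the product formula.} I will use the standard description of cycle types in the wreath product: each cycle of $\pi\in S_m$ of length $k$ whose associated cycle-product in $C_i$ has order $d\mid i$ contributes exactly $i/d$ disjoint cycles of length $kd$ to the action on $im$ points, and $\varphi(d)$ counts the number of elements of $C_i$ of order $d$. Plugging these data into Frobenius's formula and applying the exponential formula to the $\mathrm{sgn}$-weighted sum over $\pi\in S_m$ gives a closed form for $\sum_m F_i(m)$; multiplying over $i$ and summing the resulting geometric series collapses everything into
\[
\sum_{n\ge0}\mathrm{ch}(\widetilde{\mathrm{Conj}}_n)\ =\ \prod_{n\ge1}(1-p_n)^{-a_n},\qquad a_n=\frac{1}{n}\sum_{d\mid n}(-1)^{n/d-1}\varphi(d).
\]
The crux is the elementary arithmetic identity $a_n=1$ for $n$ odd and $a_n=0$ for $n$ even. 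Odd $n$ is immediate from $\sum_{d\mid n}\varphi(d)=n$, since every complement $n/d$ is odd. For even $n=2^a m$ with $m$ odd the sum factors through the odd part, and on the $2$-adic side the identity reduces to $\varphi(2^a)=\sum_{b=0}^{a-1}\varphi(2^b)$, i.e.\ $2^{a-1}=1+1+2+\cdots+2^{a-2}$. Consequently $\sum_n\mathrm{ch}(\widetilde{\mathrm{Conj}}_n)=\prod_{n\text{ odd}}(1-p_n)^{-1}=\sum_{\lambda\text{ all parts odd}}p_\lambda$, and taking the degree-$n$ component identifies $\mathrm{ch}(\widetilde{\mathrm{Conj}}_n)=P_{T_n}$.

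\textbf{Remaining claims and main obstacle.} The dimension $n!$ is read off as the coefficient of $p_1^n/n!=p_{(1^n)}/z_{(1^n)}$ in $P_{T_n}$, which is $1$; equivalently, $\sum_{\mu\vdash n}[S_n:Z_\mu]=n!\sum_\mu 1/z_\mu=n!$. Self-conjugacy is $\omega(P_{T_n})=P_{T_n}$: since $\omega(p_k)=(-1)^{k-1}p_k$ and every part of every $\lambda\in T_n$ is odd, each $p_\lambda$ in $P_{T_n}$ is fixed by $\omega$. Containment of every irreducible is exactly the strict-positivity statement of Theorem~4.6, i.e.\ case~(1) of Theorem~1.1. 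The main technical obstacle is the bookkeeping in the wreath-product cycle index together with the identification of $a_n$: the $\mathrm{sgn}$ twist on each $S_{m_i}$ produces just the right cancellation to kill every power sum $p_n$ with $n$ even, which is precisely what distinguishes this characteristic from Solomon's $\sum_\lambda p_\lambda$ for the untwisted conjugacy action.
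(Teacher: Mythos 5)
Your derivation of the Frobenius characteristic is sound and follows essentially the paper's route: the wreath-product cycle-index computation you sketch is exactly the plethystic identity $E[F(t)]=\prod_m(1-t^mp_m)^{f_m(-1)}$ with $f_m=\frac1m\sum_{d|m}\phi(d)p_d^{m/d}$ (Theorem 3.2(2) specialised via Lemma 4.1), and your arithmetic identity $a_n=-f_n(-1)=\delta_{n,\mathrm{odd}}$ is correct (the paper gets it slightly more slickly from the relation $f_{2m}(-1)=f_m(1)-f_{2m}(1)$, but your $2$-adic factorisation is fine). The dimension and self-conjugacy arguments are identical to the paper's and correct.

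However, there is a genuine gap in the last claim. You dispose of ``contains every irreducible'' by asserting it ``is exactly the strict-positivity statement of Theorem 4.6, i.e.\ case (1) of Theorem 1.1.'' That statement only gives Schur-\emph{positivity} in the sense of nonnegativity of the multiplicities $\sum_{\lambda\in Par_n^{odd}}\chi^\nu(\lambda)$, which is immediate once the characteristic is known to be that of a true module. Strict positivity for every $\nu$ is precisely what ``contains every irreducible'' means, and it does not follow from the power-sum expansion alone; it is the hardest part of the theorem. The paper proves it as Theorem 4.9 by a separate induction on $n$: one writes $ch(\varepsilon(S_n))=\sum_{\lambda\vdash n}E_\lambda[F]$, uses Lemma 4.7 (for $k$ an odd prime, $f_k=1\uparrow_{C_k}^{S_k}$ contains every irreducible of $S_k$ except those indexed by $(k-1,1)$ and $(2,1^{k-2})$), and then uses Bertrand's Postulate to peel off a prime $q$ with $n/2<q<n$, choosing a suitable $\mu\subset\lambda$ with $|\mu|=q$ avoiding the two excluded shapes, together with a Littlewood--Richardson argument and a case analysis for the near-hook shapes. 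Without some argument of this kind (or an appeal to an external strict-positivity result), your proof establishes only three of the four assertions.
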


By considering  conjugacy actions (ordinary and twisted) on the subsets of even and odd permutations separately, we show 
\begin{thm} (Theorems 4.17 and 4.19) The ordinary (respectively twisted) conjugacy action on the set of even permutations contains every irreducible for $n\geq 4$ (respectively $n\geq5$) while both 
actions on the set of odd permutations contain every irreducible except the sign, for $n\geq 2.$  
\end{thm}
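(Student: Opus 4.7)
The plan is to express the Frobenius characteristic of each of the four representations as a combination of sums $P_{T_n}$ studied in Theorem 1.1, then to promote Schur positivity to strict positivity using integrality of multiplicities.

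\textbf{Step 1 (Frobenius characteristics).} For the ordinary conjugation action of $S_n$ on the subset $A_n$ of even permutations, the character at $\tau$ of cycle type $\lambda$ equals $|C_{S_n}(\tau) \cap A_n|$, which is $z_\lambda$ or $z_\lambda/2$ depending on whether the centraliser is contained in $A_n$. An inspection of the wreath-product structure $\prod_i S_{m_i}[C_i]$ shows that $C_{S_n}(\tau_\lambda) \subseteq A_n$ precisely when $\lambda \in DO_n$: rotating an even-length cycle is odd, and swapping two length-$i$ cycles contributes sign $(-1)^i$, so $m_i = 0$ for every even $i$ and $m_i \leq 1$ for every odd $i$. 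Hence
\[
ch(\text{conj on even}) = P_{DO_n} + \tfrac{1}{2} P_{\{\lambda \notin DO_n\}}, \qquad ch(\text{conj on odd}) = \tfrac{1}{2} P_{\{\lambda \notin DO_n\}},
\]
the latter being half of the sum in item (4) of Theorem 1.1. The same analysis applied to the sign-twisted induced modules of Theorem 1.2 produces the twisted analogues, with the all-odd-parts condition (item (1)) replacing $DO_n$.

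\textbf{Step 2 (odd permutations).} Since $\text{mult}(s_\mu, \text{conj on odd}) = \tfrac{1}{2}\langle P_{\{\lambda \notin DO_n\}}, s_\mu\rangle$ must be a non-negative integer, those Schur coefficients are all even integers. The strict positivity refinement of Theorem 4.11 guarantees they are $\geq 1$ for every $\mu \neq (1^n)$, hence $\geq 2$ by parity, yielding multiplicity $\geq 1$ in conj-on-odd. The sign coefficient vanishes via the classical identity $\sum_{\lambda \vdash n} \epsilon(\lambda) = |DO_n|$, which follows from Euler's generating-function identity $\prod_{i \text{ odd}}(1-t^i)^{-1} \prod_{i \text{ even}}(1+t^i)^{-1} = \prod_{i \text{ odd}}(1+t^i)$; this makes the sign coefficient of $P_{Par_n} - P_{DO_n}$ zero. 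The twisted-odd case is parallel, with Theorem 4.6 playing the role of Theorem 4.11.

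\textbf{Step 3 (even permutations; main obstacle).} From the identity $\text{mult}(\text{even}) + \text{mult}(\text{odd}) = R_\mu$ (the Frumkin row sum, $\geq 1$ for every $\mu$), the sign character satisfies $\text{mult}(\text{even}) = R_{(1^n)} - 0 = |DO_n|$, which is $\geq 1$ for $n \geq 4$ (take $(n)$ if $n$ is odd and $(n-1,1)$ if $n$ is even). For $\mu \neq (1^n)$ the main obstacle is that the correction term $P_{DO_n}$ in $ch(\text{conj on even})$ need not be Schur-positive; I would handle this by rewriting $P_{DO_n} = P_{\{n - \ell(\lambda) \text{ even}\}} - P_{\{\lambda \notin DO_n,\, n - \ell(\lambda) \text{ even}\}}$ and combining the strictly positive row-sum refinements of Theorem 4.11 (item (2)) and Theorem 4.15 (item (3)) with the already-positive contribution $\tfrac{1}{2} P_{\{\lambda \notin DO_n\}}$ and the integrality/parity of $\text{mult}(\text{even})$, to secure $\text{mult}(\text{even}) \geq 1$. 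The twisted-even case is treated analogously, and the low-$n$ exceptions are verified by direct character-table computation, producing the thresholds $n \geq 4$ (ordinary) and $n \geq 5$ (twisted) stated in the theorem.
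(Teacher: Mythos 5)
Your Step 1 is correct and agrees with the paper's Theorem 4.11: the character of the conjugacy action on the even (resp.\ odd) permutations at an element of type $\lambda$ is $|Z\cap A_n|$, which is $z_\lambda$ or $z_\lambda/2$ according as $\lambda\in DO_n$ or not, giving $ch(\psi(S_n,A_n))=P_{DO_n}+\tfrac12 P_{\{\lambda\notin DO_n\}}$ and $ch(\psi(S_n,\bar A_n))=\tfrac12 P_{\{\lambda\notin DO_n\}}$. The vanishing of the sign multiplicity on the odd side and the half-integer/parity observations are also fine (the paper gets the former more structurally: the sign occurs in $H_\mu[F]$ only when $\mu\in DO_n$, and then $n-\ell(\mu)$ is even).

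The gap is that Steps 2 and 3 are circular. The ``strict positivity refinement'' you invoke --- that $\langle P_{\{\lambda\notin DO_n\}}, s_\mu\rangle\ge 1$ for $\mu\ne(1^n)$, and likewise for $U_n^+=P_{\{\lambda\notin DO_n,\ n-\ell(\lambda)\ \mathrm{even}\}}$ --- is not an available prior result: since $ch(\psi(S_n,\bar A_n))=\tfrac12 P_{\{\lambda\notin DO_n\}}$, the first statement \emph{is} the assertion that every non-sign irreducible occurs in the conjugacy action on the odd permutations, i.e.\ exactly what you are trying to prove; and the positivity of $U_n^+$ is Corollary 4.18, which the paper \emph{deduces from} Theorem 4.17. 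Frumkin's theorem only gives $\langle P_{Par_n},s_\mu\rangle\ge1$, and subtracting the virtual correction $P_{DO_n}$ (whose Schur coefficients $\sum_{\lambda\in DO_n}\chi^\mu(\lambda)$ can be large of either sign) destroys any easy lower bound; your proposed rewriting in Step 3 leads to $P_{\{n-\ell\ \mathrm{even}\}}-\tfrac12 U_n^+ +\tfrac12 U_n^-$, which again contains a negatively-weighted Schur-positive term and a zero-dimensional virtual term, so nothing forces positivity. The actual content of the theorem is supplied in the paper by a long induction on $n$: one uses Nagura's refinement of Bertrand's postulate to find a prime $q$ with $n>q>n-q\ge 5$, Lemma 4.7 identifying which irreducibles occur in $f_q=ch(1\uparrow_{C_q}^{S_q})$, and Littlewood--Richardson arguments to exhibit, for each $\lambda$, partitions $\tau(\lambda)$ and $\sigma(\lambda)$ of opposite length parity with $s_\lambda$ occurring in both $H_{\tau(\lambda)}[F]$ and $H_{\sigma(\lambda)}[F]$ (and the parallel statement for $E_\mu[F]$ in the twisted case). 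No substitute for that argument appears in your proposal, so the proof as written does not go through.
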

The arguments are technical and make use of  number-theoretic results on the distribution of primes.  In the case of the ordinary conjugacy action,  Frumkin's result follows as a corollary.

We also study related representations of the alternating group, and show the following:

\begin{thm} (Theorem 6.4) Consider the conjugacy action of the alternating group $A_n$  on itself, and  the representation of $S_n$ obtained by inducing this action up to $S_n.$ This is a self-conjugate representation of $S_n$ of dimension $n!,$ in which every irreducible occurs for $n\neq 3.$  It contains the conjugacy action on the even permutations as an invariant submodule.  
\end{thm}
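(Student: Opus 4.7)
The plan is to proceed in three main steps: compute the Frobenius characteristic (and read off the dimension and self-conjugacy), exhibit the claimed submodule inclusion, and then deduce the every-irreducible assertion from Theorem 4.17.

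First I compute the character of $\mathrm{Ind}_{A_n}^{S_n}\mathbb{C}[A_n]_{\mathrm{conj}}$. Because $A_n$ is normal in $S_n$ of index $2$, the induced character vanishes on $S_n\setminus A_n$ and, for $h\in A_n$, simplifies to $(|S_n|/|A_n|)\cdot|C_{A_n}(h)|=2|C_{A_n}(h)|$. Since $|C_{A_n}(g_\lambda)|$ equals $z_\lambda$ when $\lambda\in DO_n$ (the $S_n$-conjugacy class splits in $A_n$) and $z_\lambda/2$ otherwise, the Frobenius characteristic becomes
\[
\mathrm{ch}\bigl(\mathrm{Ind}_{A_n}^{S_n}\mathbb{C}[A_n]_{\mathrm{conj}}\bigr)=\sum_{\substack{\lambda\vdash n\\ n-\ell(\lambda)\text{ even}}}p_\lambda+\sum_{\lambda\in DO_n}p_\lambda.
\]
The dimension is $n!$ times the coefficient of $p_{1^n}$, which is $1$ for $n\geq 2$ (since $(1^n)\notin DO_n$), so the dimension equals $n!$. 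Self-conjugacy follows from $\omega(p_\lambda)=(-1)^{n-\ell(\lambda)}p_\lambda$: every partition appearing in either sum has $n-\ell(\lambda)$ even (automatic for $\lambda\in DO_n$, since a sum of $k$ distinct odd integers has the parity of $k$), so each $p_\lambda$ in the characteristic is $\omega$-fixed.

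For the submodule inclusion, let $V$ denote $\mathbb{C}[A_n]$ under the $A_n$-conjugation action, and let $W$ denote $\mathbb{C}[A_n]$ under the restricted $S_n$-conjugation action, which is an $S_n$-submodule of $\mathbb{C}[S_n]_{\mathrm{conj}}$ precisely because $A_n$ is normal. Then $\mathrm{Res}_{A_n}W=V$, so Frobenius reciprocity gives $\mathrm{Hom}_{S_n}(W,\mathrm{Ind}_{A_n}^{S_n}V)\cong\mathrm{Hom}_{A_n}(V,V)\ni\mathrm{id}$. Choosing coset representatives $\{1,\sigma\}$ for $S_n/A_n$, the resulting $S_n$-equivariant map $\phi: W\to\mathrm{Ind}_{A_n}^{S_n}V$ is explicitly $[a]\mapsto 1\otimes[a]+\sigma\otimes[\sigma^{-1}a\sigma]$ for $a\in A_n$. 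Injectivity is immediate because the induced module is free on the coset representatives: the $1\otimes(-)$ component of $\phi(\sum_a c_a[a])$ equals $\sum_a c_a[a]\in V$, which forces all $c_a=0$.

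The every-irreducible statement now follows cleanly: since $W$ embeds as an $S_n$-submodule of $\mathrm{Ind}_{A_n}^{S_n}V$, every irreducible of $S_n$ appearing in $W$ also appears in the induced representation, and Theorem 4.17 guarantees that $W$ contains every irreducible for $n\geq 4$. The cases $n=1,2$ are immediate direct checks (trivial and regular representations respectively), and $n=3$ is excluded because direct evaluation of the formula gives $\mathrm{ch}=3s_{(3)}+3s_{(1^3)}$, missing $s_{(2,1)}$. The hard work is therefore entirely contained in Theorem 4.17; granted that input, Theorem 6.4 reduces to the character computation combined with a one-line application of Frobenius reciprocity.
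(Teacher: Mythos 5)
Your proposal is correct and follows essentially the same route as the paper: compute the induced character via $|Z(\sigma)\cap A_n|$ (the paper's Lemma 6.3), pass to the Frobenius characteristic $2\sum_{\lambda\in DO_n}p_\lambda+\sum_{\mu\notin DO_n,\ n-\ell(\mu)\ \mathrm{even}}p_\mu$, read off dimension and self-conjugacy from the parity of $n-\ell(\lambda)$, and reduce the every-irreducible claim to Theorem 4.17. The only cosmetic difference is that you realize the containment of $\psi(S_n,A_n)$ by an explicit Frobenius-reciprocity embedding, whereas the paper gets it from the decomposition $\psi(H)\uparrow_H^G=\psi(G,H)\oplus\omega_H\otimes\psi(G,H)$ of Proposition 6.2; these carry the same content.
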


As a corollary of the preceding theorem we obtain a different proof of a result that follows from work of Heide, Saxl, Tiep and Zalesski \cite{HSTZ}, namely, that the conjugacy action of $A_n$ ($n\neq 3$) on itself also contains every $A_n$-irreducible.  See Section 6.

The case $k=2$ of Theorem 1.1 (5) can be generalised in the following direction:
\begin{conj} Let $L_n$ denote the reverse lexicographic ordering \cite[p. 6]{M} on the set of partitions of $n.$ Then the sum $\sum p_\lambda,$ taken over any final segment of the total order $L_n,$
i.e., any interval of the form $[\mu, (1^n)]$ for fixed $\mu,$  (and thus  necessarily including  the partition $(1^n)$) is Schur-positive. 
\end{conj}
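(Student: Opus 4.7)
The plan is to proceed by induction on $\mu$ in the total order $L_n$, with base case $\mu=(1^n)$, where the final segment is the singleton $\{(1^n)\}$ and $P_{T_n}=p_1^n$ is the Frobenius characteristic of the regular representation of $S_n$, trivially Schur-positive. The top of the order, $\mu=(n)$, corresponds to $T_n=\mathrm{Par}_n$ and is covered by Corollary~4.3 (Solomon's theorem, i.e.\ the conjugacy action). The inductive step is to show that extending from $[\mu,(1^n)]$ to $[\mu',(1^n)]$, where $\mu'$ is the immediate successor of $\mu$ in $L_n$, preserves Schur-positivity; this adjoins the single new summand $p_{\mu'}$.

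The principal obstacle is that individual $p_\lambda$ typically have negative Schur coefficients, so adjoining one term at a time is not obviously Schur-positive; moreover the examples recorded just after the statement of Theorem~1.1 show that arbitrary subsets of $\mathrm{Par}_n$ containing $(1^n)$ can fail, so any argument must use the reverse lex structure in an essential way. The most promising line of attack is to exhibit each $P_{[\mu,(1^n)]}$ as the Frobenius characteristic of an explicit $S_n$-module, built in the framework of modules induced from centralisers developed in this paper, so as to produce a filtration interpolating between the regular representation (at $\mu=(1^n)$) and the conjugacy representation (at $\mu=(n)$). The case $\mu=(2^{\lfloor n/2\rfloor},1^{n\bmod 2})$ is already handled by the module-theoretic construction underlying Theorem~4.23, which one could attempt to generalise to arbitrary $\mu$.

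A complementary combinatorial approach exploits the recursion inherent in reverse lex: writing $\mu=(\mu_1,\mu')$ with $\mu'\vdash n-\mu_1$, and using $\mu_2\leq\mu_1$, one has
\[
P_{[\mu,(1^n)]}\;=\;\sum_{\substack{\lambda\vdash n\\ \lambda_1<\mu_1}} p_\lambda \;+\; p_{\mu_1}\cdot P_{[\mu',(1^{n-\mu_1})]}.
\]
The first summand is itself of the conjectured form---it is the final segment up to the largest partition of $n$ with largest part $\mu_1-1$, namely $((\mu_1-1)^q,r)$ with $q(\mu_1-1)+r=n$ and $0\leq r<\mu_1-1$---and is Schur-positive by induction; the second factor is Schur-positive by induction on $n$. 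The main obstacle lies in the multiplicative step: for $\mu_1\geq 2$, $p_{\mu_1}$ is not itself Schur-positive, so its negative Schur contributions must be absorbed by the first summand. Proving such an absorption identity appears to require either a new character-theoretic identity or an explicit representation-theoretic construction not yet in the paper, and this is where I expect the bulk of the work to lie.
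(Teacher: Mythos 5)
The statement you are trying to prove is stated in the paper only as a \emph{conjecture} (Conjecture 1.5); the paper contains no proof of it, so there is nothing to compare your argument against, and the question is whether your outline actually closes the problem. It does not, and you say as much yourself. Your reduction of the index set is correct: for $\mu$ with largest part $\mu_1\geq 2$, the final segment $[\mu,(1^n)]$ in reverse lex splits as the set of $\lambda$ with $\lambda_1<\mu_1$ (itself a final segment, ending at $((\mu_1-1)^q,r)$ with $n=q(\mu_1-1)+r$, $0\leq r<\mu_1-1$) together with the partitions $(\mu_1)\cup\nu$ for $\nu$ in the final segment $[\mu',(1^{n-\mu_1})]$ of $\mathrm{Par}_{n-\mu_1}$, which yields the displayed identity
$P_{[\mu,(1^n)]}=\sum_{\lambda_1<\mu_1}p_\lambda+p_{\mu_1}\,P_{[\mu',(1^{n-\mu_1})]}$. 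This is a genuine structural observation about reverse lex that the paper does not record. But the inductive step you would need --- that adding $p_{\mu_1}$ times a Schur-positive function to the Schur-positive first summand stays Schur-positive --- is exactly the open content of the conjecture, since $p_{\mu_1}$ has negative Schur coefficients (it is an alternating sum of hook Schur functions) and no absorption inequality of the required kind is established here or in the paper. The only cases genuinely covered by your outline are the two endpoints ($\mu=(1^n)$, the regular representation, and $\mu=(n)$, Corollary 4.3) and the parts-in-$\{1,k\}$ case of Theorem 4.23; none of these supplies the general inductive step.

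If you want to push this further, the honest assessment is that your recursion converts the conjecture into a family of concrete positivity inequalities, namely that for each $\nu\vdash n$ the negative contribution of $\chi^\nu(\mu_1\cup\tau)$ summed over $\tau$ in a final segment of $\mathrm{Par}_{n-\mu_1}$ is dominated by $\sum_{\lambda_1<\mu_1}\chi^\nu(\lambda)$. Proving that would require either a new character estimate or an explicit module realising $P_{[\mu,(1^n)]}$ in the induced-from-centralisers framework of Sections 2--4; neither is present in your proposal or in the paper. So the proposal should be presented as a reduction and a plan of attack, not as a proof.
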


The paper is organised as follows.  In Sections 2 and 3 we describe a  general framework within which  these questions can be investigated.  In Section 4 we specialise these theorems to the conjugacy action of $S_n$, and in Section 5 we consider symmetric and exterior powers of an arbitrary linear character of a cyclic subgroup of $S_n$ (the Foulkes characters \cite{Fo}). In Section 6 we consider the alternating group, and more generally a finite group $G$ with a subgroup $H$ of index 2.  

At the end of the paper we have included four tables, generated using Stembridge's SF package for Maple.  Tables 1 and 2 contain respectively the decomposition into irreducibles for the conjugacy action and  its twisted counterpart, up to $n=10.$  Table 3 lists the decomposition into irreducibles of the conjugacy action on the set of even permutations and on the set of odd permutations, while Table 4 lists the  decomposition into irreducibles on the same two sets, but for the twisted conjugacy action.  Tables 3 and 4 contain this data up to $n=12.$

\section{The plethystic framework}
We follow \cite{M} and \cite{St4EC2} for notation regarding symmetric functions.  In particular, $h_n,$ $e_n$ and $p_n$ denote respectively the complete homogeneous, elementary and power-sum symmetric functions.  If $ch$ is the Frobenius characteristic map from the representation ring of the symmetric group $S_n$ to the ring of symmetric functions with real coefficients, then 
$h_n=ch(1_{S_n})$ is the Frobenius characteristic of the trivial representation, and $e_n=ch({\rm sgn}_{S_n})$ is the Frobenius characteristic of the sign representation of $S_n.$   If $\mu$ is a partition of $n$ then 
define $p_\mu =\prod_i p_{\mu_i};$ $h_\mu$ and $e_\mu$ are defined multiplicatively in analogous fashion.  As in \cite{M}, $s_\mu$ denotes the Schur function indexed by the partition $\mu;$ it is the Frobenius characteristic of the $S_n$-irreducible indexed by $\mu.$  Letting $z_\mu$ denote the size of the centraliser in $S_n$ of an element of cycle-type $\mu,$ we have that 
$z_\mu^{-1} p_\mu$ is the Frobenius characteristic of the class function whose value is 1 on the conjugacy class of type $\mu$ and zero on all other classes.  Finally, $\omega$ is the involution on the ring of symmetric functions which takes $h_n$ to $e_n,$ corresponding to tensoring with the sign representation.

If $q$ and $r$ are characteristics of representations of $S_m$ and $S_n$ respectively, they yield a representation of the wreath product $S_m[S_n]$ in a natural way, with the property that when this representation is induced up to $S_{mn},$ its Frobenius characteristic is the plethysm $q[r].$ For more background about this operation, see 
\cite{M}.  

In this paper we study representations of $S_n$ induced from centralisers.  The centraliser of an   element with cycle-decomposition corresponding to the partition $\mu$ of $n$ is a direct product of wreath products $\times_i S_{m_i}[C_i],$ where $\mu$ has $m_i$ parts equal to $i$  and $C_i$ is a cyclic subgroup of order $i$ (generated by an $i$-cycle). Note that the centraliser has cardinality $z_\mu=\prod_i i^{m_i} (m_i!).$  As indicated in the preceding paragraph, we will make extensive use  of properties of the  plethysm operation.  

Define \begin{align*} &H(t)=\sum_{i\geq 0}t^i  h_i, \quad E(t) = \sum_{i\geq 0} t^i  e_i; \\
 &H^{\pm}(t) = H(-t)=\sum_{i\geq 0} (-1)^{i} t^i h_i,\quad
  E^{\pm}(t) = E(-t)=\sum_{i\geq 0} (-1)^{i}t^i e_i ;\\
&H=\sum_{i\geq 0}  h_i, \quad E = \sum_{i\geq 0}  e_i; \qquad \qquad 
 H^{\pm} = \sum_{i\geq 0} (-1)^{i} h_i,\quad
  E^{\pm} = \sum_{i\geq 0} (-1)^{i} e_i .\\
\end{align*} 

Now let $\{q_i\}_{i\geq 1}$ be a sequence of symmetric functions, each $q_i$ homogeneous of degree $i.$  For example, each $q_i$ may be the Frobenius characteristic of a possibly virtual representation of $S_i.$  
Let $Q=\sum_{i\geq 1}q_i$ and $Q(t)=\sum_{n\geq 1} t^n q_n.$

For each partition $\lambda$ of $n\geq 1$  with $m_i(\lambda)=m_i$ parts equal to $i,$ let $|\lambda|=n = \sum_i i m_i$ be the size of $\lambda,$ and 
$\ell(\lambda)=\sum_i m_i(\lambda)=\sum_i m_i$ be the length (total number of parts) of $\lambda.$ 
Also we say a partition $\mu$ is contained in $\lambda,$ written 
$\mu\subseteq \lambda,$ if  $\mu_i\leq \lambda_i$ for all $i.$ 
  In this case $\lambda/\mu$ is a skew partition and $s_{\lambda/\mu}$ is the associated skew Schur function.  (When $\mu=\emptyset, $ $s_\lambda$ is the ordinary Schur function indexed by the partition $\lambda.$ )

Define, for each partition $\lambda$ of $n\geq 1$  with $m_i(\lambda)=m_i$ parts equal to $i,$
\begin{center} $H_\lambda[Q]=\prod_{i:m_i(\lambda)\geq 1} h_{m_i}[q_i]$ and $E_\lambda[Q]=\prod_{i:m_i(\lambda)\geq 1} e_{m_i}[q_i].$  \end{center} 

Also define \begin{center}$H^{\pm}_\lambda[Q]=\prod_{i:m_i(\lambda)\geq 1}(-1)^{m_i}h_{m_i}[q_i]$ and $E^{\pm}_\lambda[Q]=\prod_{i:m_i(\lambda)\geq 1} (-1)^{m_i}e_{m_i}[q_i].$  \end{center}

For the empty partition (of zero) we define $H_\emptyset [Q]=1=
E_\emptyset[Q]=H^{\pm}_\emptyset [Q]=
E^{\pm}_\emptyset[Q].$

 We have, by definition of $H^{\pm}$ (respectively $E^{\pm}$),  since $\sum_i m_i=\ell(\lambda),$ 
\begin{center}
$H^{\pm}_\lambda[Q]=(-1)^{\ell(\lambda)} H_\lambda[Q]$ 
and $E^{\pm}_\lambda[Q]=(-1)^{\ell(\lambda)} E_\lambda[Q].$ 
\end{center}

 We will derive some general facts about  the plethysms $H[Q]$ and $E[Q].$ Let $H[Q](t)$ (respectively $E[Q](t)$) denote the power series in $t,$ with coefficients in the ring of symmetric functions, such that the coefficient of $t^n$ is the degree $n$ term  (i.e., the symmetric function of homogeneous degree $n$) in $H[Q]$ (respectively $E[Q]$).  We define  $H^{\pm}[Q](t)$ and 
$E^{\pm}[Q](t)$ analogously.  With the convention that $Par$, the set of all partitions of nonnegative integers,
 includes the unique empty partition of zero,  by the preceding observations and standard properties of plethysm \cite{M} we have 

$$H(v)[Q](t)=\sum_{\lambda\in Par} t^{|\lambda|}v^{\ell(\lambda)} H_\lambda[Q], \qquad \text{and } E(v)[Q](t)=\sum_{\lambda\in Par} t^{|\lambda|} v^{\ell(\lambda)}E_\lambda[Q];$$  

$$ H^{\pm}[Q](t)=\sum_{\lambda\in Par} t^{|\lambda|} H^{\pm}_\lambda[Q]=   \sum_{\lambda\in Par} t^{|\lambda|} (-1)^{\ell(\lambda)} H_\lambda[Q],            
 $$
$$ \text{and } E^{\pm}[Q](t)=\sum_{\lambda\in Par} t^{|\lambda|} E^{\pm}_\lambda [Q] =\sum_{\lambda\in Par} t^{|\lambda|} (-1)^{\ell(\lambda)} E_\lambda[Q].$$

Also write $Q^{alt}$ for the alternating sum $\sum_{n\geq 1} (-1)^{i-1} q_i =  q_1-q_2+ q_3-\ldots.$  


We record some properties of plethysm that are important for our purposes (see \cite{M}):

\begin{prop} If $q, r$ are symmetric functions of homogeneous degrees $m$ and $n,$ respectively,  $g_1, g_2$ are arbitrary symmetric functions, and $\lambda$ is any partition, then 
\begin{enumerate}
\item $(fg)[q]=f[q]\cdot g[q];$
\item $s_\lambda [q+r]=\sum_{\mu\subseteq \lambda} s_{\lambda/\mu} [f] s_\mu[g];$ we single out the  special cases  $\lambda=(n), \lambda=(1^n):$
$h_n[q+r]=\sum_{k=0}^n h_k[q] h_{n-k}[r]$ and 
$e_n[q+r]=\sum_{k=0}^n e_k[q] e_{n-k}[r];$
\item $q[-r]= (-1)^{\text{deg }q} \cdot (\omega q)[r]=(-1)^m  (\omega q)[r];$
\item $\omega(q[r])=\left( \omega^{\text{deg }r}(q)\right)[\omega r]=\left( \omega^{n}(q)\right)[\omega r].$
\end{enumerate}
\end{prop}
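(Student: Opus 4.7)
All four identities are standard (see \cite{M}), and my plan is a uniform reduction to the power-sum basis $\{p_\mu\}$, using the two defining features of plethysm: the map $r\mapsto p_n[r]$ is a ring homomorphism in the argument with $p_n[p_k]=p_{nk}$, and the map $f\mapsto f[r]$ (for fixed $r$) is likewise a ring homomorphism, determined by $p_n\mapsto p_n[r]$. The auxiliary identity $\omega p_\mu=(-1)^{|\mu|-\ell(\mu)}p_\mu$ handles all $\omega$-bookkeeping.

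Part (1) is immediate from the definition: expand $f$ and $g$ in the power-sum basis and compare coefficients of $p_\mu\cdot p_\nu$ on both sides, using $p_\mu[q]\cdot p_\nu[q]=(p_\mu p_\nu)[q]$. For part (2), I invoke the classical Schur coproduct formula $s_\lambda(X\sqcup Y)=\sum_{\mu\subseteq\lambda}s_{\lambda/\mu}(X)\,s_\mu(Y)$ from Macdonald, Chapter I.5; plethystic substitution by $q+r$ is by definition evaluation on the disjoint union of the underlying formal alphabets of $q$ and $r$, so the identity is a direct specialisation. The two highlighted special cases follow from $s_{(n)/(k)}=h_{n-k}$ and $s_{(1^n)/(1^k)}=e_{n-k}$.

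For (3), the key observation is $p_n[-r]=-p_n[r]$, which is immediate from the ring-homomorphism property of $p_n$ in its argument. Writing $q=\sum_\mu c_\mu p_\mu$ with $|\mu|=m$ throughout, this yields $p_\mu[-r]=(-1)^{\ell(\mu)}p_\mu[r]$, hence $q[-r]=\sum_\mu c_\mu(-1)^{\ell(\mu)}p_\mu[r]$. Meanwhile $(-1)^m(\omega q)[r]=\sum_\mu c_\mu(-1)^m(-1)^{m-\ell(\mu)}p_\mu[r]=\sum_\mu c_\mu(-1)^{\ell(\mu)}p_\mu[r]$, and the two expressions agree.

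For (4), I reduce to the generating case $q=p_j$, $r=p_k$ by linearity in $r$ and the homomorphism property of $f\mapsto f[r]$. Then $p_j[p_k]=p_{jk}$ gives $\omega(p_j[p_k])=(-1)^{jk-1}p_{jk}$. For the other side, $\omega^k p_j=(-1)^{k(j-1)}p_j$ and $\omega p_k=(-1)^{k-1}p_k$; since $p_j$ is \emph{additive} in its argument, the scalar $(-1)^{k-1}$ pulls out directly, so $p_j[(-1)^{k-1}p_k]=(-1)^{k-1}p_{jk}$. Combining, the right-hand side equals $(-1)^{k(j-1)+k-1}p_{jk}=(-1)^{jk-1}p_{jk}$, matching the left. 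The only subtlety, and the main pitfall in (4), is the distinction between additivity and multiplicativity for plethysm of a scalar argument: the sign $(-1)^{k-1}$ is \emph{not} raised to the $j$-th power as it passes through $p_j$. Once that is handled, the sign exponents collapse cleanly to $jk-1$ and the identity follows.
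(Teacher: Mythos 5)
Your proposal is correct. Note that the paper offers no proof of this proposition at all: it is stated as a list of standard plethysm facts with a citation to Macdonald, so there is nothing to compare against beyond the literature. Your uniform reduction to the power-sum basis is the standard way these identities are verified, and the sign bookkeeping in parts (3) and (4) checks out; in particular you correctly flag the one genuine trap, namely that a scalar $(-1)^{k-1}$ passes through $p_j[\,\cdot\,]$ additively (so $p_j[(-1)^{k-1}p_k]=(-1)^{k-1}p_{jk}$) rather than being raised to the $j$th power, which is exactly consistent with the convention fixed by part (3). Two small points of precision: in part (4) the phrase ``by linearity in $r$'' undersells what you need --- reducing from a general homogeneous $r$ to $r=p_k$ requires multiplicativity of $r\mapsto p_j[r]$ as well as additivity (the degree-$n$ component of the power-sum basis consists of products $p_{\nu_1}p_{\nu_2}\cdots$, not single $p_k$'s), though you do record the full ring-homomorphism property in your opening paragraph, so the argument is complete; and in part (2) the disjoint-union-of-alphabets justification is literal only for positive alphabets, with the general case following from the coproduct identity $\Delta(s_\lambda)=\sum_{\mu\subseteq\lambda}s_{\lambda/\mu}\otimes s_\mu$, which is what you are really invoking. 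Neither issue affects correctness.
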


\begin{lem} We have 
 
\begin{equation}
\omega(E_\lambda[Q]) = (-1)^{|\lambda|-\ell(\lambda)} H_\lambda[\omega(Q)^{alt}]=(-1)^{|\lambda|} \omega\left(E_\lambda^{\pm}[Q]\right) \end{equation}
\begin{equation*}=\sum_{\lambda\vdash n} \prod_i e_{m_{2i}}[\omega q_{2i}] h_{m_{2i+1}}[\omega q_{2i+1}];  \end{equation*}

\begin{equation} \omega(H_\lambda[Q]) = (-1)^{|\lambda|-\ell(\lambda)} E_\lambda[\omega(Q)^{alt}]=(-1)^{|\lambda|} \omega\left(H_\lambda^{\pm}[Q]\right)\end{equation}
 \begin{equation*}=\sum_{\lambda\vdash n} \prod_i h_{m_{2i}}[\omega q_{2i}] e_{m_{2i+1}}[\omega q_{2i+1}];  \end{equation*}
\end{lem}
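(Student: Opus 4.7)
The plan is to reduce everything to a single product formula obtained by applying $\omega$ term-by-term to the definition of $E_\lambda[Q]$, then to show that each of the other expressions in the first displayed identity unpacks to the same product.

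First I would use that $\omega$ is a ring homomorphism together with the factorisation $E_\lambda[Q] = \prod_{i:m_i\geq 1} e_{m_i}[q_i]$ to write $\omega(E_\lambda[Q]) = \prod_i \omega(e_{m_i}[q_i])$. Property (4) of Proposition 2.1 then gives
\begin{equation*}
\omega(e_{m_i}[q_i]) = (\omega^{i} e_{m_i})[\omega q_i],
\end{equation*}
and since $\omega^2=\mathrm{id}$ and $\omega e_n = h_n$, this factor equals $e_{m_i}[\omega q_i]$ when $i$ is even and $h_{m_i}[\omega q_i]$ when $i$ is odd. Taking the product produces the closed-form expression $\prod_i e_{m_{2i}}[\omega q_{2i}]\, h_{m_{2i+1}}[\omega q_{2i+1}]$; this is the common value to which the remaining expressions should reduce.

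Next I would unpack $H_\lambda[\omega(Q)^{alt}]$. By definition the $i$th homogeneous piece of $\omega(Q)^{alt}$ is $(-1)^{i-1}\omega q_i$, so
\begin{equation*}
H_\lambda[\omega(Q)^{alt}] = \prod_{i:m_i\geq 1} h_{m_i}\bigl[(-1)^{i-1}\omega q_i\bigr].
\end{equation*}
For odd $i$ this is just $h_{m_i}[\omega q_i]$; for even $i$, property (3) of Proposition 2.1 gives $h_{m_i}[-\omega q_i] = (-1)^{m_i}(\omega h_{m_i})[\omega q_i] = (-1)^{m_i} e_{m_i}[\omega q_i]$. The overall accumulated sign is $(-1)^{\sum_{i\text{ even}} m_i}$, which has the same parity as $|\lambda|-\ell(\lambda) = \sum_i(i-1)m_i$, since $i-1$ is odd exactly when $i$ is even. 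Multiplying by $(-1)^{|\lambda|-\ell(\lambda)}$ cancels this sign and recovers the product from the first step, settling the equality of the first two expressions.

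For the expression $(-1)^{|\lambda|}\omega(E_\lambda^\pm[Q])$ I would invoke the preliminary identity $E_\lambda^\pm[Q] = (-1)^{\ell(\lambda)}E_\lambda[Q]$ displayed just before the lemma; applying $\omega$ and reconciling the exponents $|\lambda|$, $\ell(\lambda)$, and $|\lambda|-\ell(\lambda)$ modulo $2$ finishes the chain. The companion identity for $\omega(H_\lambda[Q])$ follows by the symmetric argument with the roles of $h$ and $e$ interchanged throughout, or more economically by applying $\omega$ once more to the first identity and using $\omega^2=\mathrm{id}$ together with property (1). The main obstacle throughout is clerical: juggling several different global signs arising from the even- and odd-indexed parts of $\lambda$ and reconciling them with the compact exponents stated; once this bookkeeping is straightened out, the identities fall directly out of the plethystic rules collected in Proposition 2.1.
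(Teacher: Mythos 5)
Your proposal is correct and follows essentially the same route as the paper: apply $\omega$ factor-by-factor to $E_\lambda[Q]=\prod_i e_{m_i}[q_i]$ via $\omega(e_{m_i}[q_i])=(\omega^{i}e_{m_i})[\omega q_i]$, split into even and odd parts, and match the resulting product against the expansion of $H_\lambda[\omega(Q)^{alt}]$ using the sign identity $|\lambda|-\ell(\lambda)\equiv\sum_i m_{2i}\pmod 2$; the paper merely runs the comparison with $H_\lambda[\omega(Q)^{alt}]$ in the opposite direction, and handles the $E_\lambda^{\pm}$ equality and the $H$-version with the same brevity you do.
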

\begin{proof}  We prove (2.2.1), since (2.2.2) is essentially the same. Observe that 
$|\lambda|-\ell(\lambda)=\sum_i (i-1)m_i \equiv \sum_j (2j)m_{2j+1}+\sum_j (2j-1) m_{2j}\equiv \sum_j m_{2j} \mod 2.$ Now 
a calculation using Proposition 2.1 shows that 

\begin{align*}
&\omega (E_\lambda[Q])  \\
&=  \prod_i \omega  (e_{m_{2i}} [ q_{2i}])  \cdot 
\omega (e_{m_{2i+1}} [ q_{2i+1}]) 
=   \prod_i \omega^{2i} (e_{m_{2i}})[\omega q_{2i}] \cdot \omega^{2i+1} (e_{m_{2i+1}})[\omega q_{2i+1}]  \\
&=  \prod_i e_{m_{2i}}[\omega q_{2i}] h_{m_{2i+1}}[\omega q_{2i+1}] 
=  \prod_i(-1)^{m_{2i}} h_{m_{2i}}[-\omega q_{2i}] h_{m_{2i+1}}[\omega q_{2i+1}] \\
&=(-1)^{|\lambda|-\ell(\lambda)} \prod_i h_{m_i}[(-1)^{i-1}\omega q_i]
=(-1)^{|\lambda|-\ell(\lambda)} H_\lambda[\omega(Q)^{alt}] \\
\end{align*}

Equation (2.2.2) follows {\it mutatis mutandis}; we need only interchange $e$'s and $h$'s in the above sequence.  The other equalities follow by applying the previous observations about $H_\lambda^{\pm}$ and $E_\lambda^{\pm}.$
\end{proof}

\begin{prop} Let $\{q_i\}$ be a sequence of symmetric functions with 
each $q_i$ of homogeneous degree $i,$ and let $S$ be any subset of 
the positive integers.  Let $\bar{S}$ denote the complement of $S.$ 
Let $Q=\sum_{i\geq 1} q_i$ as before, and for any subset $T$ of the positive integers, let $Q_T=\sum_{i\in T}  q_i.$
Then \begin{enumerate}
\item $H[Q_S]=E^{\pm}[Q_{\bar{S}}] \cdot H[Q];$
\item $E[Q_S]=H^{\pm}[Q_{\bar{S}}] \cdot E[Q].$
\end{enumerate}

In particular if each $q_i$ is the Frobenius characteristic of a true $S_i$-module, then the coefficient of $t^n$  in $E^{\pm}[Q_{\bar{ S}}]\cdot  H[Q]$ (respectively $H^{\pm}[Q_{\bar{S}}] \cdot E[Q]$) is also the characteristic of a true $S_n$-module, for any subset $S$ of the positive integers.

\end{prop}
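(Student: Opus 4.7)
The plan is to reduce Proposition 2.3 to two formal facts about plethysm: its additivity under $h_n$ and $e_n$, and the classical plethystic inversion $H[r]\cdot E^{\pm}[r]=1$. Since $Q(t)=Q_S(t)+Q_{\bar S}(t)$, my first step is to sum the identity $h_n[A+B]=\sum_{k=0}^n h_k[A]\,h_{n-k}[B]$ of Proposition 2.1(2) over all $n\geq 0$ to obtain the multiplicative rule $H[A+B]=H[A]\cdot H[B]$, and likewise $E[A+B]=E[A]\cdot E[B]$. Specialising to $A=Q_S(t)$ and $B=Q_{\bar S}(t)$ gives
$$H[Q(t)]=H[Q_S(t)]\cdot H[Q_{\bar S}(t)],\qquad E[Q(t)]=E[Q_S(t)]\cdot E[Q_{\bar S}(t)].$$

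Next I would establish that $H[Q_{\bar S}(t)]$ and $E^{\pm}[Q_{\bar S}(t)]$ (respectively $E[Q_{\bar S}(t)]$ and $H^{\pm}[Q_{\bar S}(t)]$) are reciprocals. For this I would apply the additivity above to $r+(-r)=0$: since $h_n[0]=\delta_{n,0}$, and since Proposition 2.1(3) gives $h_m[-r]=(-1)^m (\omega h_m)[r]=(-1)^m e_m[r]$, one obtains
$$\delta_{n,0}=\sum_{k=0}^n h_k[r]\,(-1)^{n-k} e_{n-k}[r]$$
for every $n\geq 0$. Summing over $n$ collapses this to $H[r]\cdot E^{\pm}[r]=1$. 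Interchanging the roles of $h$ and $e$ (equivalently, applying $\omega$ throughout) yields $E[r]\cdot H^{\pm}[r]=1$. Substituting $r=Q_{\bar S}(t)$ and dividing the identities of Step 1 by $H[Q_{\bar S}(t)]$ and $E[Q_{\bar S}(t)]$ respectively produces the two claimed formulas.

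For the final assertion about true modules, observe that if each $q_i$ is the Frobenius characteristic of a genuine $S_i$-module, then so is each $t^i$-coefficient of $Q_S(t)$; since plethysm of a Schur-positive symmetric function by a Schur-positive symmetric function is Schur-positive, the coefficient of $t^n$ in $h_n[Q_S(t)]$ (hence in $H[Q_S(t)]$) is the characteristic of a true $S_n$-module, and likewise for $E[Q_S(t)]$. The two identities just proved then transfer this positivity to the products $E^{\pm}[Q_{\bar S}(t)]\cdot H[Q(t)]$ and $H^{\pm}[Q_{\bar S}(t)]\cdot E[Q(t)]$, even though these factorisations contain alternating signs. I do not anticipate a real obstacle: the argument is purely formal manipulation of plethystic generating functions, and the only point that requires care is verifying that the classical generating-function identity $H(t)E(-t)=1$ lifts to the plethystic identity $H[r]\cdot E^{\pm}[r]=1$ for an arbitrary symmetric function $r$, which Step 2 handles by a one-line consequence of Proposition 2.1.
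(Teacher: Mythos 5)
Your proof is correct and follows essentially the same route as the paper: factor $H[Q(t)]=H[Q_S(t)]\cdot H[Q_{\bar S}(t)]$, invoke the plethystic inverse identity $H[r]\cdot E^{\pm}[r]=1$ (and its $E$/$H^{\pm}$ counterpart), and divide. The only difference is cosmetic — you derive the two ingredients from the addition formula $h_n[A+B]=\sum_k h_k[A]h_{n-k}[B]$ (applied once to $Q_S+Q_{\bar S}$ and once to $r+(-r)=0$), whereas the paper cites the partition expansion of $H[Q]$ and the classical identity $H\cdot E^{\pm}=1$ together with the ring-homomorphism property of plethysm.
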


\begin{proof} We only prove (1), since (2) is similar.
First note that, in the ring of symmetric functions, $H[Q]=H[Q_S]\cdot H[Q_{\bar{S}}].$  This follows, for example, by using the fact that $H[Q]=\sum_{\lambda\in Par} H_\lambda[Q],$ and keeping track of partitions $\lambda$ with parts 
$i\in S.$ Thus we have
$H[Q_S]\cdot H[Q_{\bar{S}}]= H[Q].$ 

Recall the well-known identity \cite{M}  $H(t)\cdot E^{\pm}(t)=1=E(t)\cdot H^{\pm}(t)$   (i.e., $H$ and $E^{\pm}$ are multiplicative inverses in the ring of symmetric functions, as are $E$ and $H^{\pm}.$) Thus we have 
$1/H[Q_{\bar{S}}]=(\frac{1}{H})[Q_{\bar{S}}]=E^{\pm}[Q_{\bar{S}}].$ 
The result follows immediately.   \end{proof}

The singleton case $\bar{S}=\{\alpha\}$ deserves special mention, since then we have 
$E^{\pm}[Q_{\bar{S}}]=\sum_{r\geq 0} (-1)^r e_r[q_{\alpha}],$ 
and similarly for $H^{\pm}.$   Then we have 
\begin{cor} $$H[Q_S]
=(\sum_{r\geq 0} (-1)^r e_r[ q_{\alpha}])\, H[Q];$$

$$E[Q_S]=(\sum_{r\geq 0} (-1)^r h_r[ q_{\alpha}])\, E[Q].$$
In particular if $Q$ and $S$ are such that $H_\lambda[Q_S]$ (respectively $E_\lambda[Q_S]$) is a representation for every $\lambda\vdash n$, then the same is true for the 
term of degree $n$ in the right-hand sides above.  
\end{cor}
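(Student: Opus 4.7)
The plan is to deduce the corollary as an immediate specialization of Proposition 2.3 to the singleton case $\bar{S} = \{\alpha\}$, after which the ``in particular'' clause falls out of formal power-series bookkeeping.

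First I would specialize identity (1) of Proposition 2.3, namely $H[Q_S(t)] = E^{\pm}[Q_{\bar{S}}(t)] \cdot H[Q(t)]$, to the case $\bar{S}=\{\alpha\}$. Then $Q_{\bar{S}}(t)$ collapses to the single homogeneous summand $t^{\alpha}q_{\alpha}$. Since $E^{\pm}$ is by definition the formal sum $\sum_{r\geq 0}(-1)^{r}e_{r}$ and plethysm is additive (in fact a ring homomorphism) in its outer argument, I can distribute term-by-term to obtain
\[
E^{\pm}[t^{\alpha}q_{\alpha}] \;=\; \sum_{r\geq 0}(-1)^{r}e_{r}[t^{\alpha}q_{\alpha}];
\]
this is well-defined as a formal power series in $t$, since $e_{r}[t^{\alpha}q_{\alpha}]$ sits in $t$-degree $\alpha r$, so each coefficient of $t^n$ involves only finitely many summands. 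Substituting back gives the first displayed identity of the corollary. The second identity follows in exactly the same way from part (2) of Proposition 2.3, using $H^{\pm} = \sum_{r\geq 0}(-1)^{r}h_{r}$ in place of $E^{\pm}$.

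For the representation-theoretic conclusion, I would invoke the expansion $H[Q_S(t)] = \sum_{\lambda\in Par} t^{|\lambda|}H_{\lambda}[Q_S]$ recorded just before Proposition 2.3, noting that $H_{\lambda}[Q_S]$ vanishes unless every part of $\lambda$ lies in $S$. Extracting the coefficient of $t^{n}$ on the left yields $\sum_{\lambda\vdash n}H_{\lambda}[Q_S]$, which under the stated hypothesis is a sum of Frobenius characteristics of genuine $S_{n}$-modules and is therefore itself the characteristic of such a module. The two sides of the corollary agree as formal power series in $t$, so the coefficient of $t^{n}$ on the right is the same true characteristic. The argument for $E[Q_S(t)]$ is word-for-word the same. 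No step here is a real obstacle; the only point needing attention is that the infinite series defining $E^{\pm}$ and $H^{\pm}$ can legitimately be distributed through the plethysm, and this is ensured by additivity of plethysm in the outer argument together with the finiteness of contributions to each graded piece in $t$.
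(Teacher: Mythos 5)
Your argument is correct and is exactly the paper's (the corollary is stated there as the immediate singleton specialization $\bar{S}=\{\alpha\}$ of Proposition 2.3, with $E^{\pm}[t^{\alpha}q_{\alpha}]$ and $H^{\pm}[t^{\alpha}q_{\alpha}]$ expanded term by term, no separate proof being given). Your added remarks on the $t$-grading justifying the infinite sums and on extracting the coefficient of $t^{n}$ for the representation-theoretic clause fill in precisely the routine details the paper leaves implicit.
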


Next we prove a result giving information about the irreducibles appearing in $H[Q_S]$ and $E[Q_S]$ in this very general setting.  We will apply a similar idea in Section 4 to prove that certain representations of the symmetric group share the property of the regular representation of containing at least one copy of each irreducible.   Curiously, we need a  number-theoretic result.  

\begin{thm}  \cite[Theorem 418]{HW}; \cite[Sec. 3.1, Exercises 28 and 30]{Ro}\begin{enumerate}
\item (Bertrand's Postulate): For every positive integer $n\geq 3,$ there is a prime $p$ such that $n/2<p<n.$ 
\item Every positive integer $n$ can be written as a sum of distinct primes and the integer 1.  Equivalently, for every $n$ there is a partition $\delta$ of $n$ whose parts are all distinct, and prime or equal to 1.  \end{enumerate}
\end{thm}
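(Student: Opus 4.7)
The plan is to treat the two parts separately: part (1) is Bertrand's postulate, for which I would use Erd\H{o}s's elementary proof via the central binomial coefficient, and part (2) I would derive from (1) by a short induction.

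For (1), the key object is $\binom{2n}{n}$. First I would establish the Chebyshev-type upper bound $\prod_{p\le n,\, p\text{ prime}} p \le 4^{n-1}$ by induction on $n$, using the observation that every prime $p$ with $m+1 < p \le 2m+1$ divides $\binom{2m+1}{m}$, together with the bound $\binom{2m+1}{m} \le 4^m$ (which follows from $2\binom{2m+1}{m} \le 2^{2m+1}$). Next I would apply Legendre's formula to show that the exponent of a prime $p$ in $\binom{2n}{n}$ is at most $\lfloor \log_p (2n)\rfloor$, so that primes $p > \sqrt{2n}$ appear with multiplicity at most $1$, while primes $p$ with $2n/3 < p \le n$ do not divide $\binom{2n}{n}$ at all (since $\lfloor 2n/p\rfloor - 2\lfloor n/p\rfloor = 2-2 = 0$ in that range once $n\ge 3$). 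Assuming for contradiction that no prime lies strictly between $n$ and $2n$, every prime divisor of $\binom{2n}{n}$ would then be at most $2n/3$, and combining the three estimates with the lower bound $\binom{2n}{n}\ge 4^n/(2n+1)$ would yield
\[
\frac{4^n}{2n+1} \;\le\; (2n)^{\sqrt{2n}}\cdot 4^{2n/3},
\]
which fails for all sufficiently large $n$. The finitely many small-$n$ cases are then handled by exhibiting an explicit prime (for example $2,3,5,7,13,23,43,83,\ldots$, each less than twice its predecessor) in the relevant intervals.

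For (2), I would argue by strong induction on $n$, with trivial base cases $1=1$, $2=2$, $3=3$. For $n\ge 4$, apply (1) to choose a prime $p$ with $n/2 < p \le n$; if $p=n$ we are done, and otherwise $0 < n-p < n/2 < p$, so by the inductive hypothesis $n-p$ admits a decomposition as a sum of distinct primes, possibly together with $1$, all of which are strictly less than $p$. Appending $p$ then yields the required decomposition of $n$, since $p$ does not already occur in the smaller sum.

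The main obstacle is the argument for (1): the counting estimates on $\binom{2n}{n}$ have to be set up carefully enough that the asymptotic inequality genuinely closes, and the finitely many small-$n$ cases below the asymptotic threshold must be verified by hand, which is the one step that cannot be made fully slick.
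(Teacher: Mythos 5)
Your proposal is correct, but it does substantially more work than the paper, which does not prove this theorem at all: both parts are quoted from the literature (Hardy--Wright, Theorem 418, and exercises in Rosen), and the only argument the paper supplies is the two-line parity check converting the usual form of Bertrand's postulate (``for every $m\geq 2$ there is a prime $p$ with $m<p<2m$'') into the form stated, namely a prime in the open interval $(n/2,n)$ for $n\geq 3$. Your Erd\H{o}s-style proof of part (1) is the standard one and is sound: the Chebyshev bound $\prod_{p\le n}p\le 4^{n-1}$, the Legendre estimate $p^{e_p}\le 2n$, the vanishing of primes in $(2n/3,n]$, and the lower bound $\binom{2n}{n}\ge 4^n/(2n+1)$ do close the asymptotic inequality, with the finitely many small cases handled by an explicit prime chain. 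Two small points: the claim that primes in $(2n/3,n]$ do not divide $\binom{2n}{n}$ also needs the higher-power terms of Legendre's formula to vanish, i.e.\ $p^2>2n$, which holds in the regime where you use it but deserves a word; and your argument produces a prime in $(n,2n)$, so you still owe the same short parity argument the paper gives to land in $(n/2,n)$ as stated. Your induction for part (2) is exactly the standard solution to the cited Rosen exercise and is correct: the key point that every part of the decomposition of $n-p$ is at most $n-p<n/2<p$, so that appending $p$ preserves distinctness, is stated explicitly. In short, your route is self-contained where the paper's is a citation; the trade-off is length versus reliance on classical references.
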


Bertrand's Postulate actually states that for each positive integer $m\geq 2,$ there is a prime $p$ such that $m<p<2m.$ 
If $n=2m,$ then $n\geq 4,$ and $n/2=m,$ $n=2m.$ 
If $n=2m+1,$ then $n\geq 5$ and we have $(n-1)/2=m<p<2m<n$ so that $m+1\leq p<n.$ But $n/2<m+1$ so we are done.  
If $n=3,$ clearly the  prime 2 satisfies $n/2<p <n.$

\begin{prop}  Let $\{q_i:i\geq 1\}$  be a sequence of Frobenius characteristics of $S_i$-modules (so $q_1=h_1$). Suppose that whenever $k$ is prime, the representation corresponding to $q_k$ contains every irreducible of $S_k.$  Let $\delta\vdash n$ be a partition with all parts prime and distinct.  Then the $S_n$-module whose characteristic is $H_\delta[Q]$ (respectively $E_\delta[Q]$) also contains every $S_n$-irreducible.

\end{prop}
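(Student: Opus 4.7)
The plan is first to simplify $H_\delta[Q]$ and $E_\delta[Q]$ using distinctness of the parts of $\delta$. Every multiplicity $m_i(\delta)$ is $0$ or $1$, so $h_{m_i}[q_i]=e_{m_i}[q_i]\in\{1,q_i\}$, and writing $\delta=(p_1,\dots,p_r)$ we get
\[H_\delta[Q]=E_\delta[Q]=\prod_{j=1}^r q_{p_j}.\]
Under the hypothesis we may write $q_{p_j}=\sum_{\mu\vdash p_j}c_{\mu,j}\,s_\mu$ with every $c_{\mu,j}\geq 1$, so the coefficient of $s_\lambda$ in the product above is
\[\sum_{(\mu_1,\dots,\mu_r)}\Bigl(\prod_j c_{\mu_j,j}\Bigr)\,c^\lambda_{\mu_1,\dots,\mu_r},\]
which is strictly positive whenever some tuple with $\mu_j\vdash p_j$ yields a nonzero Littlewood--Richardson coefficient. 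The proposition thus reduces to the combinatorial claim: for every $\lambda\vdash n$ there exist $\mu_j\vdash p_j$ with $s_\lambda$ appearing in $\prod_j s_{\mu_j}$.

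I would prove this combinatorial claim by induction on the number of parts $r$ of $\delta$. The base case $r=1$ is immediate by taking $\mu_1=\lambda$. For the inductive step, pick any sub-partition $\nu\subsetneq\lambda$ with $|\nu|=n-p_1$; such a $\nu$ is produced by successively deleting $p_1$ removable corners of $\lambda$, which is possible since $p_1\leq n$ and a nonempty partition always has a removable corner. Then $s_{\lambda/\nu}$ is a nonzero Schur-positive symmetric function of degree $p_1$, so its Schur expansion contains some $s_{\mu_1}$ with $\mu_1\vdash p_1$, witnessing $c^\lambda_{\mu_1,\nu}>0$. Apply the inductive hypothesis to the distinct-prime partition $(p_2,\dots,p_r)\vdash n-p_1$ and the target $\nu$: this supplies $\mu_j\vdash p_j$ for $j\geq 2$ with $c^\nu_{\mu_2,\dots,\mu_r}>0$. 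The associativity identity
\[c^\lambda_{\mu_1,\dots,\mu_r}=\sum_\sigma c^\lambda_{\mu_1,\sigma}\,c^\sigma_{\mu_2,\dots,\mu_r}\geq c^\lambda_{\mu_1,\nu}\,c^\nu_{\mu_2,\dots,\mu_r}>0\]
closes the induction and therefore proves the proposition.

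The primality hypothesis is used only in the form ``every $\mu_j\vdash p_j$ is available'', which gives complete freedom to choose the $\mu_j$; the distinctness of the parts of $\delta$ is used only to collapse every plethysm $h_{m_i}$ or $e_{m_i}$ to the identity operation. The only step that looks like it could require care is the extraction of a sub-partition $\nu\subsetneq\lambda$ of prescribed size, but this is immediate from the existence of removable corners. Bertrand's Postulate, quoted in Theorem~2.4 just above, is not needed within this argument; it is invoked elsewhere in the paper to guarantee (via Theorem~2.4(2)) that every $n$ admits a partition $\delta$ to which Proposition~2.5 can be applied.
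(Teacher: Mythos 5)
Your proof is correct and follows essentially the same route as the paper's: reduce to the product $\prod_j q_{p_j}$, use the hypothesis that every Schur function of degree $p_j$ occurs in $q_{p_j}$, and conclude via the nonvanishing of a skew Schur function $s_{\lambda/\nu}$ (equivalently, of some Littlewood--Richardson coefficient). Your explicit induction on the number of parts, with the sub-partition $\nu\subseteq\lambda$ obtained by deleting removable corners, just spells out the reduction to two parts that the paper states more briefly, and is if anything slightly more careful about ensuring $\nu\subseteq\lambda$ so that $s_{\lambda/\nu}\neq 0$.
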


\begin{proof} We have $H_{\delta}[Q]=\prod q_{\delta_i}$ where the 
$\delta_i$ are all distinct and each is either prime or equal to 1.  Thus each $\delta_i$ contains the sum of Schur functions 
$\sum_{\lambda\vdash i} s_{\lambda}.$  It suffices to establish the result when $\delta$ has only two parts, say $\delta_1=k, \delta_2=m$.   Let $\mu$ be a partition of $n,$ and choose any partition $\nu$ of $k.$ 
Consider the skew-Schur function $s_{\mu/\nu}.$ It is well-known \cite{M}  that this decomposes into irreducibles according to the 
formula $s_{\mu/\nu}=\sum_{\rho\vdash m} c^{\mu}_{\nu, \rho} s_{\rho},$ where the $c^{\mu}_{\nu, \rho}$ are the Littlewood-Richardson coefficients.  Since  the skew-Schur function $s_{\mu/\nu}$ is a nonzero symmetric function, ($k<n$), there is a partition $\rho\vdash m$ such that $c^{\mu}_{\nu, \rho}\neq 0.$  Equivalently, for every $\mu\vdash n,$ $s_{\mu}$ appears in the product $s_{\nu} \cdot s_{\rho},$ 
which in turn appears in the product $q_{\delta_1} q_{\delta_2}.$  
This finishes the proof.  \end{proof}

\section{A specific choice of $Q(t)$}

We will study the symmetric functions $H[Q]$ and $E[Q]$ for $Q$ of a specific form, which we now describe.  Let $\psi(n)$ be any real-valued function defined on the positive integers. 
Define symmetric functions $f_n$ by 
\begin{center} $f_n = \dfrac{1}{n} \sum_{d|n} \psi(d) p_d^{\frac{n}{d}},$\end{center}
so that \begin{center}  $\omega(f_n) =  \dfrac{1}{n} \sum_{d|n} \psi(d) (-1)^{n-\frac{n}{d}} p_d^{\frac{n}{d}}.$\end{center}
Note that,  when $\psi(1)$ is a positive integer,  this makes $f_n$ the Frobenius characteristic of a possibly virtual $S_n$-module whose dimension is $(n-1)!\psi(1).$

Also define the associated polynomial in one variable, $t,$ by
\begin{center}
$f_n(t) =\dfrac{1}{n} \sum_{d|n} \psi(d) t^{\frac{n}{d}}.$
\end{center}

\begin{prop}  Let $F(t)=\sum_{t\geq 1} t^i f_i, \quad  
(\omega F)^{alt}(t)=\sum_{i\geq 1} (-1)^{i-1} t^i\omega(f_i).  $  Then 
\begin{align} &F(t)= \log \prod_{d\geq 1} (1-t^d p_d)^{-\frac{\psi(d)}{d}}\\ 
&(\omega F)^{alt}(t)= \log \prod_{d\geq 1} (1+t^d p_d)^{\frac{\psi(d)}{d}}\end{align}
\end{prop}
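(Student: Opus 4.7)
The plan is a direct formal-power-series calculation: expand the logarithm of each product as a Taylor series, swap the order of summation, and reindex by $n = dk$.

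For identity (3.1.1), I would start on the right, writing
\begin{equation*}
\log \prod_{d\geq 1}(1-t^d p_d)^{-\psi(d)/d} = \sum_{d\geq 1} -\frac{\psi(d)}{d}\log(1-t^d p_d) = \sum_{d\geq 1}\sum_{k\geq 1}\frac{\psi(d)}{dk}t^{dk}p_d^k,
\end{equation*}
using $-\log(1-x)=\sum_{k\geq 1}x^k/k$. Setting $n=dk$ and grouping terms of the same total degree in $t$ gives
\begin{equation*}
\sum_{n\geq 1} t^n \sum_{d\mid n}\frac{\psi(d)}{n}\,p_d^{n/d}=\sum_{n\geq 1} t^n f_n = F(t),
\end{equation*}
by the definition of $f_n$. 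This is all that identity (3.1.1) requires.

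For (3.1.2) the computation is the same but with alternating signs, so the main care is in tracking them. Since $\omega(p_d)=(-1)^{d-1}p_d$, I would first record that
\begin{equation*}
\omega(f_n)=\frac{1}{n}\sum_{d\mid n}\psi(d)\bigl((-1)^{d-1}\bigr)^{n/d} p_d^{n/d}=\frac{1}{n}\sum_{d\mid n}\psi(d)(-1)^{n-n/d}p_d^{n/d},
\end{equation*}
which is consistent with the formula stated before the proposition. Multiplying by $(-1)^{n-1}t^n$ and summing, the exponent of $-1$ on a term with $(d,n)$ becomes $(n-1)+(n-n/d)\equiv n/d+1\pmod 2$, so
\begin{equation*}
(\omega F)^{alt}(t)=\sum_{n\geq 1}\frac{t^n}{n}\sum_{d\mid n}\psi(d)(-1)^{n/d+1}p_d^{n/d}.
\end{equation*}

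On the right-hand side of (3.1.2), using $\log(1+x)=\sum_{k\geq 1}(-1)^{k-1}x^k/k$ and the same reindexing $n=dk$ produces exactly the same double sum. Comparing the two confirms the equality. The only nontrivial step is the sign bookkeeping; once $(-1)^{(d-1)(n/d)}=(-1)^{n-n/d}$ and $(-1)^{n-1}\cdot(-1)^{n-n/d}=(-1)^{n/d+1}$ are established, the two sides are manifestly identical term by term, so no further work is needed. I would present both identities in parallel to emphasize that they are the same computation with $(-t)$ in place of $t$ after applying $\omega$.
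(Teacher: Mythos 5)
Your proposal is correct and is essentially the paper's own argument: the same reindexing $n=dk$ and the same logarithm series, with the only cosmetic difference that you expand the product side and arrive at $F(t)$ whereas the paper expands $F(t)$ and recognizes the logarithms. The sign bookkeeping in (3.1.2), via $(-1)^{(d-1)(n/d)}=(-1)^{n-n/d}$ and $(-1)^{n-1}(-1)^{n-n/d}=(-1)^{n/d-1}$, matches the paper's computation exactly.
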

\begin{proof} \begin{align*} F(t) &=\sum_{n\geq 1} {t^n \over n} \sum_{d|n} \psi(d) p_d^{n\over d}
=\sum_{m\geq 1} \sum_{d\geq 1} {t^{md}\over md} \psi(d) p_d^m \quad(\text{put}\ n=md)\ \\
&= \sum_{d\geq 1}{1\over d}\psi(d) \sum_{m\geq 1} {1\over m} (t^d p_d)^m
= \sum_{d\geq 1} {1\over d}\psi(d) \log(1-t^d p_d)^{-1}\\
&=\log \prod_{d\geq 1} (1-t^d p_d)^{-{\psi(d)\over d}}.
\end{align*}

Similarly, \begin{align*}(\omega F)^{alt}(t)&=\sum_{n\geq 1} (-1)^{n-1} t^n\omega(f_n)
=\sum_{n\geq 1} (-1)^{n-1}{t^n \over n}\sum_{d|n} \psi(d) (-1)^{n- \frac{n}{d}} p_d^{n\over d}\\
&= \sum_{m\geq 1} \sum_{d\geq 1} {t^{md}\over md}  \psi(d) {\bf(-1)^{m-1}} p_d^m \quad(\text{again\ put}\ n=md)\ \\
&=\sum_{d\geq 1} {1\over d}\psi(d) \sum_{m\geq 1} {(-1)^{m-1}\over m} (t^d p_d)^m
=  \sum_{d\geq 1} {1\over d}\psi(d) \log(1+t^d p_d)\\
&=\log \prod_{d\geq 1} (1+t^d p_d)^{\psi(d)\over d}.
\end{align*}
\end{proof}

\begin{thm} Let $H(v)=\sum_{n\geq 0} v^n h_n$ and 
$E(v)=\sum_{n\geq 0} v^n e_n.$  
We have the following plethystic generating functions:

 (Symmetric powers) 
\begin{equation}H(v)[F](t) = \sum_{\lambda\in Par} t^{\lambda|}  v^{\ell(\lambda)}H_\lambda[F]=\prod_{m\geq 1} (1-t^m p_m)^ {-f_m(v)}\end{equation}
(Exterior powers) 
\begin{equation}E(v)[F](t)=\sum_{\lambda\in Par} t^{\lambda|} v^{\ell(\lambda)}E_\lambda[F] =\prod_{m\geq 1} (1-t^m p_m)^{f_m(-v)}\end{equation}
 (Alternating exterior powers)
\begin{center}$\sum_{\lambda\in Par} t^{\lambda|} (-1)^{|\lambda|-\ell(\lambda)} v^{\ell(\lambda)}\omega(E_\lambda[F])$\end{center}
\begin{equation}=\sum_{\lambda \in Par} t^{\lambda|}v^{\ell(\lambda)}H_\lambda[\omega(F)^{alt}]
=H(v)[\omega(F)^{alt}](t)
= \prod_{m\geq 1} (1+t^m p_m)^{f_m(v)}\end{equation}
(Alternating symmetric powers) 
\begin{center}$\sum_{\lambda\in Par} t^{\lambda|} (-1)^{|\lambda|-\ell(\lambda)} v^{\ell(\lambda)}\omega(H_\lambda[F])$\end{center}
\begin{equation}=\sum_{\lambda \in Par} t^{\lambda|}v^{\ell(\lambda)}E_\lambda[\omega(F)^{alt}]
=E(v)[\omega(F)^{alt}](t)
= \prod_{m\geq 1} (1+t^m p_m)^{-f_m(-v)}\end{equation}
\end{thm}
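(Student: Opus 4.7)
The plan is to prove (3.2.1) by taking logarithms of both sides and matching coefficients, deduce (3.2.2) from (3.2.1) via the classical inversion $H(v)\cdot E(-v)=1$, and obtain (3.2.3) and (3.2.4) by applying Lemma 2.2 termwise (for the first two equalities in each) combined with the same logarithmic method (for the final product formula), the latter now based on Proposition 3.1(3.1.2) instead of (3.1.1).

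For (3.2.1), I start from the formal identity $\log H(v) = \sum_{k\geq 1} v^k p_k/k$ in $\Lambda[[v]]$. Since plethysm in the first slot is a ring homomorphism commuting with formal $\log$, this yields
\[
\log\bigl(H(v)[F(t)]\bigr) = \sum_{k\geq 1} \frac{v^k}{k}\, p_k[F(t)].
\]
Rewriting $F(t) = \sum_{d,m\geq 1} \tfrac{\psi(d)}{md}\, t^{md}\, p_d^m$ (by substituting $n=md$ into the definition of $f_n$) and using $p_k[p_d] = p_{dk}$, one computes $p_k[F(t)] = \sum_{d,m\geq 1} \tfrac{\psi(d)}{md}\, t^{mdk}\, p_{dk}^m$. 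For the right-hand side, expanding $-\log(1-t^m p_m) = \sum_{r\geq 1} t^{mr} p_m^r/r$ and substituting $f_m(v) = \tfrac{1}{m}\sum_{d\mid m}\psi(d)\, v^{m/d}$ gives
\[
\log\prod_{m\geq 1}(1-t^m p_m)^{-f_m(v)} = \sum_{m,r\geq 1}\frac{f_m(v)\, t^{mr}\, p_m^r}{r}.
\]
Reindexing by $m = dk$ (so that $d\mid m$ with $m/d = k$) converts this into $\sum_{d,k,r\geq 1} \psi(d)\, v^k\, t^{dkr}\, p_{dk}^r/(dkr)$, which, after relabeling $r \leftrightarrow m$, matches the expansion of $\log(H(v)[F(t)])$ above.

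For (3.2.2), the classical identity $H(v)\cdot E(-v) = 1$ together with multiplicativity of plethysm in the first slot gives $H(v)[F(t)]\cdot E(-v)[F(t)] = 1$, so $E(v)[F(t)] = H(-v)[F(t)]^{-1}$; substituting $v\mapsto -v$ in (3.2.1) yields (3.2.2) at once. For (3.2.3), the first equality is equation (2.2.1) of Lemma 2.2 applied termwise, noting that $E(v)_\lambda[F] = v^{\ell(\lambda)} E_\lambda[F]$ and $H(v)_\lambda[\omega(F)^{alt}] = v^{\ell(\lambda)} H_\lambda[\omega(F)^{alt}]$ with $\omega$ fixing the scalar $v^{\ell(\lambda)}$; the second equality is the expansion of Proposition 2.3 applied to $Q = \omega(F)^{alt}$. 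The product formula follows by the same logarithmic method, using the series $\omega(F)^{alt}(t) = \sum_{d,m\geq 1} (-1)^{m-1}\tfrac{\psi(d)}{dm}\, t^{dm}\, p_d^m$ from Proposition 3.1(3.1.2); the signs $(-1)^{m-1}$ propagate uniformly on both sides, converting $-\log(1-t^m p_m)$ into $\log(1+t^m p_m)$ and flipping the sign of $f_m(v)$ nowhere. Finally, (3.2.4) follows from (3.2.3) by the same inversion $E(v)[g] = H(-v)[g]^{-1}$ applied to $g = \omega(F)^{alt}(t)$.

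The main obstacle is the combinatorial bookkeeping of the triple sums: on the LHS the indices $(k,d,m)$ range freely, while on the RHS one has a sum over $(m,r)$ with $d$ restricted to divisors of $m$. The substitution $m = dk$ identifies the two parametrizations, after which the proof reduces to a purely formal matching. No new input beyond Proposition 3.1 and Lemma 2.2 is required.
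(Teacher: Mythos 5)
Your proof is correct. For (3.2.1), and for the product formula in (3.2.3), your argument is essentially the paper's: both rest on $\log H(v)=\sum_{k\geq 1}v^k p_k/k$, the fact that plethysm in the first slot commutes with this expansion, the power-sum form of $F(t)$ (respectively $(\omega F)^{alt}(t)$) supplied by Proposition 3.1, and the divisor reindexing $m=dk$ that assembles the exponent $\frac{1}{m}\sum_{d\mid m}\psi(d)v^{m/d}=f_m(v)$. Where you genuinely diverge is in (3.2.2) and (3.2.4): the paper recomputes these from scratch using $E(v)=\exp\sum_{i\geq 1}(-1)^{i-1}v^i p_i/i$ and tracking the extra sign through the same divisor sum, whereas you get them for free from the cases already proved via $H(v)\cdot E(-v)=1$ and multiplicativity of plethysm in the outer argument, so that $E(v)[g]=\bigl(H(-v)[g]\bigr)^{-1}$; substituting $v\mapsto -v$ in (3.2.1) (resp.\ (3.2.3)) then gives (3.2.2) (resp.\ (3.2.4)) immediately. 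This halves the computation and makes the $v\mapsto -v$ pattern in the exponents transparent. Two small bookkeeping remarks: the expansion $H(v)[Q(t)]=\sum_{\lambda}t^{|\lambda|}H(v)_\lambda[Q]$ that you invoke for the first equalities in (3.2.3)--(3.2.4) is the unnumbered display in Section 2 (a consequence of Proposition 2.1), not Proposition 2.3; and your observation that $H(v)_\lambda[Q]=v^{\ell(\lambda)}H_\lambda[Q]$ with $\omega$ fixing the scalar $v^{\ell(\lambda)}$ is exactly what is needed for Lemma 2.2 to apply termwise with the parameter $v$ present.
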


\begin{proof} 
Since $ H(v)=\exp\sum_{i\geq 1} \frac{v^ip_i}{i},$  Equation (3.1.1) of Proposition 3.1 gives (taking $t=1$):

\begin{align*} H(v)[F] &=\exp\sum_{i\geq 1} \frac{v^ip_i}{i} 
\left[\sum_{d\geq 1} \frac{\psi(d)}{d} \log(1- p_d)^{-1}\right] \\
&=\exp \sum_{i\geq 1} \frac{v^i}{i} \left[\sum_{d\geq 1} \frac{\psi(d)}{d} \log(1- p_{id})^{-1}\right] \ 
(\text{using } p_i[p_j]= p_{ij})\\
&=\exp \sum_{m\geq 1} {1\over m} \sum_{d|m} v^{m/d}\psi(d) \log(1- p_m)^{-1} \ (\text{putting } m=id)\\
&= \exp \sum_{m\geq 1}\log (1- p_m)^{-\frac{1}{m}\sum_{d|m}\psi(d)v^{m/d}}\\
&=\exp\log \prod_{m\geq 1} (1- p_m)^{-\frac{1}{m}\sum_{d|m}\psi(d)v^{m/d}}=\prod_{m\geq 1} (1- p_m)^{-f_m(v)} .
\end{align*}
 Since $ E(v)=\exp\sum_{i\geq 1} (-1)^{i-1} \frac{v^ip_i}{i},$ exactly as before we obtain, by replacing $\psi(d)$ with $\psi(d) (-1)^{\frac{m}{d}-1},$  
\begin{align*} E(v)[F] &=\exp \sum_{m\geq 1} {1\over m}  \sum_{d|m} \psi(d) v^{m/d}{\bf (-1)^{\frac{m}{d}-1}}\log(1- p_m)^{-1}\\
&=\exp\log \prod_{m\geq 1} (1- p_m)^{\frac{1}{m}\sum_{d|m}\psi(d) v^{m/d}{\bf (-1)^\frac{m}{d}}}
=\prod_{m\geq 1} (1- p_m)^{f_m(-v)}.
\end{align*}
 As above, using (3.1.2) of Proposition 3.1, and  Equation (2.2.1), we have 
\begin{align*} H(v)[(\omega F)^{alt}] 
&=\exp\sum_{i\geq 1} \frac{v^ip_i}{i} \left[\sum_{d\geq 1} \frac{\psi(d)}{d} \log(1+ p_d)\right]\\
&=\exp \sum_{i\geq 1} v^i \sum_{d\geq 1} \frac{\psi(d)}{id} \log(1+ p_{id}) \\
&=\exp \sum_{m\geq 1} {1\over m} \sum_{d|m} \psi(d)v^{m/d} \log(1+ p_m)\\
&= \exp \sum_{m\geq 1}\log (1+ p_m)^{\frac{1}{m}\sum_{d|m}\psi(d)v^{m/d}}\\
&=\exp\log \prod_{m\geq 1} (1+ p_m)^{\frac{1}{m}\sum_{d|m}\psi(d)v^{m/d}}=\prod_{m\geq 1} (1+ p_m)^{f_m(v)}.
\end{align*}
\begin{align*} &E(v)[(\omega F)^{alt}]\\ 
&=\exp\sum_{i\geq 1} (-1)^{i-1}\frac{v^i p_i}{i} \left[\sum_{d\geq 1} \frac{\psi(d)}{d} \log(1+ p_d)\right]=\exp \sum_{i\geq 1} v^i  (-1)^{i-1}\sum_{d\geq 1} \frac{\psi(d)}{id} \log(1+ p_{id}) \\
&=\exp \sum_{m\geq 1} {1\over m} \sum_{d|m} (-1)^{\frac{m}{d}-1}\psi(d) v^{m/d} \log(1+ p_m)\\
&= \exp \sum_{m\geq 1}\log (1+ p_m)^{\frac{1}{m}\sum_{d|m}\psi(d) v^{m/d}(-1)^{\frac{m}{d}-1}}\\
&=\exp\log \prod_{m\geq 1} (1+ p_m)^{-f_m(-v)}
=\prod_{m\geq 1} (1+ p_m)^{-f_m(-v)}.
\end{align*}

The generating functions in the statement of the theorem now follow, noting that the power of $t$ keeps track of the homogeneous degree of the symmetric function on each side.
\end{proof}

From Theorem 3.2 it is clear that  the values of the 
  polynomial $f_n(u)=\frac{1}{n}\sum_{d|n} \psi(d) u^{\frac{n}{d}},$ evaluated at $u=\pm 1,$ appear prominently in the symmetric and exterior powers of $F=\sum_{n\geq 1} f_n.$     It turns out that, regardless of the choice of the function $\psi(d),$ for the particular choice of $f_n$ in this section, the two numbers $f_n(1)$ and $f_n(-1)$ determine each other via a simple formula. In fact the values of $f_m(1)$ completely determine the symmetric function $f_n.$ We record this observation in

\begin{prop}  With $f_n$ defined as above, we have
\begin{enumerate}
\item $f_{2m+1}(-1)=-f_{2m+1}(1)$ for all $m\geq 0,$ and 
$f_{2m}(-1)=f_m(1) -f_{2m}(1)$ for all $m\geq 1.$ 
\item $\psi(n)= \sum_{d|n} \mu(\frac{n}{d})\, d\, f_d(1),$ and hence the symmetric function $f_n$ is completely determined by the values $f_d(1), d|n.$
\end{enumerate}
\end{prop}
\begin{proof} For Part (1), we have 
$f_{2m+1}(-1)=\frac{1}{2m+1} \sum_{d|(2m+1)} \psi(d) (-1)^{\frac{2m+1}{d}}=-f_{2m+1}(1).$
For the second equality, we compute 
$$f_{2m}(-1)+f_{2m}(1)= \frac{1}{2m}\sum_{d| (2m)} \psi(d)\left((-1)^{\frac{2m}{d}}+(1)^{\frac{2m}{d}}\right)
=2\cdot  \frac{1}{2m}\sum_{\substack {d| (2m)\\ \frac{2m}{d} \text{ even}}}\psi(d).$$
Since $\frac{2m}{d}$ is even if and only if $d$ divides $m,$  we have 
$f_{2m}(-1)+f_{2m}(1)=\frac{1}{m}\sum_{d| m} \psi(d)=f_m(1),$ 
as claimed.  

For Part (2),  the expression for $\psi(n)$ is obtained from the equality $f_n(1)=\frac{1}{n} \sum_{d|n} \psi(d)$  
by M\"obius inversion. \end{proof}

Theorem 3.2 has the following important corollary, which we will later  use to determine families of  linear combinations of power-sums that are Schur-positive:

\begin{thm} Suppose the functions $f_n$ are Frobenius characteristics of true representations of $S_n,$ i.e., $f_n$ is Schur-positive.  Then the following are also true representations of $S_n:$ 

\begin{enumerate} 
\item $ H_\lambda[F]$ for each $\lambda\in Par;$
\item The coefficient of $t^n$ in 
$\prod_{m\geq 1} (1-t^m p_m)^ {-f_m(1)}=\sum_{\lambda\vdash n} H_\lambda[F];$
\item $ E_\lambda[F]$ for each $\lambda\in Par;$
\item The coefficient of $t^n$ in 
$\prod_{m\geq 1} (1-t^m p_m)^{f_m(-1)}=\sum_{\lambda\vdash n} E_\lambda[F].$
\item The coefficient of $t^n$ in 
$$\frac{1}{2}\left(\prod_m (1-t^m p_m)^{f_m(-1)} + \prod_m (1+(-1)^{m-1}t^m p_m)^{f_m(1)}\right)=  \sum_{\substack {\lambda\vdash n \\ n-\ell(\lambda) \text{ even}}} E_\lambda[F];$$ 
\item The coefficient of $t^n$ in 
$$\frac{1}{2}\left(\prod_m (1-t^m p_m)^{f_m(-1)} - \prod_m (1+(-1)^{m-1}t^m p_m)^{f_m(1)}\right)=  \sum_{\substack {\lambda\vdash n \\ n-\ell(\lambda) \text{ odd}}} E_\lambda[F]; $$ 
\item The coefficient of $t^n$ in 
$$\frac{1}{2}\left(\prod_m (1-t^m p_m)^{-f_m(1)} + \prod_m (1+(-1)^{m-1}t^m p_m)^{-f_m(-1)}\right)=  \sum_{\substack {\lambda\vdash n \\ n-\ell(\lambda) \text{ even}}} H_\lambda[F]; $$ 
\item  The coefficient of $t^n$ in 
$$\frac{1}{2}\left(\prod_m (1-t^m p_m)^{-f_m(1)} - \prod_m (1+(-1)^{m-1}t^m p_m)^{-f_m(-1)}\right)=  \sum_{\substack {\lambda\vdash n \\ n-\ell(\lambda) \text{ odd}}} H_\lambda[F]. $$ 
\end{enumerate}
\end{thm}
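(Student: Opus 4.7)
The plan is to deduce parts (1)--(4) directly from the stability of Schur-positivity under plethysm with $h_m$ and $e_m$, and to deduce parts (5)--(8) from the generating-function identities of Theorem 3.2 by a $t \mapsto -t$ substitution that filters partitions by the parity of $|\lambda|-\ell(\lambda)$.

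For (1) and (3), recall $H_\lambda[F] = \prod_i h_{m_i(\lambda)}[f_i]$ and $E_\lambda[F] = \prod_i e_{m_i(\lambda)}[f_i]$. If $f_i$ is the characteristic of a genuine $S_i$-module, then so are $h_m[f_i]$ and $e_m[f_i]$, being characteristics of $S_{mi}$-modules induced from the $m$-th symmetric (resp.\ exterior) power inside the wreath product $S_m[S_i]$. A product of Schur-positive symmetric functions is Schur-positive (it is the characteristic of an external tensor product induced from a Young subgroup), so $H_\lambda[F]$ and $E_\lambda[F]$ are Schur-positive. Parts (2) and (4) then follow, since by (3.1.1) and (3.1.2) the coefficient of $t^n$ in the stated products is $\sum_{\lambda \vdash n} H_\lambda[F]$, respectively $\sum_{\lambda \vdash n} E_\lambda[F]$.

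For (5)--(8) I illustrate (5); the rest are parallel. The first product is $E[F(t)] = \sum_\lambda t^{|\lambda|} E_\lambda[F]$ by (3.1.2). For the second, the identity $1 + (-1)^{m-1} t^m p_m = 1 - (-t)^m p_m$ lets one recognise it as the reciprocal of $H[F(-t)]$, using (3.1.1) with $t$ replaced by $-t$. Since $H \cdot E^{\pm} = 1$ and plethysm satisfies $(fg)[G] = f[G]\,g[G]$, one has $1/H[F(-t)] = E^{\pm}[F(-t)]$, and a direct expansion using $E^{\pm}_\lambda[F] = (-1)^{\ell(\lambda)} E_\lambda[F]$ together with the scaling $e_k[(-t)^i f_i] = (-1)^{ik} t^{ik} e_k[f_i]$ gives
\[
E^{\pm}[F(-t)] = \sum_{\lambda} (-1)^{|\lambda|-\ell(\lambda)} t^{|\lambda|} E_\lambda[F].
\]
Adding this to $E[F(t)]$ and dividing by $2$ leaves exactly $\sum_{|\lambda|-\ell(\lambda)\ \text{even}} t^{|\lambda|} E_\lambda[F]$, whose coefficient of $t^n$ is Schur-positive by (3). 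Part (6) is the same with subtraction (isolating odd parity). Parts (7) and (8) repeat the argument with the roles of $H$ and $E$ swapped: the first product is $H[F(t)]$ by (3.1.1), and the second is $H^{\pm}[F(-t)] = 1/E[F(-t)]$, expanding to $\sum_\lambda (-1)^{|\lambda|-\ell(\lambda)} t^{|\lambda|} H_\lambda[F]$, which combines with $H[F(t)]$ to give the desired parity-filtered sum of Schur-positive terms from (1).

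The only real obstacle is the sign bookkeeping in (5)--(8): correctly identifying each second product as $E^{\pm}[F(-t)]$ or $H^{\pm}[F(-t)]$, and tracking the parity of $|\lambda|-\ell(\lambda)$ through the substitution $t \mapsto -t$. Once this identification is made, Schur-positivity of the even/odd parity sums is automatic from (1) and (3).
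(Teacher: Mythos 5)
Your proof is correct, and its overall architecture matches the paper's: parts (1)--(4) come from the wreath-product interpretation of $h_m[f_i]$ and $e_m[f_i]$ together with the product formulas of Theorem 3.2 at $v=1$, and parts (5)--(8) come from writing each parity-restricted sum as half the sum or difference of the full generating function and the signed generating function $\sum_\lambda (-1)^{|\lambda|-\ell(\lambda)} t^{|\lambda|} E_\lambda[F]$ (resp.\ $H_\lambda[F]$). Where you diverge is in how you produce the product form of that signed series. The paper obtains it by applying the involution $\omega$ to the alternating exterior and symmetric power formulas, Equations (3.2.3) and (3.2.4), which were themselves derived via Lemma 2.2 and the series $(\omega F)^{alt}$. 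You instead substitute $t\mapsto -t$ into $H[F(t)]=\prod_m(1-t^mp_m)^{-f_m(1)}$ and invert using $H\cdot E^{\pm}=1$, so that $\prod_m(1+(-1)^{m-1}t^mp_m)^{f_m(1)}=1/H[F(-t)]=\sum_\lambda(-1)^{|\lambda|-\ell(\lambda)}t^{|\lambda|}E_\lambda[F]$, and analogously for (7)--(8). This is a legitimate alternative: it needs only parts (1) and (2) of Theorem 3.2 plus the classical reciprocal identity, bypassing the $(\omega F)^{alt}$ machinery entirely. The one place requiring care, which you do handle correctly, is the meaning of $E^{\pm}[F(-t)]$: your scaling rule $e_k[(-t)^i f_i]=(-1)^{ik}t^{ik}e_k[f_i]$ amounts to the formal substitution $t\mapsto -t$ in the power series $E^{\pm}[F(t)]=\sum_\lambda(-1)^{\ell(\lambda)}t^{|\lambda|}E_\lambda[F]$, which is the right convention here; had you instead applied the $\lambda$-ring rule $q[-r]=(-1)^{\deg q}(\omega q)[r]$ to the odd-degree terms of $F(-t)$, the $e_{m_i}[f_i]$ for odd $i$ would have turned into $h_{m_i}[f_i]$ and the computation would not close up. (Also, your citations of ``(3.1.1) and (3.1.2)'' should read (3.2.1) and (3.2.2) in the paper's numbering.)
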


\begin{proof} We use Theorem 3.2 with the specialisation $v=1.$ Parts (1)-(4) are immediate from the definitions. Parts (5) and (6) follow from Theorem 3.2, Equations (3.2.2) and (3.2.3), by splitting up the sums according to sign, namely:
$$\sum_{\substack {\lambda\vdash n \\ |\lambda|-\ell(\lambda) \text{ even}}} t^{|\lambda|} E_\lambda[F] 
+\sum_{\substack{\lambda\vdash n\\ |\lambda|-\ell(\lambda) \text{ odd}}} t^{|\lambda|} E_\lambda[F] 
=\prod_{m\geq 1} (1-t^mp_m)^{f_m(-1)},\qquad (A)$$
and, applying the involution $\omega$ to the alternating exterior power in Equation (3.2.3):
$$\sum_{\substack{\lambda\vdash n \\ |\lambda|-\ell(\lambda) \text{ even}}} t^{|\lambda|} E_\lambda[F] 
-\sum_{\substack{\lambda\vdash n \\ |\lambda|-\ell(\lambda) \text{ odd}}} t^{|\lambda|} E_\lambda[F] 
=\prod_{m\geq 1} (1+(-1)^{m-1}t^mp_m)^{f_m(1)}.\qquad (B)$$
Adding and subtracting (A) and (B) immediately give (5) and (6) respectively.  

Parts (7) and (8) follow in identical fashion using Equation (3.2.1) and (3.2.4).  \end{proof}

Now assume $\psi(1)=1$ in the definition of $f_n.$  Then $f_n$ is the Frobenius characteristic of a possibly virtual $S_n$ module of dimension $(n-1)!$, whose restriction to $S_{n-1}$ is the regular representation.  This is easily seen by taking the partial derivative of $f_n$ with respect to the first power-sum symmetric function $p_1$ 
\cite{M}.  We will need the following fact about this differential operator \cite{M}:

 For symmetric functions $u$ and $v$ each of homogeneous degree,
\begin{equation}
\frac{\partial}{\partial p_1} (u[v])=  \left(\frac{\partial u}{\partial p_1}\right) [v] \cdot \frac{\partial v}{\partial p_1} 
\end{equation}

In what follows we will often use the Frobenius characteristic of an $S_n$-module to refer to the module itself.

\begin{prop}  Now assume $\psi(1)=1,$ so that the restriction of the representation $f_n$ to $S_{n-1}$ is the regular representation, i.e., 
$\frac{\partial f_n}{\partial p_1}=p_1^{n-1}.$ With notation as above, let $H[F](t)=G(t)=\sum_{n\geq 0} t^n g^{sym}_n$ and $E[F](t)=G^{ext}(t) = \sum_{n\geq 0} t^n g^{ext}_n.$
Then as virtual modules, we have 
$$\frac{\partial}{\partial p_1}g_{n+1}^{sym}=\frac{\partial}{\partial p_1}(p_1 g^{sym}_n)
= g_n^{sym} +p_1 \frac{\partial}{\partial p_1}( g^{sym}_n)$$ 
and 
$$ \frac{\partial}{\partial p_1}g_{n+1}^{ext}=\frac{\partial}{\partial p_1}(p_1 g^{ext}_n)=g_n^{ext} +p_1 \frac{\partial}{\partial p_1}( g^{ext}_n).$$

Equivalently, we have the identities
\begin{center} $\frac{\partial}{\partial p_1}g_{n+1}^{sym}
=\sum_{i=0}^n g_{n-i}^{sym} p_1^i,$ \qquad 
$\frac{\partial}{\partial p_1}g_{n+1}^{ext}
=\sum_{i=0}^n g_{n-i}^{ext} p_1^i.$ \end{center}
\end{prop}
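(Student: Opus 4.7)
The plan is to apply the product rule (3.4) for the $\partial/\partial p_1$ derivative of a plethysm to the generating functions $G(t)=H[F(t)]$ and $G^{ext}(t)=E[F(t)]$, and then extract coefficients of $t^{n+1}$.

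First I would handle the outer factor. Since $\partial h_n/\partial p_1 = h_{n-1}$ and $\partial e_n/\partial p_1 = e_{n-1}$ (standard facts from Macdonald), one has $\partial H/\partial p_1 = H$ and $\partial E/\partial p_1 = E$. Next I would handle the inner factor. The hypothesis $\psi(1)=1$ gives $f_1 = p_1$ and $\partial f_n/\partial p_1 = p_1^{n-1}$ for all $n\geq 1$, so
\[
\frac{\partial F(t)}{\partial p_1} \;=\; \sum_{i\geq 1} t^i\, p_1^{i-1} \;=\; \frac{t}{1-tp_1}.
\]

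Now I would apply (3.4) to $H[F(t)]$ and $E[F(t)]$, obtaining the two differential identities
\[
\frac{\partial G(t)}{\partial p_1} \;=\; G(t)\cdot\frac{t}{1-tp_1}, \qquad
\frac{\partial G^{ext}(t)}{\partial p_1} \;=\; G^{ext}(t)\cdot\frac{t}{1-tp_1}.
\]
Expanding $(1-tp_1)^{-1}=\sum_{i\geq 0}(tp_1)^i$ and taking the coefficient of $t^{n+1}$ in both expressions yields
\[
\frac{\partial g_{n+1}^{sym}}{\partial p_1} \;=\; \sum_{i=0}^n g_{n-i}^{sym}\, p_1^i, \qquad
\frac{\partial g_{n+1}^{ext}}{\partial p_1} \;=\; \sum_{i=0}^n g_{n-i}^{ext}\, p_1^i,
\]
which is the second pair of identities in the statement. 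To obtain the first pair, I would either clear the denominator and read off the coefficient of $t^{n+1}$ in $(1-tp_1)\,\partial G(t)/\partial p_1 = tG(t)$, giving $\partial g_{n+1}^{sym}/\partial p_1 - p_1\,\partial g_n^{sym}/\partial p_1 = g_n^{sym}$, or equivalently split off the $i=0$ term from the sum. Using the ordinary Leibniz rule $\partial(p_1 g_n^{sym})/\partial p_1 = g_n^{sym} + p_1\,\partial g_n^{sym}/\partial p_1$ identifies this with $\partial(p_1 g_n^{sym})/\partial p_1$; the same computation works verbatim in the exterior case.

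There is no real obstacle here, as every step reduces to the chain rule (3.4) together with the hypothesis $\partial f_n/\partial p_1 = p_1^{n-1}$. The only point requiring a little care is to keep the bookkeeping of degrees straight when matching the coefficient of $t^{n+1}$ in $G(t)\cdot t/(1-tp_1)$ so that the upper summation limit is exactly $n$ (coming from the factor of $t$ in the numerator), rather than $n+1$.
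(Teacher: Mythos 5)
Your proposal is correct and follows essentially the same route as the paper: apply the chain rule (3.4) with $\partial H/\partial p_1=H$, $\partial E/\partial p_1=E$, and $\partial F(t)/\partial p_1=t(1-tp_1)^{-1}$, then extract the coefficient of $t^{n+1}$ from $\partial G(t)/\partial p_1=G(t)\cdot t(1-tp_1)^{-1}$ and from its cleared-denominator form $tG(t)=(1-tp_1)\,\partial G(t)/\partial p_1$. The bookkeeping of the $t$-shift and the identification with $\partial(p_1g_n)/\partial p_1$ via the Leibniz rule match the paper's argument.
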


\begin{proof} The hypothesis about $f_n$ can be restated in terms of symmetric functions as $\frac{\partial f_n}{\partial p_1}=p_1^{n-1}.$    We take partial derivatives with respect to $p_1$ in the equation $H[F](t)=G(t)$, first observing two key points: 

$ \frac{\partial h_n}{\partial p_1}=h_{n-1}$  implies that $\frac{\partial H}{\partial p_1}=H,$ and, because each $f_n$ restricts to the regular representation of $S_{n-1},$   $\frac{\partial F(t)}{\partial p_1}=\sum_{n\geq 1} t^n p_1^{n-1} =t(1-t p_1)^{-1}$

Hence we have, using the fact about restriction of induced wreath product modules:

\begin{center}
$ H[F](t)\cdot t(1-tp_1)^{-1} = \frac{\partial G(t)}{\partial p_1},$ \qquad (A)
\end{center}
and hence 
\begin{center} 
 $t\cdot G(t)=(1-tp_1)\frac{\partial G(t)}{\partial p_1}. \qquad (B)$
\end{center}
But $\frac{\partial G(t)}{\partial p_1}=\sum_{n\geq1} t^n \frac{\partial g_n^{sym}}{\partial p_1}
=t\cdot \sum_{n\geq 0} t^n \frac{\partial g_{n+1}^{sym}}{\partial p_1}.\qquad (C)$

Thus the first equation (A)  becomes
\begin{center} $G(t)\cdot t(1-tp_1)^{-1}=\frac{\partial G(t)}{\partial p_1} \qquad (A1)$ \end{center} 
and the second equation (B) becomes 
\begin{center}$ t G(t)= (1-tp_1) \cdot \frac{\partial G(t)}{\partial p_1} \qquad (B1)$ \end{center}

 From (B1) and (C) it follows upon equating coefficients of $t^n,$ $n\geq 0,$ that 
$$g_n^{sym}=\frac{\partial g_{n+1}^{sym}}{\partial p_1} -p_1 \frac{\partial g_{n}^{sym}}{\partial p_1},$$ 
while  (A1) and (C) yield 
$\frac{\partial}{\partial p_1}g_{n+1}^{sym}
=\sum_{i=0}^n g_{n-i}^{sym} p_1^i.$ 

The statement for $g_n^{ext}$ is completely analogous.\end{proof}

\section{Conjugation in the symmetric group}

In this section we let $\psi(d)=\phi(d),$ where $\phi$ is the number-theoretic totient  function.  Then $f_n=\frac{1}{n}\sum_{d|n} \phi(d) p_d^{\frac{n}{d}}$ is the Frobenius characteristic of  the representation $1\uparrow_{C_n}^{S_n}$ of $S_n$ induced from the trivial representation of a cyclic subgroup of order $n,$ that is,  of $S_n$ acting on the class of $n$-cycles by conjugation. 

\begin{lem}  We have, for $f_n=\frac{1}{n}\sum_{d|n} \phi(d) p_d^{\frac{n}{d}}:$
\begin{enumerate}
\item 
$f_n(1)=1$ for all $n\geq 1.$ 
\item $f_n(-1)=\frac{1}{n}\sum_{d|n}\phi(d)(-1)^{\frac{n}{d}}=(-1)\delta_{n,\text{odd}}.$ 
\end{enumerate}
\end{lem}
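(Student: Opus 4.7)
The plan is to reduce both statements to elementary manipulations with the totient function, using only the classical identity $\sum_{d\mid n}\phi(d)=n$ together with a parity analysis of divisors.

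For part (1), I would observe that $f_n(1) = \frac{1}{n}\sum_{d\mid n}\phi(d)$ and invoke directly the well-known divisor identity $\sum_{d\mid n}\phi(d)=n$. This is instant and requires no case analysis.

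For part (2), the evaluation $f_n(-1) = \frac{1}{n}\sum_{d\mid n}\phi(d)(-1)^{n/d}$ splits naturally by the parity of $n$. If $n$ is odd, every divisor $d\mid n$ is odd and so $n/d$ is odd, giving $(-1)^{n/d}=-1$ uniformly, so $f_n(-1)=-\frac{1}{n}\sum_{d\mid n}\phi(d)=-1$ by part (1). If $n$ is even, write $n=2^a m$ with $a\geq 1$ and $m$ odd. Every divisor of $n$ has the form $d=2^b d'$ with $0\leq b\leq a$ and $d'\mid m$, and $n/d$ is odd exactly when $b=a$. Using the multiplicativity $\phi(2^b d')=\phi(2^b)\phi(d')$ for $\gcd(2^b,d')=1$, together with $\sum_{d'\mid m}\phi(d')=m$ and the telescoping identity $\sum_{b=0}^{a-1}\phi(2^b)=2^{a-1}=\phi(2^a)$, I would compute the ``odd $n/d$'' contribution as $-m\phi(2^a)$ and the ``even $n/d$'' contribution as $+m\sum_{b=0}^{a-1}\phi(2^b)$; these cancel exactly, giving $f_n(-1)=0$.

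The only point needing slight care is the identity $\sum_{b=0}^{a-1}\phi(2^b)=\phi(2^a)$, which follows from $\phi(1)=1$ and $\phi(2^k)=2^{k-1}$ for $k\geq 1$, telescoping to $2^{a-1}$. I do not anticipate any real obstacle here; the entire argument rests on standard facts about $\phi$ and a clean split of $d\mid n$ according to the $2$-adic valuation of $d$.
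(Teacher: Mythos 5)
Your proof is correct, and part (1) is identical to the paper's. For part (2), however, you take a genuinely different route. The paper reduces the evaluation at $-1$ to the evaluation at $+1$ via the general Lemma 3.3, which holds for an arbitrary arithmetic function $\psi$: namely $f_{2m+1}(-1)=-f_{2m+1}(1)$ and $f_{2m}(-1)=f_m(1)-f_{2m}(1)$, the latter proved by observing that $(-1)^{2m/d}+1$ is nonzero exactly when $d\mid m$. With that lemma in hand, part (2) is a one-line consequence of part (1): for odd $n$ one gets $-f_n(1)=-1$, and for even $n$ one gets $f_{n/2}(1)-f_n(1)=1-1=0$. Your argument instead computes $f_n(-1)$ directly for $\psi=\phi$, splitting the divisors of $n=2^am$ by their $2$-adic valuation and using multiplicativity of $\phi$ together with $\sum_{b=0}^{a-1}\phi(2^b)=2^{a-1}=\phi(2^a)$ to exhibit the cancellation explicitly. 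Both computations check out (your observation that $n/d$ is odd exactly when $b=a$ is the key point, and the telescoping identity is right). What the paper's route buys is reusability: Lemma 3.3 is applied again in Section 5 to $\psi=\mu$ and to the Ramanujan sums $c_d(k)$, so packaging the parity argument once pays off; what your route buys is a self-contained, elementary verification that does not depend on the surrounding framework.
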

\begin{proof} 

\begin{enumerate} 
\item This is immediate from the  well-known fact that $\sum_{d|n} \phi(d)=n.$ 
\item First let $n$ be odd.  Then by Lemma 3.3, $ f_n(-1) = (-1) f_n(1)$ and the result follows from (1).  Now let $n$ be even.  
Then Lemma 3.3 tells us that $f_n(-1)= f_{n/2}(1)-f_n(1)=1-1=0$ by (1). 
\end{enumerate}
This establishes the claim. \end{proof}

Define $Par^{odd}_n$ (respectively $Par^{even}_n$) to be the set of partitions of $n$ with all parts odd (respectively even), and  $Par^{\neq}_n$ to be the set of partitions of $n$ with distinct parts.  (An old result of Euler states that $Par^{odd}_n$ and $Par^{\neq}_n$ have the same cardinality.)  As in the Introduction, define $DO_n=Par^{\neq}_n\cap Par^{odd}_n,$ the set of all partitions of $n$ into parts that are odd and distinct.  (It is also well-known that 
$DO_n$ has the same cardinality as the set of self-conjugate partitions of $n.$ )
From Theorem 3.2 (taking $v=1$) and Lemma 4.1, we have 

\begin{thm} Let $f_n$ be the Frobenius characteristic of $1\uparrow_{C_n}^{S_n}$  as in the preceding lemma.  Then 
\begin{enumerate}
\item (Symmetric powers) 
\begin{equation}
 H[F](t) =\sum_\mu t^{|\mu|} H_\mu[F] 
= \prod_{m\geq 1} (1-t^m p_m)^ {-1}=\sum_{n\geq 0} t^n \sum_{\lambda\in Par_n} p_\lambda;\end{equation}
\begin{equation}\sum_{\mu\vdash n}H_\mu[F]= \sum_{\lambda\in Par_n} p_\lambda.\end{equation}
\item (Exterior powers) 
\begin{equation}E[F] (t)=\sum_\mu t^{|\mu|} E_\mu[F]
=\prod_{m\geq 0} (1-t^{2m+1} p_{2m+1})^{-1}=\sum_{n\geq 0} t^n \sum_{\lambda\in Par^{odd}_n} p_\lambda.\end{equation}
\begin{equation}\sum_{\mu\vdash n}E_\mu[F]= \sum_{\lambda\in Par_n^{odd}} p_\lambda.
\end{equation}
\item (Alternating exterior powers) 
\begin{equation}\sum_{\lambda} (-1)^{|\lambda|-\ell(\lambda)}t^{|\lambda|}\omega (E_\lambda[F])
 = \prod_{m\geq 1} (1+t^m p_m)=\sum_{n\geq 0} t^n \sum_{\lambda\in Par^{\neq}_n} p_\lambda.\end{equation}
\begin{equation}\sum_{\mu\vdash n}(-1)^{|\mu|-\ell(\mu)} \omega(E_\mu[F])= \sum_{\lambda\in Par_n^{\neq}} p_\lambda.\end{equation}
\item (Alternating symmetric powers) 
\begin{equation}\sum_{\lambda} (-1)^{|\lambda|-\ell(\lambda)}t^{|\lambda|}\omega (H_\lambda[F])
= \prod_{m\geq 0} (1+t^{2m+1} p_{2m+1})=\sum_{n\geq 0} t^n \sum_{\lambda\in DO_n} p_\lambda.\end{equation}
\begin{equation}\sum_{\mu\vdash n}(-1)^{|\mu|-\ell(\mu)}\omega (H_\mu[F]) = \sum_{\lambda\in DO_n} p_\lambda.\end{equation}
\end{enumerate}
\end{thm}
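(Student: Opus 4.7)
The plan is to apply Theorem 3.2 directly, with the specialisation $\psi(d)=\phi(d)$ and $v=1$, then use Lemma 4.1 to evaluate the exponents $f_m(\pm 1)$ that appear in the four product formulas, and finally recognise each resulting product as a classical power-sum generating function for an explicit family of partitions.

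Concretely, I would carry out the four cases in parallel. For (1), Equation (3.2.1) at $v=1$ becomes $H[F(t)]=\prod_{m\geq 1}(1-t^m p_m)^{-f_m(1)}$; Lemma 4.1(1) gives $f_m(1)=1$ for every $m$, producing $\prod_{m\geq 1}(1-t^m p_m)^{-1}$. Expanding each factor as a geometric series $\sum_{k\geq 0} t^{mk}p_m^k$ and collecting by total $t$-degree recovers $\sum_{n\geq 0}t^n\sum_{\lambda\vdash n}p_\lambda$, since a partition of $n$ corresponds exactly to a choice, for each $m$, of a multiplicity $k_m\geq 0$ with $\sum_m mk_m=n$. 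Equation (4.2.2) is then the coefficient of $t^n$ on both sides of (4.2.1), using the left-hand side from Theorem 3.2 in the form $\sum_\lambda t^{|\lambda|} H_\lambda[F]$. For (2), Equation (3.2.2) at $v=1$ gives $E[F(t)]=\prod_{m}(1-t^m p_m)^{f_m(-1)}$; Lemma 4.1(2) kills every even-index factor (exponent $0$) and leaves exponent $-1$ on the odd-index factors, producing $\prod_{m\geq 0}(1-t^{2m+1}p_{2m+1})^{-1}$, which expands as $\sum_n t^n\sum_{\lambda\in Par^{odd}_n}p_\lambda$ by the same geometric-series argument restricted to odd parts.

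Parts (3) and (4) are entirely analogous but use the alternating equations (3.2.3) and (3.2.4). In (3), setting $v=1$ gives $\prod_m(1+t^m p_m)^{f_m(1)}=\prod_m(1+t^m p_m)$, and expanding each factor as $1+t^m p_m$ and collecting by $t$-degree yields precisely $\sum_n t^n\sum_{\lambda\in Par^{\neq}_n}p_\lambda$, since each part is used at most once. In (4), setting $v=1$ gives $\prod_m(1+t^m p_m)^{-f_m(-1)}$; by Lemma 4.1(2) the exponent is $0$ at even $m$ and $1$ at odd $m$, so the product collapses to $\prod_{m\geq 0}(1+t^{2m+1}p_{2m+1})=\sum_n t^n\sum_{\lambda\in DO_n}p_\lambda$. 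The paired identities (4.2.6) and (4.2.8) are then read off by extracting the coefficient of $t^n$, matching it to the left-hand side of (3.2.3)/(3.2.4), which enumerates $\sum_\mu (-1)^{|\mu|-\ell(\mu)}t^{|\mu|}\omega(E_\mu[F])$ and $\sum_\mu (-1)^{|\mu|-\ell(\mu)}t^{|\mu|}\omega(H_\mu[F])$ respectively.

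There is no real obstacle here: once Theorem 3.2 is available, the entire content of Theorem 4.2 is the evaluation of $f_m(\pm 1)$ via the totient identity $\sum_{d\mid m}\phi(d)=m$ (already packaged in Lemma 4.1) together with the elementary observation that $\prod_m(1\pm t^m p_m)^{\pm 1}$ is the power-sum generating function for ordinary, odd, distinct, or distinct-odd partitions according to the signs chosen. The closest thing to a subtle point is making sure that in (3) and (4) the signs and the involution $\omega$ on the left-hand side match the Theorem 3.2 statement verbatim, which they do, so no further bookkeeping is needed.
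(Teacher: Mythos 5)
Your proposal is correct and follows exactly the paper's route: the paper also obtains Theorem 4.2 as an immediate specialisation of Theorem 3.2 at $v=1$, using Lemma 4.1 to evaluate $f_m(1)=1$ and $f_m(-1)=-\delta_{m,\text{odd}}$, and then reading off the products as the standard power-sum generating functions for all, odd, distinct, and distinct-odd partitions. No gaps.
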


We now interpret the preceding calculations  in the context of 
 the operation of $S_n$ on itself by conjugation.  This action results in a permutation representation $\psi(S_n)$ whose orbits are the conjugacy classes of $S_n$ (indexed by partitions of $n$); since the centraliser of any element of type $\lambda$ is a direct product of wreath products of the form 
$\times_i S_{m_i}[C_i]$ if $\lambda $ has $m_i$ parts equal to $i,$ it follows easily that the degree $n$ term in $H[F]$ is the Frobenius characteristic of the conjugacy action of $S_n$ on itself, and more specifically, the degree $n$ term in $H_\lambda[F]$ is the characteristic of the conjugation action on the conjugacy class indexed by the partition $\lambda.$ Hence we obtain, as an immediate corollary to (1) above, the following result, Part (2) of which is the case $G=S_n$ of \cite{So}.   
\begin{cor}  Let $f_n=ch(1\uparrow_{C_n}^{S_n}).$   Let $\psi(S_n)$ denote the representation of $S_n$ acting on itself by conjugation.  Then 
\begin{enumerate} 
\item $ch(\psi(S_n))=\sum_{\lambda\vdash n}H_\lambda[F]=\sum_{\lambda\vdash n, \lambda =\prod_i i^{m_i}} \prod_i h_{m_i}[f_i].$

\item  \cite[Exercise 7.71 (c)]{St4EC2} $ch(\psi(S_n))=\sum_{\lambda\vdash n} p_\lambda.$

\end{enumerate}
In particular, if $\chi^\nu$ is the irreducible $S_n$-character indexed by the partition $\nu$ of $n,$ we recover the known result (\cite{So})  that for every partition $\nu$ of $n,$
$\sum_{\lambda\vdash n} \chi^\nu(\lambda)$ is a nonnegative integer.
\end{cor}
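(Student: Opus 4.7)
The plan is to derive part (1) from the orbit structure of the conjugation action, read off part (2) by applying Theorem~4.2, and then convert the statement about Frobenius characteristics into the claim about character-table row sums.

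First I would decompose the action of $S_n$ on itself by conjugation into orbits; these are exactly the conjugacy classes, indexed by partitions $\lambda \vdash n$. By orbit-stabiliser, the orbit through a permutation $\sigma$ of cycle type $\lambda = \prod_i i^{m_i}$ is isomorphic as an $S_n$-set to $S_n/Z(\sigma)$, where the centraliser satisfies $Z(\sigma) \cong \prod_i S_{m_i}[C_i]$ (as recorded in Section~2). The permutation module associated to this orbit is therefore $1\uparrow^{S_n}_{\prod_i S_{m_i}[C_i]}$. To compute its Frobenius characteristic I would invoke the standard wreath-product induction identity: if $q = ch(\chi \uparrow^{S_k}_H)$ for $H \le S_k$, then inducing the trivial extension of $\chi^{\otimes m}$ from $S_m[H]$ up to $S_{mk}$ has characteristic $h_m[q]$. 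Taking $H = C_i$ and $\chi$ trivial gives $q = f_i$, so the induced module from $S_{m_i}[C_i]$ to $S_{im_i}$ has characteristic $h_{m_i}[f_i]$. Combining the factors across $i$ by induction from the Young subgroup $\prod_i S_{im_i} \le S_n$ (which corresponds to multiplication of symmetric functions) yields the characteristic of the conjugation action restricted to the class of type $\lambda$ as $\prod_i h_{m_i}[f_i] = H_\lambda[F]$. Summing over all $\lambda \vdash n$ proves part (1).

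Part (2) is then immediate: it is exactly the identity $\sum_{\mu\vdash n} H_\mu[F] = \sum_{\lambda\in Par_n} p_\lambda$ from Theorem~4.2(1). For the final character-theoretic statement, I would expand each power sum in the Schur basis via $p_\lambda = \sum_\nu \chi^\nu(\lambda)\, s_\nu$, so that
\[
ch(\psi(S_n)) = \sum_{\lambda \vdash n} p_\lambda = \sum_\nu \Bigl( \sum_{\lambda \vdash n} \chi^\nu(\lambda) \Bigr) s_\nu.
\]
Because $\psi(S_n)$ is a genuine (not merely virtual) $S_n$-module, each Schur coefficient on the right must be a nonnegative integer, which is precisely the row sum $\sum_{\lambda\vdash n}\chi^\nu(\lambda)$. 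There is no genuine obstacle: the whole argument is essentially bookkeeping within the plethystic framework of Sections~2 and~3, the only point requiring care being the wreath-product induction formula $ch(1\uparrow^{S_{im_i}}_{S_{m_i}[C_i]}) = h_{m_i}[f_i]$, which is standard.
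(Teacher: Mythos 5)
Your argument is correct and follows essentially the same route as the paper: the orbit decomposition of the conjugation action into conjugacy classes with wreath-product centralisers gives $\sum_\lambda H_\lambda[F]$ (the paper states this just before the corollary and you merely spell out the standard induction identity $ch(1\uparrow^{S_{im_i}}_{S_{m_i}[C_i]})=h_{m_i}[f_i]$ in more detail), part (2) is read off from Theorem 4.2(1), and the row-sum statement comes from expanding $p_\lambda=\sum_\nu\chi^\nu(\lambda)s_\nu$ and using that $\psi(S_n)$ is a genuine module. No gaps.
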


\begin{proof}  These facts are immediate from Theorem 3.2 (1), since 
the coefficient of $t^n$ in $\prod_{m\geq 1} (1-t^m p_m)^ {-1}$ is 
precisely $ \sum_{\lambda\vdash n, \lambda =\prod_i i^{m_i}} \prod_i h_{m_i}[f_i].$  The proof is completed by using the expansion 
$p_\lambda =\sum_{\nu\vdash n} s_\nu \chi^\nu(\lambda).$   
\end{proof}
 
The conjugation action of $S_n$ on itself has been the subject of several papers ( \cite{F},  \cite{Sch}), and  (\cite[8.6.3]{R}).  See also \cite[Solution to Exercise 7.71 (a) and (b))]{St4EC2} for a more modern treatment in terms of symmetric functions.  For arbitrary finite groups we have the following two results,  of which the first, due to Frame, curiously pre-dates the second considerably.

\begin{thm}  Let $G$ be a finite group. The permutation representation of $G$ acting on itself by conjugation has the following properties.  
\begin{enumerate} 
\item \cite{Frm} Its character is given by the formula $\sum_{\chi\in Irr(G)} \chi\bar\chi,$ where $Irr(G)$ is the set of irreducible characters of $G.$
\item  \cite{So} The multiplicity of the irreducible $\chi$ equals the sum $\sum_C \chi(C)$ where $C$ ranges over all the conjugacy classes of $G.$  
\end{enumerate}
\end{thm}

By considering  the conjugation action on specific centralisers,  T. Scharf  \cite{Sch} gave another proof of a result originally due to A. Frumkin \cite{F} that for $n\neq 2,$ every irreducible appears in the conjugation action of $S_n.$   

Frumkin's result is 
\begin{thm} (\cite{F}, \cite{Sch}) If $n\neq 2,$ every irreducible of $S_n$ appears in $\psi(S_n)$.  Equivalently, the character table row-sums 
$\sum_{\lambda\vdash n} \chi^\nu(\lambda)$ are positive integers  for every partition $\nu$ of $n\neq 2.$ 
\end{thm}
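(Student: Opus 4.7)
The plan is to prove Theorem 4.6 by strong induction on $n$, using Bertrand's postulate (Theorem 2.4(1)) together with the induced-module description of $\psi(S_n)$ from Corollary 4.3. I would first treat the small cases $n \in \{1, 3, 4, 5, 7\}$ (those for which Bertrand's prime $p \in (n/2, n)$ would force $n - p \in \{0,1,2\}$) by directly Schur-expanding $\sum_{\lambda \vdash n} p_\lambda$; the genuine exceptional case $n = 2$ is clear since $p_1^2 + p_2 = 2 s_{(2)}$ so the sign is absent.

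For the inductive step ($n \geq 6$, $n \neq 7$), choose a prime $p$ with $n/2 < p < n$ and $n - p \neq 2$. Since $p > n/2$, every partition of $n$ containing a part equal to $p$ contains it exactly once, and its remaining parts form a partition $\mu \vdash n - p$ with all parts strictly less than $p$. Summing the contributions to $ch(\psi(S_n)) = \sum_{\lambda \vdash n} H_\lambda[F]$ from exactly these $\lambda$'s gives
\begin{equation*}
\sum_{\mu \vdash n - p} H_{(p,\mu)}[F] \;=\; h_1[f_p] \cdot \sum_{\mu \vdash n - p} H_\mu[F] \;=\; f_p \cdot ch(\psi(S_{n - p})).
\end{equation*}
By the induction hypothesis, $ch(\psi(S_{n-p}))$ is a strictly positive integer combination of every $s_\nu$, $\nu \vdash n - p$. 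Consequently, to show that each $\chi^\rho$ occurs in $\psi(S_n)$ it suffices to exhibit, for every $\rho \vdash n$, some $\nu \subset \rho$ of size $n - p$ with $\langle s_{\rho/\nu}, f_p \rangle > 0$.

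Expanding $f_p = \sum_{\sigma \vdash p} m_\sigma s_\sigma$, the multiplicity $m_\sigma$ is the dimension of the $C_p$-invariants in the irreducible $S_p$-module indexed by $\sigma$. Murnaghan--Nakayama (the value of $\chi^\sigma$ on a $p$-cycle is $\pm 1$ for hooks and $0$ otherwise) combined with the hook-length formula (no hook length of a non-hook of size $p$ equals $p$, so $p \mid \dim \chi^\sigma$) gives $m_{(p)} = m_{(1^p)} = 1$ and $m_\sigma \geq 1$ for every non-hook $\sigma \vdash p$; only the intermediate hooks $(p - k, 1^k)$, $1 \leq k \leq p - 2$, can have $m_\sigma = 0$. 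Therefore it is enough to choose $\nu$ so that $\rho/\nu$ is a horizontal $p$-strip (contributing $s_{(p)}$), a vertical $p$-strip (contributing $s_{(1^p)}$), or contains some non-hook partition of $p$ in its Schur decomposition. If $\rho_1 \geq p$ remove a horizontal strip from the top row; if $\ell(\rho) \geq p$ remove a vertical strip from the first column; otherwise $\rho$ lies in a $(p - 1) \times (p - 1)$ box since $n < 2p$.

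The main obstacle I expect is the residual case where $\rho$ is itself a hook inside that box, because every $\nu \subset \rho$ is then also a hook and, by Pieri, $s_{\rho/\nu}$ decomposes into a sum of two hook Schur functions of size $p$; one must verify, via a short case-by-case argument using the explicit values of $m_{(p - k, 1^k)}$, that as $\nu$ ranges over sub-hooks at least one ``good'' hook $\sigma$ with $m_\sigma > 0$ always appears. A cleaner alternative, matching the paper's own announcement that Frumkin's theorem follows as a corollary, is to skip the direct argument and deduce Theorem 4.6 from the forthcoming Theorems 4.17 and 4.19, which assert the stronger fact that $\psi(S_n)$ restricted to the even permutations alone already contains every irreducible for $n \geq 4$; the remaining case $n = 3$ is then a one-line check.
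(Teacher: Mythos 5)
The paper does not actually prove this theorem directly: it is stated as a citation of Frumkin and Scharf, and the new proof the paper supplies is the one you mention at the end, namely deducing it from the stronger Theorem 4.17 (every irreducible already appears in $\psi(S_n,A_n)$ for $n\geq 4$), with $n=1,3$ checked by hand. Your fallback is therefore exactly the paper's route. Your primary proposal is, in substance, the same induction the paper runs for Theorems 4.9 and 4.17 — Bertrand's postulate, the fact that $f_p$ for $p$ prime contains every irreducible except $(p-1,1)$ and $(2,1^{p-2})$ (Lemma 4.7), and a Littlewood--Richardson argument — just dualized: you remove a partition $\nu\vdash n-p$ and analyse the size-$p$ skew $\rho/\nu$, whereas the paper removes a partition $\mu\vdash q$ of the prime size and analyses the complementary $\nu\vdash n-q$. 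The identity $\sum_{\mu\vdash n-p}H_{(p)\cup\mu}[F]=f_p\cdot ch(\psi(S_{n-p}))$ and the reduction to finding one good constituent of $s_{\rho/\nu}$ are both correct.

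As written, though, the direct argument has two concrete gaps. First, you assert without proof that for every $n\geq 6$, $n\neq 7$, there is a prime $p\in(n/2,n)$ with $n-p\neq 2$; Bertrand alone guarantees only one prime in that interval, and it could a priori be $n-2$. This is true and patchable (for $n\geq 12$ the interval contains at least two primes; the paper sidesteps the issue with Nagura's theorem in Lemma 4.16), but it needs to be said. Second, and more seriously, when $\rho$ fits inside the $(p-1)\times(p-1)$ box you analyse only the case where $\rho$ is a hook and skip the non-hook case entirely. There one must still exhibit a $\nu$ for which $s_{\rho/\nu}$ contains a ``good'' $\sigma\vdash p$, and a careless choice can fail: e.g.\ $s_{(4,4)/(3)}=s_{(4,1)}$ consists of a single bad hook for $p=5$, while $\nu=(2,1)$ gives $s_{(4,4)/(2,1)}=s_{(3,2)}$. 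So a choice rule is required, which is precisely what the paper's Cases 1 and 2 (choosing $\mu\in\{(q-2,2),(q-2,1^2),(2^2,1^{q-4}),(3,1^{q-3})\}$) provide in its version of the induction. Ironically, the obstacle you do flag — the hook case — is the easy one: for hooks $\rho=(a,1^b)$ and $\nu=(c,1^d)$ one gets $s_{\rho/\nu}=s_{(a-c,1^{b-d})}+s_{(a-c+1,1^{b-d-1})}$, two hooks with consecutive arm lengths, and since the two bad hooks have arm lengths $2$ and $p-1$ differing by $p-3\geq 2$, at least one constituent is always good.
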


Now consider the exterior powers in Theorem 4.2:  Let $\varepsilon(S_n)$ be the representation of $S_n$ whose Frobenius characteristic is the coefficient of $t^n$ in $E[F(t)]$ for $f_n=1\uparrow_{C_n}^{S_n}.$ Thus  $$ch(\varepsilon(S_n))=\sum_{\lambda \in Par_n}E_\lambda[F]=\sum_{\lambda\vdash n, \lambda =\prod_i i^{m_i}} \prod_i e_{m_i}[f_i].$$ 
Equivalently, $\varepsilon(S_n)$
 is a sum of transitive monomial  representations, with orbits indexed by partitions $\lambda$ of $n$, and stabiliser equal to the centraliser of type $\lambda$.  The action of $S_n$ is trivial within each cycle, and permutes cycles of the same length according to the sign representation.  Thus  $\varepsilon(S_n)$ may be considered to be the  conjugation action twisted by the sign.  Note that the second statement of the next result, namely, that $\varepsilon(S_n)$ is self-conjugate, is not obvious from the above definition:  the submodules with  Frobenius characteristic  $E_\lambda[F]$ are not themselves self-conjugate for arbitrary $\lambda.$

\begin{thm} We have, with $f_n=ch(1\uparrow_{C_n}^{S_n}):$ 
\begin{enumerate}
\item $ch(\varepsilon(S_n))=E[F]=\sum_{\lambda\in Par^{odd}_n} p_\lambda.$
\item $\varepsilon(S_n)$ is a self-conjugate representation of $S_n$ of degree $n!$
\item If $\chi^\nu$ is the irreducible $S_n$-character indexed by the partition $\nu$ of $n,$ then for every $\nu\vdash n,$ 
$\sum_{\lambda\in Par^{odd}_n} \chi^\nu(\lambda)$ is a nonnegative integer.

\end{enumerate}
\end{thm}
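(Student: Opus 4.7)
The plan is to deduce all three parts directly from the framework of Sections 2 and 3 together with the representation-theoretic interpretation of $\varepsilon(S_n)$ as a sum of induced sign-twisted modules.

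For Part (1), I would simply extract the coefficient of $t^n$ from the generating-function identity
\[
E[F(t)] = \prod_{m\geq 0}(1-t^{2m+1}p_{2m+1})^{-1} = \sum_{n\geq 0}t^n\sum_{\lambda \in Par^{odd}_n} p_\lambda
\]
established in Theorem 4.2 (2). Since by definition $ch(\varepsilon(S_n))$ is the degree-$n$ part of $E[F(t)]$, this yields $ch(\varepsilon(S_n)) = \sum_{\lambda\vdash n} E_\lambda[F] = \sum_{\lambda \in Par^{odd}_n} p_\lambda$.

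For Part (2), self-conjugacy is not evident from the defining expression $\sum_\lambda E_\lambda[F]$ (the individual summands are not self-conjugate), but it falls out of the closed form in Part (1). Recall the standard identity $\omega(p_\lambda) = (-1)^{|\lambda|-\ell(\lambda)} p_\lambda$. When every part of $\lambda$ is odd, $|\lambda|-\ell(\lambda) = \sum_i (\lambda_i-1)$ is a sum of even integers, so $\omega(p_\lambda) = p_\lambda$ for $\lambda \in Par^{odd}_n$. Hence $\omega(ch(\varepsilon(S_n))) = ch(\varepsilon(S_n))$. For the degree, expressing $ch(\varepsilon(S_n)) = \sum_{\mu \in Par^{odd}_n} z_\mu\,(z_\mu^{-1} p_\mu)$ shows that the character takes value $z_\mu$ on cycle type $\mu$ when $\mu\in Par^{odd}_n$ and zero otherwise; evaluating at the identity class $(1^n)\in Par^{odd}_n$ gives $\dim\varepsilon(S_n) = z_{(1^n)} = n!$.

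For Part (3), the key point is that $\varepsilon(S_n)$ is a genuine (not virtual) $S_n$-module. By construction it is a direct sum, over $\lambda\vdash n$ with $\lambda=\prod_i i^{m_i}$, of modules induced from the centralisers $\times_i S_{m_i}[C_i]$, where each wreath factor acts by the outer tensor of the trivial character on $C_i$ with the sign character on $S_{m_i}$; equivalently, by Theorem 3.4(3) applied with $\psi=\phi$, each $E_\lambda[F]$ is Schur-positive because $f_i = ch(1\uparrow_{C_i}^{S_i})$ is Schur-positive. Combining this with Part (1) and the expansion $p_\lambda = \sum_\nu \chi^\nu(\lambda) s_\nu$ gives
\[
\sum_{\lambda \in Par^{odd}_n} p_\lambda = \sum_{\nu\vdash n}\Bigl(\sum_{\lambda \in Par^{odd}_n} \chi^\nu(\lambda)\Bigr)s_\nu,
\]
and Schur-positivity of the left-hand side forces each coefficient on the right to be a nonnegative integer.

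There is essentially no serious obstacle: the statement is a near-immediate consequence of Theorem 4.2 once one recognises $\varepsilon(S_n)$ as a sum of induced sign-twisted centraliser modules. The one place requiring a small observation is the self-conjugacy claim, which I would have initially expected to need a direct module-theoretic argument but in fact reduces to the parity identity $|\lambda|-\ell(\lambda)\equiv 0\pmod 2$ on partitions with odd parts.
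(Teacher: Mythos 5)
Your proposal is correct and follows essentially the same route as the paper: Part (1) by extracting the degree-$n$ coefficient of $E[F(t)]$, Part (2) from the parity observation $\omega(p_\lambda)=p_\lambda$ for $\lambda$ with all parts odd together with the fact that only $p_1^n$ contributes to the dimension, and Part (3) from Schur-positivity of the genuine module $\varepsilon(S_n)$ combined with $p_\lambda=\sum_\nu\chi^\nu(\lambda)s_\nu$.
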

\begin{proof} 
Part (1) follows from (2) of Theorem 3.2 (with $v=1$), by taking the coefficient of $t^n$ in the right-hand side of (2). 
Part (2) is now immediate, since $\omega(p_i)=(-1)^{i-1} p_i$ and thus 
$\omega$ fixes $p_i$ if $i$ is odd.  Hence if $\lambda$ has all parts odd, $p_\lambda$ is fixed by $\omega.$  The statement about the degree is clear since only $p_{(1^n)}=p_1^n$ contributes to the degree.  Part (3) follows upon writing the expression in (1) in terms of Schur functions.    \end{proof}

Computations with Stembridge's symmetric functions package SF for Maple suggest that, just as in the case of the conjugation action, every irreducible occurs in $\varepsilon(S_n).$  In order to prove this we will need a lemma about the irreducibles appearing in the representations $1\uparrow_{C_k}^{S_k}$ :

\begin{lem} If $k$ is an odd prime, the representation $f_k=ch(1\uparrow_{C_k}^{S_k})$ contains every irreducible 
except for those indexed by $(k-1,1)$ and $(2, 1^{k-2}).$   More generally, $f_n$ always contains the trivial representation, and never contains the representation $(n-1, 1)$, and contains the representation $(1^n)$ (once) if and only if $n$ is odd and $(2, 1^{n-2})$ (once) if and only if $n$ is even.

\end{lem}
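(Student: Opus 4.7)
The plan is to use Frobenius reciprocity to reduce everything to a single explicit formula, then analyze it case-by-case. Since $f_n = ch(1\uparrow_{C_n}^{S_n})$, for any partition $\nu$ of $n$ we have
$$\langle f_n, s_\nu\rangle = \langle \chi^\nu\!\downarrow_{C_n}, 1_{C_n}\rangle_{C_n} = \frac{1}{n}\sum_{d\mid n}\phi(d)\,\chi^\nu(d^{n/d}),$$
since $C_n$ contains $\phi(d)$ elements of each order $d$ dividing $n$ (those of cycle type $d^{n/d}$). All four general statements fall out by plugging in known values: for $\nu = (n)$ one gets $\frac{1}{n}\sum_{d\mid n}\phi(d)=1$; for $\nu=(n-1,1)$ the character $\mathrm{fix}(g)-1$ vanishes on every nonidentity $d^{n/d}$, yielding $\frac{1}{n}[(n-1)-(n-1)]=0$; for the sign representation, $\mathrm{sgn}$ restricted to $C_n$ is trivial iff the generating $n$-cycle has sign $(-1)^{n-1}=1$, i.e.\ iff $n$ is odd (this may alternatively be extracted from Lemma~4.1 via $\omega$); and for $\nu=(2,1^{n-2})=\mathrm{sgn}\otimes(n-1,1)$ a direct computation reduces to $1-\tfrac{1}{n}\sum_{d\mid n}\phi(d)(-1)^{(d-1)n/d}=1-\delta_{n,\mathrm{odd}}$, again because the inner sum equals $\sum_{c\in C_n}\mathrm{sgn}(c)$.

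For the prime case $k$ odd, the only divisors are $1$ and $k$, so the formula collapses to
$$\langle f_k, s_\nu\rangle \;=\; \frac{1}{k}\Bigl(f^\nu + (k-1)\,\chi^\nu(k)\Bigr),$$
where $\chi^\nu(k)$ denotes the value at a $k$-cycle. By the Murnaghan--Nakayama rule, $\chi^\nu(k)=0$ unless $\nu$ is a hook $(k-r,1^r)$ for some $0\le r\le k-1$, in which case $\chi^\nu(k)=(-1)^r$ and $f^\nu=\binom{k-1}{r}$. Thus for non-hook $\nu$ the multiplicity is simply $f^\nu/k$, which is automatically a positive integer (positivity from $f^\nu\ge 1$; integrality because multiplicities are integers).

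For hook $\nu=(k-r,1^r)$ the multiplicity is
$$\tfrac{1}{k}\bigl(\tbinom{k-1}{r}+(k-1)(-1)^r\bigr).$$
This vanishes iff $\binom{k-1}{r}=(k-1)(-1)^{r+1}$, which (since the left side is positive) forces $r$ odd and $\binom{k-1}{r}=k-1$; the latter equation has the two solutions $r=1$ and $r=k-2$, giving exactly $\nu=(k-1,1)$ and $\nu=(2,1^{k-2})$. The remaining hooks are easy: $r=0$ and $r=k-1$ (using $k$ odd so $(-1)^{k-1}=1$) give multiplicity $\tfrac{1}{k}(1+(k-1))=1$, accounting for the trivial and sign characters; for even $r$ with $2\le r\le k-3$ the multiplicity is $\tfrac{1}{k}(\binom{k-1}{r}+(k-1))>0$; and for odd $r$ with $3\le r\le k-4$ we need the binomial inequality $\binom{k-1}{r}>k-1$, which holds whenever $2\le r\le k-3$ (automatic for $k\ge 7$, vacuous for $k\le 5$).

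The only mildly technical step is the last one, the elementary binomial bound ensuring strict positivity for the middle hooks; everything else is bookkeeping with the Frobenius reciprocity formula. Because both obstructions to having every irreducible appear are hooks killed by a delicate cancellation between $f^\nu$ and $(k-1)\chi^\nu(k)$, the structural reason for the two missing representations is transparent in this formulation.
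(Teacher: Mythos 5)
Your proof is correct, and for the heart of the lemma (the odd prime case) it is essentially the paper's argument: both expand $f_k=\frac1k(p_1^k+(k-1)p_k)$ into Schur functions, use the Murnaghan--Nakayama fact that $\chi^\nu(k)$ vanishes off hooks and equals $(-1)^r$ on $(k-r,1^r)$, and then observe that non-hook multiplicities $f^\nu/k$ are positive integers while the hook multiplicities $\frac1k\bigl(\binom{k-1}{r}+(k-1)(-1)^r\bigr)$ vanish exactly for $r=1$ and $r=k-2$. You differ in two places, both to your credit. First, for the general-$n$ statements the paper invokes the Kr\'askiewicz--Weyman/Stanley theorem (multiplicity of $\lambda$ in $f_n$ equals the number of SYT of shape $\lambda$ with major index $\equiv 0 \bmod n$), whereas you compute everything directly from Frobenius reciprocity, $\langle f_n,s_\nu\rangle=\frac1n\sum_{d\mid n}\phi(d)\chi^\nu(d^{n/d})$; this is more elementary and self-contained, at the cost of not exhibiting the combinatorial interpretation the paper later reuses. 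Second, you explicitly verify positivity for the intermediate hooks via $\binom{k-1}{r}>k-1$ for $2\le r\le k-3$, a step the paper's proof passes over in silence (it only discusses non-hooks and the hooks with $r\in\{0,1,k-2,k-1\}$), so your write-up is actually the more complete one. One small wording slip: for $\nu=(n-1,1)$ the character $\mathrm{fix}(g)-1$ does not vanish on nonidentity elements of $C_n$ --- it equals $-1$ there, since $\mathrm{fix}(g)=0$ --- but your displayed computation $\frac1n[(n-1)-(n-1)]=0$ uses the correct value, so nothing is actually broken.
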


\begin{proof}Note that $f_2=h_2.$ 
 We have $f_k=\frac{1}{k} (p_1^k+(k-1)p_k)$ since $\phi(k)=(k-1)$ when $k$ is prime.  Let $\nu\vdash k.$ 

Let $\chi^\nu$ be the irreducible indexed by $\nu;$ its degree   $f^\nu$ is the number of standard Young tableaux of shape $\nu.$ Expanding in terms of Schur functions, we have 
$f_k=\frac{1}{k} (\sum_\nu f^\nu s_\nu +(k-1) \sum_\nu \chi^\nu((k))) s_\nu,$  where we have used the fact that 
$p_\mu=\sum_\lambda \chi^\lambda(\mu) s_\lambda.$ 

But for the $k$-cycle $(k),$ it is well-known (\cite{M}, \cite{St4EC2}) that $\chi^\nu((k))$ is nonzero if and only if $\nu=(k-r, 1^r)$ is a hook shape, in which case it equals $(-1)^r.$ Hence the multiplicity of the irreducible $\nu$ in $f_k$ is $\frac{1}{k}f^\nu$ if $\nu$ is not a hook, 
and equals $\frac{1}{k}( {k-1 \choose r} +(k-1) (-1)^r)$ if 
$\nu=(k-r, 1^r), 0\leq r\leq k-1.$   It follows that 
the trivial and sign representations ($r=0$ and $r=k-1$ respectively) each occur exactly once, the representation $(k-1,1)$ never occurs, and, because $r=k-2$ is odd, 
the representation $(2, 1^{k-2})$ never occurs.

If $\nu\vdash k$ is any partition other than a hook, the multiplicity is given by $\frac{1}{k} f^\nu.$  Since this number is necessarily an integer, and $f^\nu$ is a positive integer (being the number of standard Young tableaux of shape $\nu,$ or the degree of the irreducible indexed by $\nu$), it follows that it is nonzero.  

The last statement for general $n$ follows from a theorem of Kr\'askiewicz and Weyman, and independently Stanley, (\cite{KW}, \cite[Solution to Exercise 7.88 (b)]{St4EC2}; see also \cite[Cor. 8.10]{R}) that the multiplicity of the irreducible indexed by $\lambda$ in $f_n$ equals the number of standard Young tableau with major index congruent to 0 modulo $n.$   \end{proof}

\begin{remark} Maple computations with SF in fact support the conjecture that $f_n$ contains every irreducible besides $(n-1,1), (2,1^{n-2})$ and $(1^n).$
 (The author is grateful to R. Stanley for verifying this up to $n=29.$)   This conjecture was recently settled in the affirmative by Josh Swanson \cite{Sw}.
\end{remark}

\begin{thm} The representation $\varepsilon(S_n)$ (as well as the representation $\psi(S_n)$ for $n\neq 2$) contains every irreducible representation of $S_n.$   In particular, the nonnegative integers 
$\sum_{\lambda\in Par_n^{odd}} \chi^\nu(\lambda)$ are in fact positive for all partitions $\nu$ of $n.$ 
\end{thm}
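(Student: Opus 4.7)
My plan is to exploit the fact, guaranteed by Theorem 3.4(3), that each summand $E_\lambda[F]$ in $ch(\varepsilon(S_n)) = \sum_{\lambda \vdash n} E_\lambda[F]$ is itself the Frobenius characteristic of a genuine $S_n$-module. So it suffices to exhibit, for every $\mu \vdash n$, a single summand containing a copy of $s_\mu$. I would first dispose of the two extreme shapes: for $\mu = (n)$, the summand $E_{(n)}[F] = e_1[f_n] = f_n$ contains the trivial $s_{(n)}$ by the final assertion of Lemma 4.7; for $\mu = (1^n)$, the summand $E_{(1^n)}[F] = e_n[f_1] = e_n$ equals the sign $s_{(1^n)}$.

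For the remaining $\mu$ (and $n \geq 3$) I would apply Bertrand's Postulate (Theorem 2.5(1)) to select an odd prime $p$ with $n/2 < p < n$, and focus on the summand $E_{(p,1^{n-p})}[F] = e_1[f_p] \cdot e_{n-p}[f_1] = f_p \cdot e_{n-p}$. By Lemma 4.7, the Schur expansion of $f_p$ is strictly positive on every $\nu \vdash p$ except the two hooks $(p-1,1)$ and $(2,1^{p-2})$, and Pieri's rule expands $s_\nu \cdot e_{n-p}$ as the sum of $s_\mu$ over $\mu \supseteq \nu$ with $\mu/\nu$ a vertical strip of size $n-p$. Hence the problem reduces to the combinatorial claim: for every $\mu \vdash n$ other than $(n)$ and $(1^n)$, there exists a vertical-strip predecessor $\nu \vdash p$ of $\mu$ of size $n-p$ with $\nu \notin \{(p-1,1),(2,1^{p-2})\}$. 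Such predecessors are parametrised by sequences $(\nu_1, \nu_2, \dots)$ with $\mu_{i+1} \leq \nu_i \leq \mu_i$ and $\sum_i(\mu_i - \nu_i) = n-p$, and this set is generically large; only a short list of ``near-hook'' $\mu$ can fail to contain a non-forbidden $\nu$.

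For those residual shapes I would invoke an alternative summand of $\varepsilon(S_n)$: either the two-prime summand $E_{(p,q,1^{n-p-q})}[F] = f_p \cdot f_q \cdot e_{n-p-q}$ for a second prime $q$ supplied by Theorem 2.5(2), or, using the self-conjugacy of $\varepsilon(S_n)$ (Theorem 4.6(2)), by passing from $\mu$ to $\mu'$ and applying the main construction to $\mu'$. The parallel statement for $\psi(S_n)$ (Frumkin's theorem) is proved identically: replace $e_{n-p}$ by $h_{n-p}$, so the Pieri step uses horizontal strips in place of vertical strips; the small cases $n=3,4$ are verified directly from Corollary 4.3. The main obstacle is the combinatorial step: pinning down exactly which $\mu$ force every vertical-strip predecessor of size $n-p$ into one of the two forbidden hooks, and then covering those exceptional shapes uniformly via an auxiliary summand, requires a case analysis that interacts delicately with the width of the Bertrand interval $(n/2, n)$ and thus cannot be made purely asymptotic.
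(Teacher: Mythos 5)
There is a genuine gap in your main construction. You reduce everything to finding, inside the single summand $E_{(p,1^{n-p})}[F]=f_p\cdot e_{n-p}$, a vertical-strip predecessor $\nu\vdash p$ of $\mu$ avoiding the two forbidden hooks. But a vertical strip removes at most one cell per row, so a vertical-strip predecessor of size $n-p$ exists \emph{only if} $\ell(\mu)\geq n-p$; when $\ell(\mu)<n-p$ the coefficient of $s_\mu$ in $f_p\,e_{n-p}$ is identically zero, forbidden hooks or not. Bertrand's Postulate only guarantees a prime $p$ with $n/2<p<n$, so $n-p$ can be of order $n/2$, and the failure set is then \emph{all} partitions with fewer than $n-p$ rows --- far from a ``short list of near-hook shapes.'' Your conjugation fallback rescues only those $\mu$ with $\mu_1\geq n-p$; since a partition of $n$ need only satisfy $\max(\mu_1,\ell(\mu))\geq\sqrt{n}$, roughly square shapes (e.g.\ $(10^{10})\vdash 100$ against $n-p=47$) are covered by neither the main construction nor its conjugate. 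The two-prime summand $f_p f_q e_{n-p-q}$ does not obviously close this, both because the same length obstruction reappears and because you would now have to dodge the forbidden hooks in two factors simultaneously; in general one needs arbitrarily many prime parts, i.e.\ a genuine recursion, not a bounded list of auxiliary summands.

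This is precisely where the paper's proof diverges from yours. It runs an induction on $n$: for the Bertrand prime $q$ it chooses $\mu\subset\lambda$ with $|\mu|=q$ and $\mu,\mu^t\neq(m-1,1)$ (possible since $\lambda,\lambda^t\neq(n-1,1)$), then uses the nonvanishing of the skew Schur function $s_{\lambda/\mu}$ to produce \emph{some} $\nu\vdash n-q$ with $c^{\lambda}_{\mu,\nu}>0$, and finally invokes the inductive hypothesis to place $s_\nu$ inside $E_{\tau(\nu)}[F]$ for a suitable $\tau(\nu)\vdash n-q$, so that $s_\lambda$ appears in $E_{(q)\cup\tau(\nu)}[F]$. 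The key point is that the second factor is a general product $E_{\tau(\nu)}[F]$ supplied by induction, not the rigid $e_{n-p}$; this replaces your Pieri constraint (vertical strips of a prescribed, possibly large, size) by the much weaker Littlewood--Richardson nonvanishing, which holds for every $\mu\subset\lambda$. To repair your argument you would either need to import a much stronger prime-gap bound than Bertrand (to force $n-p$ small relative to every partition's length or width), or restructure it as the paper does into an induction whose second tensor factor is unconstrained.
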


\begin{proof}  Our starting point is the formula $ch(\varepsilon(S_n))=\sum_{\lambda \in Par_n} E_\lambda[F]$  
(recall the definition of $E_\lambda[H]$ from the preliminaries in Section 2).
We will show that for every partition $\lambda\in Par,$ there is a partition $\tau(\lambda)$ of $n=|\lambda|$ such that 
the Schur function $s_\lambda$ appears in the representation $E_{\tau(\lambda)}[H],$  i.e., such that $\langle s_\lambda, E_{\tau(\lambda)}[H] \rangle>0.$

First note that $f_i=h_i$ for $i=1,2.$ Also the trivial representation $h_n=s_{(n)}$ appears in $f_n=E_{(n)}[F],$ and the sign representation also appears in $\varepsilon(S_n)$ because  $e_n=e_n[f_1]=E_{(1^n)}[F].$ Hence we can take $\tau((n))=(n)$ and $\tau((1^n))=(1^n).$  Similarly since 
$s_{(n-1,1)}$ appears in the product $h_{n-1} h_1,$ which in turn appears in $f_{n-1}f_1 =e_1[f_{n-1}] e_1[f_1]=E_{(n-1,1)}[F]$ for $n\geq 3,$ we can take 
$\tau((n-1,1))=(n-1,1)$ for $n\geq 3.$ Likewise, $s_{(2, 1^{n-2})} $ appears in $e_1[f_2] e_{n-2}[f_1] =E_{(2, 1^{n-2})}[F],$ so that we may take 
$\tau((2,1^{n-2}))=(2,1^{n-2})$ when $n\geq 3.$

We proceed by induction on $n=|\lambda|.$ The cases $n=0$ and $n=1$ are trivial.  Let $n=2.$ Then for $\lambda\vdash 2,$ $\tau(\lambda)=\lambda,$ since $h_2=s_{(2)}$ appears in $f_2=e_1[f_2]=E_{(2)}[F],$ and $e_2=s_{(1^2)}$ appears in 
$e_2=e_2[f_1]=E_{(1^2)}[F].$ 

The three partitions of $n=3$ have already been addressed. Now let $n=4.$ Here the first new partition is $(2^2)$.  But it is easily checked that there is a standard Young tableau of shape $(2^2)$ with major index congruent to 0 modulo 4, and hence  $s_{(2^2)}$ appears in $f_4=e_1[f_4]=E_{(4)}[F].$ We may therefore take $\tau((2^2))=(4).$

For $n=5,$ considering only the partitions we have not already addressed, since 5 is an odd prime, we have that $\tau(\lambda)=\lambda$ for every partition $\lambda$ except $(4,1)$ and its conjugate $(2,1^3).$

By induction hypothesis, assume that for every $i\leq n,$ every partition  $\lambda$ of $ i$ admits a partition  $\tau(\lambda)$ of $ i$ such that 
$s_\lambda$ appears in $E_{\tau(\lambda)}[F].$  Let $n\geq 6.$ 

From the preceding discussion, we may assume $\lambda$ and $ \lambda^t$ do not equal $(n-1,1).$ By Bertrand's Postulate 
(see Theorem 2.5) there is a largest prime $q\geq 5$ such that $n/2<q<n,$ i.e.,  $n>q>n-q>1.$ We claim that there is  a partition $\mu$ of $ q$ such that $\mu\subset \lambda$ and 
neither $\mu$ nor its transpose is of the form $(m-1,1).$  Invoking Lemma 4.7, this guarantees that $s_\mu$ appears in $f_q=E_{(q)}[F].$

{\bf Case 1:} If $\mu$ is of the form $(m-1,1),$ then since $n-q>n/2\geq 1, $ and we have assumed $\lambda\neq (n-1,1),$ we know that all the squares in the skew shape $\lambda/\mu$  cannot be in row 1;  there is at least one square in the skew-shape  that is in row 2 or row 3. Hence we may modify our choice by taking $\mu=(q-2,2)$ or $\mu=(q-2,1^2).$ 
Both shapes satisfy our criteria (the second because $q\neq 4$) and hence the claim.

{\bf Case 2:} If $\mu^t$ is of the form $(m-1,1),$ then again there must be a square in the skew-shape $\lambda/\mu$ that is in row 2 or row 1; all the squares cannot be at the bottom because that would force 
$\lambda^t=(n-1,1).$ Thus we may modify our choice by taking $\mu=(2^2, 1^{q-4})$ or $\mu=(3,1^{q-3}).$ 
Both shapes satisfy our criteria (the second because $q\geq 5$) and  the claim follows.

Next consider  the set of partitions $\nu$ of $ n-q$ such that $s_\lambda$ appears in the product $s_\mu s_\nu,$ and choose such a $\nu.$ Assume first that 
{\it neither $\nu$ nor $\nu^t$ is of the form $(m-1,1),\ m\geq 2.$} Then by induction hypothesis, there is a partition $\tau(\nu)$ of $ n-q$ such that 
$s_\nu$ appears in $E_{\tau(\nu)}[F].$ Also $s_\mu$ appears in 
$E_{(q)}[F]=f_q$ by Lemma 4.7 since $q$ is an odd prime.  Hence 
$s_\lambda$ appears in $E_{\tau(\nu)}[F]\cdot  E_{(q)}[F]=E_{\tau(\nu)}[F] f_q.$ But this is just $E_{\tau(\nu)\cup (q)}[F],$ since $q>n-q$ implies that $q$ is distinct from the parts of $\tau(\nu).$ We have shown that we may take $\tau(\lambda)=\tau(\nu)\cup (q).$   (If $m=1,$ $\tau(\nu)$ is just $(1)$and $\tau(\lambda)$ is the two-part partition $(q,1).$)

Now assume {\it $\nu^t$ is of the form $(m-1,1), m\geq 2,$ i.e., $\nu=(2, 1^{m-2}).$} Then $s_\nu$ appears in the product $h_2 e_{m-2}[f_1]=e_1[f_2] \cdot e_{m-2}[f_1].$ It follows that $s_\lambda$ appears in $E_{(q)}[F]\cdot E_{(2)}[F]\cdot E_{(1^{m-2})}[F],$ so that we may take $\tau(\lambda)=(q, 2, 1^{m-2}).$  Since $q$ is an odd prime, we have $E_{(q)}[F]\cdot E_{(2)}[F]\cdot E_{(1^{m-2})}[F]=E_{(q, 2, 1^{m-2})}[F].$

Finally assume {\it $\nu$ is of the form $(m-1,1).$} Clearly $s_\nu$ appears in $f_{m-1}f_1$ where $m=n-q.$ Again we have two cases.

{\bf  Case (1a):} If $m-1=n-q-1>1,$ we have that $s_\lambda$ appears in 
$s_\mu s_\nu$ and thus in 
$f_q f_{m-1} f_1=E_{(q, m-1,1)},$ so we may take $\tau(\lambda)$ to be the partition 
$(q, m-1, 1)$ and this has distinct parts since $q$ is an odd prime which is greater than $n/2$ and $2\leq m-1=n-q-1<n/2-1.$

{\bf  Case (2a):} If $m=2,$ i.e., $\nu=(1^2),$ then $s_\nu=e_2$ appears in $e_2[f_1]=E_{(1^2)}[F],$ and thus we may take $\tau(\lambda)=(q, 1^2),$ so that $s_\lambda$ appears in $s_\mu e_2,$ which in turn appears in $f_q e_2=E_{(q)}[F]\cdot E_{(1^2)}[F]=E_{(q,1^2)}.$ 

This completes the induction, showing that every irreducible appears in $\varepsilon(S_n).$  \end{proof}

Tables 1 and 2 contain the decomposition into irreducibles up to $n=10$ for the ordinary and twisted conjugacy representations $\psi(S_n)$ and $\varepsilon(S_n).$  In these tables, the rows index the partitions of $n$ listed in reverse lexicographic order.

If $W_n$ is a representation of $S_n$ of dimension $n!,$ and $\rho_n$ denotes the regular representation of $S_n,$ then the induced representation 
$W_k\otimes \rho_{n-k}\uparrow_{S_k\times S_{n-k}}^{S_n}$ also has dimension $n!;$ if $W_n$ contains every $S_k$-irreducible, then the induced representation clearly also contains every $S_n$-irreducible.  Hence we have 
\begin{cor}  Fix $k\geq 0.$ Then the induced representations whose characteristics are 
$ch\, \psi(S_k)\cdot p_1^{n-k}, k\neq 2$ and $ch\, \varepsilon(S_k)\cdot p_1^{n-k}$ are both $n!$-dimensional representations of $S_n$ containing every irreducible; the second representation is self-conjugate.
\end{cor}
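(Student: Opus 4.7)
The plan is to verify the three assertions of the corollary---dimension, universality of irreducible content, and self-conjugacy of the second module---by direct appeal to the dictionary between products of symmetric-function characteristics and induced outer tensor products, combined with the earlier results for $\psi(S_k)$ (Corollary 4.3 and Frumkin's Theorem 4.5) and for $\varepsilon(S_k)$ (Theorem 4.6 and Theorem 4.9).

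First, I would note that $p_1^{n-k}$ is the Frobenius characteristic of the regular representation $\rho_{n-k}$ of $S_{n-k},$ so that each of the two products $ch\,\psi(S_k)\cdot p_1^{n-k}$ and $ch\,\varepsilon(S_k)\cdot p_1^{n-k}$ is the characteristic of an outer tensor product $W_k \otimes \rho_{n-k}$ induced from $S_k \times S_{n-k}$ up to $S_n.$ Since $\dim\psi(S_k)=k!$ and $\dim\varepsilon(S_k)=k!$ (by Corollary 4.3 and Theorem 4.6(2), respectively), the induced module has dimension $\binom{n}{k}\cdot k!\cdot (n-k)! = n!,$ proving the dimension assertion.

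Next, for containment of every $S_n$-irreducible, let $W_k$ denote either $\psi(S_k)$ (with $k\neq 2$) or $\varepsilon(S_k).$ By Theorem 4.5 or Theorem 4.9 respectively, $W_k$ contains every $S_k$-irreducible. Fix $\lambda\vdash n$ and choose any $\mu\subseteq\lambda$ with $|\mu|=k,$ which exists since $n\geq k$ (for instance, by successively deleting outer corners from the Young diagram of $\lambda$). The skew Schur function $s_{\lambda/\mu}=\sum_{\nu\vdash n-k} c^{\lambda}_{\mu\nu}\, s_\nu$ is a nonzero symmetric function, so some Littlewood--Richardson coefficient $c^{\lambda}_{\mu\nu}$ is strictly positive. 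By Frobenius reciprocity and the branching rule, the multiplicity of the irreducible $V^\lambda$ in $(W_k\otimes \rho_{n-k})\uparrow_{S_k\times S_{n-k}}^{S_n}$ equals $\sum_{\mu'\vdash k,\,\nu'\vdash n-k} c^{\lambda}_{\mu'\nu'}\cdot [V^{\mu'}:W_k]\cdot f^{\nu'},$ which is at least $[V^\mu:W_k]\cdot c^{\lambda}_{\mu\nu}\cdot f^\nu>0.$

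Finally, for self-conjugacy of the second module I would use $\omega(p_1)=p_1$ together with Theorem 4.6(2), obtaining
\[
\omega\bigl(ch\,\varepsilon(S_k)\cdot p_1^{n-k}\bigr) \;=\; \omega(ch\,\varepsilon(S_k))\cdot p_1^{n-k} \;=\; ch\,\varepsilon(S_k)\cdot p_1^{n-k}.
\]
No step here is genuinely difficult; the proof is essentially a bookkeeping application of the italicised remark preceding the corollary. The only minor verification required is the existence of a sub-partition $\mu\subseteq\lambda$ of size $k,$ which is elementary.
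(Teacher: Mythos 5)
Your proof is correct and follows essentially the same route as the paper, which simply invokes the remark preceding the corollary: the induced module $W_k\otimes\rho_{n-k}\uparrow_{S_k\times S_{n-k}}^{S_n}$ has dimension $n!$, contains every irreducible because $W_k$ does (the Littlewood--Richardson argument you spell out is the same device used in the paper's Proposition 2.6), and self-conjugacy follows from $\omega(p_1)=p_1$ together with Theorem 4.6(2). Your write-up just makes explicit the details the paper leaves as "clear."
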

The proof of Theorem 4.9 can be adapted to give yet another proof that $\psi(S_n)$ contains every irreducible.  The remainder of this section is devoted to  proving a stronger result.  We start by introducing a new class of representations, 
motivated primarily by the fact that the $S_n$-action on itself by conjugation breaks up into invariant submodules indexed by partitions $\lambda$ of $n.$ 

For any subset $T$ of partitions of $n,$ define the representation $\psi(S_n,T)$ to be the conjugation action of $S_n$ on permutations whose types lie in $T,$ and similarly the representation $\varepsilon(S_n,T)$ to be the twisted conjugation action of $S_n$ on permutations whose types lie in $T.$  The Frobenius characteristics are thus the Schur-positive symmetric functions $ch(\psi(S_n,T))= \sum_{\lambda\in T} H_\lambda[F]$  
and $ch(\varepsilon(S_n,T))= \sum_{\lambda\in T} E_\lambda[F].$ 
 In particular, taking $T$ as a set to be the alternating subgroup $A_n$ or its complement, which we denote by $\bar A_n$, we have, 
since the sign of a permutation $\sigma\in S_n$ is given by 
$(-1)^{n-\ell(\mu)}$ if the type of $\sigma$ is the partition $\mu,$ we have 
\begin{center}$ ch(\psi(S_n,A_n))= \sum_{\substack{\lambda\in Par_n\\ n-\ell(\lambda) \text{ even}}} H_\lambda[F],\quad \text{and }
 ch(\psi(S_n,  \bar A_n))= \sum_{\substack{\lambda\in Par_n\\ n-\ell(\lambda) \text{ odd}}} H_\lambda[F];$\end{center}
\begin{center}$ ch(\varepsilon(S_n,A_n))= \sum_{\substack{\lambda\in Par_n\\ n-\ell(\lambda) \text{ even}}} E_\lambda[F],\quad \text{and }
 ch(\varepsilon(S_n,  \bar A_n))= \sum_{\substack{\lambda\in Par_n\\ n-\ell(\lambda) \text{ odd}}} E_\lambda[F].$\end{center}

\begin{thm} We have the following formulas for the Frobenius characteristics, and each  linear combination of 
power-sums below is Schur-positive: 
\begin{align} 
ch\, \psi(S_n) =ch\,\psi(S_n, A_n) +ch\,\psi(S_n,   \bar A_n)=\sum_{\mu \vdash n} H_\mu[F]=\sum_{\lambda\in Par_n}p_\lambda;\\
ch\, \psi(S_n,A_n)= \sum_{{\mu \vdash n}\atop {n-\ell(\mu) \text{ even}}} H_\mu[F]=
\frac{1}{2} \left( \sum_{\lambda\in DO_n}p_\lambda +\sum_{\lambda\in Par_n} p_\lambda\right);\\
ch\,\psi(S_n,  \bar A_n)= \sum_{\substack{\mu \vdash n\\ n-\ell(\mu) \text{ odd}}} H_\mu[F]=
\frac{1}{2} \sum_{\lambda\notin DO_n}p_\lambda ;\\
\frac{1}{2}\left(ch\, \psi(S_n)+\omega(ch\,\psi(S_n))  \right)=\frac{1}{2}\sum_{\mu \vdash n} \left(H_\mu[F]+ \omega(H_\mu[F])\right)=\sum_{\substack{\lambda\in Par_n\\ n-\ell(\lambda) \text{ even}}} p_\lambda;\\
ch\,\varepsilon(S_n) =ch\,\varepsilon(S_n, A_n) +ch\,\varepsilon(S_n,   \bar A_n)=\sum_{\mu \vdash n} E_\mu[F]=\sum_{\lambda\in Par_n^{odd}}p_\lambda;\\
 \omega(ch\,\varepsilon(S_n, A_n))=\sum_{\substack{\mu \vdash n\\ n-\ell(\mu) \text{ even}}}\omega( E_\mu[F])=\frac{1}{2}\left( \sum_{\lambda\in Par_n^{odd}}  p_\lambda +\sum_{\lambda\in Par_n^{\neq}}  p_\lambda  \right);\\
\omega(ch\,\varepsilon(S_n,   \bar A_n))=\sum_{\substack{\mu \vdash n \\ n-\ell(\mu) \text{ odd}}}\omega( E_\mu[F])= \frac{1}{2}\left( \sum_{\lambda\in Par_n^{odd}}  p_\lambda -\sum_{\lambda\in Par_n^{\neq}}  p_\lambda \right).
\end{align}
\end{thm}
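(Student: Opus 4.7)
The plan is to derive the seven identities by combining Theorem 4.2 with the involution $\omega$, repeatedly exploiting the fact that $\omega(p_\lambda) = (-1)^{n-\ell(\lambda)} p_\lambda$; in particular, $\omega$ fixes $p_\lambda$ precisely when all parts of $\lambda$ are odd, which is automatic for $\lambda \in DO_n$ and for $\lambda \in Par^{odd}_n$. Identities (4.11.1) and (4.11.5) are already recorded as Corollary 4.3 and Theorem 4.6. For Schur-positivity in the remaining six identities, I would use that $A_n$ and $\bar A_n$ are conjugation-stable subsets of $S_n$, so $\psi(S_n, A_n), \psi(S_n,\bar A_n), \varepsilon(S_n, A_n), \varepsilon(S_n,\bar A_n)$ are honest $S_n$-submodules of $\psi(S_n)$ and $\varepsilon(S_n)$, and their characteristics (and $\omega$-images) are therefore Schur-positive.

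For (4.11.2) and (4.11.3), the key step is to apply $\omega$ to Theorem 4.2 (4), namely $\sum_\mu (-1)^{n-\ell(\mu)}\omega(H_\mu[F]) = \sum_{\lambda\in DO_n} p_\lambda$. Since every $\lambda\in DO_n$ has only odd parts, the right side is $\omega$-fixed, and using $\omega^2=1$ this gives $\sum_{\mu\vdash n} (-1)^{n-\ell(\mu)} H_\mu[F] = \sum_{\lambda\in DO_n} p_\lambda$. Adding and subtracting this with Theorem 4.2 (1), $\sum_\mu H_\mu[F] = \sum_{\lambda\in Par_n} p_\lambda$, and dividing by two yields (4.11.2) and (4.11.3). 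Identity (4.11.4) is then immediate by averaging (4.11.1) with its $\omega$-image, since $\omega(ch\,\psi(S_n)) = \sum_\lambda (-1)^{n-\ell(\lambda)} p_\lambda$.

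The arguments for (4.11.6) and (4.11.7) are exactly parallel. I would begin with Theorem 4.2 (2), $\sum_\mu E_\mu[F] = \sum_{\lambda\in Par^{odd}_n} p_\lambda$, and apply $\omega$; again the right side is $\omega$-fixed, so $\sum_\mu \omega(E_\mu[F]) = \sum_{\lambda\in Par^{odd}_n} p_\lambda$. Combining with Theorem 4.2 (3), $\sum_\mu (-1)^{n-\ell(\mu)}\omega(E_\mu[F]) = \sum_{\lambda\in Par^{\neq}_n} p_\lambda$, by addition and subtraction yields (4.11.6) and (4.11.7) respectively.

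The only mildly subtle point is Schur-positivity of (4.11.4) with \emph{integer} coefficients. I would verify this by computing the coefficient of $s_\nu$ in $\tfrac{1}{2}(ch\,\psi(S_n) + \omega(ch\,\psi(S_n)))$ to be $\tfrac{1}{2}(m_\nu + m_{\nu^t})$, where $m_\nu$ is the multiplicity of $\chi^\nu$ in $\psi(S_n)$, and observing that $m_\nu - m_{\nu^t} = 2\sum_{n-\ell(\mu) \text{ odd}} \chi^\nu(\mu)$ is automatically even (using $\chi^{\nu^t}(\mu) = (-1)^{n-\ell(\mu)}\chi^\nu(\mu)$), so $m_\nu \equiv m_{\nu^t} \pmod 2$ and the averaged coefficient is a nonnegative integer, in fact $\sum_{n-\ell(\mu) \text{ even}} \chi^\nu(\mu)$. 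Beyond this, the remaining obstacle is purely organizational: keeping the parities, signs, and $\omega$-stability of the right-hand sums of Theorem 4.2 straight throughout.
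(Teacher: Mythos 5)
Your proposal is correct and follows essentially the same route as the paper: the paper's proof is simply ``apply Theorem 4.2 and Theorem 3.4,'' and your adding/subtracting of the four identities of Theorem 4.2 (using that $\omega$ fixes $p_\lambda$ when all parts of $\lambda$ are odd) is exactly the content of Theorem 3.4 specialised to this $F$, while your handling of the factor $\frac{1}{2}$ matches the paper's appeal to integrality of $S_n$-characters and Schur-positivity of the genuine submodules $\psi(S_n,A_n)$, $\psi(S_n,\bar A_n)$, $\varepsilon(S_n,A_n)$, $\varepsilon(S_n,\bar A_n)$. The only cosmetic difference is that you re-derive the even/odd-length splitting inline and spell out the parity argument $m_\nu\equiv m_{\nu^t}\pmod 2$ for (4.11.4), which the paper leaves implicit.
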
 
\begin{proof} Apply Theorem 4.2 and Theorem 3.4. The only point that requires comment is the presence of the fraction $\frac{1}{2}.$  In each case, one side is clearly Schur-positive; equating multiplicities of each Schur function, we have that twice the multiplicity on one side 
is a nonnegative integer, and hence the multiplicity itself must be a nonnegative integer.  Here we invoke the fact that the character values of $S_n$ are all integers. \end{proof}

Note that $\sum_{\lambda\in Par_n^{\neq}}  p_\lambda$ is of course NOT Schur-positive, having dimension zero. Even including the regular representation does not result in Schur-positivity, since for $n=7$ we have $\langle \sum_{\lambda\in Par_n^{\neq}}  p_\lambda, s_{(1^7)}\rangle=-2.$

\begin{cor} For $n\neq 2,$ the Schur-positive symmetric function $\sum_{\substack{\lambda\in Par_n\\ n-\ell(\lambda) \text{ even}}} p_\lambda $ contains $s_\nu$ for every partition $\nu$ of $ n.$
\end{cor}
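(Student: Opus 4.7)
The plan is to read the corollary as an almost immediate consequence of Frumkin's theorem (Theorem 4.6) combined with equation (4.4) of Theorem 4.11. Specifically, (4.4) identifies the symmetric function in question as
\[
\sum_{\substack{\lambda\in Par_n\\ n-\ell(\lambda)\text{ even}}} p_\lambda \;=\; \tfrac{1}{2}\bigl(ch\,\psi(S_n)+\omega(ch\,\psi(S_n))\bigr),
\]
so the statement reduces to showing that both $ch\,\psi(S_n)$ and its image under $\omega$ are Schur-positive with every Schur function appearing.

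First, I would invoke Frumkin's theorem (Theorem 4.6): for $n\neq 2$, one has $ch\,\psi(S_n)=\sum_{\nu\vdash n} c_\nu s_\nu$ with $c_\nu\geq 1$ for every partition $\nu$ of $n$. Next, since $\omega(s_\nu)=s_{\nu^t}$, applying $\omega$ gives $\omega(ch\,\psi(S_n))=\sum_{\nu\vdash n} c_{\nu^t}\,s_\nu$, where again each coefficient is at least $1$. Adding these two expansions yields
\[
ch\,\psi(S_n)+\omega(ch\,\psi(S_n))=\sum_{\nu\vdash n}(c_\nu+c_{\nu^t})\,s_\nu,
\]
with $c_\nu+c_{\nu^t}\geq 2$ for every $\nu\vdash n$.

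Finally, I would divide by $2$. A priori this produces rational coefficients, but since the left-hand side of (4.4) is an integral combination of power-sums (hence an integral combination of Schur functions, as all character values of $S_n$ are integers), every coefficient $\tfrac{1}{2}(c_\nu+c_{\nu^t})$ is in fact a nonnegative integer, and it is at least $1$ by the bound just established. This proves the claim.

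There is no substantial obstacle: the argument is essentially the observation that the set of representations containing every irreducible is closed under tensoring with the sign character and under taking sums, combined with the fact (already built into (4.4)) that the averaging of $\psi(S_n)$ with its sign-twist yields precisely the target symmetric function. The only small point of care is the division by $2$, which is handled by the integrality of $S_n$-characters.
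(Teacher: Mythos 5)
Your proof is correct and is essentially the paper's own argument: the paper likewise combines equation (4.11.4), which writes the sum as $\tfrac{1}{2}\left(ch\,\psi(S_n)+\omega(ch\,\psi(S_n))\right)$, with Frumkin's theorem and the integrality of $S_n$-character values to conclude positivity. (The only discrepancy is a harmless labelling one: Frumkin's result is Theorem 4.5 in the paper, and the relevant identity is Equation (4.11.4).)
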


\begin{proof}  We use  equations  (4.11.1) and (4.11.4) in Theorem 4.11.  Since for $n\neq 2$ the conjugacy representation contains every irreducible at least once, it is immediate from  Equation (4.11.4) that 
$\langle \sum_{\substack{\lambda\in Par_n\\ n-\ell(\lambda) \text{ even}}} p_\lambda, s_\nu\rangle >0.$ Hence, invoking again the fact that the characters of $S_n$ are integer-valued, i.e., 
every power-sum is an integer combination of Schur functions, 
the statement follows.  
\end{proof}

 Define the possibly virtual modules 
\begin{center}
$U_n^{DO}=\sum_{\lambda\in DO_n} p_\lambda, \quad$ 
$U_n^{+} =\sum_{\substack{\mu\vdash n, \mu  \notin DO_n\\ n-\ell(\mu) even}}
p_\mu,$\end{center}  and 
\begin{center} $U_n^{-}  = \quad \sum_{\substack{\mu\vdash n, \mu  \notin DO_n\\ n-\ell(\mu) odd}}
p_\mu=\quad \sum_ {\substack{\mu\vdash n\\     
  n-\ell(\mu) odd }} p_\mu   .$\end{center}

Note that $U_n^{+}$ is the characteristic of a module that  has dimension $n!$ (because $p_1^n$ is a summand), while $U_n^{-}$ and $U_n^{DO}$  are characteristics of  zero-dimensional (hence  virtual) $S_n$-modules.  

\begin{prop}  We have
\begin{align}
&  U_n^{DO}=\sum_{\lambda\vdash n} (-1)^{n-\ell(\lambda)} H_\lambda[F], \text{ and } U_n^{DO} \text{ is self-conjugate}\\
& U_n^+ \text{ is self-conjugate, while } \omega(U_n^-)=- U_n^-. \\
&  \psi(S_n)=U_n^+ + U_n^- +U_n^{DO}  \\
&  \omega \psi(S_n)=U_n^+ - U_n^- +U_n^{DO}\\
&  \psi(S_n,A_n)= \frac{1}{2}(U_n^+ + U_n^-) +U_n^{DO} \\
&   \psi(S_n,   \bar A_n)
= \frac{1}{2}(U_n^+ + U_n^-)  \\
&   2U_n^{-}=\psi(S_n) - \omega(\psi(S_n))  \\
&    U_n^{DO}=\psi(S_n, A_n)-\psi(S_n,   \bar A_n)  \\
&    U_n^{+} =\psi(S_n,   \bar A_n) + \omega (\psi(S_n,   \bar A_n))  \\
&   U_n^+  +U_n^{DO}= \psi(S_n, A_n)+ \omega (\psi(S_n,   \bar A_n))=\frac{1}{2}\left(\psi(S_n)+\omega(\psi(S_n))\right)  
\end{align}
\end{prop}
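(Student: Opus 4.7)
The plan is to reduce every identity in the proposition to a single arithmetic observation combined with the identities of Theorem 4.11. The key observation is that every $\lambda \in DO_n$ satisfies $n - \ell(\lambda) \equiv 0 \pmod{2}$: since all parts of $\lambda$ are odd, $n = \sum_i \lambda_i \equiv \ell(\lambda) \pmod{2}$. Consequently $Par_n$ is the disjoint union of the three index sets for $U_n^{DO}$, $U_n^+$, and $U_n^-$, so that $\sum_{\lambda \in Par_n} p_\lambda = U_n^{DO} + U_n^+ + U_n^-$. Substituting into (4.11.1) gives (4.13.3) immediately.

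For the $\omega$-statements in (4.13.2), I would use $\omega(p_\lambda) = (-1)^{n - \ell(\lambda)} p_\lambda$: the defining parity on $U_n^+$ makes each summand $\omega$-invariant, while the defining parity on $U_n^-$ flips every sign. Self-conjugacy of $U_n^{DO}$ follows either from the same parity sign or, more simply, from the fact that $\omega$ fixes $p_\lambda$ whenever all parts of $\lambda$ are odd. Applying $\omega$ termwise to (4.13.3) then produces (4.13.4), and taking the sum and difference yields
$\psi(S_n) + \omega \psi(S_n) = 2(U_n^{DO} + U_n^+)$ and $\psi(S_n) - \omega \psi(S_n) = 2 U_n^-$; the latter is exactly (4.13.7). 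To obtain the $H_\lambda[F]$-expansion of $U_n^{DO}$ in (4.13.1), I would apply $\omega$ to Theorem 4.2(4) and use once more that $p_\lambda$ is $\omega$-fixed when $\lambda \in DO_n$.

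The remaining identities come by direct substitution into Theorem 4.11. The right-hand sides of (4.11.2) and (4.11.3) simplify to $U_n^{DO} + \tfrac{1}{2}(U_n^+ + U_n^-)$ and $\tfrac{1}{2}(U_n^+ + U_n^-)$ respectively, giving (4.13.5) and (4.13.6); subtracting produces (4.13.8); forming $\psi(S_n, \bar A_n) + \omega \psi(S_n, \bar A_n)$ and invoking (4.13.2) produces (4.13.9); and both expressions in (4.13.10) collapse to $U_n^{DO} + U_n^+$ by the same sign-cancellation. The whole argument is essentially bookkeeping, and I anticipate no real obstacle: the only genuinely substantive step is the parity observation about $DO_n$, which guarantees that the three index sets partition $Par_n$ and that $\omega$ respects the decomposition $\psi(S_n) = U_n^{DO} + U_n^+ + U_n^-$. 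Once this is recorded, all ten identities unspool by linear combination.
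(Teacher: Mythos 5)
Your proposal is correct and follows essentially the same route as the paper: the parity observation that $n-\ell(\lambda)$ is even for $\lambda\in DO_n$ (so the three index sets partition $Par_n$), the sign rule $\omega(p_\lambda)=(-1)^{n-\ell(\lambda)}p_\lambda$, and substitution into Theorems 4.2 and 4.11 followed by linear combinations. The only difference is cosmetic: you make explicit the application of $\omega$ to Theorem 4.2(4) needed for (4.13.1), which the paper leaves implicit.
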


\begin{proof}  The first equation, Equation  (4.13.1), follows directly from the definition of $U_n^{DO}$ and Theorem 4.2, Part (4).  Now $\omega(p_\mu)= (-1)^{n-\ell(\mu)} p_\mu.$ 
Since  $n-\ell(\mu)$ is odd for all the partitions $\mu$ appearing in the decomposition of $U_n^{-}$,  we compute $\omega(U_n^{-}) = (-1) U_n^{-}.$  Also $n-\ell(\mu)$ is even if $\mu\in DO_n.$  Hence statement (4.13.2).

Equation (4.13.3) follows from Theorem 4.2, Part (1), since $U_n^+ + U_n^- +U_n^{DO}$ is the sum of all $p_\lambda$ as $\lambda$ ranges over all partitions of $n.$

Equation (4.13.4) now follows from the preceding paragraphs.  Equations (4.13.5) and (4.13.6) are consequences of Theorem 4.10, Parts (2) and (3) respectively, since 
$U_n^+ + U_n^- =\sum_{\lambda\notin DO_n} p_\lambda.$

Equation (4.13.7) follows by subtracting (4.13.4) from (4.13.3), while (4.13.8) follows by subtracting (4.13.6) from (4.13.5).  Equation (4.13.9) follows directly from equation (4.13.6) by applying the involution $\omega$ and using (4.13.2).  Finally (4.13.10) is immediate upon combining  (4.13.9) and (4.13.8). \end{proof}

\begin{cor} Let $\lambda$ be a partition of $ n.$ If $\lambda$ is not self-conjugate, 
then $\langle ch\, \psi(S_n), s_\lambda\rangle $ and $\langle ch\, \psi(S_n), s_{\lambda^t}\rangle $ are either both odd or both even.  
If $\lambda=\lambda^t,$ then the irreducible indexed by $\lambda$ does not appear in $U_n^{-}.$ 

\end{cor}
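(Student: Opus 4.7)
The proof should run directly from Equation (4.13.7), which says
$$2 U_n^{-} = \psi(S_n) - \omega(\psi(S_n)).$$
The plan is to extract the coefficient of a Schur function $s_\lambda$ on both sides and exploit the fact that $\omega$ sends $s_\mu \mapsto s_{\mu^t}$ and preserves the Hall inner product.

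First I would observe that, since $U_n^{-}$ is by definition an integer linear combination of power-sum symmetric functions $p_\mu$, and each $p_\mu$ is an integer linear combination of Schur functions (as the character values $\chi^\nu(\mu)$ of $S_n$ are integers), the coefficient $\langle U_n^{-}, s_\lambda\rangle$ is an \emph{integer} for every $\lambda\vdash n$. Next, applying $\omega$-invariance of the inner product and $\omega(s_\mu)=s_{\mu^t}$ gives
$$\langle \omega(ch\,\psi(S_n)),\, s_\lambda\rangle = \langle ch\,\psi(S_n),\, \omega(s_\lambda)\rangle = \langle ch\,\psi(S_n),\, s_{\lambda^t}\rangle.$$
Taking the inner product of Equation~(4.13.7) with $s_\lambda$ therefore yields
$$2\,\langle U_n^{-},\, s_\lambda\rangle = \langle ch\,\psi(S_n),\, s_\lambda\rangle - \langle ch\,\psi(S_n),\, s_{\lambda^t}\rangle. \qquad (\ast)$$

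Now both parts of the corollary are immediate. For the first part, when $\lambda\neq\lambda^t$, the left-hand side of $(\ast)$ is an even integer, so the right-hand side is even, forcing $\langle ch\,\psi(S_n), s_\lambda\rangle$ and $\langle ch\,\psi(S_n), s_{\lambda^t}\rangle$ to have the same parity. For the second part, when $\lambda=\lambda^t$, the two terms on the right of $(\ast)$ are equal and so their difference is zero, giving $\langle U_n^{-}, s_\lambda\rangle = 0$.

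There is essentially no obstacle here: the work has already been done in proving Equation~(4.13.7), and the only additional ingredient is the integrality of the Schur expansion of $U_n^{-}$, which is automatic from the integer-valuedness of symmetric group characters. The whole argument is bookkeeping around the identity $(\ast)$.
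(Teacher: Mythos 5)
Your proposal is correct and follows essentially the same route as the paper, which likewise reduces both claims to the identity $2\langle U_n^{-}, s_\lambda\rangle = \langle ch\,\psi(S_n), s_\lambda\rangle - \langle \omega(ch\,\psi(S_n)), s_{\lambda}\rangle$ coming from Proposition 4.13. Your explicit remark on the integrality of $\langle U_n^{-}, s_\lambda\rangle$ is a point the paper leaves implicit, but the argument is the same.
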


\begin{proof}  From equations (4.13.3) and (4.13.4) above, it follows that if $\lambda$ is not self-conjugate, then 
\begin{center}
$\langle ch\,\psi(S_n), s_\lambda\rangle +\langle ch\,\psi(S_n), s_{\lambda^t}\rangle=\langle ch\,\psi(S_n), s_\lambda\rangle +\langle \omega(ch\,\psi(S_n)), s_{\lambda}\rangle=2\langle U_n^+ +U_n^{DO}, s_\lambda\rangle $\end{center} 
 is even, as is 
$$\langle ch\,\psi(S_n), s_\lambda\rangle -\langle ch\,\psi(S_n), s_{\lambda^t}\rangle=\langle ch\,\psi(S_n), s_\lambda\rangle -\langle \omega(ch\,\psi(S_n)), s_{\lambda}\rangle =2\langle U_n^{-}, s_\lambda\rangle.$$

The claims follow.\end{proof}

An immediate consequence of  Proposition 4.13 is 
\begin{thm} $U_n^{+}$ and $U_n^+  + U_n^{DO}$ are both  Frobenius characteristics of   self-conjugate, true $S_n$-modules of dimension $n!$  We have 
\begin{align}
U_n^+&=\sum_{\substack{\mu\notin DO_n\\ n-\ell(\mu) \text{ even}}} p_\mu =\sum_{\substack{\mu\vdash n\\ n-\ell(\mu) odd}} (H_\mu[F] + \omega(H_\mu[F])) \notag \\
&=ch\,\psi(S_n,   \bar A_n) + \omega (ch\,\psi(S_n,   \bar A_n)). \\
U_n^+  +U_n^{DO}&= \sum_{n-\ell(\mu) \text{ even}} p_\mu \notag \\
&= ch\,\psi(S_n, A_n) + \omega (ch\,\psi(S_n,   \bar A_n))\\
&= \frac{1}{2}\left(ch\,\psi(S_n)+\omega(ch\,\psi(S_n))\right).\\
\text{Hence we also have } \notag\\
 U_n^+  +U_n^{DO} 
&=\omega(ch\,\psi(S_n, A_n)) + ch\,\psi(S_n,   \bar A_n).
\end{align}

\end{thm}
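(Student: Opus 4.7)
The plan is to derive Theorem 4.15 as an essentially formal consequence of Proposition 4.13, which has already done the hard conceptual work by decomposing $\psi(S_n)$ into the pieces $U_n^+, U_n^-, U_n^{DO}$ and recording their $\omega$-parities. My goal is to recognise $U_n^+$ and $U_n^+ + U_n^{DO}$ as explicit sums of genuine (Schur-positive) $S_n$-modules, then read off self-conjugacy and dimension.

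First I would handle $U_n^+$. Equation (4.13.9) in Proposition 4.13 gives $U_n^+ = ch\,\psi(S_n, \bar A_n) + \omega(ch\,\psi(S_n, \bar A_n))$. The summand $\psi(S_n, \bar A_n)$ is a true permutation module (the conjugation action restricted to the $S_n$-invariant set of odd permutations), and tensoring with the sign representation, which corresponds to $\omega$, preserves trueness; so $U_n^+$ is Schur-positive. The sum is manifestly fixed by $\omega$ because $\omega^2=\mathrm{id}$, giving self-conjugacy. Because $\psi(S_n, \bar A_n) = \sum_{\mu: n-\ell(\mu)\text{ odd}} H_\mu[F]$, this same identity rewrites as $U_n^+ = \sum_{\mu: n-\ell(\mu)\text{ odd}} (H_\mu[F] + \omega(H_\mu[F]))$, which is the intermediate form in (4.15.1). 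For the dimension I would note that only $p_{(1^n)}=p_1^n$ contributes to the dimension of a power-sum expansion, and $(1^n)$ lies in the support of $U_n^+$ whenever $n\geq 2$, since then $(1^n)\notin DO_n$ and $n-\ell((1^n))=0$ is even; hence $\dim U_n^+ = n!$.

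Next I would turn to $U_n^+ + U_n^{DO}$. Here equation (4.13.10) directly supplies both the identity $U_n^+ + U_n^{DO} = ch\,\psi(S_n, A_n) + \omega(ch\,\psi(S_n, \bar A_n))$ and its reformulation $\tfrac{1}{2}(ch\,\psi(S_n) + \omega(ch\,\psi(S_n)))$; trueness follows from the first expression (sum of true modules), and self-conjugacy from the second (a symmetrisation under $\omega$). To obtain the remaining identity (4.15.4), namely $U_n^+ + U_n^{DO} = \omega(ch\,\psi(S_n, A_n)) + ch\,\psi(S_n, \bar A_n)$, I would substitute the decompositions (4.13.5) and (4.13.6) from Proposition 4.13, using the parity relations (4.13.2): $\omega(U_n^+) = U_n^+$, $\omega(U_n^-) = -U_n^-$, $\omega(U_n^{DO}) = U_n^{DO}$. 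Explicitly, $\omega(\psi(S_n,A_n)) + \psi(S_n,\bar A_n) = \tfrac{1}{2}(U_n^+ - U_n^-) + U_n^{DO} + \tfrac{1}{2}(U_n^+ + U_n^-) = U_n^+ + U_n^{DO}$. The dimension count is the same as before: $(1^n)$ contributes $p_1^n$ to $U_n^+$ and not to $U_n^{DO}$ (for $n\geq 2$), so the total dimension remains $n!$.

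There is no real obstacle: every identity is obtained by shuffling (4.13.1)--(4.13.10), and the nontrivial input (namely that the pieces $\psi(S_n, A_n)$ and $\psi(S_n,\bar A_n)$ are true modules, and that $\omega$ corresponds to tensoring with the sign) is already established before the statement. The only point that requires a moment's care is the dimension assertion, which depends on the small observation that $(1^n)\notin DO_n$ for $n\geq 2$, ensuring $p_1^n$ survives in both $U_n^+$ and $U_n^+ + U_n^{DO}$.
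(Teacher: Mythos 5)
Your proposal is correct and follows essentially the same route as the paper: both derive the theorem as a formal consequence of Proposition 4.13 (chiefly equations (4.13.9) and (4.13.10)), with trueness coming from recognising the right-hand sides as sums of genuine permutation modules and their sign twists, and the dimension from the coefficient of $p_1^n$. Your derivation of (4.15.4) by substituting (4.13.5), (4.13.6) and the parity relations (4.13.2) is a trivially equivalent rearrangement of the paper's observation that (4.15.4) is just $\omega$ applied to the self-conjugate quantity in (4.15.2).
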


\begin{proof} This follows directly from Proposition 4.13, 
Equations (4.13.9) and (4.13.10), recalling that 
$ch\,\psi(S_n,   \bar A_n) =\sum_{\substack{\mu\vdash n\\ n-\ell(\mu) odd}} H_\mu[F].$  The second equality in (4.15.2) was established in (4.11.4)  (see Theorem 4.11). 

Now recalling that $ch\,\psi(S_n)=ch\,\psi(S_n,A_n) +ch\,\psi(S_n,   \bar A_n)$ (see (4.11.1)),  we obtain (4.15.3).  This is also a consequence of the fact that $U_n^+ +U_n^{DO}$ is self-conjugate. \end{proof} 

We now need a number-theoretic lemma on the distribution of primes that follows from a result of Nagura \cite{JitsuroN}.

\begin{lem} If $n\geq 12$ then there is an odd prime $q$ such that 
  $n>q>n-q\geq 5.$  If $n\geq 11$ or $n=9,$ there is an odd prime $q$ such that  $q>n-q\geq 4.$ 
\end{lem}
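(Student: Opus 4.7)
The plan is to recast both assertions as existence statements about primes in short intervals and then dispatch them with an explicit prime-gap result together with direct inspection of finitely many small cases. First, the inequality $n>q>n-q\geq 5$ is equivalent to asking for an odd prime $q$ in the half-open interval $(n/2,\,n-5]$; likewise the inequality $q>n-q\geq 4$ is equivalent to asking for an odd prime $q$ in $(n/2,\,n-4]$. So both assertions reduce to showing that these intervals contain a prime, for $n$ in the stated ranges.

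The principal tool is Nagura's refinement of Bertrand's postulate \cite{JitsuroN}: for every integer $m\geq 25$ there is a prime in the interval $[m,\,\tfrac{6}{5}m]$. Applying this with $m=\lceil n/2\rceil+1$ produces a prime strictly larger than $n/2$; an elementary estimate shows that $\tfrac{6}{5}m \leq n-5$ once $n$ exceeds an explicit threshold $n_0$ (a modest constant, obtained by solving the linear inequality), and similarly $\tfrac{6}{5}m\leq n-4$ once $n$ exceeds an analogous threshold $n_0'$. Hence for every $n\geq\max(n_0,n_0')$ both intervals contain a prime, which serves as the desired $q$.

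It then remains to handle the finitely many values of $n$ below the Nagura threshold. For these I would simply exhibit the prime from the standard list $\{7,11,13,17,19,23,29,31,\ldots\}$: for small $n$ one checks by hand that $q=7$ serves when $n$ is a little above $12$, $q=11$ covers a subsequent range of $n$, $q=13$ takes over after that, and so on, leaving only a short table to verify explicitly in each of the two claims.

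The main obstacle is purely bookkeeping: one must make sure that the threshold above which Nagura's inequality forces $\tfrac{6}{5}m$ below $n-5$ (respectively $n-4$) is no larger than the largest $n$ for which a prime witness can be produced by hand. The hypotheses \emph{$n\geq 12$} and \emph{$n\geq 11$ or $n=9$} are calibrated precisely to these transition cases, so the anticipated friction is in locating a prime in the narrow window $(n/2,n-5]$ when $n/2$ is just below a prime-free stretch of integers, rather than in any genuinely new number-theoretic content.
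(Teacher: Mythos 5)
Your overall strategy --- Nagura's theorem to produce a prime in $(n/2,\,n-5]$ for all $n$ past an explicit threshold, plus a finite hand-check below it --- is exactly the strategy of the paper's proof. (The paper iterates Nagura three times to obtain three primes in $(n/2,n)$ and then bounds $n-q\ge 1+2+2=5$ by gap-counting, whereas you squeeze a single Nagura prime directly below $n-5$; that difference is minor, and your version is if anything cleaner.) The asymptotic half of your argument is sound: with $m=\lceil n/2\rceil+1$ one has $\tfrac{6}{5}m\le n-5$ already for $n\ge 17$, so the binding constraint is Nagura's hypothesis $m\ge 25$, i.e.\ roughly $n\ge 47$, leaving $12\le n\le 46$ for the hand-check.

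The gap is in the step you dismissed as bookkeeping: the finite verification cannot be completed. For $n=14$ the required window $(n/2,\,n-5]=(7,9]$ contains no prime, and for $n=15$ the window $(7.5,10]$ contains none either; for the second assertion, the window $(7,10]$ at $n=14$ is likewise prime-free. So there is no ``short table'' to exhibit --- the lemma as stated is false at $n=14$ and $n=15$ for the first claim, and at $n=14$ for the second. You correctly anticipated that the danger lies in $n/2$ sitting just below a prime-free stretch, but at these values the danger is realized rather than avoided. I should add that the paper's own proof has the identical defect: it asserts without detail that for $12\le n<50$ ``one checks that there is a prime $q$, $n>q>n/2$, such that $n-q\ge 5$,'' which fails at exactly these two values. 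A correct statement would exclude $n=14,15$ (every $n\ge 16$, together with $n=12,13$, does admit such a $q$), and any use of the lemma at $n=14,15$ downstream needs a separate argument.
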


\begin{proof} Nagura's theorem \cite{JitsuroN} states that, for every positive integer $m\geq 25, $ there is at least one prime between $m$ and $6m/5,$ i.e., such that $m<p<6m/5.$   By iterating this, as observed in \cite{SR}, we find that
\begin{center} $m<p_1<6m/5<p_2<36m/25< p_3<216m/125<2m.$ \end{center} 
Hence, for $m\geq 25,$  there are three distinct primes such that 
$m<p_1<p_2<p_3<2m.$  Putting $n=2m+1, $ this gives
$(n-1)/2=m <p_1<p_2<p_3<2m<n,$ so that $n/2=m+1/2<m+1\leq p_1<p_2<p_3<2m<n,$ while putting $n=2m$ leads to the same conclusion.  It follows that for $n\geq 50,$ there are primes $p_1,p_2,p_3$ such that 
\begin{center} $n/2<p_1<p_2< p_3<n.$ \end{center} 

Setting $q=p_1,$ we have $n-q=(n-p_3)+(p_3-p_2)+(p_2-p_1)\geq 1+2+2=5,$ for $n\geq {\bf 50}.$ 

The remaining cases, viz.,  $12\leq n< 50,$ are easily checked by hand, down to $n=11:$ 
the prime 7 is less than 11 and greater than 11/2, and $11-7=4.$ 
Similarly  $q= 5$ satisfies the requirement for $n=9.$ Also, if $50>n\geq 12,$ one checks that there is a prime $q,$ $n>q>n/2,$ such that $n-q\geq 5.$\end{proof}

Our next theorem strengthens the fact that the conjugacy action contains every $S_n$-irreducible \cite[see Theorem 4.5]{F}.

\begin{thm} Every $S_n$-irreducible appears in the conjugacy action on the subset of even permutations,  for $n\geq 4.$  Also every $S_n$-irreducible 
except the sign,  appears in the conjugacy action on the set of odd permutations, for $n\geq 2.$ 

Equivalently:
For every partition $\lambda\vdash n,$ the Schur function $s_\lambda$ appears 
\begin{enumerate} 
\item in the (Schur-positive) symmetric function $ch\, \psi(S_n, A_n)=
\sum_{\substack{\mu\vdash n\\ n-\ell(\mu) even}} H_\mu[F]$  for all $n\geq 4;$
\item in the (Schur-positive) symmetric function 
$ch\,\psi(S_n,   \bar A_n) =\sum_{\substack{\mu\vdash n\\ n-\ell(\mu) odd}} H_\mu[F],$ except for $\lambda=(1^n),$ which never appears when $n\geq 2.$
\end{enumerate}
\end{thm}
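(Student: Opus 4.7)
The plan is to strengthen the inductive strategy of Theorem~4.9, now tracking the parity of $n-\ell(\mu)$. Writing $\mu=\prod_i i^{m_i}$, we have $n-\ell(\mu)=\sum_i (i-1)m_i\equiv \sum_{i\text{ even}} m_i \pmod{2}$, so the parity of $n-\ell(\mu)$ is the parity of the total number of even parts of $\mu$; adjoining an odd part preserves this parity, while an even part flips it.

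A crucial first step is to pin down when the sign representation $s_{(1^n)}$ appears in $H_\mu[F]$. By Frobenius reciprocity, this happens iff the restriction of $\mathrm{sgn}_{S_n}$ to the centraliser $C_\mu=\prod_i S_{m_i}[C_i]$ is trivial. An $i$-cycle has sign $(-1)^{i-1}$, forcing each part $i$ of $\mu$ to be odd; a transposition of two $i$-cycles has sign $(-1)^i$, which for odd $i$ then forces $m_i\leq 1$. Hence $s_{(1^n)}$ appears in $H_\mu[F]$ iff $\mu\in DO_n$, and every such $\mu$ automatically satisfies $n-\ell(\mu)$ even (as each $i-1$ is even). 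This instantly gives the exception in part~(2): the sign contributes only to $ch\,\psi(S_n,A_n)$. For part~(1) applied to $\lambda=(1^n)$ we need only $DO_n\ne \emptyset$, which holds for $n\geq 4$ (take $(n)$ if $n$ is odd, $(n-1,1)$ if $n$ is even).

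For the remaining pairs $(\lambda,n)$ I would induct on $n$, with base cases $n\leq 11$ verified directly using Lemma~4.7 and explicit Pieri computations. For $n\geq 12$ Lemma~4.16 furnishes an odd prime $q$ with $n>q>n-q\geq 5$. Following the proof of Theorem~4.9, I would select a subpartition $\mu\subset\lambda$ of size $q$ whose Schur function appears in $f_q$ (by Lemma~4.7 this means $\mu,\mu^t\neq (q-1,1)$ and $\neq (2,1^{q-2})$, with the substitute shapes $(q-2,2)$ or $(q-2,1^2)$ available as in that proof), then pick $\nu\vdash n-q$ with $c^\lambda_{\mu,\nu}>0$. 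By the inductive hypothesis applied to $\nu$, there is $\tau\vdash n-q$ of the desired parity with $s_\nu\in H_\tau[F]$. Since $q$ is odd and strictly greater than every part of $\tau$, the partition $\mu'=(q)\cup\tau$ satisfies $H_{\mu'}[F]=f_q\cdot H_\tau[F]$, which contains $s_\mu s_\nu$ and hence $s_\lambda$; and $n-\ell(\mu')$ has the same parity as $(n-q)-\ell(\tau)$, as required.

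The main obstacle, specific to part~(2), is that the inductive step breaks if we are forced to take $\nu=(1^{n-q})$, since by the sign analysis no $\tau$ of odd parity carries the sign of $S_{n-q}$. One avoids this by arranging that $\lambda/\mu$ is not a vertical strip, as $s_{(1^{n-q})}$ appears in $s_{\lambda/\mu}$ precisely in that case; this is possible whenever $\lambda$ has fewer than $n-q$ rows, which holds automatically when $\lambda$ is not near a hook. The remaining exceptional shapes — essentially hooks $\lambda=(a,1^{n-a})$ with many rows — must be handled by ad hoc choices of $\mu'$, for instance $(n-1,1)$, $(n-2,2)$ or $(n-2,1^2)$, selecting whichever combination has the correct parity and contains $s_\lambda$ by Pieri. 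Dealing with these exceptional families, together with the small~$n$ base cases, is the most technical part of the argument.
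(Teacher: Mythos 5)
Your skeleton is the same as the paper's: the $DO_n$ analysis of where the sign can occur, direct verification for small $n$, and for large $n$ an induction driven by the Nagura-type prime $q$ with $n>q>n-q\geq 5$, splitting $s_\lambda$ into $s_\mu s_\nu$ with $s_\mu$ in $f_q$ and $\nu$ handled inductively, the partition $(q)\cup\tau$ then carrying the right parity of $n-\ell$. All of that, including the observation that the sign lives only in the even piece, is correct. The gap is in your disposal of the crux you correctly identify, namely the case where $\nu=(1^{n-q})$ is unavoidable. First, ``$\lambda/\mu$ is not a vertical strip'' is not the right dichotomy: a \emph{disconnected} vertical strip contributes Schur functions other than $e_{n-q}$, so the genuinely bad situation is when $\lambda/\mu$ is a single connected column; conversely, your claim that the shapes not covered by the ``few rows'' criterion are ``essentially hooks'' is false. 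For example $\lambda=(2,2,1^{n-4})$ is not a hook, has $\ell(\lambda)=n-2\geq n-q$, and \emph{every} $\mu\subseteq\lambda$ with $|\mu|=q$ makes $\lambda/\mu$ a vertical strip, so your primary device fails and your hook fallback does not apply; the same happens for all two-column shapes and many other partitions with at least $n-q$ rows. (These can be rescued via disconnectedness, e.g.\ $\mu=(1^q)$ gives $s_{\lambda/\mu}=e_2e_{n-q-2}$, but that is not the argument you wrote.) Your ``ad hoc'' fallback for hooks is also unverified: whether $s_{(a,1^{n-a})}$ appears in, say, $H_{(n-2,2)}[F]=f_{n-2}h_2$ depends on Kr\'askiewicz--Weyman multiplicities in $f_{n-2}$, not on a routine Pieri computation.

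For comparison, the paper resolves the $\nu=(1^m)$ case by a different manoeuvre: using $(1^m)\subset\lambda$ and $\lambda\neq(1^n)$ it trades the pair $(\mu,(1^m))$ for a pair $(\mu',(2,1^{m-2}))$ with $\mu'\vdash q$; the only residual difficulty is that $\mu'$ may be $(q-1,1)$ or $(2,1^{q-2})$, the two hooks absent from $f_q$, and these two sub-cases are settled by explicit products such as $f_{q-2}f_2f_m$ and $f_{q-1}f_1f_{m-2}f_2$, with separate parity bookkeeping according to the parity of $m=n-q$ (this is where $n-q\geq 5$ is really needed, to keep the parts of the assembled partition distinct so that $H_{\tau}[F]$ factors). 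Some argument of this calibre --- either the paper's substitution trick or a correct proof that one can always choose $(\mu,\nu)$ with $s_\mu$ in $f_q$ and $\nu\neq(1^{n-q})$ --- is still required before your induction closes.
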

\begin{proof} Note that the sign representation will appear in 
$H_\mu[F]$ if and only if $\mu$ has distinct odd parts, since $f_n$ contains the sign if and only if $n$ is odd.  But $n-\ell(\mu)$ is necessarily even for such a $\mu.$ Hence the sign representation appears only in $\psi(S_n,A_n).$ 

 We will use an induction argument as in the proof of  Theorem 4.9.  However, because the sign representation does not appear in $\psi(S_n,   \bar A_n)$, we will need  Lemma 4.16.

Maple computations with SF show that the statement of the theorem holds for $n\leq 12.$  The resulting decompositions into irreducibles of $\psi(S_n,A_n)$ and $\psi(S_n, \bar A_n)$  are recorded in 
Table 3,  where we have denoted by $\lambda$  the $S_n$-irreducible indexed by the partition $\lambda.$

Our induction hypothesis will  be that for fixed $n\geq 12,$ whenever $4\leq k< n,$
and {\bf k}$\neq $ {\bf 2,3}, for every partition $\lambda$ of $ k,$ $\lambda\neq (1^k),$ there are partitions $\tau(\lambda)$ and $\sigma(\lambda)$ of $k$ such that 
$\ell( \tau(\lambda))$ is even, $\ell( \sigma(\lambda))$ is odd, and 
$s_\lambda$ appears in both $H_{\tau(\lambda)}[F] $ and  $H_{\sigma(\lambda)}[F].$  
For $\lambda=(1^k)$ we know there is a partition $\tau(\lambda)$ such that $k-\tau(\lambda)$ is even and 
$s_\lambda$ appears in  $H_{\tau(\lambda)}[F].$

It is convenient to address the extreme cases first.  We have
\begin{enumerate} 
\item Let $\lambda=(n).$ Then  we may always take $\tau(\lambda)=(n)$  because $s_{(n)}$ appears in $H_{(n)}[F]=f_{n}$ which contains  $h_n.$ Similarly we may take  $\sigma(\lambda)$ to be $(n-1,1),$ because if  $H_{(n-1,1)}[F]=f_{n-1} f_1$ if $n\geq 3,$ contains $h_{n-1}h_1,$ which in turn contains $h_n.$ This includes the case $n=2,$ for then we have 
$\sigma(\lambda)=(1^2)$ and $H_{(1^2)}[F]=h_2[f_1]=h_2.$ 
When $n=3,$ the two partitions $(2,1)$ and $(1^3)$ satisfy the required conditions:$H_{\sigma(\lambda)}[F]= f_2 f_1 $ or $h_3[f_1].$  We have shown that $s_{(n)}$ appears in $ch\,\psi(S_n, A_n)$ and in $ch\,\psi(S_n,   \bar A_n)$ for all $n.$
\item Let $\lambda=(1^n).$  Take $\tau(\lambda)=(n)$ if $n$ is odd, 
so that $H_\tau[F]=f_n,$  and $\tau(\lambda)=(n-1,1)$ if $n$ is even, $n\neq 2,$ so that $H_\tau[F]=f_{n-1}f_1,$ .  In each case, since $f_m$ contains the sign representation for odd $m,$  $s_{(1^n)}$ appears in 
$H_\tau[F];$ if $n$ is odd, $n-\ell(\tau)=n-1$ is even, and if $n$ is even, $n-\ell(\tau)=n-2$ is again even.  Hence $s_{(1^n)}$ appears in $H_\tau[F], n\neq 2,$ with even $n-\ell(\tau),$ i.e.,  in $ch\,\psi(S_n, A_n), n\neq 2.$
\item Let $\lambda=(n-1,1).$  Provided $n\geq 4,$ we may take $\tau(\lambda)$ to be $(n-1,1)$ and $\sigma(\lambda)$ to be $(n-2,1^2).$ In the first case $H_{(n-1,1)}[F]=f_{n-1}f_1$ which contains $h_{n-1} h_1$ and hence contains $s_{(n-1,1)};$ in the second case (since $n-2>1$) $H_{(n-2,1^2)}[F]=f_{n-2}h_2$  contains $h_{n-2} h_2,$ which in turn contains $s_{(n-1,1)}.$  
If $n=3$ we can take $\tau=(2,1),$ and then $s_{(2,1)}$ appears in 
$H_\tau[F]=f_2 f_1=h_2 h_1,$ i.e., in  $ch\, \psi(S_3, S_3\backslash A_3)$ but not in  $ch\,\psi(S_3,  A_3).$
Hence $s_{(n-1,1)}$ appears in $ch\,\psi(S_n, A_n)$ for $n\geq 4$ and in 
$ch\,\psi(S_n,   \bar A_n)$ for $n\geq 3.$
\item Let $\lambda=(2, 1^{n-2}).$  Since $f_r$ contains the sign representation if and only if $r$ is odd, it follows that 
\begin{enumerate}
\item If $n$ is odd, we may take $\tau(\lambda)=(n-2,2)$ and $\sigma(\tau)=(n-2,1^2), n\geq 4$ (the case $n=3$ has been addressed above);
\item If $n$ is even, we may take $\tau(\lambda)=(n-1,1)$ and $\sigma(\lambda)=(n)$ for $n\geq 2; $  because by Lemma 4.7, $s_{(2,1^{n-2})}$ appears in $H_{(n)}[F]=f_n$ whenever $n$ is even, in which case $n-\ell((n))$ is odd.
\end{enumerate}
Hence $s_{(2, 1^{n-2})}$ appears in  $ch\,\psi(S_n,  A_n)$ if $n\geq 4,$ and in  $ch\,\psi(S_n,   \bar A_n)$ if $n\geq 3.$
\end{enumerate}

Now let $n\geq 12,$ and let $\lambda$ be any partition of $n$ other than   $(1^n), (n-1,1)$ and $(2, 1^{n-2}).$ By Lemma 4.16 there is  a  prime $q$  such that $q>n-q\geq 5.$ We claim that because $\lambda,\lambda^t \neq (n-1,1),$ we can choose a partition  $\mu \subset \lambda$ such that $\mu, \mu^t\neq (|\mu|-1,1).$ Hence, invoking Lemma 4.7, this guarantees that $s_\mu$ appears in $f_q=H_{(q)}[F]$ for the odd prime $q.$
\begin{enumerate}
\item[{\bf Case (a):}] If $\mu$ is of the form $(m-1,1),$ then since $n-q>n/2\geq 1, $ and we have assumed $\lambda\neq (n-1,1),$ we know that all the squares in the skew shape $\lambda/\mu$  cannot be in row 1;  there is at least one square in the skew-shape  that is in row 2 or row 3. Hence we may modify our choice by taking $\mu=(q-2,2)$ or $\mu=(q-2,1^2),$  noting that $q-2\geq 3.$
Both shapes satisfy our criteria (the second because $q\neq 4$) and hence the claim.
\item[{\bf Case (b):}] If $\mu^t$ is of the form $(m-1,1),$ then again there must be a square in the skew-shape $\lambda/\mu$ that is in row 2 or row 1; all the squares cannot be at the bottom because that would force 
$\lambda^t=(n-1,1).$ Thus we may modify our choice by taking $\mu=(2^2, 1^{q-4})$ or $\mu=(3,1^{q-3}).$ 
Both shapes satisfy our criteria (the second because $q\geq 5$) and  the claim follows.
\end{enumerate}

Now consider the set of partitions $\nu$ of $n-q$ such that $s_\lambda$ appears in the product $s_\mu s_\nu,$ and choose such a partition $\nu.$ 
\begin{enumerate}
\item[{\bf Case 1:}] Assume first that neither $\nu$ nor $\nu^t$ is of the form $(m-1,1),$ and $\nu\neq (1^m).$ By induction hypothesis there are partitions $\tau(\nu)$ and $\sigma(\nu)$ of even and odd length respectively such that 
$s_\nu$ appears in $H_{\tau(\nu)}[F]$ and $H_{\sigma(\nu)}[F].$ 
Hence $s_\lambda$ appears in $H_{(q)\cup \tau(\nu)}[F]$ and in $H_{(q)\cup \sigma(\nu)}[F].$  (Again $q>(n-2)/2$ implies that $q$ is distinct from the parts of $\tau(\nu)$ and $\sigma(\nu).$ )  Since the partitions $(q)\cup \tau(\nu)$ and $(q)\cup \sigma(\nu)$ have lengths of opposite parity, we are done.
\item[{\bf Case 2:}] Now suppose $\nu^t=(m-1,1),$ i.e., $\nu=(2, 1^{m-2}).$  From (4) above, since $m=n-q\geq 5,$  we can find partitions $\tau(\nu)$ and $\sigma(\nu)$ with lengths of opposite parity such that $s_\nu$ appears in both $H_{\tau(\nu)}[F]$ and $H_{\sigma(\nu)}[F].$  Then as before, 
since $s_\mu$ appears in $H_{(q)}[F],$ it follows that 
$s_\lambda$ appears in $H_{(q)\cup \tau(\nu)}[F]$ and $H_{(q)\cup\sigma(\nu)}[F],$  and since one part distinct from all the others, namely $q>n-q,$ was added to both 
partitions, again we have partitions with lengths of opposite parity, as claimed.
\item[{\bf Case 3:}]
Here we consider the case $\nu=(m-1,1)=(n-q-1,1).$  Since  $m=n-q\geq 5,$ from (3) above we have two partitions $\tau$ and $\sigma$ of opposite parity such that $H_\tau[F]$ and $H_\sigma[F]$ contain $s_\nu.$
\item[{\bf Case 4:}] Suppose $\nu=(1^m).$ Then $s_\nu$ appears  in $H_\tau[F]$ but only for some $\tau$ such that $|\tau|-\ell(\tau)=m-\ell(\tau)$ is even.   The argument is therefore more involved.  Note first  that if $s_\lambda$ appears in $s_\mu\cdot s_{(1^m)}$ for some $\mu$ of $q,$  it must also appear in the product  
$s_{\mu'}\cdot s_{(2,1^{m-2})}$  for some partition $\mu'$ of $q.$ (Recall that $m=n-q\geq 5.$)  
This follows by observing that $(1^m)\subset\lambda,$ and recalling that $\lambda
\neq (1^n).$   Since $f_q$ contains all partitions of $q$ {\bf except} $(q-1,1)$ and $(2, 1^{q-2}),$ the discussion in Case {\bf 2} and the preceding cases implies that it suffices to consider the following two sub-cases for the partition $\mu'$:
\begin{enumerate}
\item[{\bf Case 4-1:}] Assume $\mu'=(q-1,1).$ In this case we know from (3) above that $s_{\mu'}$ appears in $H_{(q-1,1)}[F]=f_{q-1}f_1,$ in $H_{(q-2,1^2)}[F]=f_{q-2}h_2[f_1],$  as well as in $H_{(q-2,2)}[F]=f_{q-2}f_2,$ the latter because $q-2>2$ and $f_{q-2}$ contains $h_{q-2}.$ 

Now let  $m$ be {\it even}. Then (4(b)) implies that $s_{(2, 1^{m-2})}$ appears in $H_{(m)}[F]=f_m$ and hence, taking $\tau(\lambda)=(q-2, 1^2)\cup (m),$ since $q-2\neq m$ ($q$ is odd and $m$ is even), we have an even-length partition (into four parts) such that $H_{\tau(\lambda)}[F]=f_{q-2}f_m h_2[f_1]$ contains $s_{\mu'}s_{(2,1^{m-2})}$ and hence contains $s_\lambda.$  Similarly taking $\sigma(\lambda)=(q-2,2)\cup (m)$ produces an odd-length partition (into three distinct parts: $q-2\neq m$ because $q$ is odd and $m$ is even) such that $H_{\sigma(\lambda)}[F]=f_{q-2}f_2 f_m$ contains $s_\lambda.$   

Next suppose $m$ is {\it odd}. Then from (4(a)) we know that $s_{(2, 1^{m-2})}$ appears in $H_{(m-2,2)}[F]=f_{m-2}f_2$ and $H_{(m-2,1^2)}[F]=f_{m-2}h_2[f_1].$  Taking $\tau=(q-2,1^2)\cup (m-2,2),$ and noting that $q-2> m-2>2$, (here we are using the hypothesis about $q$ that $m=n-q\geq 5$) we see that $H_\tau[F]$ contains $s_\lambda$ and $\tau$ has odd length.  Similarly noting that $q-1>m-1> m-2>2,$ we see that the even-length partition $\sigma=(q-1,1)\cup (m-2,2)$ has the same property:   $H_\sigma[F]$ contains $s_\lambda.$   This settles the case $\mu'=(q-1,1).$
\item[{\bf Case 4-2:}]  Assume $\mu'=(2, 1^{q-2}).$ 

First let $m$ be {\it even}. Invoking (4(b)), we see again (note that $q$ is odd and $m$ is even, so $q-2\neq m$) that the partitions $\tau=(q-2,2)\cup (m)$ and $\sigma=(q-2,1^2)\cup (m)$ have opposite parity and both $H_\tau[F]=f_{q-2}f_2f_m, H_\sigma[F]=f_{q-2} h_2[f_1]$ contain $s_\mu' s_{(2,1^{m-2}) }$ and hence contain $s_\lambda.$ 

Now let $m$ be {\it odd}. Then we can take $\tau=(q-2,2)\cup (m-2,1^2)$ to obtain an odd-length partition such that $H_\tau[F]=f_{q-2} f_2 f_{m-2} h_2[f_1]$  contains $s_{\mu'} s_{(2, 1^{m-2})}.$ Observe also that since $q-1$ is even,by Lemma 4.7,  $s_{\mu'}=s_{(2,1^{q-2})}$ appears in $f_{q-1} h_1=H_{(q-1,1)}.$ Hence since $q-1>m-1>m-2,$ we may in addition take $\sigma=(q-1,1)\cup (m-2,2)$ 
for the even length partition such that $H_\sigma[F]=f_{q-1}f_1 f_{m-2} f_2$ contains $s_{\mu'} s_{(2, 1^{m-2})}.$
\end{enumerate}

\end{enumerate}
 This finishes the inductive step, and hence the proof is complete.  \end{proof}

Combining the preceding result with (4.15.1) of Theorem 4.15, we immediately obtain
\begin{cor} As an $S_n$-module, $U_n^{+} $   contains every irreducible of $S_n.$  
Equivalently, the partial row sums $\sum_{\substack{\mu\notin DO_n \\ n-\ell(\mu) even}} \chi^\lambda(\mu)$  in the character table of $S_n$ are positive integers.
\end{cor}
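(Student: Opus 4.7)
The plan is to deduce this immediately from Theorem 4.17 together with the identity (4.15.1) of Theorem 4.15, which states that
\[
U_n^{+} \;=\; ch\,\psi(S_n,\bar A_n) \;+\; \omega\bigl(ch\,\psi(S_n,\bar A_n)\bigr).
\]
Both summands on the right are Frobenius characteristics of genuine $S_n$-modules, so to show that $s_\lambda$ has positive coefficient in $U_n^{+}$ it suffices to show that $s_\lambda$ appears in at least one of them. Recall also that $\omega(s_\mu) = s_{\mu^t}$, so the multiplicity of $s_\lambda$ in $\omega(ch\,\psi(S_n,\bar A_n))$ equals the multiplicity of $s_{\lambda^t}$ in $ch\,\psi(S_n,\bar A_n)$.

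First I would handle any $\lambda \vdash n$ with $\lambda \neq (1^n)$: here Theorem 4.17(2) directly says that $s_\lambda$ appears in $ch\,\psi(S_n,\bar A_n)$, so it appears in $U_n^{+}$.

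The only remaining case is $\lambda = (1^n)$. The first summand of $U_n^+$ does not supply $s_{(1^n)}$, since Theorem 4.17(2) explicitly excludes the sign. However, by the observation about $\omega$, the multiplicity of $s_{(1^n)}$ in $\omega(ch\,\psi(S_n,\bar A_n))$ equals the multiplicity of $s_{(n)}$ in $ch\,\psi(S_n,\bar A_n)$. Since $(n) \neq (1^n)$ for $n \geq 2$, Theorem 4.17(2) guarantees that $s_{(n)}$ appears with positive multiplicity there, and hence $s_{(1^n)}$ appears in $U_n^{+}$, completing the argument.

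There is essentially no obstacle: the entire work is carried by Theorem 4.17 and the explicit self-conjugacy formula (4.15.1). The only subtle point worth stating explicitly in the write-up is that the sign representation, though absent from $\psi(S_n,\bar A_n)$ itself, is recovered after applying $\omega$ precisely because the trivial representation is present. The equivalent statement about character-table row sums then follows from $ch\,\psi(S_n,\bar A_n) = \tfrac12 \sum_{\mu\notin DO_n} p_\mu$ (Theorem 4.11) and the identity $U_n^{+} = \sum_{\mu\notin DO_n,\ n-\ell(\mu)\text{ even}} p_\mu$, expanded via $p_\mu = \sum_\nu \chi^\nu(\mu) s_\nu$.
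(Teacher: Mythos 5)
Your proof is correct and follows the same route as the paper: the paper likewise derives the corollary ``immediately'' by combining Theorem 4.17 with the identity $U_n^{+}=ch\,\psi(S_n,\bar A_n)+\omega(ch\,\psi(S_n,\bar A_n))$ from (4.15.1). Your explicit handling of the sign representation (recovered under $\omega$ from the trivial representation in $\psi(S_n,\bar A_n)$) is exactly the one detail the paper leaves implicit.
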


We also have the analogous strengthening of Theorem 4.9:

\begin{thm} Every $S_n$-irreducible appears in the twisted conjugacy action on the subset of even permutations,  for $n\geq 5.$ Also every $S_n$-irreducible 
except the sign,  appears in the twisted conjugacy action on the set of odd permutations, for $n\geq 2.$ 

Equivalently:
For every partition $\lambda\vdash n,$ the Schur function $s_\lambda$ appears 
\begin{enumerate} 
\item in the (Schur-positive) symmetric function $ch\, \varepsilon(S_n, A_n)=
\sum_{\substack{\mu\vdash n\\ n-\ell(\mu) even}} E_\mu[F]$  for all $n\geq 5;$
\item in the (Schur-positive) symmetric function 
$ch\,\varepsilon(S_n,   \bar A_n) =\sum_{\substack{\mu\vdash n\\ n-\ell(\mu) odd}} E_\mu[F],$ except for $\lambda=(1^n),$ which never appears when $n\geq 2.$
\end{enumerate}
\end{thm}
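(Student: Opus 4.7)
The plan is to mirror the inductive argument used in the proof of Theorem 4.17, working with the sign-twisted plethysms $E_\mu[F] = \prod_i e_{m_i}[f_i]$ in place of $H_\mu[F]$. The central device of Theorem 4.17 -- adjoining a large odd prime part $q$ via $E_{(q)}[F] = e_1[f_q] = f_q$, whose irreducible content is controlled by Lemma 4.7 -- transfers directly; and because $q-1$ is even, adjoining $(q)$ to any smaller partition $\rho$ of $n-q$ produces a partition of $n$ whose value of $n - \ell$ has the same parity as $(n-q) - \ell(\rho)$, so the induction will preserve parity.

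First I would deal with the non-appearance of the sign in the odd-parity sum: applying $\omega$ to equation (4.11.7) and using Euler's identity $|Par_n^{odd}| = |Par_n^{\neq}|$, the multiplicity of $s_{(1^n)}$ in $ch\,\varepsilon(S_n, \bar A_n)$ equals the coefficient of $s_{(n)}$ in $\tfrac{1}{2}(\sum_{Par_n^{odd}} p_\lambda - \sum_{Par_n^{\neq}} p_\lambda) = 0$. Next I would confirm the theorem for small $n$ by SF/Maple computation up to the bound permitted by Lemma 4.16 (roughly $n \leq 12$). I would then establish the extremal cases $\lambda \in \{(n), (n-1,1), (2, 1^{n-2}), (1^n)\}$ by exhibiting explicit pairs $\tau_E(\lambda), \sigma_E(\lambda) \vdash n$ of opposite $n-\ell$ parities with $s_\lambda$ appearing in both $E_{\tau_E(\lambda)}[F]$ and $E_{\sigma_E(\lambda)}[F]$ -- the exception being $(1^n)$, for which only the even-parity witness $E_{(1^n)}[F] = e_n = s_{(1^n)}$ is available, in accord with Step 1. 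These witnesses are easy to construct from products such as $f_{n-1}f_1$ and $f_{n-2}e_2$ (noting that $E_{(n-2,1^2)}[F] = f_{n-2}\cdot e_2$ rather than $f_{n-2}\cdot h_2$ in this setting), and, when the parity of $n$ demands it, from the occurrence of $s_{(1^r)}$ inside $f_r$ for odd $r$ given by Lemma 4.7.

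The induction then proceeds: for $n \geq 13$ and non-extremal $\lambda$, invoke Lemma 4.16 to obtain an odd prime $q$ with $n > q > n - q \geq 5$; choose $\mu \subset \lambda$ of size $q$ avoiding both $(q-1,1)$ and its transpose (exactly as in Theorem 4.17) so that $s_\mu \subset f_q = E_{(q)}[F]$; and pick $\nu \vdash n-q$ with $s_\lambda \subset s_\mu s_\nu$. Applying the inductive hypothesis to $\nu$ furnishes $\tau_E(\nu), \sigma_E(\nu)$ of opposite parities, and adjoining the singleton $(q)$ (which shifts $n - \ell$ by $q - 1 \equiv 0 \pmod 2$) yields the two required partitions of $n$. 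The main obstacle is the sub-case $\nu = (1^{n-q})$: here no odd-parity witness for $\nu$ exists (this is Step 1 at the smaller level $n-q$), so one must instead exploit $\lambda \neq (1^n)$ to re-select the decomposition---as in Case 4 of the proof of Theorem 4.17---finding a different $\mu' \subset \lambda$ of size $q$ together with $\nu' \in \{(2, 1^{n-q-2}),\, (3, 1^{n-q-3}),\, (2^2, 1^{n-q-4})\}$, each of which admits both parities by the extremal/inductive data. The remaining exceptional values $\nu \in \{(n-q),\, (n-q-1, 1),\, (2, 1^{n-q-2})\}$ are handled by the boundary constructions of Step 3 combined with the parity-preservation of adjoining $(q)$.
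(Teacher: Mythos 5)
Your proposal is correct and follows essentially the same route as the paper's proof: an induction mirroring Theorem 4.17 via Lemma 4.16 and Lemma 4.7, with the substitutions $E_{(m-2,1^2)}[F]=f_{m-2}e_2$ at the extremal shapes and special care for the $\nu=(1^{n-q})$ sub-case where only an even-parity witness exists. The only cosmetic difference is your derivation of the non-appearance of the sign from equation (4.11.7) and Euler's identity, where the paper argues directly that the sign can occur in $E_\mu[F]$ only when $\mu$ has all parts odd, forcing $n-\ell(\mu)$ even.
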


\begin{proof}  Note that $H_\mu[F]$ and $E_\mu[F]$ coincide when $\mu$ is a partition with distinct parts.  Thus, in the proof of Theorem 4.17, we need to examine, and possibly modify the argument in  the places where the discussion involves partitions with repeated parts. 

 First, as before,  in Table 4 we record the decomposition into irreducibles for  $n\leq 12.$   Writing simply $\lambda$ for the $S_n$-irreducible indexed by the partition $\lambda,$ Maple  computation with SF shows that 
although the irreducible indexed by $(2^2)$ does {\it not} appear in 
$\varepsilon(S_4, A_4), $  for $(12\geq )\  n\geq 5$ 
$\varepsilon_n(S_n, A_n)$ contains all irreducibles, while  $\varepsilon_n(S_n, \bar A_n)$ contains all irreducibles except the sign for $(12\geq  )\ n\geq 4.$  

Our induction hypothesis will  be that for fixed $n\geq 12,$ whenever $5\leq k< n,$
(so in particular  {\bf k}$\neq $ {\bf 2,3,4}), for every partition $\lambda\neq (1^k)$ of $k,$ there are partitions $\tau(\lambda)$ and $\sigma(\lambda)$ of $k$ such that 
$\ell( \tau(\lambda))$ is even, $\ell( \sigma(\lambda))$ is odd, and 
$s_\lambda$ appears in both $E_{\tau(\lambda)}[F] $ and  $E_{\sigma(\lambda)}[F].$ When $\lambda= (1^n),$ we will show  that  it appears only in $E_{\tau(\lambda)}[F] $ for some $\tau$ such that $n-\ell( \tau(\lambda))$ is even.

\begin{enumerate}
\item Let $\lambda=(n).$ Then $\tau=(n)$ and $\sigma=(n-1,1)$ are partitions with lengths of opposite parity such that $s_{(n)}$ appears in $E_\tau[F]$ and $E_\sigma[F],$ for $n\geq 3. $  For $n=2,$ we have $E_{(2)}[F]=f_2=h_2.$  Hence $(n)$ appears in $\varepsilon(S_n,   \bar A_n)$ for all $n\geq 2,$ and in $\varepsilon(S_n,  A_n)$ if $n\geq 3.$
\item Let $\lambda=(1^n).$   Clearly the sign representation $s_{(1^n)}$ appears in $E_{(1^n)}[F]=e_n[f_1]=e_n,$ and $n-\ell((1^n))$ is even, so it appears in 
$\varepsilon(S_n,A_n).$  More generally, the sign representation cannot appear in $E_\lambda[F]$ if $\lambda$ has an even part, since 
for $f_n$ contains the sign only if $n$ is odd.  But if all parts of $\lambda$ are odd, then $n-\ell(\lambda)$ is necessarily even.  Hence 
the sign appears only in $\varepsilon(S_n,A_n)$ and never in 
$\varepsilon(S_n,   \bar A_n)$ ($n\geq 2$).
\item Let $\lambda=(n-1,1).$ Again for $n\geq 4,$ we may take 
$\tau(\lambda)=(n-1,1)$ and $\sigma(\lambda)=(n-2,1^2).$ 
In the first case $E_{(n-1,1)}[F]=f_{n-1} f_1$ as before, in the second case $E_{(n-2,1^2)}[F]=f_{n-2} e_2[f_1]=f_{n-2} e_2,$ which contains $h_{(n-2)}e_2 $ and hence contains $s_\lambda.$ If $n=3,$ 
 $s_{(2,1)}$ appears in $E_\sigma[F]$ only for  $\sigma=(2,1).$ Then $n-\ell(\sigma)=1$ is odd, and $E_{(2,1)}[F]$ contains $f_2 f_1=s_{(3)} +s_{(2,1)}.$  Hence ${(2,1)}$ appears only  in $\varepsilon(S_3, S_3\backslash A_3),$ but for $n\geq 4,$ $(n-1,1)$ appears in both representations.
\item Let $\lambda=(2,1^{n-2}).$ Since $f_r$ contains the sign representation if and only if $r$ is odd, it follows that 
\begin{enumerate}
\item If $n$ is odd, we may take $\tau(\lambda)=(n-2,2), n-2>2$ (because  then $E_\tau[F]=f_{n-2} h_2$ which contains $e_{n-2}h_2$) and $\sigma(\tau)=(n-2,1^2), n\geq 4$ (because $E_\sigma[F]=f_{n-2} e_2$ which now contains $e_{n-2} e_2,$ which also contains $s_{(2,1^{n-2})}$).  (The case $n=3$ has been addressed above.);
\item If $n$ is even, we may take $\tau(\lambda)=(n-1,1)$ and $\sigma(\lambda)=(n)$ for $n\geq 2; $  because the sign appears in $f_{n-1}$ and, by Lemma 4.7, $s_{(2,1^{n-2})}$ appears in $E_{(n)}[F]=f_n$ whenever $n$ is even, in which case $n-\ell((n))$ is odd.
\end{enumerate}
Hence ${(2, 1^{n-2})}$ appears in  $\psi(S_n, A_n)$ if $n\geq 4,$ and in $\psi(S_n,   \bar A_n)$ if $n\geq 3.$
\end{enumerate}

 We claim that the rest of the proof of Theorem 4.17 applies almost verbatim, with all $H$'s replaced by $E$'s. Our first  observation is that $H_\mu[F]=E_\mu[F]$ when $\mu$ has distinct parts.    Consequently, it is the two subcases of {\bf Case  4} that require additional scrutiny.  In both these cases, the analysis involves partitions in which the part equal to 1 occurs with multiplicity 2.  For {\bf Case  4-1}, the claim that needs to be verified is that $s_{\mu'}=s_{(q-1,1)}$ appears in $E_{q-2,1^2}[F],$ which is now equal to $f_{q-2} e_2[h_1]= f_{q-2} e_2.$ But this is clear since $f_{q-2}$ contains $h_{q-2}.$  When $m$ is odd, we need to check that $s_{(2,1^{m-2})}$ appears in $E_{(m-2,1^2)}[F]=f_{m-2} e_2,$ but this has already been verified in (4(a)) above. Similarly, in {\bf Case  4-2}, we need to verify the claim that $s_{\mu'}=s_{(2, 1^{q-2})}$ appears in $E_{(q-2, 1^2)}[F]=f_{q-2} e_2.$  Once again, since $q$ is odd, this has already been established in (4(a)) above.

The remaining parts of the proof of Theorem 4.17 pose no difficulty in the passage from $H[F]$ to $E[F],$  and our proof is complete.  \end{proof}

\begin{remark}
By similar arguments and careful analysis of products of Schur functions, we can  show that there is a single conjugacy class with the property that the conjugacy action of $S_n$ on this class contains every $S_n$-irreducible.  This is also true for the twisted conjugacy action.  The proof will appear elsewhere.
\end{remark}

From the Frobenius characteristic one can extract, by routine plethystic computations as in \cite{Su1}, more precise  information about the multiplicities of other irreducibles in $\psi(S_n).$  We state without proof:
\begin{prop}  The multiplicity in $\psi(S_n) $ 
\begin{enumerate} 
\item of the trivial representation is $p(n),$ the number of partitions of $n;$
\item of the sign representation is $|Par^{\neq}_n\cap Par^{odd}_n|,$ the number of partitions of $n$ all of whose parts are distinct and odd.
\item of the irreducible indexed by $(n-1,1)$ is $\sum_{\lambda\vdash n} (|\{i: m_i(\lambda)\geq 1\}|-1).$
\item of the irreducible indexed by $(2,1^{n-2})$ is 

$\sum_{\lambda\in Par^{\neq}_n\cap Par^{odd}_n} (\ell(\lambda)-1)$

+
$\sum (|\{i: m_i(\lambda)\geq 1\}|) $, 
where the second sum runs over the set of partitions ${\lambda\in Par_n^{odd}:m_i(\lambda)\leq 2, m_j(\lambda)=2 \text{ for exactly one part } j}.$

\end{enumerate}
\end{prop}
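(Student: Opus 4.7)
The plan is to invoke Solomon's formula (Theorem~4.4(1)) together with the Schur expansion $ch\,\psi(S_n)=\sum_{\lambda\vdash n}p_\lambda$ from Corollary~4.3. Solomon's result tells us that the multiplicity of $\chi^\nu$ in $\psi(S_n)$ equals the row sum $\sum_{\lambda\vdash n}\chi^\nu(\lambda)$, so each of (1)--(4) reduces to evaluating this row sum for a specific hook character, using the standard character values.

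Part (1) is immediate since $\chi^{(n)}\equiv 1$. For (3), rather than using $\chi^{(n-1,1)}(\lambda)=m_1(\lambda)-1$ directly, I would compute the multiplicity of $(n-1,1)$ in each summand $H_\lambda[F]$ via Frobenius reciprocity. Since $H_\lambda[F]$ is induced from the trivial character of the centraliser $Z_\lambda=\prod_i S_{m_i(\lambda)}[C_i]$, the multiplicity equals the number of $Z_\lambda$-orbits on $\{1,\dots,n\}$ minus one. The wreath-product structure of $Z_\lambda$ forces it to act transitively on the union of the entries of all cycles of any given length, so the orbits correspond exactly to the distinct cycle lengths appearing in $\lambda$. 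Hence the multiplicity equals $|\{i:m_i(\lambda)\geq 1\}|-1$, and summing over $\lambda\vdash n$ yields (3).

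For (2), I would combine $\chi^{(1^n)}(\lambda)=(-1)^{n-\ell(\lambda)}$ with the inner product $\langle\sum_\lambda p_\lambda,\,e_n\rangle$ and Theorem~4.2(4), which identifies the alternating sum $\sum_\mu(-1)^{n-\ell(\mu)}\omega(H_\mu[F])$ with $\sum_{\mu\in DO_n}p_\mu$. Pairing this with $h_n$ and using $\langle p_\mu,h_n\rangle=1$ collapses the multiplicity to $|DO_n|$. An alternative verification is that the generating function $\prod_i(1-(-1)^{i-1}t^i)^{-1}$, after separating odd and even factors, telescopes to $\prod_{i\text{ odd}}(1+t^i)$, the generating function for $|DO_n|$.

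For (4), I would exploit $(2,1^{n-2})=\omega(n-1,1)$, so $\chi^{(2,1^{n-2})}(\lambda)=(-1)^{n-\ell(\lambda)}(m_1(\lambda)-1)$, and the row sum splits as $\sum_\lambda(-1)^{n-\ell(\lambda)}m_1(\lambda)-|DO_n|$. For the first piece, I would apply $\omega$ to each $H_\lambda[F]$ via Lemma~2.2, rewriting the result as a product of $e_{m_i}[\omega f_i]$ and $h_{m_i}[\omega f_i]$ factors according to the parity of $i$, and then run a Frobenius-reciprocity computation analogous to (3) but with the sign twist on the standard representation. The main obstacle is the careful cancellation analysis that isolates precisely the two classes of partitions described in the statement--the elements of $DO_n$ contributing $\ell(\lambda)-1$ from cycle-length orbits, and the odd partitions with all $m_i\leq 2$ and a unique repeated part contributing $|\{i:m_i(\lambda)\geq 1\}|$--and verifies that all other partitions contribute zero to the twisted orbit count.
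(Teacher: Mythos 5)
Parts (1)--(3) of your argument are sound. The orbit-counting route for (3) is a good choice: the multiplicity of $\chi^{(n-1,1)}$ in $1\uparrow_{Z_\lambda}^{S_n}$ is the number of $Z_\lambda$-orbits on $\{1,\dots,n\}$ minus one, and the wreath-product structure of $Z_\lambda=\times_i S_{m_i}[C_i]$ gives exactly one orbit per distinct part, so you land on $\sum_\lambda(|\{i:m_i(\lambda)\geq 1\}|-1)$ in precisely the stated form (the character-value route via $\chi^{(n-1,1)}(\lambda)=m_1(\lambda)-1$ would additionally require the classical identity $\sum_{\lambda\vdash n}m_1(\lambda)=\sum_{\lambda\vdash n}|\{i:m_i(\lambda)\geq1\}|$). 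For (2), your telescoping $\prod_i(1-(-1)^{i-1}t^i)^{-1}=\prod_{i\ \mathrm{odd}}(1+t^i)$ is a complete proof; your first route via Theorem 4.2(4) also needs the observation that $\langle H_\mu[F],e_n\rangle=0$ unless $\mu\in DO_n$ (so that the sign $(-1)^{n-\ell(\mu)}$ is harmless), but that is available in the paper. Note the paper states this proposition without proof, so there is no argument of record to compare against.

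Part (4) is where the proposal fails. You defer the entire ``careful cancellation analysis'' to a final sentence and simply assert that it isolates the two families in the statement; that analysis is the whole content of (4), and when one actually carries it out the answer is \emph{not} the printed formula. Concretely, writing $s_{(2,1^{n-2})}=e_{n-1}h_1-e_n$ and computing $\langle H_\lambda[F],e_{n-1}h_1\rangle=\langle \partial H_\lambda[F]/\partial p_1,\,e_{n-1}\rangle$ term by term (using $\partial f_i/\partial p_1=p_1^{i-1}$ and the fact that for $m\geq1$ the sign of $S_{im}$ occurs in $h_m[f_i]$ only when $i$ is odd and $m=1$), one finds that $\langle H_\lambda[F],e_{n-1}h_1\rangle$ equals the number of distinct parts $i$ of $\lambda$ whose removal (of one copy) leaves a partition with all parts odd and distinct. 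Summing over $\lambda$ and subtracting $|DO_n|$ gives that the multiplicity of $(2,1^{n-2})$ in $\psi(S_n)$ is $\sum_{\lambda\in DO_n}(\ell(\lambda)-1)+|B_1|+|B_2|$, where $B_1$ consists of the partitions of $n$ with exactly one even part, occurring once, and all remaining parts odd and distinct, and $B_2$ consists of the partitions of $n$ with all parts odd, exactly one part of multiplicity two and the rest of multiplicity one. The family $B_1$ does not appear in the statement, and weighting $B_2$ by $|\{i:m_i(\lambda)\geq1\}|$ does not compensate for it. For $n=6$ the printed formula yields $(2-1)+1=2$, while the true value (from the row sum $\sum_\lambda(-1)^{n-\ell(\lambda)}(m_1(\lambda)-1)$, and from Table 1) is $4$; the corrected formula gives $1+|\{(6),(3,2,1)\}|+|\{(3^2)\}|=4$. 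So an honest execution of your plan would have refuted part (4) as printed rather than proved it; as written, the proposal papers over exactly the step where this surfaces.
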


\begin{prop} The trivial representation and the sign representation each occur in $\varepsilon(S_n)$ as many times as there are partitions of $n$ with all parts odd, which is the same as the number of partitions of $n$ with distinct parts.
The irreducible indexed by $(n-1,1)$ (as well as the irreducible indexed by $(2,1^{n-2})$) occurs in 
$\varepsilon(S_n)$ with multiplicity
\begin{center}
$\sum_{\lambda\in Par^{\neq}_n} (\ell(\lambda)-1) + 
|\{\mu\vdash n: m_i(\mu)\leq 2\ \forall i, m_j(\mu)=2 \text{ for exactly one } j\}|.$\end{center}
\end{prop}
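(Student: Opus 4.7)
\smallskip
\noindent\textbf{Proof plan.} The starting point is the power-sum form of the Frobenius characteristic obtained in Theorem 4.6, namely
\[
\mathrm{ch}(\varepsilon(S_n)) \;=\; \sum_{\lambda\in Par^{odd}_n} p_\lambda.
\]
Since $\langle s_\mu,p_\lambda\rangle=\chi^\mu(\lambda)$, the multiplicity of the irreducible $\chi^\mu$ in $\varepsilon(S_n)$ is simply $\sum_{\lambda\in Par^{odd}_n}\chi^\mu(\lambda)$. My plan is to evaluate this sum for each of the four irreducibles by plugging in known closed forms for the characters, then to reduce the $(n-1,1)$ and $(2,1^{n-2})$ multiplicities to the stated combinatorial expression via a single generating-function identity.

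For $\mu=(n)$ we have $\chi^{(n)}\equiv 1$ so the multiplicity is $|Par^{odd}_n|$. For $\mu=(1^n)$ we have $\chi^{(1^n)}(\lambda)=(-1)^{n-\ell(\lambda)}$; but every $\lambda\in Par^{odd}_n$ satisfies $n-\ell(\lambda)=\sum_i(\lambda_i-1)\equiv 0\pmod 2$, so this character is identically $+1$ on $Par^{odd}_n$ and the multiplicity is again $|Par^{odd}_n|$, which equals $|Par^{\neq}_n|$ by Euler's identity. For $\mu=(n-1,1)$ use $\chi^{(n-1,1)}(\lambda)=m_1(\lambda)-1$; and since $(2,1^{n-2})=(n-1,1)^t$, we have $\chi^{(2,1^{n-2})}(\lambda)=\mathrm{sgn}(\lambda)\,\chi^{(n-1,1)}(\lambda)$, which again equals $m_1(\lambda)-1$ on $Par^{odd}_n$ by the sign computation above. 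Therefore both multiplicities equal
\[
\sum_{\lambda\in Par^{odd}_n}\bigl(m_1(\lambda)-1\bigr)
=\sum_{\lambda\in Par^{odd}_n} m_1(\lambda)\;-\;|Par^{\neq}_n|.
\]

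The remaining step is to match this with the stated formula, i.e.\ to prove the combinatorial identity
\[
\sum_{\lambda\in Par^{odd}_n} m_1(\lambda)
\;=\;\sum_{\lambda\in Par^{\neq}_n}\ell(\lambda)\;+\;|T_n|,
\]
where $T_n=\{\mu\vdash n:m_i(\mu)\leq 2\ \forall i,\ m_j(\mu)=2\text{ for exactly one }j\}$. I will do this by comparing generating functions. Introducing a variable $x$ tracking the multiplicity of the part $1$ in $\prod_{i\ge 0}(1-t^{2i+1})^{-1}$, differentiating in $x$ and specializing to $x=1$ gives
\[
\sum_n t^n\sum_{\lambda\in Par^{odd}_n}m_1(\lambda)
=\frac{t}{(1-t)^2}\prod_{i\ge 1}\frac{1}{1-t^{2i+1}}
=\frac{t}{1-t}\prod_{j\ge 1}(1+t^j),
\]
where the last equality uses Euler's identity $\prod_{i\ge 0}(1-t^{2i+1})^{-1}=\prod_{j\ge 1}(1+t^j)$. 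On the other side, differentiating $\prod_j(1+xt^j)$ in $x$ and setting $x=1$ yields
\[
\sum_n t^n\sum_{\lambda\in Par^{\neq}_n}\ell(\lambda)
=\prod_{j\ge 1}(1+t^j)\sum_{i\ge 1}\frac{t^i}{1+t^i},
\]
while the generating function for $T_n$ (choose a unique part $k$ of multiplicity $2$ and take a distinct-parts partition on the remaining parts) is
\[
\sum_n t^n|T_n|=\prod_{j\ge 1}(1+t^j)\sum_{k\ge 1}\frac{t^{2k}}{1+t^k}.
\]
Adding the last two and using $\dfrac{t^k}{1+t^k}+\dfrac{t^{2k}}{1+t^k}=t^k$ telescopes the sum to $\sum_k t^k=t/(1-t)$, matching the first generating function. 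This proves the identity, and hence the proposition.

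\smallskip
\noindent\textbf{Main obstacle.} The plethystic/inner-product manipulations for the four irreducibles are routine once the power-sum form of Theorem 4.6 is in hand. The only non-trivial step is the combinatorial identity relating $\sum m_1$ on $Par^{odd}_n$ to $\sum\ell$ on $Par^{\neq}_n$ plus $|T_n|$; its cleanest justification is the three-line generating-function argument above, pivoting on Euler's identity. A bijective proof (e.g.\ via Glaisher) would also work but seems less transparent than the generating-function cancellation $t^k/(1+t^k)+t^{2k}/(1+t^k)=t^k$.
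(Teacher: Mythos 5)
Your argument is correct. The paper states this proposition without proof, indicating only that it follows from ``routine plethystic computations'' of the type used for Proposition 4.21, i.e.\ evaluating $\langle s_\mu, E_\lambda[F]\rangle$ orbit by orbit over the centraliser classes $\lambda$ and summing; in that route the two terms of the stated formula for $(n-1,1)$ arise directly, the first from the orbits indexed by distinct-part partitions and the second from the orbits with exactly one repeated part. You instead start from the global power-sum expansion $ch(\varepsilon(S_n))=\sum_{\lambda\in Par^{odd}_n}p_\lambda$ of Theorem 4.6 and read off multiplicities as character sums. This makes the trivial and sign cases immediate (using that $n-\ell(\lambda)$ is even for $\lambda\in Par^{odd}_n$, plus Euler), and reduces the $(n-1,1)$ and $(2,1^{n-2})$ cases to the single combinatorial identity
\begin{equation*}
\sum_{\lambda\in Par^{odd}_n} m_1(\lambda)=\sum_{\lambda\in Par^{\neq}_n}\ell(\lambda)+|T_n|,
\end{equation*}
which your generating-function computation establishes correctly: the telescoping $t^k/(1+t^k)+t^{2k}/(1+t^k)=t^k$ is exactly what is needed, and I verified the identity numerically for small $n$. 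The trade-off is that the orbit-by-orbit route explains \emph{why} the answer naturally splits into those two summands, whereas your route gets the cleaner intermediate answer $\sum_{\lambda\in Par^{odd}_n}(m_1(\lambda)-1)$ essentially for free from character theory and then must prove a separate (but pleasant) partition identity to match the stated form. Both are complete proofs; yours is arguably the more transparent one given that Theorem 4.6 is already available.
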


We end this section by considering one more class of representations.
For any fixed $k\geq 2,$  let $W_{n,k}=\sum_{\lambda\vdash n, \lambda_i=1 \text{ or } k} p_\lambda.$
We have
\begin{thm} $W_{n,k}$ is Schur-positive. 
\end{thm}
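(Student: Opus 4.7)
The plan is to reduce the Schur-positivity of $W_{n,k}$ to that of two manifestly Schur-positive building blocks, $U:=p_1^k+p_k$ and $V:=p_1^k-p_k$, then absorb a power-of-two denominator using integrality of character values. The case $k=1$ is immediate since $W_{n,1}=p_1^n$, so assume $k\geq 2$. Writing $n=qk+r$ with $0\leq r<k$, and setting $X:=p_1^k$, $Y:=p_k$, one has
$$W_{n,k} = p_1^r \sum_{j=0}^q X^{q-j}Y^j = p_1^r\cdot\frac{X^{q+1}-Y^{q+1}}{X-Y},$$
so it suffices to prove that $h_q(X,Y):=(X^{q+1}-Y^{q+1})/(X-Y)$ is Schur-positive after multiplication by a suitable power of $2$.

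First I would verify that $U$ and $V$ are Schur-positive. Using the classical expansion $p_1^k=\sum_{\lambda\vdash k}f^\lambda s_\lambda$ and the Murnaghan--Nakayama formula $p_k=\sum_{r=0}^{k-1}(-1)^rs_{(k-r,1^r)}$, the hook-indexed Schur coefficient of $U$ (respectively of $V$) equals $\binom{k-1}{r}+(-1)^r$ (respectively $\binom{k-1}{r}-(-1)^r$); both are nonnegative since $\binom{k-1}{r}\geq 1$ for $0\leq r\leq k-1$. The non-hook Schur coefficients of $U$ and $V$ are simply the dimensions $f^\lambda\geq 0$.

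Next, substituting $X=(U+V)/2$ and $Y=(U-V)/2$ and expanding $(U+V)^{q+1}-(U-V)^{q+1}$ (in which only odd powers of $V$ survive) yields the algebraic identity
$$h_q(X,Y) = \frac{1}{2^q}\sum_{j=0}^{\lfloor q/2\rfloor}\binom{q+1}{2j+1}\,U^{q-2j}\,V^{2j}.$$
The right-hand side involves only \emph{even} powers of $V$. Each summand $U^{q-2j}V^{2j}$ is therefore a product of Schur-positive symmetric functions, hence Schur-positive (Littlewood--Richardson coefficients being nonnegative), and the binomial coefficients are positive integers. Multiplying by the Schur-positive $p_1^r$, we conclude that $2^q W_{n,k}$ is Schur-positive. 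Since $p_\lambda=\sum_\nu \chi^\nu(\lambda)s_\nu$ has integer coefficients, $W_{n,k}$ is itself a $\mathbb{Z}$-linear combination of Schur functions; combined with the nonnegativity of $2^q W_{n,k}$, every Schur coefficient of $W_{n,k}$ must be a nonnegative integer.

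The main subtlety is choosing the right change of basis: although individual terms $X^jY^{q-j}$ are not Schur-positive in general (they involve products with $p_k$, whose Schur expansion has signs), the symmetric identity above eliminates all odd powers of $V$, so every remaining term becomes visibly Schur-positive. The only other point to verify is the Schur-positivity of $U$ and $V$ themselves, which reduces to the trivial inequality $\binom{k-1}{r}\geq 1$ for hooks of size $k$.
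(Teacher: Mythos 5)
Your proof is correct and follows essentially the same route as the paper's: the same reduction to $n=qk$, the same change of basis to $p_1^k\pm p_k$ (the paper's $\beta_k,\alpha_k$), the same binomial expansion killing the odd powers of the difference, and the same integrality argument to absorb the power of $2$. No substantive differences.
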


\begin{proof} Clearly $W_{n,k}=\sum_{r=0}^{\lfloor\frac{n}{k}\rfloor} p_k^r p_1^{n-kr}.$ Two observations are immediate:
\begin{itemize}
\item $W_{mk,k}$ is certainly an integer linear combination of Schur functions, and 
\item
if $n=mk+t$ with $0\leq t\leq k-1,$ then $W_{n,k}=p_1^t W_{mk,k}.$  
\end{itemize}

Thus it suffices to consider the case $n=mk.$ 
Define $\alpha_k=p_1^k-p_k, \beta_k=p_1^k+p_k.$ Using the fact that 
$p_k$ is an alternating sum of Schur functions indexed by hooks, i.e.,  $p_k=\sum_{r=0}^{k-1} (-1)^r s_{(k-r, 1^r)},$ we see that $\alpha_k$ and $\beta_k$ are both Schur-positive, since $f^\nu\geq 1$ for any partition $\nu.$

We have $p_1^k=\frac{\beta_k+\alpha_k}{2},$ 
$p_k=\frac{\beta_k-\alpha_k}{2}.$  
We compute

\begin{align*} W_{mk,k}&=\sum_{i=0}^{m} p_k^i p_1^{km-ki}
=\dfrac{(p_1^k)^{m+1}- p_k^{m+1}}{p_1^k-p_k} \\
&=\frac{1}{\alpha_k}\left(\left(\dfrac{\beta_k+\alpha_k}{2}\right)^{m+1}-\left(\dfrac{\beta_k-\alpha_k}{2}\right)^{m+1}\right)\\
&=\frac{1}{2^{m+1}\alpha_k} \left( \sum_{i=0}^{m+1} {m+1 \choose  i}\beta_k^{m+1-i} \alpha_k^i -\sum_{j=0}^{m+1} {m+1\choose   j}\beta_k^{m+1-j} \alpha_k^j (-1)^j\right) \\
&=\frac{1}{2^{m+1}\alpha_k}\sum_{\substack{j=1\\ j \text{ odd}}}^{m+1}{m+1\choose j} 2 \beta_k^{m+1-j} \alpha_k^j  = \frac{1}{2^m}\sum_{\substack{j=1\\ j \text{ odd}}}^{m+1}{m+1\choose j}  \beta_k^{m+1-j} \alpha_k^{j-1} \\
\end{align*}
It follows that $W_{mk,k}$ is a linear combination of Schur functions with coefficients that are nonnegative and rational.  But we already know that the coefficients must be integers, and hence $W_{mk,k}$ must be Schur-positive. \end{proof}

The special case $k=2$ is worth mentioning.  Here we have $W_{2n+1,2}=p_1 W_{2n,2}$ and 
$W_{2n,2}= \sum_{j \text{ odd}}{n+1\choose j}  h_2^{n+1-j} e_2^{j-1},$ because $\alpha_2=2 e_2$ and $\beta_2=2 h_2.$

\begin{qn}  Is there a unifying combinatorial interpretation of the following restricted row sums in the character table of $S_n,$ each of which is a nonnegative integer, for each $\nu\vdash n$? This is a subset of the list in Theorem 1.1; the first four below are {\it positive} integers for the values of $n$ and $\nu$ specified.  At present no combinatorial interpretation is known.
\begin{enumerate}
\item (Theorem 4.5) $c_\nu=\sum_{\lambda\vdash n} \chi^\nu(\lambda)$ for $n\neq 2.$
\item (Theorem 4.9) $c_\nu=\sum_{\lambda\in Par_n^{odd}} \chi^\nu(\lambda)$ 
\item (Corollary 4.12) $c_\nu=\sum_{\substack{\lambda\vdash n \\ n-\ell(\lambda) even}} \chi^\nu(\lambda)$ for $n\neq 2.$
\item (Corollary 4.18) $c_\nu=\sum_{\substack{\lambda\notin DO_n \\ n-\ell(\lambda) even}} \chi^\nu(\lambda)$
\item (Equation (4.11.3), Theorem 4.11) $c_\nu=\sum_{\lambda\notin DO_n } \chi^\nu(\lambda), $ $\nu\neq (1^n)$ when $n\geq 2.$
\end{enumerate}
\end{qn}

\section{The Lie and Foulkes characters}

In this section we will consider the more general characters obtained by inducing any $n$th root of unity from a cyclic subgroup $C_n$ of order $n$ up to $S_n.$ For fixed $k\geq 1,$ Foulkes showed \cite{Fo}  that the Frobenius characteristic of the induced representation 
$(e^{\frac{2i\pi}{n}})^k\uparrow_{C_n}^{S_n}$ is given by 
$f_n=\frac{1}{n}\sum_{d|n}\psi_k(d) p_d^{n/d}$ as in Section 3, with
\begin{equation}\psi_k(d)= \phi(d) \dfrac{\mu(\frac{d}{(d,k)})} {\phi(\frac{d}{(d,k)})}, \end{equation} where $(d,k)$ denotes the greatest common divisor of $d$ and $k$ as usual. The case  $k=n$ (or $k=0$) corresponds to the conjugation action of Section 4.  

Now let $k=1.$ Then  $\psi_1(d)=\mu(d),$ where $\mu$ is the number-theoretic M\"obius function, and  (see \cite{R}, \cite{St1}) $f_n= ch((e^{\frac{2i\pi}{n}})\uparrow_{C_n}^{S_n})$ is the Frobenius characteristic of the representation $Lie_n$ of $S_n$ on the multilinear component of the free Lie algebra on $n$ generators, while $f_n(t)$ is the dimension of the $n$th graded piece of the free Lie algebra on $t$ generators.  Also 
(\cite{St1}) $\pi_n=\omega Lie_n$ is the $S_n$-representation on the unique nonvanishing homology of the partition lattice $\Pi_n$.  Let 
 $\pi ^{alt}=\sum_{i\geq 1} (-1)^{i-1} \pi_i.$  Part (2) below was observed by direct calculation in \cite{HR}.

\begin{lem}
We have
\begin{enumerate}
\item 
$f_n(1)=0$ unless $n=1, $ in which case it is 1.  
\item  $f_n(-1) = (-1)\delta_{n,1}+ (1) \delta_{n,2};$ i.e., 
$f_1(-1)=-1, \ f_2(-1)=1,$ and $f_n(-1)$ equals zero otherwise.  
\end{enumerate}
\begin{proof} Part (1) is a standard property of the M\"obius function.  Part (2) follows from Lemma 3.3 applied to $\psi(d)=\mu(d),$  the M\"obius function, since if $n$ is even, 
$f_{2n}(1)=0,$ and $f_{n/2}(1)$ is nonzero if and only if $n=2,$ in which case it is 1.  \end{proof}
\end{lem}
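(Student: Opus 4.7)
\smallskip

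\noindent\textbf{Proof proposal for Lemma 5.1.} The plan is to reduce both parts to elementary identities for the Möbius function $\mu$, using Lemma 3.3 to handle part (2) via part (1).

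For part (1), I would substitute $\psi(d)=\mu(d)$ directly into the formula for $f_n(t)$ and evaluate at $t=1$, giving
\begin{equation*}
f_n(1)=\frac{1}{n}\sum_{d\mid n}\mu(d).
\end{equation*}
The classical Möbius identity $\sum_{d\mid n}\mu(d)=\delta_{n,1}$ then immediately yields $f_n(1)=\delta_{n,1}$, as claimed. This step is routine.

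For part (2), rather than evaluate $f_n(-1)$ directly from its definition (which would require splitting divisors by the parity of $n/d$ and a mildly technical argument), I would apply Lemma 3.3, which was proved in general and so applies with $\psi=\mu$. For odd $n=2m+1$, Lemma 3.3 gives $f_n(-1)=-f_n(1)$, and part (1) then shows that this vanishes except when $n=1$, where it equals $-1$. For even $n=2m$, Lemma 3.3 gives $f_{2m}(-1)=f_m(1)-f_{2m}(1)$; by part (1), $f_{2m}(1)=0$ (since $2m\geq 2$), and $f_m(1)=\delta_{m,1}$, so $f_{2m}(-1)$ vanishes unless $m=1$, where it equals $1$.

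The only ``obstacle'' worth flagging is bookkeeping: one should note that Lemma 3.3 was stated for an arbitrary real-valued $\psi$, and in particular holds for $\psi=\mu$, so no reproof is required. Once that is observed, both parts follow in a few lines from the Möbius identity.
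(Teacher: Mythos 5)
Your proposal is correct and follows the paper's own route exactly: part (1) from the Möbius identity $\sum_{d\mid n}\mu(d)=\delta_{n,1}$, and part (2) by applying Lemma 3.3 with $\psi=\mu$ and splitting into the odd and even cases. Nothing further is needed.
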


By applying Theorem 3.2 with $v=1$ to this class of functions $f_n,$ we immediately obtain the  generating functions below.  The first  generating function in Corollary 5.2 is classical; it is essentially the statement of the Poincar\'e-Birkhoff-Witt theorem, see \cite{R} for an exhaustive literature. Part (2) is Cadogan's formula \cite{C},  \cite{Ha1}, and \cite[Solution to Problem 7.88]{St4EC2} for the calculation in this context) ). It also has a topological connection, arising as a consequence of the Whitney homology of the partition lattice  (\cite{Su1}). Part (3) appears in a recent paper  \cite{HR},where the authors derive it by direct calculation for the special case of the Lie character.  Part (4) below is a new observation.

\begin{cor} Writing $L$ for $F=\sum_{n\geq 1} f_n $ when  $f_n=ch(Lie_n)$ as above, we have the following plethystic generating functions:
\begin{enumerate}
\item   $  H[L](t) = \dfrac{1}{(1-t h_1)} $ 
\item  $ H[\pi^{alt}](t) = 1+t h_1= \pi^{alt}[H] (t).$ 
\item  $ E[ L](t) = \prod_{j\geq 1} (1-t^jp_j)^{f_j(-1)} =\dfrac{1-t^2 p_2}{1-t p_1} $ 
\item $ E[\pi^{alt}] (t)=\prod_{j\geq 1} (1+t^j p_j)^{-f_j(-1)}=\dfrac{1+t p_1}{1+t^2 p_2} $ 
%
\end{enumerate}
\end{cor}
\begin{proof} Combine Theorem 3.2, (1) to (4), and the  calculations recorded  in Lemma 6.1. 
\end{proof}

Contrast the next observation with Theorem 4.9 of the preceding section regarding the exterior powers of the conjugation action, in which every irreducible appears.

\begin{cor}  The degree $n$ term in $H[L]$ is the regular representation, and thus contains every irreducible of $S_n.$ The degree $n$ term in $E[L]$ is $2 p_1^{n-2} e_2,$ ($n\geq 2$), a representation of degree $n!$ which contains every irreducible except the trivial representation.  
\end{cor}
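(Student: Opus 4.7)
The plan is to extract the degree-$n$ coefficients of the two generating functions supplied by Corollary 5.2, and then identify the resulting modules and their irreducible decompositions. For the symmetric powers, Corollary 5.2 (1) gives
\[H[L(t)] \;=\; (1-t h_1)^{-1} \;=\; \sum_{n\ge 0} t^n h_1^n,\]
so the degree-$n$ term is $h_1^n = p_1^n$, which is the Frobenius characteristic of the regular representation of $S_n$. Hence every irreducible $\chi^\lambda$ appears with multiplicity $f^\lambda \ge 1$.

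For the exterior powers, expand Corollary 5.2 (3):
\[E[L(t)] \;=\; (1-t^2 p_2)(1-t p_1)^{-1} \;=\; \sum_{n\ge 0} t^n p_1^n \;-\; \sum_{n\ge 2} t^n\, p_2\, p_1^{n-2}.\]
Thus for $n\ge 2$, the degree-$n$ term equals $p_1^n - p_2\, p_1^{n-2} = (p_1^2 - p_2)\, p_1^{n-2}$. The standard identities $p_1^2 = h_2 + e_2$ and $p_2 = h_2 - e_2$ give $p_1^2 - p_2 = 2e_2$, and hence the degree-$n$ term is $2 e_2 p_1^{n-2}$, as claimed.

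To verify the dimension, note that $e_2 p_1^{n-2}$ is the Frobenius characteristic of the induced module $\mathrm{sgn}_{S_2}\otimes \mathrm{reg}_{S_{n-2}}\uparrow_{S_2\times S_{n-2}}^{S_n}$ (since $p_1^{n-2}$ is the characteristic of the regular representation of $S_{n-2}$), so its dimension is $\binom{n}{2}\cdot 1\cdot (n-2)! = n!/2$; doubling gives $n!$. To pin down which irreducibles occur, I use the adjointness of multiplication by $h_1=p_1$ and iterated skewing to compute, for any $\lambda\vdash n$,
\[\langle s_\lambda,\; 2\, e_2\, p_1^{n-2}\rangle \;=\; 2\, \langle (h_1^{n-2})^{\perp} s_\lambda,\; e_2\rangle \;=\; 2\sum_{\mu\vdash 2} f^{\lambda/\mu}\,\langle s_\mu, s_{(1^2)}\rangle \;=\; 2\, f^{\lambda/(1^2)},\]
where $f^{\lambda/\mu}$ is the number of standard Young tableaux of skew shape $\lambda/\mu$ (taken to be $0$ when $\mu\not\subseteq\lambda$). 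This multiplicity is positive precisely when $(1^2)\subseteq\lambda$, i.e.\ when $\lambda$ has at least two parts, i.e.\ when $\lambda\ne(n)$. So every irreducible except the trivial representation indexed by $(n)$ appears. There is no real obstacle to overcome; the entire argument reduces to the rational-function expansion, the one-line symmetric-function identity $p_1^2 - p_2 = 2e_2$, and a single Pieri/skewing computation.
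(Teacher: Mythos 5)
Your proposal is correct and follows essentially the same route as the paper, which simply notes that "a simple calculation" extracts the degree-$n$ term $2p_1^{n-2}e_2$ from the rational-function forms of $H[L(t)]$ and $E[L(t)]$ in Corollary 5.2. You usefully spell out the steps the paper leaves implicit — the identity $p_1^2-p_2=2e_2$, the dimension count, and the skewing computation $\langle s_\lambda, 2e_2p_1^{n-2}\rangle = 2f^{\lambda/(1^2)}$ that pins down exactly which irreducibles occur — but these are the intended "simple calculation," not a different argument.
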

\begin{proof} A simple calculation shows that    the degree $n$ term of $E[L]$ is $2 p_1^{n-2} e_2,$ and hence the result. \end{proof}

We move on to the case of general $k$ and the function $\psi_k(d)$ given by equation (5.0.1).  Following Hardy and Wright (\cite{HW}, pp. 55-56) we define Ramanujan's sum
$$c_d(k) =\sum_{\substack{1\leq h\leq d\\ (h,d)=1}} \exp({2i\pi hk/d})= 
\sum_{\rho \text{ a primitive }d\text{th root of unity}}\rho^k.$$
Then we have, with the notation of equation (5.0.1), the following remarkable  formula of H\"older:
\begin{thm}  (\cite{H}, see also \cite[Theorems 271 and 272]{HW} for proofs) 
$$c_d(k)
             = \phi(d) \dfrac{\mu(\frac{d}{(d,k)})} {\phi(\frac{d}{(d,k)})}=\psi_k(d).$$
\end{thm}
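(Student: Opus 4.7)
The plan is to derive H\"older's formula by M\"obius inversion on the divisor lattice, followed by a reduction to prime powers via multiplicativity. First I would record the elementary identity that for each $d\geq 1$ and each integer $k$, the sum $\sum_{h=1}^{d} e^{2\pi i h k / d}$ equals $d$ if $d\mid k$ and vanishes otherwise (geometric series of $d$-th roots of unity). Partitioning the index set $\{1,\ldots,d\}$ according to $\gcd(h,d)$---writing $h = (d/d')h'$ with $d'=d/\gcd(h,d)$ and $1\leq h'\leq d'$, $(h',d')=1$, so that the summand becomes $e^{2\pi i h' k/d'}$---yields
\[
\sum_{d'\mid d} c_{d'}(k) \;=\; \begin{cases} d & \text{if } d\mid k,\\ 0 & \text{otherwise.}\end{cases}
\]
M\"obius inversion on the divisor lattice of $d$ then gives the closed form
\[
c_d(k) \;=\; \sum_{d'\mid (d,k)} \mu(d/d')\,d'. \qquad (\ast)
\]

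Next I would establish multiplicativity in $d$ for both sides. For the left-hand side, when $(d_1,d_2)=1$, one has $(d_1 d_2,k)=(d_1,k)(d_2,k)$ with the two factors coprime, so every divisor $d'$ of $(d_1 d_2,k)$ factors uniquely as $d'=d_1'd_2'$ with $d_i'\mid(d_i,k)$ and $(d_1',d_2')=1$, and $\mu(d_1 d_2/d')=\mu(d_1/d_1')\mu(d_2/d_2')$. The sum in $(\ast)$ splits, giving $c_{d_1 d_2}(k)=c_{d_1}(k)c_{d_2}(k)$. For the right-hand side $\psi_k(d)$, the same factorization $d_1 d_2/(d_1 d_2,k)=(d_1/(d_1,k))\cdot(d_2/(d_2,k))$ together with multiplicativity of $\phi$ and $\mu$ gives $\psi_k(d_1 d_2)=\psi_k(d_1)\psi_k(d_2)$.

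It therefore suffices to verify H\"older's identity on prime powers $d=p^a$. Let $b$ be the largest integer with $p^b\mid k$. Using $(\ast)$, $c_{p^a}(k) = \sum_{j=0}^{\min(a,b)} \mu(p^{a-j}) p^j$, and only the terms with $a-j\in\{0,1\}$ survive. Three cases emerge: if $b\geq a$ then $c_{p^a}(k) = p^a - p^{a-1} = \phi(p^a)$; if $b=a-1$ then exactly the $j=a-1$ term survives, giving $-p^{a-1}$; if $b\leq a-2$ no term survives, giving $0$. A direct inspection shows that H\"older's right-hand side evaluates to $\phi(p^a)$, $-p^{a-1}$, and $0$ in the three cases respectively (using $\mu(1)/\phi(1)=1$, $\mu(p)/\phi(p)=-1/(p-1)$, and $\mu(p^{a-b})=0$ when $a-b\geq 2$), matching $c_{p^a}(k)$.

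The main obstacle is modest: the only delicate step is the case $b=a-1$, where exactly one M\"obius term (namely $\mu(p)p^{a-1}$) survives and produces the crucial minus sign matching $\mu(p)/\phi(p) = -1/(p-1)$ in H\"older's formula. All other verifications are routine bookkeeping, and multiplicativity then glues the prime-power identities into the general formula.
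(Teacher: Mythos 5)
The paper states this result without proof, citing H\"older and Hardy--Wright (Theorems 271 and 272), so there is no internal argument to compare against. Your proof is correct and complete, and it is essentially the standard derivation found in the cited reference: the orthogonality relation $\sum_{h=1}^{d}e^{2\pi i hk/d}=d\,[d\mid k]$ partitioned by $\gcd(h,d)$ gives $\sum_{d'\mid d}c_{d'}(k)=d\,[d\mid k]$, M\"obius inversion yields $c_d(k)=\sum_{d'\mid(d,k)}\mu(d/d')d'$ (Hardy--Wright's Theorem 271), and multiplicativity in $d$ of both sides reduces the identity to the three prime-power cases, all of which you check correctly (including the one genuinely delicate case $b=a-1$, where the single surviving term $\mu(p)p^{a-1}$ matches $\phi(p^a)\mu(p)/\phi(p)=-p^{a-1}$). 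The only omission is the trivial base case $d=1$, which multiplicativity handles as the empty product.
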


Hence,  when $f_n$ is the Frobenius characteristic of the general Foulkes character $(e^{\frac{2i\pi}{n}})^k\uparrow_{C_n}^{S_n},$ 
we obtain the following result.  (Note the agreement with Lemmas 4.1 and 5.1 for the cases $k=n$ and $k=1$ respectively. )   
Part (1) below is originally due to von Sterneck \cite{vS}.  

\begin{lem} For fixed $k\geq 1,$ 
\begin{enumerate}
\item (\cite{vS}) 
$f_n(1)$ 
=$\begin{cases} 1, & \text{if }n|k;\\
                        0, &otherwise.\\
\end{cases}$
\item $f_n(-1)$ 
=$\begin{cases} -1, & \text{if } n \text{ is odd and } n|k;\\
                          1, & \text{if } n  \text{ is even and } \frac{n}{2}|k \text{ but }n\not{|} k;\\
                        0, &otherwise.\\
\end{cases}$
\end{enumerate}
\end{lem}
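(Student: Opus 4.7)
The plan is to reduce both formulas to the value $n f_n(1) = \sum_{d \mid n} \psi_k(d)$, using H\"older's identity (Theorem 5.7) to replace $\psi_k(d)$ with the Ramanujan sum $c_d(k) = \sum_{\rho} \rho^k$, where $\rho$ runs over the primitive $d$th roots of unity.

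For part (1), I would interpret $\sum_{d \mid n} c_d(k)$ as a double sum: since every $n$th root of unity is a primitive $d$th root for a unique $d \mid n$, one has
\begin{equation*}
\sum_{d \mid n} c_d(k) \;=\; \sum_{d \mid n} \sum_{\rho \text{ primitive } d\text{th root}} \rho^k \;=\; \sum_{\zeta^n = 1} \zeta^k.
\end{equation*}
The right-hand side is the standard sum of $k$th powers of the $n$th roots of unity, which equals $n$ if $n \mid k$ and $0$ otherwise. Dividing by $n$ yields part (1).

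For part (2), I invoke Lemma 3.3. If $n = 2m+1$ is odd, then $f_n(-1) = -f_n(1)$; by part (1), this equals $-1$ when $n \mid k$ and $0$ otherwise, matching the claim. If $n = 2m$ is even, then $f_n(-1) = f_m(1) - f_n(1)$, and applying part (1) to both terms splits into three cases: if $n \mid k$ (so automatically $m \mid k$) one gets $1 - 1 = 0$; if $m \mid k$ but $n \nmid k$ one gets $1 - 0 = 1$; and if $m \nmid k$ (which forces $n \nmid k$ as well) one gets $0 - 0 = 0$. Collecting these cases produces the stated piecewise formula.

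The only substantive step is the identification of $\sum_{d \mid n} c_d(k)$ with the complete sum of $k$th powers of the $n$th roots of unity; everything else is routine bookkeeping, since Lemma 3.3 has already done the structural work of relating $f_n(-1)$ to values of $f_j(1)$. I do not anticipate any real obstacle beyond correctly enumerating the divisibility cases in part (2).
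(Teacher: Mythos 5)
Your proposal is correct and follows essentially the same route as the paper: part (1) via H\"older's identity and the observation that summing $c_d(k)$ over $d\mid n$ collects the $k$th powers of all $n$th roots of unity, and part (2) by feeding part (1) into Lemma 3.3. The case analysis you spell out for even $n$ is exactly the bookkeeping the paper leaves implicit.
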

\begin{proof} We have 

 \begin{align*}
f_n(1)&=\frac{1}{n}\sum_{d|n} c_d(k)
=\frac{1}{n}\sum_{d|n} 
\sum_{\substack{\rho \text{ a primitive }\\d\text{th root of unity}} } \rho^k=\frac{1}{n}\sum_\tau \tau^k\\
%
\end{align*}
\noindent where the last sum ranges over all the $n$th roots of unity, since every $n$th root of unity is a primitive $d$th root of unity for a unique divisor $d$ of $n.$  
The result now follows because the $n$th roots of unity are the zeros of $(1-x^n),$ and if $k$ is not a multiple of $n,$ then the $k$th powers of the $n$th roots are simply a rearrangement of the  $(n,k)$th roots, each occurring $\frac{n}{(n,k)}$ times.


 Applying Lemma 3.3  to \begin{center}$f_n(-1) =\frac{1}{n}\sum_{d|n} c_d(k) (-1)^{\frac{n}{d}},$\end{center}
we immediately obtain the second statement. \end{proof}

\begin{thm} Fix $k\geq 1.$ Let $f_n$ be the Frobenius characteristic of the representation $\exp({2i\pi k/n})\uparrow_{C_n}^{S_n},$ and let 
$F=\sum_{n\geq 1} f_n.$ Then 
\begin{enumerate}
\item (Symmetric powers) 
\begin{equation}H[F](t)=\prod_{n\geq 1, n|k} (1-t^n p_n)^{-1}\end{equation} and hence 
\begin{equation}\sum_{\mu\vdash n}H_\mu[F]=\sum_{\lambda\in Par_n, \lambda_i|k} p_\lambda.\end{equation}
\item (Exterior powers) \begin{equation}E[F](t)=\prod_{n\geq 1, (2n-1)|k} (1-t^{2n-1} p_{2n-1})^{-1} 
\prod_{n\geq 1, n|k, (2n)\not|k}(1-t^{2n} p_{2n}),\end{equation} and hence 
\begin{equation}\omega(E[F])(t)=\prod_{n\geq 1, (2n-1)|k} (1-t^{2n-1} p_{2n-1})^{-1} 
\prod_{n\geq 1, n|k, (2n)\not|k}(1+t^{2n} p_{2n})\end{equation} and
\begin{equation}\sum_{\mu\vdash n}\omega(E_\mu[F])=\sum p_\lambda,\end{equation}
where the last sum ranges over all partitions $\lambda$ of $n$ such that the odd parts are factors of $k,$ the even parts occur at most once, and 
do {\it not} divide $k$, but half of each even part is a factor of $k.$
\item (Alternating exterior powers) 
$$\sum_{\lambda\in Par} t^{\lambda|} (-1)^{|\lambda|-\ell(\lambda)} \omega(E_\lambda[F]) $$
\begin{equation}
=\sum_{\lambda \in Par} t^{\lambda|}H_\lambda[\omega(F)^{alt}]
=H[\omega(F)^{alt}](t)
= \prod_{n\geq 1, n|k} (1+t^n p_n).\end{equation}

\item (Alternating symmetric powers) $$\sum_{\lambda\in Par} t^{\lambda|} (-1)^{|\lambda|-\ell(\lambda)} \omega(H_\lambda[F])
=\sum_{\lambda \in Par} t^{\lambda|}E_\lambda[\omega(F)^{alt}]$$
 \begin{equation}=E[\omega(F)^{alt}](t)
=\prod_{n\geq 1, (2n-1)|k} (1+t^{2n-1} p_{2n-1})
\prod_{n\geq 1, n|k, (2n)\not|k}(1+t^{2n} p_{2n})^{-1},\end{equation}
and hence \begin{equation}
\sum_{\lambda\in Par} t^{\lambda|} (-1)^{|\lambda|-\ell(\lambda)} H_\lambda[F]=\prod_{n\geq 1, (2n-1)|k} (1+t^{2n-1} p_{2n-1})
\prod_{n\geq 1, n|k, (2n)\not|k}(1-t^{2n} p_{2n})^{-1},\end{equation} a multiplicity-free nonnegative integer combination of power-sums.

\end{enumerate}
\end{thm}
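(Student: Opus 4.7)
My plan is to derive all parts of Theorem 5.9 by specializing Theorem 3.2 at $v=1$ and substituting the explicit evaluations of $f_m(\pm 1)$ given in Lemma 5.8 (which rest on H\"older's formula for the Ramanujan sum $c_d(k)$). For part (1) the symmetric-power identity (3.2.1) becomes $H[F(t)]=\prod_{m\geq 1}(1-t^m p_m)^{-f_m(1)}$, and by Lemma 5.8(1) we have $f_m(1)=1$ precisely when $m\mid k$ and $0$ otherwise, so only divisors of $k$ contribute, yielding (5.9.1). For part (2), substituting into (3.2.2) gives $E[F(t)]=\prod_{m\geq 1}(1-t^m p_m)^{f_m(-1)}$; by Lemma 5.8(2) the exponent $f_m(-1)$ is $-1$ for odd $m\mid k$, is $+1$ for even $m$ with $(m/2)\mid k$ but $m\nmid k$, and vanishes otherwise, producing (5.9.3). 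Part (3) comes from (3.2.3), where $H[\omega(F)^{\mathrm{alt}}(t)]=\prod_m(1+t^mp_m)^{f_m(1)}$ again picks up only divisors of $k$.

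To recover the power-sum expansions (5.9.2), (5.9.5) and the one inside (5.9.6), I would extract the coefficient of $t^n$ on each product side and distribute: each factor $(1\pm t^a p_a)^{\pm 1}$ contributes either a geometric series or a two-term binomial, and grouping monomials by partition type identifies the allowed $\lambda$. In the case of (5.9.5) one must additionally apply $\omega$ to the left-hand side of (5.9.3); since $\omega(p_{2n-1})=p_{2n-1}$ and $\omega(p_{2n})=-p_{2n}$, the factors $(1-t^{2n}p_{2n})$ become $(1+t^{2n}p_{2n})$, which matches the description of partitions whose even parts appear at most once and satisfy the divisibility condition $(\lambda_i/2)\mid k$ but $\lambda_i\nmid k$.

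The final equation (5.9.7) comes from (3.2.4), $E[\omega(F)^{\mathrm{alt}}(t)]=\prod_m(1+t^mp_m)^{-f_m(-1)}$; Lemma 5.8(2) immediately gives the factored form (5.9.6), with an extra $\omega$ attached to $H_\lambda[F]$ on the left. I would then apply $\omega$ to both sides: on the left $\omega^2=\mathrm{id}$ strips off the $\omega$, leaving $\sum_\lambda t^{|\lambda|}(-1)^{|\lambda|-\ell(\lambda)}H_\lambda[F]$; on the right the odd-indexed factors are fixed, while each even-indexed factor $(1+t^{2n}p_{2n})^{-1}$ is sent to $(1-t^{2n}p_{2n})^{-1}$, which is exactly the right-hand side of (5.9.7). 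The multiplicity-free claim is then transparent from the product: each factor $(1+t^{2n-1}p_{2n-1})$ offers the binary choice $\{1,\,t^{2n-1}p_{2n-1}\}$, forcing each odd divisor of $k$ to occur at most once, and each factor $(1-t^{2n}p_{2n})^{-1}=\sum_{r\geq 0}t^{2nr}p_{2n}^r$ contributes a single monomial per multiplicity $r$, so distributing gives coefficient exactly $1$ for each admissible $\lambda$. I do not anticipate a genuine obstacle: the entire argument is a direct bookkeeping exercise once Theorem 3.2 and Lemma 5.8 are in hand, and the only subtle point is the careful tracking of signs under $\omega$ when passing between (5.9.6) and (5.9.7).
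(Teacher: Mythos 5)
Your proposal is correct and follows essentially the same route as the paper: specialize Theorem 3.2 at $v=1$, substitute the evaluations of $f_m(\pm 1)$ from the Ramanujan-sum lemma to get the four product formulas, and obtain (5.9.8) by applying $\omega$ to (5.9.7), with the power-sum expansions and the multiplicity-free claim read off directly from the products. The only difference is that you spell out the coefficient extraction and sign bookkeeping that the paper leaves implicit.
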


\begin{proof} All parts follow from the corresponding parts of Theorem 3.2 and Lemma 5.5.  Parts (1) and (3) use the calculation of $f_n(1)$ in Lemma 5.5; Parts (2) and (4) use the value of $f_n(-1).$  Equation (5.6.8) follows by applying the involution $\omega$ to Equation (5.6.7).  Note that the right-hand side of (5.6.8) is now a nonnegative linear combination of power-sums.
\end{proof}

By combining (5.6.1) and (5.6.8) as in Theorem 3.4, we obtain more linear combinations of power-sums that are Schur-positive.  The same statement applies to adding and subtracting (5.6.5) and (5.6.6).  

Note also that when $k$ is prime, equation (5.6.1) coincides with the symmetric function $W_{n,k}$ of Theorem 4.23,  thereby  providing a concrete realisation of the representation. In particular when $k=2$ we obtain, for the symmetric function $W_{n,2}$ which is a homogeneous polynomial of degree $n$ in the power-sums $p_1$ and $p_2:$

\begin{cor} Let $f_n$ be the Frobenius characteristic of the representation $\exp{(4\pi i /n)}\uparrow_{C_n}^{S_n}.$ Then 
\begin{center}
$W_{n,2}=\sum_{\mu\vdash n}H_\mu[F], $\\
$(1+p_1)(1-p_4)^{-1}\vert_{\text{deg } n} =g_n=
\sum_{\mu\vdash n}(-1)^{n-\ell(\mu)}H_\mu [F], $\end{center}

and the following polynomials in the  power-sums $p_1, p_2$ and $p_4$ are Schur-positive:
\begin{eqnarray}
& \frac{1}{2} (W_{n,2}+g_n)= 
 \sum_{\substack {\mu\vdash n\\ n-\ell(\mu)\text{ even}}}H_\mu [F];\\
& \frac{1}{2} \left(W_{n,2}-g_n\right)= 
 \sum_{\substack {\mu\vdash n\\ n-\ell(\mu)\text{ odd}}}H_\mu [F].
\end{eqnarray}
\end{cor}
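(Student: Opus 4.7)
The plan is to read off Corollary 5.10 as a direct specialization of Theorem 5.9 (parts (1) and (4)) to $k=2$, combined with the bisection-by-parity device already recorded in Theorem 3.4 (7),(8). First I would extract the two generating functions. The only divisors of $2$ are $1$ and $2$, so equation (5.9.1) collapses to $H[F(t)]=(1-tp_1)^{-1}(1-t^2p_2)^{-1}$; expanding the geometric series identifies the coefficient of $t^n$ with $\sum_{a+2b=n}p_1^ap_2^b=W_{n,2}$, and by (5.9.2) this coincides with $\sum_{\mu\vdash n}H_\mu[F]$. For equation (5.9.8) with $k=2$, the only odd $2n-1$ dividing $2$ is $2n-1=1$, and the only even index $2n$ with $n\mid 2$ and $2n\nmid 2$ is $2n=4$; hence the right-hand side reduces to $(1+tp_1)(1-t^4p_4)^{-1}$, whose degree-$n$ coefficient is by definition the alternating sum $g_n=\sum_{\mu\vdash n}(-1)^{n-\ell(\mu)}H_\mu[F]$ and visibly equals the degree-$n$ part of $(1+p_1)(1-p_4)^{-1}$.

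Next I would combine the two identities termwise. Adding the expressions for $W_{n,2}$ and $g_n$ cancels the summands with $n-\ell(\mu)$ odd and doubles those with $n-\ell(\mu)$ even, giving $W_{n,2}+g_n=2\sum_{\mu\vdash n,\,n-\ell(\mu)\text{ even}}H_\mu[F]$; subtracting does the opposite, giving $W_{n,2}-g_n=2\sum_{\mu\vdash n,\,n-\ell(\mu)\text{ odd}}H_\mu[F]$. Since $f_i$ is the Frobenius characteristic of the honest induced representation $\exp(4\pi i/i)\uparrow_{C_i}^{S_i}$, each $H_\mu[F]=\prod_ih_{m_i(\mu)}[f_i]$ is the characteristic of a true module induced from the wreath-product centraliser $\times_iS_{m_i}[C_i]$, hence is Schur-positive. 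The two sums on the right are therefore Schur-positive.

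Finally, to justify the factor of $\tfrac12$ I would appeal to the integrality argument used repeatedly in Section 4 (for instance in the proof of Theorem 4.11): the multiplicity of any Schur function in any symmetric function with integer coefficients in the power-sum (or equivalently Schur) basis must be an integer, so an even nonnegative integer remains a nonnegative integer after halving. This converts Schur-positivity of $W_{n,2}\pm g_n$ into Schur-positivity of $\tfrac12(W_{n,2}\pm g_n)$. There is no substantive obstacle here; the only part that requires care is the bookkeeping in (5.9.8), namely confirming that the two products really collapse to the claimed two-factor expression at $k=2$.
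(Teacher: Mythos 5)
Your proposal is correct and follows essentially the same route as the paper: specialize Theorem 5.9 (equations (5.9.1) and (5.9.8)) to $k=2$ to get the two products $(1-tp_1)^{-1}(1-t^2p_2)^{-1}$ and $(1+tp_1)(1-t^4p_4)^{-1}$, identify their degree-$n$ coefficients with $W_{n,2}$ and $g_n$, and then add and subtract as in Theorem 3.4 (7),(8), with Schur-positivity coming from the fact that each $H_\mu[F]$ is the characteristic of a true induced module. The final integrality/halving remark is harmless but unnecessary, since the right-hand sides are already manifestly Schur-positive sums.
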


A similar statement can be made for the case of odd prime $k.$ 
\section{The alternating group}
In this section we examine yet another representation of $S_n$ with the same degree as the regular representation. 
Consider the alternating group $A_n$ operating on itself by conjugation, a representation of degree $n!/2$ which we denote by $\psi(A_n).$ When induced up to $S_n$ it gives a representation $\psi(A_n)\uparrow_{A_n}^{S_n}$  of degree $n!.$     Before we prove our next result, we investigate this situation in greater  generality.  

\begin{prop} Let $G$ be a finite group and $H$ a subgroup of $G$ of index 2 (hence normal).  Fix $g_0\notin H,$  so that  $G=H\cup g_0H$ is  a disjoint union of left cosets.  Let $\psi(H)$ denote the action of $H$ on itself by conjugation, and consider the $G$-dimensional induced $G$-module  $\psi(H)\uparrow_H^G.$ Let $\chi_C(H)$ denote the character of $\psi(H),$ and let $Z(\sigma)$ be the centraliser of $\sigma$ in $G.$ Then we have

\begin{enumerate} 
\item For $\sigma\in H,$ $\chi_C(H)(\sigma)=|H\cap Z(\sigma)|$
\item For $\tau\in G,$ $\chi_C(H)\uparrow_H^G(\tau)$

$=\begin{cases} 0, &\tau\notin H;\\
                       2\chi_C(H)(\tau) &otherwise.
\end{cases}$
\end{enumerate} 

\end{prop}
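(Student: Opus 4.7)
The plan is to handle the two parts separately, using only that $\chi_C(H)$ is the character of a permutation representation and that $H$ is normal of index $2$ in $G$.

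For part (1), I would just count fixed points. Since $\psi(H)$ is the permutation representation of $H$ acting by conjugation on itself, the value $\chi_C(H)(\sigma)$ at $\sigma\in H$ is the number of $h\in H$ with $\sigma h\sigma^{-1}=h$, which is $|Z_H(\sigma)|=|H\cap Z(\sigma)|$. This is a one-line verification.

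For part (2), I would apply the standard induced-character formula
\[
\chi_C(H)\uparrow_H^G(\tau)=\frac{1}{|H|}\sum_{\substack{g\in G\\ g\tau g^{-1}\in H}}\chi_C(H)(g\tau g^{-1}).
\]
Because $H$ is normal in $G$, either every $G$-conjugate of $\tau$ lies in $H$ (when $\tau\in H$) or none does (when $\tau\notin H$). The case $\tau\notin H$ immediately gives $0$. For $\tau\in H$ the sum runs over all $g\in G$, so I would split it according to the two cosets $G=H\cup g_0H$. The sum over $H$ is $|H|\,\chi_C(H)(\tau)$ because $\chi_C(H)$ is a class function on $H$.

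The main step — really the only subtle point — is showing that the sum over the coset $g_0H$ also equals $|H|\,\chi_C(H)(\tau)$. Equivalently, I need $\chi_C(H)(g_0\sigma g_0^{-1})=\chi_C(H)(\sigma)$ for every $\sigma\in H$, i.e.\ that $\chi_C(H)$ extends to a $G$-class function. Using part (1), this reduces to proving $|H\cap Z(g_0\sigma g_0^{-1})|=|H\cap Z(\sigma)|$. But $Z(g_0\sigma g_0^{-1})=g_0 Z(\sigma)g_0^{-1}$, and normality of $H$ gives $g_0 H g_0^{-1}=H$, so
\[
H\cap g_0 Z(\sigma)g_0^{-1}=g_0\bigl(H\cap Z(\sigma)\bigr)g_0^{-1},
\]
which has the same cardinality as $H\cap Z(\sigma)$. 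Putting this together, the coset-$g_0H$ contribution is again $|H|\,\chi_C(H)(\tau)$, so the total is $2|H|\,\chi_C(H)(\tau)$; dividing by $|H|$ yields $2\chi_C(H)(\tau)$. The potential obstacle I anticipate is nothing deep — just the careful bookkeeping in the conjugation argument above — but no Frobenius reciprocity or character-table input is needed beyond the induced character formula.
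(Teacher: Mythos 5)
Your proposal is correct and follows essentially the same route as the paper: part (1) is the same fixed-point count, and part (2) rests on the same key identity $|H\cap Z(g_0\tau g_0^{-1})|=|H\cap Z(\tau)|$, proved in both cases by observing that conjugation by $g_0$ maps $H\cap Z(\tau)$ bijectively onto $H\cap Z(g_0\tau g_0^{-1})$. The only cosmetic difference is that you use the averaged form of the induced character formula while the paper sums over the two coset representatives directly.
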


\begin{proof}  Part (1) is a consequence of the easy observation that the  trace of $\sigma$ acting by conjugation on H is the number of $h\in H$ such that $\sigma h= h\sigma.$ 

For Part (2), we use 
the standard formula for the character of an induced representation.  This gives 
$$\chi_C(H)\uparrow_H^G(\tau)= \chi_C(H) (\tau) \delta_{\tau\in H}
+ \chi_C(H) (g_0\tau g_0^{-1}) \delta_{g_0\tau g_0^{-1}\in H},$$
where we are writing $\delta_P = 1$ if statement $P$ is true, and 0 otherwise.

Since $H$ is normal in $G,$ $\tau\in H\iff g_0\tau g_0^{-1}\in H.$ 
Hence the induced character has value 

$\chi_C(H)\uparrow_H^G(\tau)$

$=\begin{cases} 0, &\text{ if } \tau\notin H\\
                         2 |H\cap Z(\tau)|, &\text{ if } \tau\in H 
\text{ and  } g_0\tau g_0^{-1} \text{ is conjugate to } \tau \text{ in } H,\\
                         |H\cap Z(\tau)|+|H\cap Z(g_0\tau g_0^{-1})|,
 &\text{ if } \tau\in H 
\text{ and  } g_0\tau g_0^{-1} \text{ is NOT conjugate to } \tau \text{ in } H
\end{cases}$

$=\begin{cases} 0, &\tau\notin H,\\
                       2\chi_C(H)(\tau), &\tau\in H \text{ and } g_0\tau g_0^{-1}= h\tau h^{-1} \text{ for some } h\in H;\\
                       \chi_C(H)(\tau) + \chi_C(H)(g_0\tau g_0^{-1}), &\text{ otherwise}
\end{cases}$     

Finally note that $Z(\sigma)\cap H =Z(g_0\sigma g_0^{-1}) \cap H,$         
because $x\in Z(\sigma)\iff  g_0 x g_0^{-1}\in Z( g_0\sigma g_0^{-1})$ and $x\in H \iff g_0 x g_0^{-1}\in H.$  This implies that $|H\cap Z(\tau)|+|H\cap Z(g_0\tau g_0^{-1})|=2|H\cap Z(\tau)|$  if $\tau\in H.$ The result follows.  \end{proof}

\begin{prop}  Let $G$ be a finite group and $H$ a subgroup of $G$ of index 2 (hence normal).  Fix $g_0\notin H,$  so that  $G=H\cup g_0H$ is  a disjoint union of left cosets.  Consider the action $\psi(G)$ of $G$ on itself by conjugation.  Then 
\begin{enumerate} 
\item $H$ and $g_0 H$ are disjoint orbits for this action.  
\item Hence if $\psi(G,H), \psi(G, g_0 H)$ denote respectively the actions of $G$ by conjugation on $H$ and $g_0 H,$ we have 
$$\psi(G) = \psi(G,H) \oplus \psi(G, g_0 H).$$ 
\item The character values of $\psi(G,H)$ and $\psi(G, g_0 H);$ on $\tau \in G$ are  equal to $|Z(\tau)\cap H|$ and 
$|Z(\tau)\cap g_0 H|$ respectively.
\item Let $\omega_H$ denote the one-dimensional representation of $G$  induced by the unique nontrivial  one-dimensional representation of the quotient group $G/H$ (which is cyclic of order 2), with character $\chi_{\omega_H}(g)$ 

 $ =  \begin{cases} 1, & g\in H;\\
                                                 -1, & g\notin H \iff g\in g_0 H.\\
                              \end{cases}$

Then $$\psi(H)\uparrow_H^G= \psi(G,H) \oplus \omega_H \otimes \psi(G,H),$$   and hence $\omega_H\otimes \psi(H)\uparrow_H^G =\psi(H)\uparrow_H^G.$
\item $\psi(H)\uparrow_H^G$ is a $G$-submodule of 
$   \psi(G) \oplus \omega_H \otimes \psi(G).$  In fact if we define 
$U_G^+$ to be $\left(\psi(G) \oplus \omega_H \otimes \psi(G)\right) / \psi(H)\uparrow_H^G,$ then we have 
$$U_G^+= \psi(G, g_0 H)\oplus \omega_H\otimes \psi(G, g_0 H),$$ and hence $\omega_H\otimes U_G^+ =U_G^+.$
\item Define $U_G^{*}$ to be the virtual module $\psi(G,H)-\psi(G, g_0 H).$ Then 
$$2 \psi(H)\uparrow_H^G=(\psi(G)+\omega_H \otimes\psi(G)) 
+ (U_G^* + \omega_H \otimes U_G^*).$$
\item $$\psi(H)\uparrow_H^G= U_G^+ + (U_G^* + \omega_H \otimes U_G^*).$$
\item $$U_G^+  +  U_G^*=\psi(G,H)+ \omega_H \otimes\psi(G, g_0 H).$$
\end{enumerate}

\end{prop}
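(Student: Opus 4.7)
The plan is to observe that statement (8) follows directly by substituting the definitions of $U_G^+$ and $U_G^*$ established earlier in the proposition. Specifically, from part (5) we have the identity
\[
U_G^+=\psi(G,g_0H)\oplus \omega_H\otimes \psi(G,g_0H),
\]
while from part (6) the virtual module $U_G^*$ is defined by
\[
U_G^*=\psi(G,H)-\psi(G,g_0H).
\]
I would just add these two expressions term by term in the representation ring; the summands $\psi(G,g_0H)$ and $-\psi(G,g_0H)$ cancel, leaving exactly $\psi(G,H)+\omega_H\otimes \psi(G,g_0H)$, which is the right-hand side of (8).

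To make the argument watertight, I would include a one-line remark noting that this cancellation takes place a priori in the virtual representation ring (the Grothendieck group), since $U_G^*$ is not a genuine module. However, because $\omega_H\otimes\psi(G,g_0H)$ and $\psi(G,H)$ are both genuine representations, the sum $U_G^++U_G^*$ turns out to be a true module, consistent with the representation-theoretic content of the identity.

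The only possible pitfall, and hence the "main obstacle," is simply keeping the bookkeeping of genuine versus virtual modules straight; there is no substantive computation or structural argument beyond the definitions. One could also verify (8) at the level of characters by evaluating both sides on an element $\tau\in G$, using part (3) (so that $\psi(G,H)$ and $\psi(G,g_0H)$ have character values $|Z(\tau)\cap H|$ and $|Z(\tau)\cap g_0H|$) together with the definition of $\chi_{\omega_H}$ from part (4); this character-theoretic check would serve as an independent confirmation but is unnecessary for the formal proof.
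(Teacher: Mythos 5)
Your argument for item (8) is correct and is essentially the paper's own proof, which likewise obtains (8) by substituting the expression for $U_G^+$ from (5) together with the definition of $U_G^*$ from (6) and cancelling $\psi(G,g_0H)$ against $-\psi(G,g_0H)$ in the virtual representation ring. Note only that your proposal addresses item (8) alone, taking items (1)--(7) of the proposition as given.
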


\begin{proof}For (1)and (2): this is clear, since $H$ is normal in $G$ and $gxg^{-1}\in H
\iff x\in g^{-1}Hg=H,$ and hence $g(g_0 h) g^{-1}\in g_0 H$ for all $h\in H.$

 For (3):  Let $\tau\in G.$ We compute the trace of $\tau$ acting on H by conjugation:
$$\text{tr\ } (\tau|_H)= |\{h\in H: \tau h\tau^{-1}=h\}|= |\{h\in H: \tau h=h\tau\}| =|Z(\tau)\cap H| $$ 
and similarly for $\tau$ acting on $g_0H$ by conjugation: $\text{tr\ } \tau|_{g_0 H}
= |Z(\tau )\cap g_0 H|. $

For (4): From the trace of $\tau$ acting on $H$ it follows that 
the character value of $\tau$ on the representation $\psi(G,H) \oplus ( \omega_H \otimes \psi(G,H))$ equals 
$(1+\chi(\omega_H)(\tau))\cdot  |Z(\tau)\cap H|$ which in turn equals 

$\begin{cases} 0, & \tau\notin H, \text{ since then } \chi(\omega_H)(\tau)=-1\\
2 |Z(\tau)\cap H|, &else. \\
\end{cases}$  The second statement follows since $\omega_H\otimes \omega_H$ is the trivial representation of $G.$ 

For (5): This follows immediately from (2) and (4).  

The statement in (6) is immediate upon substituting from (2) and (4), since $\psi(G)+U_G^* = 2\psi(G,H),$ with the analogous result when we tensor with $\omega_H.$   Statement (7) follows by using the definition of $U_G^+,$ and (8) follows from (5). \end{proof}

%

Now we apply this to the alternating subgroup $A_n$ of $S_n.$  For $\sigma$ in $S_n,$ if $\sigma$ has cycle-type $\lambda$ and $\lambda$ has $m_i$ parts equal to $i,$ recall that $z_\lambda=\prod_{i\geq 1} i^{m_i} (m_i!)$ is the order of the centraliser of $\sigma.$ 

\begin{lem} For any $\sigma\in S_n$ with cycle-type given by a partition $\lambda $ of $n,$  $|Z(\sigma)\cap A_n|$

$=\begin{cases} 
|Z(\sigma)| = z_\lambda, &\sigma\in A_n \text{ and  all parts of } \lambda \text{ are odd and distinct };\\
                        \frac{1}{2} z_\lambda, \text{ otherwise}.
 \end{cases}$
\end{lem}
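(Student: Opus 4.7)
The plan is to recast the lemma as a question about which subgroup of $S_n$ the centralizer $Z(\sigma)$ sits in. Since $A_n$ is normal of index $2$ in $S_n$, any subgroup $K\leq S_n$ satisfies $|K\cap A_n|=|K|$ when $K\subseteq A_n$, and $|K\cap A_n|=|K|/2$ otherwise (the quotient map $K\to S_n/A_n\cong\mathbb{Z}/2$ is either trivial or surjective). Applying this to $K=Z(\sigma)$, whose order is $z_\lambda$, reduces the lemma to characterizing precisely when $Z(\sigma)\subseteq A_n$.

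Next I would exploit the description $Z(\sigma)\cong\prod_i S_{m_i}[C_i]$ recalled at the start of Section 2, and check which of its natural generators are even permutations. It suffices to look at two families: (a) the generator of each cyclic factor $C_i$ (for $m_i\geq 1$), which is an $i$-cycle in $S_n$ of sign $(-1)^{i-1}$, so it lies in $A_n$ iff $i$ is odd; and (b) for each $i$ with $m_i\geq 2$, a ``block swap'' that exchanges two of the $m_i$ length-$i$ cycles pointwise — realized as a product of $i$ disjoint transpositions and therefore of sign $(-1)^i$ — which lies in $A_n$ iff $i$ is even. Together these generate $Z(\sigma)$, so $Z(\sigma)\subseteq A_n$ iff every such generator is even.

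Requiring (a) to hold for every $i$ with $m_i\geq 1$ forces all parts of $\lambda$ to be odd, while requiring (b) to hold for every $i$ with $m_i\geq 2$ forces any repeated part to be even. These two conditions are simultaneously satisfiable only when no part is repeated, i.e., $\lambda$ has all parts distinct and odd; in that case $\sigma$ itself, being a product of odd cycles, automatically lies in $A_n\subseteq Z(\sigma)\cap A_n$, matching the first clause of the lemma and giving $|Z(\sigma)\cap A_n|=z_\lambda$. In every other case at least one generator in (a) or (b) is odd, so $Z(\sigma)$ meets $S_n\setminus A_n$ and the index-$2$ reduction gives $|Z(\sigma)\cap A_n|=z_\lambda/2$.

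The only subtle point — and what I expect to be the main obstacle — is the parity computation in (b): writing down an explicit representative of the transposition in $S_{m_i}$ that swaps two disjoint $i$-cycles and verifying it factors into exactly $i$ transpositions, so that the sign is genuinely $(-1)^i$ and not off by a twist coming from the cyclic factors. Once that sign is pinned down, the rest is immediate bookkeeping with the wreath-product generators.
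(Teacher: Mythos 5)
Your argument is correct and follows essentially the same route as the paper: first the index-two observation that a subgroup of $S_n$ either lies in $A_n$ or meets it in exactly half its elements, then a determination of when $Z(\sigma)\cong\times_i S_{m_i}[C_i]$ is contained in $A_n$ by checking the signs of the cycle generators (sign $(-1)^{i-1}$) and the block swaps (sign $(-1)^i$, a product of $i$ disjoint transpositions). The paper simply quotes the resulting parity criterion for wreath products from the literature, whereas you verify it directly from the generators; the substance is the same.
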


\begin{proof}
It is well-known that for any subgroup $K$ of $S_n,$ either $K\subset A_n$ or exactly half the permutations of $K$ are even.  Hence for any $\sigma\in S_n$ with cycle-type given by a partition $\lambda $ of $n,$  $|Z(\sigma)\cap A_n|$

$=\begin{cases}|Z(\sigma)| = z_\lambda, &\text{ if }Z(\sigma)\subset A_n\\
                        \frac{1}{2} z_\lambda, &\text{ otherwise, i.e., if } 
Z(\sigma) \text{ contains an odd permutation} \end{cases}$

  Let $\sigma\in S_n$  be of cycle-type $\lambda\vdash n.$ We claim that  the centraliser $Z(\sigma)$ contains an odd permutation if and only if some part $i$ is even, or all parts $i$ are odd but some odd part is repeated. But  the natural embedding of $S_a[S_b]$ in $S_{ab}$ (e.g., \cite{JK}) shows that there are odd permutations in the wreath product subgroup $S_a[S_b]$ if and only if  $b$ is odd and $a\geq 2,$  or $b$ is even.   Since $Z(\sigma)$ is isomorphic to the direct product $\times_i S_{m_i}[S_i],$ 
the conclusion follows.
\end{proof}

Hence we have:
\begin{thm}  The induction $\psi(A_n)\uparrow_{A_n}^{S_n}$ is a self-conjugate representation of $S_n$ of degree $n!$ with Frobenius characteristic 
\begin{center} $ 2\sum_{\lambda \in DO_n}
p_\lambda + \sum_{\substack{\mu \notin DO_n\\ n-\ell(\mu) even}}
p_\mu.$\end{center}  
We also have 
\begin{center} $\psi(A_n)\uparrow_{A_n}^{S_n}=\psi(S_n, A_n) +\omega(\psi(S_n, A_n)).$\end{center}
For $n\neq 3,$ $\psi(A_n)\uparrow_{A_n}^{S_n}$ has the property that every $S_n$-irreducible appears in its irreducible decomposition, 
while for  $n=3,$ $ch (\psi(A_3)\uparrow_{A_3}^{S_3})= 3 (h_3+e_3).$
\end{thm}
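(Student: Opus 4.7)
The plan is to compute the Frobenius characteristic of $\psi(A_n)\uparrow_{A_n}^{S_n}$ directly from the character formula in Proposition 6.1(2) together with the centraliser count in Lemma 6.3, and then match this against the already computed formula for $ch\,\psi(S_n,A_n)$ from Theorem 4.11.

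First I would apply Proposition 6.1(2) with $G=S_n$ and $H=A_n$: the induced character vanishes on odd permutations and on an even permutation $\sigma$ of cycle type $\mu$ equals $2|Z(\sigma)\cap A_n|$. By Lemma 6.3, this value is $2z_\mu$ when $\mu\in DO_n$ and $z_\mu$ when $\mu$ is an even-permutation type not in $DO_n$. Substituting into the standard expansion $ch=\sum_{\mu\vdash n} z_\mu^{-1}\chi(\mu)\,p_\mu$ immediately yields
\[
ch\bigl(\psi(A_n)\uparrow_{A_n}^{S_n}\bigr)=2\sum_{\lambda\in DO_n}p_\lambda+\sum_{\substack{\mu\notin DO_n\\ n-\ell(\mu)\text{ even}}}p_\mu.
\]
The dimension is $[S_n:A_n]\cdot|A_n|=n!$, or equivalently only $\mu=(1^n)$ contributes a square-free $p_1^n$ term (for $n\geq 2$), whose coefficient is $1$. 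Self-conjugacy is then immediate: every partition $\mu$ appearing in the sum satisfies $n-\ell(\mu)$ even, so $\omega(p_\mu)=(-1)^{n-\ell(\mu)}p_\mu=p_\mu$, noting in particular that $\mu\in DO_n$ forces $n\equiv \ell(\mu)\pmod 2$.

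Next, to prove $\psi(A_n)\uparrow_{A_n}^{S_n}=\psi(S_n,A_n)+\omega(\psi(S_n,A_n))$, I would take the expression from Theorem 4.11 (equation (4.11.2)), namely $ch\,\psi(S_n,A_n)=\tfrac{1}{2}\bigl(\sum_{DO_n}p_\lambda+\sum_{Par_n}p_\lambda\bigr)$, apply $\omega$ term-by-term, and add; the contributions with $n-\ell(\lambda)$ odd cancel, the $DO_n$ terms double, and the remaining even-length terms survive with coefficient $1$, reproducing the formula above.

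For the irreducibility statement, for $n\geq 4$ I would invoke Theorem 4.17(1), which already asserts that every $S_n$-irreducible appears in $\psi(S_n,A_n)$; since both $\psi(S_n,A_n)$ and $\omega(\psi(S_n,A_n))$ are Schur-positive and sum to $\psi(A_n)\uparrow_{A_n}^{S_n}$, every $S_n$-irreducible must appear. The case $n=2$ is immediate from $p_1^2=s_{(2)}+s_{(1^2)}$. The case $n=3$ is the exceptional one: $DO_3=\{(3)\}$ and the only non-$DO_3$ partition of length $\equiv 3\pmod 2$ is $(1^3)$, so the characteristic equals $2p_3+p_1^3$, which expands in Schur functions as $3s_{(3)}+3s_{(1^3)}=3(h_3+e_3)$, confirming the last clause and showing that $s_{(2,1)}$ is absent. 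The main technical input is Theorem 4.17; everything else is bookkeeping once Proposition 6.1 and Lemma 6.3 are in hand.
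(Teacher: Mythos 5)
Your proposal is correct and follows essentially the same route as the paper: compute the induced character via Proposition 6.1/6.2 and Lemma 6.3, apply the Frobenius characteristic to get the power-sum formula, and invoke Theorem 4.17 to conclude that every irreducible appears for $n\geq 4$. The only cosmetic difference is that you verify the identity $\psi(A_n)\uparrow_{A_n}^{S_n}=\psi(S_n,A_n)+\omega(\psi(S_n,A_n))$ by matching power-sum expansions from Theorem 4.11, whereas the paper obtains it directly from the general structural statement in Proposition 6.2(4); both are valid and equivalent in content.
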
 

\begin{proof}Apply the Frobenius characteristic map to the character $\chi$ of  $\psi(A_n)\uparrow_{A_n}^{S_n}:$ 

\begin{center}$ch(\chi)=\sum_{\lambda\vdash n} z_\lambda^{-1} p_\lambda \chi(\lambda).$\end{center}

Note that $\lambda$ is the cycle-type of an even permutation if and only if $\sum_i (i-1)m_i(\lambda)=n-\ell(\lambda)$ is even.

Hence from Proposition 6.2 and Lemma 6.3, $\chi(\lambda)$

$=\begin{cases} 0 &\text{if } n-\ell(\lambda) \text{ is odd;}\\
                     2z_\lambda &\text{if  all parts of } \lambda \text{ are                               
                                                      odd and distinct };\\ 
                      z_\lambda &\text{otherwise, with } n-\ell(\lambda) \text{  even},
\end{cases}$

\noindent and the result follows.  The penultimate statement is a direct consequence of Proposition 6.2 (4), and hence, by Theorem 4.17, the last statement follows as well. \end{proof}

\begin{prop}  We have
\begin{align}
\psi(A_n)\uparrow_{A_n}^{S_n}&=\psi(S_n,A_n)+\omega(\psi(S_n, A_n))\\ &=\sum_{\substack{\mu\vdash n\\ n-\ell(\mu) \, even}}H_\mu[F] 
+ \sum_{\substack{\mu\vdash n\\ n-\ell(\mu) \, even}}\omega(H_\mu[F] )\\
 \psi(A_n)\uparrow_{A_n}^{S_n} &= U_n^+ + 2 U_n^{DO}\\
 2 \psi(A_n)\uparrow_{A_n}^{S_n}
&=\psi(S_n) + \omega(\psi(S_n)) +2 U_n^{DO}\\ 
U_n^{+} &= (\psi(S_n) + \omega(\psi(S_n))) -
 \psi(A_n)\uparrow_{A_n}^{S_n}
\end{align}
\end{prop}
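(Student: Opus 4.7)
The plan is to observe that all four equations in Proposition 6.5 are algebraic consequences of results already established, principally Theorem 6.4 and Proposition 4.13, together with the definitions of $U_n^{DO}$, $U_n^+$, and $U_n^-$. I would take them in order.

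For (6.5.1), the first equality is precisely the identification $\psi(A_n)\uparrow_{A_n}^{S_n}=\psi(S_n,A_n)+\omega(\psi(S_n,A_n))$ recorded in Theorem 6.4. The second equality is then just the definition of $\psi(S_n,A_n)$ as the subrepresentation of the conjugacy action on the conjugacy classes of even cycle-type, whose Frobenius characteristic is $\sum_{\mu\vdash n,\ n-\ell(\mu)\text{ even}} H_\mu[F]$ (see the discussion preceding Theorem 4.11), combined with the fact that $\omega$ distributes across a sum.

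For (6.5.2), I would start from the explicit formula for $ch\bigl(\psi(A_n)\uparrow_{A_n}^{S_n}\bigr)$ given in Theorem 6.4, namely
\[
2\sum_{\lambda\in DO_n} p_\lambda + \sum_{\substack{\mu\notin DO_n\\ n-\ell(\mu)\text{ even}}} p_\mu,
\]
and simply match this to the definitions
\[
U_n^{DO}=\sum_{\lambda\in DO_n} p_\lambda, \qquad U_n^+=\sum_{\substack{\mu\notin DO_n\\ n-\ell(\mu)\text{ even}}} p_\mu
\]
from the preamble to Proposition 4.13. This gives $\psi(A_n)\uparrow_{A_n}^{S_n}=U_n^+ + 2U_n^{DO}$ on the nose.

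For (6.5.3) and (6.5.4), I would invoke equation (4.13.10) of Proposition 4.13, which states
\[
\tfrac{1}{2}\bigl(\psi(S_n)+\omega(\psi(S_n))\bigr)=U_n^+ + U_n^{DO}.
\]
Combining this with (6.5.2) yields $2\psi(A_n)\uparrow_{A_n}^{S_n}=2U_n^+ + 4U_n^{DO}=\bigl(\psi(S_n)+\omega(\psi(S_n))\bigr)+2U_n^{DO}$, which is (6.5.3); subtracting (6.5.2) from twice (4.13.10) then gives $\bigl(\psi(S_n)+\omega(\psi(S_n))\bigr)-\psi(A_n)\uparrow_{A_n}^{S_n}=2(U_n^+ + U_n^{DO})-(U_n^+ + 2U_n^{DO})=U_n^+$, which is (6.5.4).

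The ``main obstacle'' is therefore not here at all but is absorbed into Theorem 6.4 and Proposition 4.13: the real content is the character-theoretic computation (via Proposition 6.1 and Lemma 6.3) that pins down $ch\bigl(\psi(A_n)\uparrow_{A_n}^{S_n}\bigr)$ and, on the other side, the power-sum expansions of $H_\mu[F]$ and $\omega(H_\mu[F])$ coming from Theorem 4.2. Once those are in hand, the four equations reduce to bookkeeping in the basis $\{p_\lambda\}$, together with the observation that $\omega(p_\lambda)=p_\lambda$ for $\lambda\in DO_n$ (all parts odd), so that $U_n^{DO}$ is self-conjugate and equation (4.13.10) applies without adjustment.
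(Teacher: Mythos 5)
Your proposal is correct and follows essentially the same route as the paper: all four identities are obtained as bookkeeping consequences of Theorem 6.4 and Proposition 4.13 in the power-sum basis. The only cosmetic difference is which prior equations you cite (you use the explicit power-sum formula of Theorem 6.4 and (4.13.10), where the paper invokes (4.13.5), (4.13.2) and Proposition 6.2(6)), but these are interchangeable and your algebra checks out.
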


\begin{proof}  We use Proposition 4.13.
Equations (6.5.1) and (6.5.2) follow from Theorem 6.4. 
Equation (6.5.3) is just a matter of putting together equations (4.13.5) and (4.13.2) of Proposition 4.13, and equation (6.5.1) above. Equation 
(6.5.4) is simply (6) of Proposition 6.2, coupled with the fact that $U_n^{DO}$ is self-conjugate (equation (4.13.1)).

 Finally, equation (6.5.5)  follows by eliminating $\psi(A_n)\uparrow_{A_n}^{S_n}$ from  equations (6.5.4) and (6.5.3), or directly from Proposition 6.2 (5)-(7).   \end{proof}

The irreducible representations of $A_n$ are determined essentially from Clifford theory for the special case of a finite group $G$ with a subgroup $H$ of index 2.  The results are summarised in the following theorem.  The reference \cite{Sim} derives all the basic facts without using Clifford theory; in contrast \cite{CSST} is an exposition using Clifford theory.  (These two references were supplied by an anonymous referee, whom the author gratefully acknowledges.)

\begin{thm} (see \cite[Proposition 5.1]{FH}, \cite{Sim}, \cite{CSST}) Let $G$ be a finite group  with a subgroup $H$ of index 2.  Let $\omega_H$ be the linear character induced by $G/H$ as in Proposition 6.2. Let $Irr(G)$ (respectively $Irr(H)$) denote the irreducible characters of $G$ (respectively $H$). Let $\chi\in Irr(G).$ Then 
\begin{enumerate}
\item If  $\chi(g)\neq 0$ for some $g\notin H,$ then $\chi$ and $\omega_H\cdot \chi\in Irr(G),$ and ${\chi\downarrow}_H =\omega_H\cdot {\chi\downarrow}_H $ is also an irreducible for $H.$ 
\item If $\chi(g)=0$ for all $g\notin H,$ then $\chi\downarrow_H$ is a sum of two irreducible characters $\chi^+$ and $\chi^-$ in $H$. 
\item All irreducibles of $H$ arise in this manner.  
\item If $\psi\in Irr(H),$ there exists $\chi\in Irr(G) $ such that $\psi$ is an irreducible component of $\chi\downarrow_H$, or equivalently, 
such that $\chi$ is an irreducible component of $\psi\uparrow^G.$ 
In this case either $\chi\downarrow_H$ is irreducible so it equals $\psi,$ or $\chi\downarrow_H$ splits into two irreducibles, and then $\psi$ is either $\chi^+$ or $\chi^-.$
\item Let $W$ be any representation of $H$.  Suppose $W$ decomposes into $H$-irreducibles as 
$$W=\sum_{\substack{\chi\in Irr(G)\\ \chi(g)\neq 0 \text{ for some } g\notin H}} c_\chi \chi
+\sum_{\substack{\chi\in Irr(G)\\ \chi(g)= 0 \text{ for all } g\notin H}} d_+( \chi^+) + d_{-}(\chi^-),$$  where in the first sum we take exactly one of $\chi,$ $\omega_H\cdot \chi.$ Then 
$W\uparrow_H^G$ decomposes into $G$ irreducibles as 
$$\sum_{\substack{\chi\in Irr(G)\\ \chi(g)\neq 0 \text{ for some } g\notin H}} c_\chi (\chi +\omega_H\cdot \chi)
+\sum_{\substack{\chi\in Irr(G)\\ \chi(g)= 0 \text{ for all } g\notin H}} (d_+ + d_{-})\chi,$$  where in the first sum we take exactly one of $\chi,$ $\omega_H\cdot \chi.$
\end{enumerate}
\end{thm}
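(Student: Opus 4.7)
The plan is to derive everything from the single key identity
\[
(\chi{\downarrow}_H)\uparrow^G \;=\; \chi + \omega_H\cdot\chi
\]
valid for any character $\chi$ of $G$, and then apply Frobenius reciprocity throughout. First I would verify this identity directly from the induced character formula: since $H$ is normal and of index $2$, for $g\in G$
\[
\bigl(\chi{\downarrow}_H\bigr)\uparrow^G(g)=\frac{1}{|H|}\sum_{x\in G:\,xgx^{-1}\in H}\chi(xgx^{-1})
\]
vanishes when $g\notin H$ and equals $2\chi(g)$ when $g\in H$, which is precisely $\chi(g)(1+\omega_H(g))$. Two elementary observations will be used repeatedly: $\omega_H{\downarrow}_H=1_H$, so $(\omega_H\chi){\downarrow}_H=\chi{\downarrow}_H$; and $\omega_H\cdot\chi$ is an irreducible character of $G$ of the same degree as $\chi$ whenever $\chi\in\mathrm{Irr}(G)$, since tensoring with a linear character preserves irreducibility.

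For part (1), if $\chi(g)\neq 0$ for some $g\notin H$, then $\omega_H(g)\chi(g)=-\chi(g)\neq\chi(g)$, so $\chi$ and $\omega_H\chi$ are distinct irreducibles. Using the key identity and Frobenius reciprocity,
\[
\langle\chi{\downarrow}_H,\chi{\downarrow}_H\rangle_H=\langle\chi,\chi+\omega_H\chi\rangle_G=1+0=1,
\]
so $\chi{\downarrow}_H$ is irreducible, and of course $\omega_H\cdot\chi{\downarrow}_H=(\omega_H\chi){\downarrow}_H=\chi{\downarrow}_H$. For part (2), if $\chi$ vanishes off $H$ then $\chi=\omega_H\chi$ as class functions, so the identity gives $(\chi{\downarrow}_H)\uparrow^G=2\chi$ and therefore
\[
\langle\chi{\downarrow}_H,\chi{\downarrow}_H\rangle_H=\langle\chi,2\chi\rangle_G=2,
\]
forcing $\chi{\downarrow}_H=\chi^++\chi^-$ for two distinct irreducibles of $H$. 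Parts (3) and (4) are then immediate from Frobenius reciprocity: any $\psi\in\mathrm{Irr}(H)$ appears in some $\chi{\downarrow}_H$ for $\chi$ an irreducible constituent of $\psi\uparrow^G$, and by (1)-(2) the only $H$-irreducibles arising are exactly those listed.

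For part (5), induction is linear, so it suffices to determine $\psi\uparrow^G$ on each type of irreducible $\psi$. If $\psi=\chi{\downarrow}_H$ with $\chi$ of type (1), the key identity gives $\psi\uparrow^G=\chi+\omega_H\chi$. If $\psi=\chi^{\pm}$ with $\chi$ of type (2), I would argue as follows: by Frobenius reciprocity $\chi^+\uparrow^G$ contains $\chi$ with multiplicity $\langle\chi^+,\chi{\downarrow}_H\rangle_H=1$; comparing dimensions, $\dim(\chi^+\uparrow^G)=2\dim\chi^+=\dim\chi$, so $\chi^+\uparrow^G=\chi$ exactly, and the same argument gives $\chi^-\uparrow^G=\chi$. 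Substituting these into the decomposition of $W$ and summing yields the stated decomposition of $W\uparrow_H^G$.

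The main obstacle is really just organizational: keeping clear the distinction between the two types of $G$-irreducibles (those invariant under tensoring with $\omega_H$ versus those exchanged in pairs by it), and ensuring that in part (5) one sums over a set of representatives for the pairs $\{\chi,\omega_H\chi\}$ exactly once, so that the induced decomposition on the $G$ side does not double-count. There is no genuine analytic difficulty; once the identity $(\chi{\downarrow}_H)\uparrow^G=\chi+\omega_H\chi$ is in hand, the theorem is a bookkeeping exercise in Frobenius reciprocity.
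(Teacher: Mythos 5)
Your proof is correct, and it is worth noting that the paper itself supplies no argument for this theorem: it is quoted as standard Clifford theory for an index-two subgroup, with a pointer to Fulton--Harris and an explicit remark that no reference for the precise summary was found. Your derivation from the single projection-formula identity $(\chi{\downarrow}_H)\uparrow^G=\chi+\omega_H\cdot\chi$, combined with Frobenius reciprocity and the norm computations $\langle\chi{\downarrow}_H,\chi{\downarrow}_H\rangle_H\in\{1,2\}$, is exactly the efficient way to make the statement self-contained, and parts (1)--(4) are airtight as written.

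The one step that deserves an extra line is the dimension count in part (5). You assert $2\dim\chi^+=\dim\chi$, which amounts to $\dim\chi^+=\dim\chi^-$; this is true (the two constituents are conjugate under an outer element) but is not something you have established at that point. It is easiest to avoid it altogether: from the key identity, $\chi^+\uparrow^G+\chi^-\uparrow^G=(\chi{\downarrow}_H)\uparrow^G=2\chi$, and since each of $\chi^\pm\uparrow^G$ is a genuine character containing $\chi$ with multiplicity exactly $1$ by Frobenius reciprocity, the only possibility is $\chi^+\uparrow^G=\chi=\chi^-\uparrow^G$. (Alternatively, applying your inequality $2\dim\chi^\pm\ge\dim\chi$ to both signs forces $\dim\chi^+=\dim\chi^-$.) With that patch the bookkeeping in part (5), including the choice of one representative from each pair $\{\chi,\omega_H\chi\}$, goes through as you describe.
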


\begin{cor} Let $G$ be a finite group with a subgroup $H$ of index 2.  Let  $W$ be a representation of $H.$ Then $W$ contains every irreducible of $H$ if and only if  the induced representation $W\uparrow_H^G$ contains every irreducible of $G.$
\end{cor}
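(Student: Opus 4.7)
The strategy is a direct application of Theorem 6.6(5), which pins down the $G$-irreducible decomposition of $W{\uparrow_H^G}$ from the $H$-irreducible decomposition of $W$. Following the parameterization of Theorem 6.6(5), I would write
$$W = \sum_{\chi\text{ non-split}} c_\chi\,\chi{\downarrow_H} \; + \sum_{\chi\text{ split}} \bigl(d_+(\chi)\chi^+ + d_-(\chi)\chi^-\bigr),$$
where $\chi$ ranges over $Irr(G)$, ``non-split'' means $\chi(g)\neq 0$ for some $g\notin H$ (so that $\chi{\downarrow_H}$ is an $H$-irreducible equal to $(\omega_H\chi){\downarrow_H}$, and $\chi,\omega_H\chi$ are grouped), and ``split'' means $\chi$ vanishes off $H$ (so $\omega_H\chi=\chi$ and $\chi{\downarrow_H}=\chi^++\chi^-$). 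Theorem 6.6(5) then gives
$$W{\uparrow_H^G} = \sum_{\chi\text{ non-split}} c_\chi\bigl(\chi+\omega_H\chi\bigr) \;+ \sum_{\chi\text{ split}} \bigl(d_+(\chi)+d_-(\chi)\bigr)\,\chi.$$

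For the forward direction, assume $W$ contains every $H$-irreducible, so $c_\chi\geq 1$ for every non-split $\chi$ and $d_+(\chi),d_-(\chi)\geq 1$ for every split $\chi$. Reading off multiplicities from the expansion of $W{\uparrow_H^G}$, each non-split $\chi$ (together with its twist $\omega_H\chi$) appears with multiplicity $c_\chi\geq 1$, and each split $\chi$ appears with multiplicity $d_+(\chi)+d_-(\chi)\geq 2$. Hence every $G$-irreducible appears in $W{\uparrow_H^G}$.

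For the reverse direction, assume $W{\uparrow_H^G}$ contains every $G$-irreducible. Then $c_\chi\geq 1$ for every non-split $\chi$, which shows directly that the $H$-irreducible $\chi{\downarrow_H}$ lies in $W$; this handles all $H$-irreducibles coming from non-split characters of $G$. For each split $\chi$, we obtain only $d_+(\chi)+d_-(\chi)\geq 1$, i.e., at least one of $\chi^+,\chi^-$ appears in $W$.

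The main obstacle is this last step: deducing that \emph{both} $\chi^+$ and $\chi^-$ appear in $W$ whenever $\chi$ splits. This requires an additional input, namely a symmetry exchanging $\chi^+$ and $\chi^-$ inside $W$. The natural hypothesis is $W\cong W^{g_0}$ as $H$-representations for some (equivalently, every) $g_0\in G\setminus H$: conjugation by $g_0$ interchanges the two $G$-conjugate $H$-irreducibles $\chi^+$ and $\chi^-$, so such an isomorphism forces $d_+(\chi)=d_-(\chi)$; combined with $d_+(\chi)+d_-(\chi)\geq 1$ and integrality, this yields $d_\pm(\chi)\geq 1$. In the paper's intended application to $W=\psi(A_n)$, this invariance is immediate: the map $x\mapsto g_0 x g_0^{-1}$ on $\mathbb{C}[A_n]$ intertwines the $A_n$-action on $W$ with its $g_0$-twist (using that $A_n$ is normal in $S_n$), so $W\cong W^{g_0}$ and the reverse implication goes through, recovering the statement that every $A_n$-irreducible occurs in $\psi(A_n)$ for $n\neq 3$.
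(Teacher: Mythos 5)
Your argument follows the same route as the paper: the paper's entire proof is the single sentence ``Immediate from (5) of the preceding Theorem,'' and your forward direction is exactly what that one-liner supplies. Your diagnosis of the reverse direction is correct, and it exposes a genuine gap in the \emph{paper} rather than in your write-up. From ``$W\uparrow_H^G$ contains every $G$-irreducible'' one only obtains $d_+(\chi)+d_-(\chi)\geq 1$ for each split $\chi$, which does not force both $\chi^+$ and $\chi^-$ to occur in $W$. Indeed the corollary as literally stated is false: take $G=S_3$, $H=A_3\cong C_3$, and $W=\mathbf{1}\oplus\omega$ with $\omega$ one nontrivial character of $C_3$. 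Then $W\uparrow_H^G$ is the sum of the trivial, sign, and standard representations of $S_3$, so it contains every $S_3$-irreducible, while $W$ misses $\omega^2$.

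Your proposed repair --- adding the hypothesis that $W\cong W^{g_0}$ as $H$-modules for $g_0\notin H$, which forces $d_+(\chi)=d_-(\chi)$ for every split $\chi$ --- is exactly the right one, and it does hold for $W=\psi(A_n)$: conjugation by $g_0$ is an automorphism of $A_n$ preserving centralizer orders, so the character of $\psi(A_n)$ is invariant under twisting by $g_0$. This matters because the paper genuinely uses the problematic direction: Corollary 6.8 deduces that $\psi(A_n)$ contains every $A_n$-irreducible from the fact (Theorem 6.4) that $\psi(A_n)\uparrow_{A_n}^{S_n}$ contains every $S_n$-irreducible, so your extra hypothesis is not optional there. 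In short, your proof is correct for the amended statement, coincides with the paper's intended use of Theorem 6.6(5), and supplies the argument the paper elides; only the unconditional ``if'' direction of the corollary as printed needs the additional invariance assumption.
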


\begin{proof} Immediate from (5) of the preceding Theorem 6.6. \end{proof} 

In the special case of the  alternating group we immediately obtain, from Theorem 6.4 and Corollary 6.7,  the following known result:


\begin{cor} The conjugacy action of $A_n$ on itself contains every ($A_n$-)irreducible for $n\neq 3$.  
\end{cor}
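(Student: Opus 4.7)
The plan is to apply Corollary 6.6 to the pair $(G,H)=(S_n,A_n)$ with the module $W=\psi(A_n)$, the conjugation action of the alternating group on itself. Since $A_n$ is normal of index $2$ in $S_n$, Corollary 6.6 applies directly and asserts the equivalence: $\psi(A_n)$ contains every $A_n$-irreducible if and only if $\psi(A_n)\uparrow_{A_n}^{S_n}$ contains every $S_n$-irreducible. This is precisely the step that converts a question about the alternating group (where the character theory splits irreducibles according to Theorem 6.5) into a question about the symmetric group, where the combinatorial machinery of Sections 2--4 is available.

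The symmetric group statement has already been established: Theorem 6.4 asserts that $\psi(A_n)\uparrow_{A_n}^{S_n}$ contains every $S_n$-irreducible for $n\neq 3$. Combining this with the ``only if'' direction of Corollary 6.6 yields the claim for all $n\neq 3$ in a single line. In other words, after the heavy lifting done by Theorem 6.4 (which itself rested on Theorem 4.17, and ultimately on the Nagura-type prime-distribution Lemma 4.16), the present corollary is a one-step deduction.

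For completeness it is worth checking that the exclusion $n=3$ is genuine. Theorem 6.4 gives the explicit formula $ch\,\psi(A_3)\uparrow_{A_3}^{S_3}=3(h_3+e_3)$, so the standard $S_3$-irreducible $s_{(2,1)}$ is missing. By Theorem 6.5 (2), $s_{(2,1)}\downarrow_{A_3}$ splits as the sum of the two nontrivial linear characters of $A_3\cong C_3$, and Theorem 6.5 (5) then forces the absence of both of these characters from $\psi(A_3)$. This is consistent with the direct observation that $\psi(A_3)$ is three copies of the trivial character, each element of $A_3$ being central.

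The main obstacle, then, is entirely upstream: it lies in Theorem 4.17, which requires the delicate prime-counting argument of Lemma 4.16 to prove that the conjugacy action on even permutations alone contains every $S_n$-irreducible. Once one has that result and the general Clifford-theoretic Corollary 6.6, nothing further is needed here beyond checking the hypotheses, and no additional number-theoretic or combinatorial input is required.
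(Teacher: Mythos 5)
Your proposal is correct and is exactly the paper's argument: combine Corollary 6.6 (applied to $G=S_n$, $H=A_n$, $W=\psi(A_n)$) with Theorem 6.4, which supplies the fact that $\psi(A_n)\uparrow_{A_n}^{S_n}$ contains every $S_n$-irreducible for $n\neq 3$. The additional check that $n=3$ is a genuine exception is consistent with the paper's explicit formula $ch\,\psi(A_3)\uparrow_{A_3}^{S_3}=3(h_3+e_3)$.
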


The preceding corollary is also a consequence of  the following deep result of Heide, Saxl, Tiep and Zalesski:
\begin{thm} \cite[Corollary 1.4]{HSTZ} If $G$ is a  finite nonabelian simple group, then every $G$-irreducible appears in the conjugation action of $G$ on itself. 
\end{thm}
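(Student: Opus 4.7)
The plan is to invoke Solomon's formula (Theorem 4.4(2) of the excerpt), which states that the character of $G$ acting on itself by conjugation equals $\sum_{\chi\in\mathrm{Irr}(G)}\chi\bar\chi$. Equivalently, the multiplicity of an irreducible $\chi$ in the conjugation representation is $\sum_{C}\chi(C)$, the sum of character values over conjugacy classes $C$ of $G$. The task therefore reduces to showing this class sum is nonzero for every $\chi\in\mathrm{Irr}(G)$ when $G$ is a finite nonabelian simple group. As noted in the Introduction, this is equivalent to a non-vanishing condition on the kernel of the adjoint action (Passman's problem).

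Since no uniform identity forces $\sum_C\chi(C)\neq 0$ --- character values can and do cancel in principle --- I would proceed via the classification of finite simple groups, handling each family separately. For $G=A_n$ with $n\neq 3$ this is precisely Corollary 6.7 of the present paper, deduced via Clifford theory (Theorem 6.5) from the stronger $S_n$-statement of Theorem 4.17 together with Proposition 6.2. For the $26$ sporadic simple groups it is a finite verification against the known character tables.

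The substantive case, and the main obstacle, is the groups of Lie type, both classical and exceptional. Here I would use Deligne--Lusztig theory to parametrize $\mathrm{Irr}(G)$ via pairs consisting of a semisimple element and a unipotent character of its centralizer, and then invoke Lusztig's Jordan decomposition of characters to reduce the evaluation of $\sum_C\chi(C)$ to sums within a single geometric series.  Within such a series the question becomes one about Green functions and almost-characters, and one expects the contribution from regular semisimple classes (which dominate in number and on which Deligne--Lusztig characters take large predictable values) to force $\sum_C\chi(C)\neq 0$ for generic $\chi$. To rule out accidental cancellations one would combine this with uniform character-value bounds of Gelfand--Graev or Bezrukavnikov--Liebeck--Shalev--Tiep type.

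The truly delicate subcases will be the small-rank groups and the low-dimensional irreducibles --- Weil representations, minimal representations, and a handful of exceptional characters of small Lie-type groups --- which do not yield to the generic argument and typically have to be dispatched by direct computation.  These exceptional characters are where I expect essentially all the difficulty to reside, and where any honest proof along the lines of Heide--Saxl--Tiep--Zalesski will have to do its hardest work.
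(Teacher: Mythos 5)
The first thing to note is that the paper does not prove this theorem at all: it is imported as a black box from \cite{HSTZ}, and the paper's own contribution runs in the opposite direction --- it gives an independent, symmetric-function-theoretic proof of the special case $G=A_n$ (Corollary 6.7, obtained from Theorem 4.17, Proposition 6.2 and the Clifford-theory statement Theorem 6.6) and then remarks that this special case is \emph{also} a consequence of the Heide--Saxl--Tiep--Zalesski theorem. So there is no in-paper argument to compare yours against; the statement is a citation, not a result of the paper.

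Judged as a proof attempt, your proposal is an accurate description of the strategy of the cited paper, but it is not a proof. The reduction via Solomon's formula to the nonvanishing of $\sum_C\chi(C)$ is correct, the appeal to the classification of finite simple groups is unavoidable, the alternating case is indeed covered by the present paper, and the sporadic case is a finite check against character tables. But essentially all of the content of the theorem lives in the groups of Lie type, and there your argument consists of statements of the form ``one expects the contribution from regular semisimple classes \dots to force $\sum_C\chi(C)\neq 0$'' and ``to rule out accidental cancellations one would combine this with uniform character-value bounds.'' No estimate is stated, no inequality is verified, and the low-dimensional and exceptional characters that you correctly identify as the crux are simply deferred. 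What you have written is a research program --- and, to your credit, it is the right program, the one \cite{HSTZ} actually carries out --- but the gap between ``one expects'' and a theorem is exactly where the difficulty of that paper resides. Within the present paper the honest treatment is the one the author adopts: cite the result. If you want to claim a proof, you must supply the Deligne--Lusztig character estimates and the case analysis in full.
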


Hence we have 

\begin{thm} If $G$ is a finite group and $H$ is a subgroup of index 2 which is nonabelian and simple, then the conjugacy action of $H$ on itself, when induced up to $G,$ contains every $G$-irreducible.
\end{thm}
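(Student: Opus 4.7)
The plan is to observe that this theorem is essentially a direct combination of two results already recorded in the excerpt: Theorem 6.8 of Heide--Saxl--Tiep--Zalesski, and Corollary 6.6, which is itself a consequence of the Clifford-theoretic analysis in Theorem 6.5.

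First, I would invoke Theorem 6.8: since $H$ is a finite nonabelian simple group, every $H$-irreducible appears in the conjugation representation $\psi(H)$ of $H$ on itself. This step is purely a citation; no work is needed here beyond verifying that the hypothesis ``finite nonabelian simple'' from Theorem 6.8 matches what is given.

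Next, I would apply Corollary 6.6 to the pair $(G,H)$, which requires only that $H$ be a subgroup of index~$2$ in $G$ (automatically normal). Taking $W = \psi(H)$, the ``only if'' direction of Corollary 6.6 asserts that because $W$ contains every $H$-irreducible, the induced $G$-module $W\!\uparrow_H^G = \psi(H)\!\uparrow_H^G$ contains every $G$-irreducible. This is exactly the conclusion of Theorem 6.9.

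There is no real obstacle, since both Theorem 6.8 and Corollary 6.6 are assumed. The only thing worth spelling out, for completeness, is the chain of logic in Corollary 6.6 via Theorem 6.5(5): each $H$-irreducible $\psi$ is a constituent of some $\chi\!\downarrow_H$ for $\chi\in \mathrm{Irr}(G)$, and Frobenius reciprocity then forces $\chi$ (and $\omega_H\cdot\chi$, if distinct) to appear in $\psi\!\uparrow_H^G$; summing over $\psi$ hitting every $H$-irreducible then hits every $G$-irreducible. Thus the proof reduces to a one-line application of the two cited results.
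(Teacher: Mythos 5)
Your proposal is correct and is exactly the paper's (implicit) argument: Theorem 6.9 is stated immediately after Theorem 6.8 with "Hence we have," i.e.\ it follows by applying Theorem 6.8 to the nonabelian simple group $H$ and then the forward direction of Corollary 6.6 (via Theorem 6.5(5)) to the index-2 inclusion $H\leq G$. Nothing is missing.
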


We conclude with some natural questions raised by this section, in analogy with the results established for the symmetric group.

\begin{qn}  Given a group $G$ with a subgroup $H$ of index 2, we know (Proposition 6.2)  that $\psi(G)$ decomposes into a direct sum 
$\psi(G,H)\oplus \psi(G, g_0 H).$ If $\psi(G)$ contains every $G$-irreducible, is this also true for the submodules $\psi(G,H)$, $\psi(G, g_0 H)$  (the analogues of $S_n$ acting by conjugation on the subsets of even and odd permutations), and the representations $U_G^+$ and $U_G^+ \oplus U_G^* $ defined in Proposition 6.2?  

\end{qn}

Tables 1 and 2 which follow contain the decomposition into irreducibles up to $n=10$ for the ordinary and twisted conjugacy representations $\psi(S_n)$ and $\varepsilon(S_n).$  In these tables, the rows index the irreducibles according to partitions of $n$ listed in reverse lexicographic order.   Tables 3 and 4 list the decomposition of the ordinary and twisted conjugacy actions respectively, on the subset of even permutations, and the set of odd permutations, up to $n=12.$



\vfill\eject
\begin{center} Table 1: Decomposition into irreducibles of conjugacy action of $S_n$

\begin{tabular}{l| r r r r r r r r  r r r r r r r r }
$\qquad  n$ &1 &2 & 3 & 4 & 5 & 6 & 7 & 8 & 9 & 10& 11 & 12 & 13 & 14 & 15 &16\\
$\lambda$ \small{\ reverse\ lex\ order} \\
\hline
$(n)$ &1  & 2 & 3 &5 &7 & 11 & 15 & 22 & 30 & 42 &56 & 77 & 101 & 135 & 176 & 231\\
 $(n-1,1)$ & & 0 & 1 & 2 & 5 & 8 & 15 & 23 & 37 & 55\\ 
$(n-2, 2)$ & &  & 1 & 3 & 6 & 15 & 26 & 49& 79 & 131\\
$(n-2, 1^2)$ & &  & & 2 & 5 & 10 & 19 &33 & 57 & 91\\
& &  & & 1 & 4 & 4 & 18 & 39 & 87 & 157\\
& &  & & &  3 & 13 & 36 & 78 & 154 & 284\\
& &  & & &  1 & 10 & 19 & 44 & 82 & 148\\
& &  & & &      & 8 & 18 & 25 & 64 & 165\\
& &  & & &      & 5  &22 & 70 & 188 & 424\\
& &  & & &      &  4 & 28 & 67 & 152 & 327\\
& &  & & &      & 1 & 7 & 81 & 201 & 427\\
& &  & & &      &    & 12 & 34 & 75 & 158\\
& &  & & &      &     & 10 & 35 & 95 & 52\\
& &  & & &      &     & 5 & 53 & 168 & 345\\
& &  & & &      &     &  1 & 58 & 207 & 497\\
& &  & & &      &     &     & 52 & 203 & 614\\
& &  & & &      &     &     & 17 & 169 & 546\\
& &  & & &      &     &     &  19 & 52 & 447\\
& &  & & &      &     &     &  19 & 41 & 124\\
& &  & & &      &     &     & 17 & 144 & 284\\
& &  & & &      &     &     & 5 & 104 & 283\\
& &  & & &      &     &     & 2 &   81 & 200\\
& &  & & &      &     &     &    &  130 & 721\\
& &  & & &      &     &     &    &    84 & 482\\
& &  & & &      &     &     &    &     23 & 305\\
& &  & & &      &     &     &    &     34 & 492\\
& &  & & &      &     &     &    &     39 & 311\\
& &  & & &      &     &     &    &      21 & 72\\
& &  & & &      &     &     &    &      7  & 194\\
& &  & & &      &     &     &    &     2  & 208\\
& &  & & &      &     &     &    &       & 387\\
& &  & & &      &     &     &    &       & 177\\
& &  & & &      &     &     &    &       & 241\\
& &  & & &      &     &     &    &       & 258\\
& &  & & &      &     &     &    &       & 128\\
& &  & & &      &     &     &    &       & 33\\
& &  & & &      &     &     &    &       & 46\\
& &  & & &      &     &     &    &       & 65\\
& &  & & &      &     &     &    &       & 63\\
& &  & & &      &     &     &    &       & 25\\
& &  & & &      &     &     &    &       & 9\\
& &  & & &      &     &     &    &       & 2\\

\end{tabular}
\end{center}
\vfill\eject

\begin{center} Table 2: Decomposition into irreducibles of twisted conjugacy action of $S_n$ 

\begin{tabular}{l| r r r r r r r r  r r r r r r r r }
$\qquad  n$ &1 &2 & 3 & 4 & 5 & 6 & 7 & 8 & 9 & 10& 11 & 12 & 13 & 14 & 15 &16\\
$\lambda$  \small{\ reverse\ lex\ order}\\
\hline
$(n)$ &1  & 1 & 2 &2 &3 & 4 & 5 & 6 & 8 & 10 \\
 $(n-1,1)$ & & 1 & 1 & 3 & 4 & 6 & 9 & 13 & 17 & 23\\ 
$(n-2, 2)$ & &  & 2 & 1 & 4 & 8 & 14 & 23& 36 & 53\\
$(n-2, 1^2)$ & &  & & 3 & 7 & 12 & 19 &29 & 44 & 63\\
                   & &  & & 2 & 4 & 6 & 14 & 29 & 55 & 92\\
                   & &  & &   &4 & 13 & 33 & 65 & 114 & 193\\
                   & &  & &    &3 & 12 & 23 & 42 & 72 & 115\\
                   & &  & &   &   &  6 & 19 & 13 & 39 & 93\\
                   & &  & & &      & 8  &19 & 66 & 160 & 329\\
                   & &  & & &       & 6 & 33 & 53 & 121 & 236\\
                   & &  & & &       & 4 & 19& 89 & 196 & 382\\
                   & &  & & &      &     & 14 & 42 & 83   &156\\
                   & &  & & &       &      &14 & 37 & 82 & 41\\
                   & &  & & &       &     &9  & 53 & 153 & 280\\
                   & &  & & &         &  &5 & 66  & 208 & 433\\
                   & &  & & &         &    &   & 65 & 208 & 566\\
                   & &  & & &          &    &    & 29 & 196 & 525\\
                   & &  & & &      &     &       &  13 & 72 & 473\\
                   & &  & & &      &     &       & 29 & 43 & 156\\
& &  & & &      &     &     & 23 & 153 & 237\\
& &  & & &      &     &     & 13 &   82 & 289\\
& &  & & &      &     &     & 6   &  121 & 196\\
& &  & & &      &     &     &    &    160 & 721\\
& &  & & &      &     &     &    &     114 & 525\\
& &  & & &      &     &     &    &     44 & 289\\
& &  & & &      &     &     &    &     39 & 566\\
& &  & & &      &     &     &    &      55 & 382\\
& &  & & &      &     &     &    &      36  & 115\\
& &  & & &      &     &     &    &     17  & 196\\
& &  & & &      &     &     &    &      8 & 237\\
& &  & & &      &     &     &    &       & 433\\
& &  & & &      &     &     &    &       & 236\\
& &  & & &      &     &     &    &       & 280\\
& &  & & &      &     &     &    &       & 329\\
& &  & & &      &     &     &    &       & 193\\
& &  & & &      &     &     &    &       & 63\\
& &  & & &      &     &     &    &       & 41\\
& &  & & &      &     &     &    &       & 93\\
& &  & & &      &     &     &    &       & 92\\
& &  & & &      &     &     &    &       & 53\\
& &  & & &      &     &     &    &       & 23\\
& &  & & &      &     &     &    &       & 10\\

\end{tabular}
\end{center}
\vfill\eject
\begin{center} Table 3: Irreducible decomposition of 
$\psi(S_n, A_n)$ and $\psi(S_n, \bar A_n),$  $2\leq n\leq 12:$
\end{center}
\begin{tiny} \begin{eqnarray*}  &\psi(S_2, A_2) &=   {(2)}= \psi(S_2, \bar A_2)\\
&\psi(S_3, A_3) &=2   {(3)}+  {(1^3)}; \quad \psi(S_3, \bar A_3)=  {(3)}+  {(2,1)}\\
&\psi(S_4, A_4) &= 3   {(4)}+  {(3,1)}+   {(2^2)}+  {(2,1^2)}+   {(1^4)},\\  
&\psi(S_4, \bar A_4) &=2   {(4)}+  {(3,1)} +2   {(2^2)}+   {(2,1^2)};\\
&\psi(S_5, A_5) &= 4   {(5)}+2  {(4,1)}+ 3  {(3,2)}+3  {(3,1^2)} + 2  {(2^2,1)} +  {(2,1^3)}+   {(1^5)},\\ 
&\psi(S_5, \bar A_5) &=3   {(5)}+3  {(4,1)} +3  {(3,2)}+ 2   {(3,1^2)} + 2  {(2^2,1)} +2  {(2,1^3)};\\
&\psi(S_6, A_6) &= 6    {(6)}+4  {(5,1)}+ 7  {(4,2)}+5  {(4,1^2)} + 2  {(3^2)}+ 7  {(3,2,1)} 
+5   {(3,1^3)} + 4  {(2^3)} + 2   {(2^2,1^2)} +2  {(2,1^4)} +   {(1^6)},\\ 
&\psi(S_6, \bar A_6) &=5   {(6)}+4  {(5,1)}+ 8  {(4,2)}+5  {(4,1^2)} + 2  {(3^2)}+ 6  {(3,2,1)} 
+5   {(3,1^3)} + 4  {(2^3)} + 3   {(2^2,1^2)} +2  {(2,1^4)}.\\ 
&\psi(S_7, A_7) &= 8    {(7)}+7  {(6,1)}+ 13  {(5,2)}+10  {(5,1^2)} + 9{(4,3)}+ 18  {(4,2,1)} 
+ 9  {(4,1^3)}\\
& & + 9  {(3^2,1)} + 11   {(3,2^2)}+ 14  {(3,2,1^2)} +4  {(3,1^4)}
  + 6   {(2^3,1)}+ 5  {(2^2,1^3)}
+2   {(2,1^5)}+    {(1^7)},\\ 
&\psi(S_7, \bar A_7) &= 7    {(7)}+8  {(6,1)}+ 13  {(5,2)}+9  {(5,1^2)} + 9  {(4,3)}+ 18  {(4,2,1)} 
+ 10  {(4,1^3)}\\
& &  + 9  {(3^2,1)} + 11   {(3,2^2)}+ 14  {(3,2,1^2)}+3  {(3,1^4)}
   + 6   {(2^3,1)}+ 5  {(2^2,1^3)}
+3   {(2,1^5)};\\
&\psi(S_8, A_8) &=    12 {(8)}+11  {(7,1)}+24   {(6,2)}+17  {(6,1^2)} + 20 {(5,3)}+39  {(5,2,1)} 
+ 22  {(5,1^3)}
 +12   {(4^2)} +35   {(4,3,1)}\\
&&+34   {(4,2^2)}
+40 (4,2,1^2)+ 17 {(4,1^4)} + 17  {(3^2,2)}+  27(3^2,1^2) + 29  {(3,2^2,1)}+26   {(3,2,1^3)}\\
&&+9    {(3,1^5)} 
+ 9 (2^4)  +10 (2^3, 1^2) +8 (2^2, 1^4)  +2  (2, 1^6)  +  2(1^8);\\
&\psi(S_8, \bar{A_8}) &=  10   {(8)}+12  {(7,1)}+  25 {(6,2)}+ 16 {(6,1^2)} + 19 {(5,3)}+ 39 {(5,2,1)} 
+  22 {(5,1^3)}+ 13  {(4^2)} +  35 {(4,3,1)}\\
&&+  33 {(4,2^2)} +41 (4,2,1^2)+  17{(4,1^4)}  +18   {(3^2,2)}
+26 (3^2,1^2)+ 29  {(3,2^2,1)}
+ 26  {(3,2,1^3)}\\
&&+  8  {(3,1^5)}
 + 10 (2^4)  +9 (2^3, 1^2) +9 (2^2, 1^4)  + 3 (2, 1^6)  ;\\
& \psi(S_9,A_9) &= 16 (9) +18 (8,1) + 39 (7,2) +29 (7,1^2) +44 (6,3) +77 (6,2,1)+ 41 (6,1^3)\\
& & + 32 (5,4) +94 (5,3,1) +76 (5,2^2) + 100 (5,2,1^2) +38 (5, 1^4)\\
& & +47 (4^2,1) + 84 (4,3,2) + 104 (4,3,1^2) +102 (4,2^2,1) + 84 (4,2,1^3) +26 (4, 1^5)\\
& & + 20 (3^3) +72 (3^2,2,1) + 52 (3^2, 1^3) +40 (3, 2^3) + 65 (3,2^2,1^2) +42 (3,2,1^4) \\
& & +12 (3,1^6)+17 (2^4,1) +20 (2^3, 1^3) + 10 (2^2, 1^5) +3 ( 2, 1^7) +2 (1^ 9)\\
& \psi(S_9, \bar{A_9}) &=14 (9) +19 (8,1) + 40 (7,2) +28 (7,1^2) +43 (6,3) +77 (6,2,1)+ 41 (6,1^4)\\
&&+ 32 (5,4) +94 (5,3,1) +76 (5,2^2) + 101 (5,2,1^2) +37 (5, 1^4)\\
& & +48 (4^2,1) + 84 (4,3,2) + 103 (4,3,1^2) +101 (4,2^2,1) + 85 (4,2,1^3) +26 (4, 1^5)\\
& & + 21 (3^3) +72 (3^2,2,1) + 52 (3^2, 1^3) +41 (3, 2^3) + 65 (3,2^2,1^2) +42 (3,2,1^4) \\
&&+11 (3,1^6)+17 (2^4,1) +19 (2^3, 1^3) + 11 (2^2, 1^5) +4 ( 2, 1^7) ;\\
& \psi(S_{10},A_{10}) &=22 (10)+27 (9,1) + 65 (8,2) +46 (8,1^2) +79 (7,3) + 142 (7,2,1) +74 (7, 1^3)\\
& &  + 82 (6,4) + 212 (6,3,1) + 164 (6,2^2) + 213 (6,2,1^2) +79 (6, 1^4) \\
& & +26 (5^2) + 173 (5,4,1) + 248 (5,3,2) +307 (5,3,1^2) + 273 (5,2^2,1) + 224 (5,2, 1^3) +62 (5, 1^5)\\
& & + 142 (4^2,2) + 141 (4^2, 1^2) +100 ( 4,3^2) + 361(4,3,2,1) +241 (4,3,1^3) + 152 (4,2^3)\\
&& \qquad\qquad\qquad\qquad\qquad\qquad\qquad\qquad\qquad + 246 ( 4,2^2,1^2) + 155 ( 4,2,1^4) + 36 (4, 1^6)\\
& & + 97 (3^3,1)  + 104 (3^2, 2^2)+ 193 ( 3^2,2,1^2) + 89 (3^2, 1^4) + 121 (3, 2^3,1) + 129 (3, 2^2, 1^3)\\
& & \qquad\qquad\qquad\qquad \qquad\qquad\qquad\qquad\qquad\qquad\qquad\qquad\qquad + 64 (3,2,1^5) + 17 (3, 1^7) \\
& & +23 (2^5) + 32 (2^4, 1^2) + 32 (2^3, 1^4) + 12 (2^2, 1^8) + 4( 2, 1^{10}) + 2 (1^{10});\\
%
& \psi(S_{10},\bar A_{10}) &=20 (10)+28 (9,1) + 66 (8,2) +45 (8,1^2) +78 (7,3) + 142 (7,2,1) +74 (7, 1^3)\\
& &  + 83 (6,4) + 212 (6,3,1) + 163 (6,2^2) + 214 (6,2,1^2) +79 (6, 1^4) \\
& & +26 (5^2) + 172 (5,4,1) + 249 (5,3,2) +307 (5,3,1^2) + 273 (5,2^2,1) + 223 (5,2, 1^3) +62 (5, 1^5)\\
& & + 142 (4^2,2) + 142 (4^2, 1^2) +100 ( 4,3^2) + 360(4,3,2,1) +241 (4,3,1^3) + 153 (4,2^3)\\
&& \qquad\qquad\qquad\qquad\qquad\qquad\qquad\qquad\qquad + 246 ( 4,2^2,1^2) + 156 ( 4,2,1^4) + 36 (4, 1^6)\\
& & + 97 (3^3,1)  + 104 (3^2, 2^2)+ 194 ( 3^2,2,1^2) + 88 (3^2, 1^4) + 120 (3, 2^3,1) + 129 (3, 2^2, 1^3)\\
& & \qquad\qquad\qquad\qquad \qquad\qquad\qquad\qquad\qquad\qquad\qquad\qquad\qquad + 64 (3,2,1^5) + 16 (3, 1^7) \\
& & +23 (2^5) + 33 (2^4, 1^2) + 31 (2^3, 1^4) + 13 (2^2, 1^8) + 5( 2, 1^{10});
\end{eqnarray*}\end{tiny}
\begin{tiny}\begin{eqnarray*}
& \psi(S_{11}, A_{11}) &= 29 (11) + 41 (10,1) + 100 (9,2) + 73 (9,1^2) + 142 (8,3) + 248 (8,2,1) + 125 (8, 1^3)\\
&& + 164 (7,4) + 432 (7,3,1) + 314 (7,2^2) + 426 (7,2,1^2) + 151 (7,1^4)\\
&& +107 (6,5) + 465 (6,4,1) + 621 (6,3,2) + 738 (6,3,1^2) + 645 (6,2^2,1) + 519 (6,2,1^3) + 140 (6, 1^5) \\
&&+ 200 (5^2,1) + 566 (5,4,2) + 619 (5,4,1^2) +349 (5,3^2) +1188 (5,3,2,1) + 769 (5,3,1^3) + 432 (5,2^3)\\ 
&& \qquad\qquad\qquad\qquad\qquad\qquad\qquad\qquad\qquad\qquad\qquad\qquad + 742 (5,2^2,1^2) +441 (5,2,1^4) + 99 (5,1^6)  \\
&&+ 239 (4^2,3) + 644 (4^2,2,1) +384 (4^2,1^3) + 560 (4,3^2,1) + 614 (4,3,2^2) + 1043 (4,3,2,1^2)\\
&&\quad\qquad\qquad\qquad\qquad\qquad+484 (4,3,1^4) + 515 (4,2^3,1) +  533 (4,2^2,1^3) + 254 (4,2,1^5) + 53 (4,1^7)    \\
&&+ 202(3^3,2) + 287 (3^3,1^2) +420 (3^2,2^2,1) + 410 ( 3^2,2,1^3) + 155 (3^2,1^5) +146 (3,2^4) +282 (3,2^3,1^2)
\\&&\qquad\qquad\qquad\qquad\qquad\qquad\qquad\qquad\qquad\qquad\qquad\qquad  +223 (3,2^2,1^4) + 93 (3,2,1^6) + 21 (3,1^8) \\
&& + 53 (2^5,1) +66 (2^4, 1^3) + 44 (2^3, 1^5) +17 (2^2, 1^7) +5 (2, 1^9) + 2 (1^{11}) \\
& \psi(S_{11},\bar A_{11}) &= 27 (11) + 42 (10,1) + 101 (9,2) + 72 (9,1^2) + 141 (8,3) + 248 (8,2,1) + 125 (8, 1^3)\\
&& + 164 (7,4) + 432 (7,3,1) + 314 (7,2^2) + 427 (7,2,1^2) + 150 (7,1^4)\\
&& +108 (6,5) + 465 (6,4,1) + 621 (6,3,2) + 738 (6,3,1^2) + 644 (6,2^2,1) + 519 (6,2,1^3) + 141 (6, 1^5) \\
&&+ 199 (5^2,1) + 566 (5,4,2) + 619 (5,4,1^2) +350 (5,3^2) +1188 (5,3,2,1) + 769 (5,3,1^3) + 433 (5,2^3)\\   
&& \qquad\qquad\qquad\qquad\qquad\qquad\qquad\qquad\qquad\qquad\qquad\qquad + 742 (5,2^2,1^2) +441 (5,2,1^4) + 98 (5,1^6)  \\
&&+ 239 (4^2,3) + 644 (4^2,2,1) +385 (4^2,1^3) + 559 (4,3^2,1) + 614 (4,3,2^2) + 1043 (4,3,2,1^2)\\
&&\quad\qquad\qquad\qquad\qquad\qquad+483 (4,3,1^4) + 515 (4,2^3,1) +  533 (4,2^2,1^3) + 255 (4,2,1^5) + 53 (4,1^7)    \\
&&+ 202 (3^3,2) + 288 (3^3,1^2) +420 (3^2,2^2,1) + 410 ( 3^2,2,1^3) + 155 (3^2,1^5) +145 (3,2^4) +282 (3,2^3,1^2)
\\&&\qquad\qquad\qquad\qquad\qquad\qquad\qquad\qquad\qquad\qquad\qquad\qquad  +223 (3,2^2,1^4) + 93 (3,2,1^6) + 20 (3,1^8) \\
&& + 54 (2^5,1) +66 (2^4, 1^3) + 43 (2^3, 1^5) +18 (2^2, 1^7) +6 (2, 1^9) \\\
%
%
& \psi(S_{12},A_{12}) &= 40 (12) +58 (11,1) +155 (10,2) +112 (10,1^2) + 236 (9,3) +414(9,2,1) + 207 (9, 1^3) \\
&& + 318 (8,4) +815(8,3,1) +586 (8,2^2) +789 ( 8,2,1^2) +276 (8, 1^4)\\
&& + 260 (7,5) + 1060 (7,4,1)
 + 1355 (7,3,2) +1616 (7,3^,1^2) +1373 (7,2^2,1) + 1099 (7,2,1^3) + 288 (7, 1^5) \\
&& +124 (6^2) +767 (6,5,1) +1679 (6,4,2) + 1801 (6,4,1^2) + 960 (6,3^3) + 3187 (6,3,2,1) + 2032 (6,3,1^3) \\
&&\qquad\qquad\qquad\qquad\qquad\qquad\qquad\qquad\qquad+ 1108 (6,2^3) + 1891 (6,2^2,1^2) + 1106 (6,2,1^4) + 236 (6,1^6)\\
&& +747 (5^2,2) + 842 (5^2,1^2) +1147 (5,4,3) + 3016 (5,4,2,1) + 1771 (5,4,1^3)\\ 
&&\qquad\qquad\qquad\qquad\qquad\qquad\qquad + 2115 (5,3^2,1) + 2211 (5,3,2^2)
 + 3753 ( 5,3,2,1^2) + 1678 (5,3,1^4) \\
&& \qquad\qquad\qquad\qquad\qquad\qquad\qquad+ 1682 (5,2^3,1) + 1727 (5,2^2,1^3) +793(5,2,1^5) + 156 (5, 1^7) \\
&& + 263 (4^3) + 1426 (4^2,3,1) + 1294 (4^2,2^2) + 2053 (4^2,2,1^2) + 888 (4^2,1^4) + 1363 (4,3^2,2)\\
&& \qquad\qquad\qquad\qquad\qquad\qquad
+1901 (4,3^2,1^2)+ 2592 ( 4,3,2^2,1) + 2466 (4,3,2,1^3) + 892 (4,3,1^5)\\
&&\qquad\qquad\qquad\qquad\qquad\qquad + 684 (4,2^4) +1314 (4,2^3,1^2) + 1019 (4,2^2,1^4) + 394 (4,2,1^6) + 75 (4,1^8)\\
&& + 218 (3^4) + 906 (3^3,2,1) + 690 (3^3,1^3) + 549 (3^2,2^3) 
+ 1132 ( 3^2, 2^2, 1^2) +779 (3^2, 2, 1^4) + 258 (3^2, 1^6)\\ 
&&\qquad\qquad\qquad\qquad\qquad\qquad+ 474 (3, 2^4,1) + 574 (3, 2^3, 1^3) + 357 (3, 2^2, 1^5) + 130( 3,2,1^7) + 26 (3, 1^9) \\
&& + 66 (2^6) + 114 (2^5,1^2) +112 (2^4, 1^4) + 61 (2^3, 1^6) + 24 ( 2^2, 1^8) + 5 (2, 1^{10}) +3 (1^{12}).\\
& \psi(S_{12},\bar A_{12}) &=
37 (12) +60 (11,1) +156 (10,2) +110 (10,1^2) + 235 (9,3) +414 (9,2,1) + 208 (9, 1^3) \\
&& + 319 (8,4) +815(8,3,1) +585 (8,2^2) +790 ( 8,2,1^2) +275 (8, 1^4)\\
&& + 259 (7,5) + 1060 (7,4,1)
 + 1356 (7,3,2) +1615 (7,3^,1^2) +1373 (7,2^2,1) + 1099 (7,2,1^3) + 288 (7, 1^5) \\
&& +125 (6^2) +767 (6,5,1) +1678 (6,4,2) + 1802 (6,4,1^2) + 960 (6,3^3) + 3187 (6,3,2,1) + 2032 (6,3,1^3) \\
&&\qquad\qquad\qquad\qquad\qquad\qquad\qquad\qquad\qquad+ 1109 (6,2^3) + 1890 (6,2^2,1^2) + 1107 (6,2,1^4) + 236 (6,1^6)\\
&& +748 (5^2,2) + 841 (5^2,1^2) +1147 (5,4,3) + 3016 (5,4,2,1) + 1771 (5,4,1^3)\\ 
&&\qquad\qquad\qquad\qquad\qquad\qquad\qquad + 2115 (5,3^2,1) + 2210 (5,3,2^2)
 + 3754 ( 5,3,2,1^2) + 1677 (5,3,1^4) \\
&& \qquad\qquad\qquad\qquad\qquad\qquad\qquad+ 1682 (5,2^3,1) + 1727 (5,2^2,1^3) +793(5,2,1^5) + 155 (5, 1^7) \\
&& + 263 (4^3) + 1426 (4^2,3,1) + 1295 (4^2,2^2) + 2052 (4^2,2,1^2) + 889 (4^2,1^4) + 1363 (4,3^2,2)\\
&& \qquad\qquad\qquad\qquad\qquad\qquad
+1901 (4,3^2,1^2)+ 2592 ( 4,3,2^2,1) + 2466 (4,3,2,1^3) + 892 (4,3,1^5)\\
&&\qquad\qquad\qquad\qquad\qquad\qquad + 683 (4,2^4) +1315 (4,2^3,1^2) + 1018 (4,2^2,1^4) + 395 (4,2,1^6) + 76 (4,1^8)\\
&& + 218 (3^4) + 906 (3^3,2,1) + 690 (3^3,1^3) + 550 (3^2,2^3) 
+ 1131 ( 3^2, 2^2, 1^2) +780 (3^2, 2, 1^4) + 257 (3^2, 1^6)\\ 
&&\qquad\qquad\qquad\qquad\qquad\qquad+ 474 (3, 2^4,1) + 574 (3, 2^3, 1^3) + 357 (3, 2^2, 1^5) + 130( 3,2,1^7) + 24 (3, 1^9) \\
&& + 67 (2^6) + 113 (2^5,1^2) +113 (2^4, 1^4) + 60 (2^3, 1^6) + 25 ( 2^2, 1^8) + 7 (2, 1^{10}).
\end{eqnarray*}\end{tiny} 
%
\begin{center} Table 4: Decomposition into irreducibles of 
$\varepsilon(S_n, A_n)$ and $\varepsilon(S_n, \bar A_n)$ for $2\leq n\leq 12:$
\end{center}
\begin{tiny} \begin{eqnarray*} &\varepsilon(S_2, A_2) &= {(1^2)};\quad \varepsilon(S_2, \bar A_2)={(2)};\\
&\varepsilon(S_3, A_3) &= {(3)}+2{(1^3)}; \quad \varepsilon(S_3, \bar A_3)={(3)}+{(2,1)}\\
&\varepsilon(S_4, A_4) &=    {(4)}+ 2 {(3,1)}+\phantom{(2^2)+}  {(2,1^2)}+ 2 {(1^4)},\\  
&\varepsilon(S_4, \bar A_4) &=   {(4)}+  {(3,1)} +  {(2^2)}+ 2  {(2,1^2)};\\
&\varepsilon(S_5, A_5) &=    {(5)}+2  {(4,1)}+ 2 {(3,2)}+4 {(3,1^2)} + 2  {(2^2,1)} +  {(2,1^3)}+  3 {(1^5)},\\ 
&\varepsilon(S_5, \bar A_5) &=  2   {(5)}+2  {(4,1)} +2 {(3,2)}+3 (3,1^2)+ 2  {(2^2,1)} +3  {(2,1^3)};\\
&\varepsilon(S_6, A_6) &=2  {(6)}+3  {(5,1)}+ 4  {(4,2)}+6 {(4,1^2)} + +2  {(3^2)}+ 7  {(3,2,1)} +6  {(3,1^3)} \\
& &+ 3 {(2^3)} + 4   {(2^2,1^2)} +2  {(2,1^4)} +   4{(1^6)},\\ 
&\varepsilon(S_6, \bar A_6) &=2  {(6)}+3  {(5,1)}+ 4  {(4,2)}+6  {(4,1^2)} + 4 {(3^2)}+ 6  {(3,2,1)} +6  {(3,1^3)} \\
& & + 3 {(2^3)} + 4   {(2^2,1^2)} +4  {(2,1^4)}\\
&\varepsilon(S_7, A_7) &=2 (7) +5 (6,1) +7 (5,2) + 9(5,1^2) +7 (4,3) + 17 (4,2,1) + 11 (4,1^3) \\
&& +9 (3^2,1)+9 (3,2^2) + 17 (3,2,1^2) +  10(3,1^4)+ 7 (2^3,1)+7 (2^2,1^3)
+3 (2,1^5) + 5(1^7)\\
&\varepsilon(S_7, \bar A_7) &=3 (7) +4 (6,1) +7 (5,2) + 10(5,1^2) +7 (4,3) + 16 (4,2,1) + 12 (4,1^3) \\
&& +10 (3^2,1) +10 (3,2^2) + 16 (3,2,1^2) +  9(3,1^4)+ 7 (2^3,1)+7 (2^2,1^3)
+6 (2,1^5) \\
&\varepsilon(S_8, A_8) &= 3(8) +6(7,1)+12(6,2) +15 (6,1^2)+14  (5,3) + 32 (5,2,1) + 21 (5,1^3) +6 (4^2)\\
&&+ 34 (4,3,1) + 27 (4,2^2)+44 (4,2,1^2) + 21 (4,1^4) +18 (3^2,2) + 26 (3^2, 1^2) +33 (3,2^2,1)
\\
&& +33 (3,2,1^3) + 15 (3,1^5)  + 6 (2^4) + 15 (2^3,1^2) +11 (2^2, 1^4) + 5 (2,1^6) + 6 (1^8)\\
&\varepsilon(S_8, \bar A_8) &= 3(8) +7 (7,1)+11 (6,2) +14 (6,1^2)+15  (5,3) + 33 (5,2,1) + 21 (5,1^3) +7 (4^2)\\
&&+ 32 (4,3,1) + 26 (4,2^2)+ 45 (4,2,1^2) + 21 (4,1^4)
+ 19 (3^2,2) + 27 (3^2, 1^2) +33 (3,2^2,1)\\
&& +32 (3,2,1^3) + 14 (3,1^5)  + 7 (2^4) + 14 (2^3,1^2) +12 (2^2, 1^4) + 8 (2,1^6) \\
& \varepsilon(S_9, A_9)&=4 (9) + 9 (8,1) + 18 (7,2) + 21 (7,1^2) +28 (6,3) + 57 (6,2,1) + 37 (6, 1^3) \\
&& + 19 (5,4) + 80 (5,3,1) + 60 (5,2^2) + 98 (5,2,1^2) +41 ( 5,1^4)\\
&& +41 (4^2,1) + 77 (4,3,2) + 104 (4,3,1^2) + 104 (4,2^2,1) +98 (4,2,1^3) + 36 (4, 1^5) \\
&& + 20 (3^3) + 77 (3^2,2,1)+ 41 (3,2^3) +60 ( 3^2, 1^3)  + 80 (3,2^2,1^2) + 57 (3,2,1^4) + 23 (3,1^6) \\
&& + 19 (2^4,1) + 28 (2^3, 1^3) + 18 (2^2, 1^5) + 6 (2, 1^7) + 8 (1^9)\\
&\varepsilon(S_9, \bar A_9)&=4 (9) + 8 (8,1) + 18 (7,2) + 23 (7,1^2) +27 (6,3) + 57 (6,2,1) + 35 (6, 1^3) \\
&& + 20 (5,4) + 80 (5,3,1) + 61 (5,2^2) + 98 (5,2,1^2) +42 ( 5,1^4)\\
&& +41 (4^2,1) + 76 (4,3,2) + 104 (4,3,1^2) + 104 (4,2^2,1) +98 (4,2,1^3) + 36 (4, 1^5) \\
&& + 23 (3^3) + 76 (3^2,2,1)+ 41 (3,2^3) +61 ( 3^2, 1^3)  + 80 (3,2^2,1^2) + 57 (3,2,1^4) + 21 (3,1^6) \\
&& + 20 (2^4,1) + 27 (2^3, 1^3) + 18 (2^2, 1^5) + 11 (2, 1^7) \\
& \varepsilon(S_{10},A_{10}) &=5 (10)+11 (9,1) + 27 (8,2) +32 (8,1^2) +46 (7,3) + 96 (7,2,1) +57 (7, 1^3)\\
& &  + 47 (6,4) + 164 (6,3,1) + 118 (6,2^2) + 192 (6,2,1^2) +78 (6, 1^4) \\
& & +19 (5^2) + 141 (5,4,1) + 217 (5,3,2) +282 (5,3,1^2) + 262 (5,2^2,1) + 237 (5,2, 1^3) +77 (5, 1^5)\\
& & + 118 (4^2,2) + 144 (4^2, 1^2) +98 ( 4,3^2) + 361(4,3,2,1) +263 (4,3,1^3) + 144 (4,2^3)\\
&& \qquad\qquad\qquad\qquad\qquad\qquad\qquad\qquad\qquad + 283 ( 4,2^2,1^2) + 191 ( 4,2,1^4) + 58 (4, 1^6)\\
& & + 97 (3^3,1)  + 119 (3^2, 2^2)+ 217 ( 3^2,2,1^2) + 117 (3^2, 1^4) + 140 (3, 2^3,1) + 164 (3, 2^2, 1^3)\\
& & \qquad\qquad\qquad\qquad \qquad\qquad\qquad\qquad\qquad\qquad\qquad\qquad\qquad + 97 (3,2,1^5) + 32 (3, 1^7) \\
& & +21 (2^5) + 46 (2^4, 1^2) + 47 (2^3, 1^4) + 26 (2^2, 1^6) + 9( 2, 1^{8}) + 10 (1^{10});\\
& \varepsilon(S_{10},\bar A_{10}) &=5 (10)+12 (9,1) + 26 (8,2) +31 (8,1^2) +46 (7,3) + 97 (7,2,1) +58 (7, 1^3)\\
& &  + 46 (6,4) + 165 (6,3,1) + 118 (6,2^2) + 190 (6,2,1^2) +78 (6, 1^4) \\
& & +22 (5^2) + 139 (5,4,1) + 216 (5,3,2) +284 (5,3,1^2) + 263 (5,2^2,1) + 236 (5,2, 1^3) +79 (5, 1^5)\\
& & + 119 (4^2,2) + 145 (4^2, 1^2) +98 ( 4,3^2) + 360(4,3,2,1) +262 (4,3,1^3) + 145 (4,2^3)\\
&& \qquad\qquad\qquad\qquad\qquad\qquad\qquad\qquad\qquad + 283 ( 4,2^2,1^2) + 191 ( 4,2,1^4) + 57 (4, 1^6)\\
& & + 99 (3^3,1)  + 118 (3^2, 2^2)+ 216 ( 3^2,2,1^2) + 119 (3^2, 1^4) + 140 (3, 2^3,1) + 165 (3, 2^2, 1^3)\\
& & \qquad\qquad\qquad\qquad \qquad\qquad\qquad\qquad\qquad\qquad\qquad\qquad\qquad + 96 (3,2,1^5) + 31 (3, 1^7) \\
& & +20 (2^5) + 47 (2^4, 1^2) + 45 (2^3, 1^4) + 27 (2^2, 1^6) + 14( 2, 1^{8});
\end{eqnarray*}\end{tiny}
\begin{tiny}\begin{eqnarray*}
& \varepsilon(S_{11}, A_{11}) &= 6 (11) + 16 (10,1) + 38 (9,2) + 43 (9,1^2) + 73 (8,3) + 154 (8,2,1) + 90 (8, 1^3)\\
&& + 92 (7,4) + 307 (7,3,1) + 215 (7,2^2) + 345 (7,2,1^2) + 134 (7,1^4)\\
&& +67 (6,5) + 352 (6,4,1) + 498 (6,3,2) + 638 (6,3,1^2) + 572 (6,2^2,1) + 506 (6,2,1^3) + 157 (6, 1^5) \\
&&+ 159 (5^2,1) + 476 (5,4,2) + 568 (5,4,1^2) +315 (5,3^2) +1122 (5,3,2,1) + 782 (5,3,1^3) + 407 (5,2^3)\\ 
&& \qquad\qquad\qquad\qquad\qquad\qquad\qquad\qquad\qquad\qquad\qquad\qquad + 782 (5,2^2,1^2) +505 (5,2,1^4) + 135 (5,1^6)  \\
&&+ 215 (4^2,3) + 623 (4^2,2,1) +407 (4^2,1^3) + 558 (4,3^2,1) + 624 (4,3,2^2) + 1122 (4,3,2,1^2)\\
&&\quad\qquad\qquad\qquad\qquad\qquad+572 (4,3,1^4) + 567 (4,2^3,1) +  639 (4,2^2,1^3) + 345 (4,2,1^5) + 89 (4,1^7)    \\
&&+ 215(3^3,2) + 314 (3^3,1^2) +476 (3^2,2^2,1) + 499 ( 3^2,2,1^3) + 214 (3^2,1^5) +160 (3,2^4) +351 (3,2^3,1^2)
\\&&\qquad\qquad\qquad\qquad\qquad\qquad\qquad\qquad\qquad\qquad\qquad\qquad  +307 (3,2^2,1^4) + 154 (3,2,1^6) + 45 (3,1^8) \\
&& + 68 (2^5,1) +91 (2^4, 1^3) + 74 (2^3, 1^5) +38 (2^2, 1^7) +12 (2, 1^9) + 12 (1^{11}) \\
& \varepsilon(S_{11}, \bar A_{11}) &= 6 (11) + 15 (10,1) + 38 (9,2) + 45 (9,1^2) + 74 (8,3) + 153 (8,2,1) + 88 (8, 1^3)\\
&& + 91 (7,4) + 307 (7,3,1) + 216 (7,2^2) + 345 (7,2,1^2) + 137 (7,1^4)\\
&& +68 (6,5) + 351 (6,4,1) + 498 (6,3,2) + 640 (6,3,1^2) + 571 (6,2^2,1) + 504 (6,2,1^3) + 156 (6, 1^5) \\
&&+ 161 (5^2,1) + 475 (5,4,2) + 566 (5,4,1^2) +316 (5,3^2) +1122 (5,3,2,1) + 783 (5,3,1^3) + 407 (5,2^3)\\ 
&& \qquad\qquad\qquad\qquad\qquad\qquad\qquad\qquad\qquad\qquad\qquad\qquad + 783 (5,2^2,1^2) +505 (5,2,1^4) + 136 (5,1^6)  \\
&&+ 216 (4^2,3) + 625 (4^2,2,1) +407 (4^2,1^3) + 555 (4,3^2,1) + 624 (4,3,2^2) + 1122 (4,3,2,1^2)\\
&&\quad\qquad\qquad\qquad\qquad\qquad+571 (4,3,1^4) + 567 (4,2^3,1) +  639 (4,2^2,1^3) + 345 (4,2,1^5) + 89 (4,1^7)    \\
&&+ 216 (3^3,2) + 317 (3^3,1^2) +475 (3^2,2^2,1) + 497 ( 3^2,2,1^3) + 217 (3^2,1^5) +160 (3,2^4) +352 (3,2^3,1^2)
\\&&\qquad\qquad\qquad\qquad\qquad\qquad\qquad\qquad\qquad\qquad\qquad\qquad  +307 (3,2^2,1^4) + 153 (3,2,1^6) + 43 (3,1^8) \\
&& + 67 (2^5,1) +92 (2^4, 1^3) + 73 (2^3, 1^5) +38 (2^2, 1^7) +19 (2, 1^9)  \\
& \varepsilon(S_{12},A_{12}) &= 7 (12) +20 (11,1) +54 (10,2) +61 (10,1^2) + 114 (9,3) +233 (9,2,1) + 133 (9, 1^3) \\
&& + 162 (8,4) +534(8,3,1) +371 (8,2^2) +588 ( 8,2,1^2) +225 (8, 1^4)\\
&& + 160 (7,5) + 752 (7,4,1)
 + 1018 (7,3,2) +1292 (7,3^,1^2) +1132 (7,2^2,1) + 984 (7,2,1^3) + 292 (7, 1^5) \\
&& +70 (6^2) +576 (6,5,1) +1326 (6,4,2) + 1557 (6,4,1^2) + 820 (6,3^3) + 2826 (6,3,2,1) + 1930 (6,3,1^3) \\
&&\qquad\qquad\qquad\qquad\qquad\qquad\qquad\qquad\qquad+ 982 (6,2^3) + 1860 (6,2^2,1^2) + 1164 (6,2,1^4) + 295 (6,1^6)\\
&& +638 (5^2,2) + 727 (5^2,1^2) +999 (5,4,3) + 2793 (5,4,2,1) + 1753 (5,4,1^3)\\ 
&&\qquad\qquad\qquad\qquad\qquad\qquad\qquad + 1994 (5,3^2,1) + 2153 (5,3,2^2)
 + 3809 ( 5,3,2,1^2) + 1859 (5,3,1^4) \\
&& \qquad\qquad\qquad\qquad\qquad\qquad\qquad+ 1753 (5,2^3,1) + 1929 (5,2^2,1^3) +985(5,2,1^5) + 222 (5, 1^7) \\
&& + 223 (4^3) + 1396 (4^2,3,1) + 1243 (4^2,2^2) + 2153 (4^2,2,1^2) + 982 (4^2,1^4) + 1397 (4,3^2,2)\\
&& \qquad\qquad\qquad\qquad\qquad\qquad
+1994 (4,3^2,1^2)+ 2792 ( 4,3,2^2,1) + 2827 (4,3,2,1^3) + 1132 (4,3,1^5)\\
&&\qquad\qquad\qquad\qquad\qquad\qquad + 728 (4,2^4) +1557 (4,2^3,1^2) + 1292 (4,2^2,1^4) + 588 (4,2,1^6) + 134 (4,1^8)\\
&& + 220 (3^4) + 1001 (3^3,2,1) + 818 (3^3,1^3) + 637 (3^2,2^3) 
+ 1326 ( 3^2, 2^2, 1^2) +1019 (3^2, 2, 1^4) + 370 (3^2, 1^6)\\ 
&&\qquad\qquad\qquad\qquad\qquad\qquad+ 576 (3, 2^4,1) + 752 (3, 2^3, 1^3) + 533 (3, 2^2, 1^5) + 234( 3,2,1^7) + 62 (3, 1^9) \\
&& + 70 (2^6) + 160 (2^5,1^2) +162 (2^4, 1^4) + 115 (2^3, 1^6) + 53 ( 2^2, 1^8) + 16 (2, 1^{10}) +15 (1^{12}).\\
& \varepsilon(S_{12},\bar A_{12}) &=8 (12) +20 (11,1) +53 (10,2) +61 (10,1^2) + 114 (9,3) +233 (9,2,1) + 135 (9, 1^3) \\
&& + 163 (8,4) +534(8,3,1) +372 (8,2^2) +587 ( 8,2,1^2) +222 (8, 1^4)\\
&& + 159 (7,5) + 752 (7,4,1)
 + 1018 (7,3,2) +1291 (7,3^,1^2) +1132 (7,2^2,1) + 986 (7,2,1^3) + 295 (7, 1^5) \\
&& +70 (6^2) +576 (6,5,1) +1325 (6,4,2) + 1559 (6,4,1^2) + 819 (6,3^3) + 2827 (6,3,2,1) + 1929 (6,3,1^3) \\
&&\qquad\qquad\qquad\qquad\qquad\qquad\qquad\qquad\qquad+ 982 (6,2^3) + 1859 (6,2^2,1^2) + 1163 (6,2,1^4) + 292 (6,1^6)\\
&& +638 (5^2,2) + 728 (5^2,1^2) +1002 (5,4,3) + 2790 (5,4,2,1) + 1752 (5,4,1^3)\\ 
&&\qquad\qquad\qquad\qquad\qquad\qquad\qquad + 1994 (5,3^2,1) + 2154 (5,3,2^2)
 + 3810 ( 5,3,2,1^2) + 1860 (5,3,1^4) \\
&& \qquad\qquad\qquad\qquad\qquad\qquad\qquad+ 1752 (5,2^3,1) + 1930 (5,2^2,1^3) +985(5,2,1^5) + 225 (5, 1^7) \\
&& + 220 (4^3) + 1396 (4^2,3,1) + 1246 (4^2,2^2) + 2154 (4^2,2,1^2) + 982 (4^2,1^4) + 1395 (4,3^2,2)\\
&& \qquad\qquad\qquad\qquad\qquad\qquad
+1994 (4,3^2,1^2)+ 2791 ( 4,3,2^2,1) + 2826 (4,3,2,1^3) + 1132 (4,3,1^5)\\
&&\qquad\qquad\qquad\qquad\qquad\qquad + 727 (4,2^4) +1559 (4,2^3,1^2) + 1291 (4,2^2,1^4) + 587 (4,2,1^6) + 134 (4,1^8)\\
&& + 223 (3^4) + 1000 (3^3,2,1) + 821 (3^3,1^3) + 639 (3^2,2^3) 
+ 1325 ( 3^2, 2^2, 1^2) +1017 (3^2, 2, 1^4) + 373 (3^2, 1^6)\\ 
&&\qquad\qquad\qquad\qquad\qquad\qquad+ 576 (3, 2^4,1) + 752 (3, 2^3, 1^3) + 535 (3, 2^2, 1^5) + 232( 3,2,1^7) + 60 (3, 1^9) \\
&& + 70 (2^6) + 159 (2^5,1^2) +163 (2^4, 1^4) + 113 (2^3, 1^6) + 54 ( 2^2, 1^8) + 24 (2, 1^{10}).
\end{eqnarray*}\end{tiny}

\bibliographystyle{amsplain.bst}

\begin{thebibliography}{<99>}

\bibitem{C} C.~Cadogan, \textit{ The M\"obius function and connected graphs}, J. Combinatorial Theory (B) \textbf{11} (1971), 193-200 (3).

\bibitem{CSST} T.~Ceccherini-Silberstein, T.~Scarabotti, F.~Tolli,
 \textit{Clifford Theory and applications}, Functional Analysis, J. Math.Sci. N.Y. \textbf{156} No. 1 (2009), 29--43.

\bibitem{F}  A.~Frumkin,
\textit{Theorem about the conjugacy representation of $S_n$}, Israel J. Math. \textbf{55} (1986), 
121--128.

\bibitem{Fo} H.~O.~Foulkes,  \textit{ Characters of symmetric groups induced by characters of cyclic subgroups}, Proc. Conf. on Combinatorial Math., Oxford  (D.J.A.~Welsh and D.R. Woodall, eds.), The Institute for Mathematics and its Applications, Southend-on-Sea, Essex (1972), 141-154. 

\bibitem{Frm} J.~Sutherland Frame, \textit {On the reduction of the conjugating representation of a finite group}, Bull. Amer. Math Soc \textbf{53} (1947), 584-589.

\bibitem{FH}  W.~Fulton and J.~Harris,
\textit{Representation Theory, A First Course}, Graduate Texts in Mathematics Vol. 129, Springer-Verlag (1991).

\bibitem{H} O.~H\"older, \textit{Zur Theorie der Kreisteilungsgleichung $K_m(x)=0,$} Prace Matematyczno Fizyczne, \textbf{43} (1936), 13-23.

\bibitem{Ha1}  P.~Hanlon,
\textit{The fixed-point partition lattices}, Pacific J. Math. \textbf{96} (1981), 
319--341.

\bibitem{HR} P. L. Hersh and V. Reiner, \textit{Representation stability for cohomology of configuration spaces in ${\bf R}^d,$} 
Int. Math. Res. Not. IMRN 2017, no. 5, 1433--1486.


\bibitem{HSTZ} G.~Heide, J.~Saxl, P.~H.~ Tiep and A.~E.~Zalesski, \textit{Conjugacy action, induced representations and the Steinberg square for simple groups of Lie type}, 
Proc. Lond. Math. Soc. (3) 106 (2013), no. 4, 908–930.


\bibitem{HW} G.~H.~Hardy and E.~M. Wright, \textit{An Introduction to the Theory of Numbers}, Fifth edition, Oxford Science Publications, Oxford University Press (1979).

\bibitem{JK} G.~D.~James and A.~Kerber, \textit{The representation theory of the symmetric group}, Encyclopedia of Mathematics and its Applications, Addison-Wesley (1981).

\bibitem{KW} W.~Kr\'askiewicz and J.~Weyman, \textit{Algebra of coinvariants and the action of a Coxeter element}, Bayreuth Math. Schr. \textbf{63} (2001), 265--284.

\bibitem{M}  I.~G.~Macdonald, \textit{Symmetric functions and 
Hall polynomials}, Second Edition, Oxford University Press (1995).

\bibitem{JitsuroN} J.~Nagura, \textit{On the interval containing at least one prime number}, Proc. Japan Acad. \textbf{28}, (4) (1952), 177–181. doi:10.3792/pja/1195570997  

\bibitem{SR} ShreevatsaR, (http://math.stackexchange.com/users/205/shreevatsar), \textit{Primes between $n$ and $2n$}, URL (version: 2013-08-13): http://math.stackexchange.com/q/466857

\bibitem{R} C.~Reutenauer, \textit{Free Lie Algebras}, London Mathematical Society Monographs, Oxford Science Publications (1993).

\bibitem{Ro} K.~H.~ Rosen, \textit{Elementary Number Theory and its Applications,} 4th edition, Addison-Wesley (1999).

\bibitem{Sch}  T.~Scharf,
\textit{Ein weiterer Beweis, da\ss\  die konjugierende Darstellung der symmetrischen Gruppe jede irreduzible Darstellung enth\"alt}, Arch. Math.  \textbf{54}  (1990), 427--429.

\bibitem{Sim} B. Simon, \textit{Representations of Finite and Compact Groups,} Graduate Studies in Mathematics \textbf{10}, American Mathematical Society, Providence, RI (1996).

\bibitem{So}  L.~Solomon,
\textit{On the sum of the elements in the character table of a finite group}, Proc. Amer. Math. Soc. \textbf{12} No. 6 (1961), 
962--963.

\bibitem{St1} R.~Stanley, \textit{Some aspects of groups acting 
on finite posets}, J. Comb. Theory (A) \textbf{32}, No.~2 (1982),
132--161.

\bibitem{St4EC2} \bysame, \textit{Enumerative 
Combinatorics, Vol.2}, Cambridge Studies in Advanced Mathematics 62, Cambridge University Press, Cambridge, 1999.

\bibitem{Su1} S.~Sundaram, \textit{The homology representations of the symmetric group on Cohen-Macaulay subposets of the partition lattice}, Adv. in Math. \textbf{104} (2)(1994), 225--296.

\bibitem{Sw} J.~Swanson, \textit{On the existence of tableaux with given modular major index}, Algebraic Combinatorics, to appear  
 (arXiv:1701.04963).



\bibitem{vS} R.~ Daublebsky von Sterneck, \textit{Ein Analogon zur additiven Zahlentheorie,} Sitzber. Akad. Wiss. Wien, Math,-Naturw. Klasse, \textbf{111} (Abt. IIa) (1902), 1567-1601.
\end{thebibliography}

\end{document}